\newComments\AK{AK}{blue}
\newComments\JG{JG}{orange}
\newComments\RO{R\'{O}}{red}
\newcommand{\PreserveBackslash}[1]{\let\temp=\\#1\let\\=\temp}
\newcolumntype{C}[1]{>{\PreserveBackslash\centering}p{#1}}
\newcolumntype{R}[1]{>{\PreserveBackslash\raggedleft}p{#1}}
\newcolumntype{L}[1]{>{\PreserveBackslash\raggedright}p{#1}}
\newcounter{stepcounter}
\theoremstyle{plain}
\newtheorem{thm}{Theorem}[section]
\newtheorem{lem}[thm]{Lemma}
\newtheorem{prop}[thm]{Proposition}
\newtheorem{cor}[thm]{Corollary}
\newtheorem{defn}[thm]{Definition}
\theoremstyle{definition}
\newtheorem{eg}[thm]{Example}
\newtheorem{rem}[thm]{Remark}
\newtheorem{remark}[thm]{Remark}
\date{}
\newcommand\bit{\begin{itemize}}
\newcommand\eit{\end{itemize}}
\newcommand\bet{\begin{enumerate}}
\newcommand\eet{\end{enumerate}}
\newcommand\ed{\end{document}}
\DeclareFontFamily{U}{mathx}{\hyphenchar\font45}
\DeclareFontShape{U}{mathx}{m}{n}{
      <5> <6> <7> <8> <9> <10>
      <10.95> <12> <14.4> <17.28> <20.74> <24.88>
      mathx10
      }{}
\DeclareSymbolFont{mathx}{U}{mathx}{m}{n}
\DeclareMathAccent{\widecheck}{0}{mathx}{"71}
\DeclareMathAccent{\wideparen}{0}{mathx}{"75}
\newcommand\Om{\Omega}
\newcommand\bN{{\mathbb N}}
\newcommand\fh{{\mathfrak h}}
\newcommand\Aut{\mathrm{Aut}}
\newcommand\co{\mathrm{co}}
\newcommand\exd{\mathrm{d}}
\newcommand\End{\mathrm{End}}
\newcommand\Hol{\mathrm{Hol}}
\newcommand\id{\mathrm{id}}
\newcommand\sseq{\subseteq}
\def\qbinom#1#2{\ensuremath{\left[\kern-.3em\left[\genfrac{}{}{0pt}{}{#1}{#2}\right]\kern-.3em\right]_q}}
\newcommand\ol{\overline}
\DeclareMathOperator{\ev}{ev}
\DeclareMathOperator{\coev}{coev}
\newcommand{\Ch}{\mathrm{Ch}}
\newcommand{\op}{\mathrm{op}}
\newcommand{\fl}{\mathfrak{l}}
\DeclareMathOperator{\Hom}{Hom}
\newcommand{\cO}{\mathcal{O}}
\DeclareMathOperator{\Span}{span}
\DeclareMathOperator{\coT}{coT}
\author[J. Bhowmick]{Jyotishman Bhowmick}
\address{Stat Math Unit, Indian Statistical Institute, 203 B.T. Road, Kolkata 700108, India}
\email{jyotishmanb@gmail.com}
\author[B. Ghosh]{Bappa Ghosh}
\address{Stat Math Unit, Indian Statistical Institute, 203 B.T. Road, Kolkata 700108, India}
\email{bappa0697@gmail.com}
\author[A. O. Krutov]{Andrey O. Krutov}
\address{Mathematical Institute of Charles University, Sokolovsk\'a 83, Prague, Czech Republic}
\email{andrey.krutov@matfyz.cuni.cz}
\author[R. \'O Buachalla]{R\'eamonn \'O Buachalla}
\address{Mathematical Institute of Charles University, Sokolovsk\'a 83, Prague, Czech Republic}
\email{obuachalla@karlin.mff.cuni.cz}
\title{Levi-Civita connection on the irreducible quantum flag manifolds}
\thanks{
B.G. is supported by CSIR-SPM Fellowship through the file number SPM09/0093(13862)/2022-EMR-I.
A.O.K. was supported by the GA\v{C}R projects 24-10887S.
A.O.K. and R.\'OB. was supported by and HORIZON-MSCA-2022-SE-01-01 CaLIGOLA.
R.\'OB. is supported by the GA\v{C}R/NCN grant \emph{Quantum Geometric Representation Theory and Noncommutative Fibrations} 24-11728K.
This article is based upon work from COST Action CaLISTA CA21109 supported by COST (European Cooperation in Science and Technology, \url{www.cost.eu}).
}
\keywords{quantum groups, noncommutative geometry, quantum flag manifolds, complex geometry}
\subjclass[2020]{
  46L87, % Noncommutative differential geometry [See also 58B32, 58B34, 58J22]
  81R60, % Noncommutative geometry
  81R50, % Quantum groups and related algebraic methods [See also 16T20, 17B37]
  17B37, % Quantum groups (quantized enveloping algebras) and related deformations [See also 16T20, 20G42, 81R50, 82B23]
  16T05}  % Hopf algebras and their applications [See also 16S40, 57T05]
\begin{document}

\begin{abstract}
We classify covariant metrics (in the sense of Beggs and Majid) on a~class of quantum homogeneous spaces. In particular, our classification implies the existence of a~unique (up to scalar)  quantum symmetric covariant metric on the Heckenberger--Kolb calculi for the quantized irreducible flag manifolds. Moreover, we prove the existence and uniqueness of Levi-Civita connection for any real covariant metric for the Heckenberger--Kolb calculi. This generalizes Matassa's result (\cite{matassalevicivita}) for the quantum projective spaces.
\end{abstract}

\maketitle

\section{Introduction}

Since the inception of noncommutative geometry, the notions of metrics, connections and their compatibility have been studied by many mathematicians. In the spectral triple framework of Connes (\cite{Connes}), a~candidate for a~metric on the space of one-forms of the spectral triple was proposed in \cite{FGR}. The authors of \cite{FGR} also formulated a~compatibility of a~connection with a~given metric. Existence and uniqueness of Levi-Civita connections for  metrics on the space of one-forms of  certain classes of spectral triples have been proved. For example, see  \cite{article1} or the more recent work \cite{meslandrennie1} where the existence result is extended to any $\theta$-deformation of a~compact Riemannian manifold.  

On the other hand, the question of existence of metrics and compatibility of a~connection with a~given metric can be studied in the more generalized set-up of a~differential calculus on an algebra. There have been a~lot of approaches to address these questions for a~variety of examples.  Existence and uniqueness of Levi-Civita connections for bilinear metrics on a~certain class of calculi by deriving a~Koszul type formula was achieved in  \cite{KoszulJDG}. For similar results in the context of more general classes of metric for tame differential calculi, quantum groups and $\theta$-deformed even spheres, we refer to \cite{article7},  \cite{aschieriweber}, \cite{atiyahseq} and references therein.

In this article, we study metrics and connections on a~class of quantum homogeneous spaces and the approach we follow here is the one presented in the monograph \cite{BeggsMajid:Leabh} of Beggs and Majid. Given a~differential calculus on an algebra $B$, this notion of metric is given by a~pair $ (g, (~, ~)) $ and  is a~choice of self-dual structure (in the category of $B$-bimodules) on the space of one-forms of the differential calculus. This makes it particularly suited to the study of covariant differential calculi over quantum homogeneous spaces. 

Suppose $A$ and $H$ are Hopf $\ast$-algebras where $H$ is a~compact quantum group algebra, $\pi: A \rightarrow
H$ a surjective Hopf $\ast$-algebra map and let  $B $ be the associated  quantum homogeneous space in the
sense of Definition \ref{ Quantum homogeneous space }. Moreover, assume that $B$ is equipped with an
$A$-covariant $\ast$-differential calculus~$\Omega^1(B)$.  We start by constructing covariant metrics
on~$\Omega^1(B)$ in the sense of \cite{BeggsMajid:Leabh} and then,  by a~categorical equivalence due to Takeuchi (Theorem \ref{3rdapril241}), we classify all such metrics. It turns out that the class of covariant metrics are parametrized by a~product of general linear groups. We refer to Theorem \ref{8thmay24jb5} for the precise statement. Moreover, when the calculus is equipped with a~covariant complex structure, we construct  a~family of covariant real metrics (see Definition~\ref{3rdjuly244}). 

After having a~good understanding on the space of covariant metrics on the above-mentioned class of quantum homogeneous spaces, we are now in a~position to ask for the existence and uniqueness of Levi-Civita connections for these metrics. We concentrate on the quantum homogeneous spaces coming from the Drinfeld--Jimbo algebras. For the case of the standard quantum $2$-sphere, this was settled by Majid in \cite{Maj}. More recently, it was extended to the case of all quantum projective spaces for a~generalized Fubini-Study metric by Matassa in \cite{matassalevicivita}. Our second goal in this article is to extend this theorem to any irreducible quantum flag manifold for a~class of covariant metrics. 

 Suppose $G$ is a~compact connected semisimple Lie group and $L_S$ a~choice of Levi-subalgebra for a~certain choice of a~subset of simple roots of the complexified Lie algebra $\mathfrak{g}$ of $G$ (see Section \ref{4thaugust244}). Then for all $0 < q < 1$, one defines the quantized  flag manifolds denoted by    $\mathcal{O}_q (G/L_S)$, which are  quantum homogeneous spaces  of $ \mathcal{O}_q (G), $ the latter being the coordinate algebra corresponding to the Drinfeld--Jimbo's universal enveloping algebra $ U_q (\mathfrak{g}). $

In the irreducible case, that is to say, for those quantum flag manifolds whose classical limit is a~symmetric space, the classical de Rham complex admits an essentially unique $q$-deformation which remembers a~large part of the classical flag manifold K\"ahler geometry. This is the celebrated Heckenberger--Kolb differential calculus (\cite{HK,HKdR}). 

Now suppose $  (g, (~, ~))   $ is  an $ \mathcal{O}_q (G) $-covariant metric on  $\mathcal{O}_q (G/L_S)$. Our strategy to construct the Levi-Civita connection for the pair  $  (g, (~, ~))   $ is to use the theory of Chern connections for holomorphic bimodules developed by Beggs and Majid in \cite{BeggsMajidChern}. The Heckenberger--Kolb calculus $ (\Omega^\bullet_q (G/L_S), \wedge, d) $ admits a~complex structure (see \cite{HK}, \cite{HKdR}, and~\cite{MarcoConj}) such that  $\Omega^{(1,0)}_q (G/L_S) $ is a~holomorphic $\mathcal{O}_q (G/L_S) $-bimodule. Similarly, $\Omega^{(0,1)}_q (G/L_S)$ is a~holomorphic bimodule with respect to the opposite complex structure. Thus, for Hermitian metrics (see Definition \ref{4thdec231}) $\mathscr{H}_1$ and $\mathscr{H}_2$ on $\Omega^{(1,0)}_q (G/L_S)$ and $\Omega^{(0,1)}_q (G/L_S)$ respectively, we have Chern connections $\nabla_{\Ch}$ and $\nabla_{\Ch, \op}$ respectively. Thus, we have a~connection
\begin{equation} \label{14thaug241}
 \nabla = \nabla_{\Ch} + \nabla_{\Ch, \op}
 \end{equation}
 on $ \Omega^{1}_q (G/L_S) = \Omega^{(1,0)}_q (G/L_S) \oplus \Omega^{(0,1)}_q (G/L_S)$. 
 Therefore, we need to connect Hermitian metrics on  $\Omega^{(1,0)}_q (G/L_S)$ and $\Omega^{(0,1)}_q (G/L_S)$ with metrics on $\Omega^{1}_q (G/L_S)$. This is where we need to restrict our attention to the case where $ (g, (~, ~))$ is a~real metric. 

We observe that for any $\ast$-differential calculus  $ (\Omega^\bullet, \wedge, d), $ real metrics  are in one to one correspondence with Hermitian metrics on $\Omega^1$.  For $\ast$-differential calculi on quantum homogeneous spaces equipped with complex structures such that $\Omega^{(1,0)}$ and $\Omega^{(0,1)}$ are simple and non-isomorphic objects in a~certain monoidal category, we prove that the Hermitian metric $H$ constructed from a~real metric $ (g, (~, ~)) $ is of the form $\mathscr{H} = \mathscr{H}_1 \oplus \mathscr{H}_2$, where $\mathscr{H}_1$ and $\mathscr{H}_2$ are Hermitian metrics on $\Omega^{(1,0)}$ and $\Omega^{(0,1)}$ respectively. In particular, this is true for the Heckenberger--Kolb calculi. 

Summarizing, starting from a~covariant real metric $(g, (~ , ~))$ on $\mathcal{O}_q (G/L_S), $ we have Chern connections $\nabla_{\Ch}  $ and $\nabla_{\Ch, \op}$
so that the connection $\nabla$ defined in \eqref{14thaug241} exists. In order to make sense of the metric-compatibility condition, we need to prove that $\nabla$ is a~bimodule connection. This follows from the fact that the Heckenberger--Kolb calculus is factorizable. Finally, the vanishing of the torsion of $\nabla$ and the compatibility of $\nabla$ with $(g, (~ , ~))$ are established using a~central element $Z \in U_q (\mathfrak{l}_S), $ where $Z$ is the natural analogue of the generator of the center of $\mathfrak{l}_S$.

\subsection*{Summary of the Paper}

The article is organized as follows: in Section \ref{10thaugust241}, we recall some necessary preliminaries about differential calculi, metrics (in the sense of Beggs and Majid), connections and their compatibility with metrics, bar categories and noncommutative complex structures.

In Section \ref{10thaugust242},  we classify covariant metrics on quantum homogeneous spaces of the form $B = A^{\co(H)}, $ where $A$ is a~Hopf $\ast$-algebra and $H$ a~compact quantum group algebra. When the quantum homogeneous space is equipped with a~covariant complex structure, we construct a~family of real covariant metrics on them. 

In Section \ref{10thaugust243}, we establish a~one to one correspondence between real covariant metrics and Hermitian metrics on the space of one-forms. When the calculus is endowed with a~complex structure such that $\Omega^{(1,0)}$ and $\Omega^{(0,1)}$ are simple and non-isomorphic, it turns out that the Hermitian metric constructed from a~real covariant metric splits into a~direct sum of Hermitian metrics on $\Omega^{(1,0)}$ and $\Omega^{(0,1)}$.   

In Section \ref{4thaugust243}, we show that for a~covariant factorizable complex structure, the Chern connection associated to a~covariant Hermitian metric is always a~covariant bimodule connection. 

In Section \ref{4thaugust244}, we recall some necessary details about Drinfeld--Jimbo quantum groups, irreducible quantum flag manifolds $\mathcal{O}_q(G/L_S)$, and the Heckenberger--Kolb calculi $\Omega^{\bullet}_q(G/L_S)$. We then apply the results of Section \ref{10thaugust242}  to classify  the covariant metrics on the Heckenberger--Kolb calculi. Moreover, we show that there exists a~unique (up to scalar) real covariant quantum symmetric metric on these calculi. Furthermore, we establish the existence and uniqueness of Levi-Civita connections for any real covariant metric. 

The results of Section \ref{4thaugust243} use the general fact that the Chern connection for a~covariant metric  on any covariant holomorphic bimodule is actually covariant. This is proved in Section \ref{1staugust241}. Finally, we collect some results in an appendix at the very end of the article. 

\subsection*{Acknowledgments}
J.B and B.G. would like to thank the organizers of the workshop ``Quantum groups and noncommutative geometry in
Prague, 2023" where this work was initiated.
We would like to thank Julien Bichon for discussions related to Lemma~\ref{22ndapril243}.

\section{Preliminaries} \label{10thaugust241}

Throughout this article, our ground field will be $\mathbb{C}$. All algebras are assumed to be unital. All unadorned tensor products are over $\mathbb{C}$.

Let $ (\mathcal{C}, \otimes) $ be an monoidal category with unit object $1_{\mathcal{C}}$. An object $M$ in the category ${\mathcal{C}}$ is said to have a~right dual if there exists an object $\prescript{\ast}{}{M}$ in ${\mathcal{C}}$ and  morphisms $\ev: M\otimes \prescript{\ast}{}{M} \to 1_{\mathcal{C}}$, $\coev: 1_{\mathcal{C}}\to \prescript{*}{}{M}\otimes M$ such that the equations
  \begin{align}\label{27thnov231}
      (\ev\otimes \id_M) (\id_M\otimes  \coev) =\id_M, \quad (\id_{\prescript{\ast}{}{M}}\otimes\ev)(\coev\otimes \id_{\prescript{\ast}{}{M}})  = \id_{\prescript{\ast}{}{M}}
  \end{align}
are satisfied.

Let $B$ be an~algebra. The category of all $B$-bimodules will be denoted by $\qMod{}{B}{}{B}$.
For a~$B$-bimodule $M$, we will denote the set of all left $B$-linear maps from~$M$ to~$B$ by the symbol $\prescript{}{B}{\Hom(M,B)}$. Then $\prescript{}{B}{\Hom(M,B)}$ is an~object of $\qMod{}{B}{}{B}$ via the formulas
\begin{align} \label{19thdec234}
    (b\cdot f)(m)=f(m\cdot b) \quad (f \cdot b)(m) = f(m)b,
\end{align}
where $f \in \prescript{}{B}{\Hom(M,B)}, m\in M $ and $ b \in B$.

We recall that if a~$B$-bimodule $M$ is finitely generated and projective as a~left $B$-module, then for some $n \in \mathbb{N}$, there are elements $e^i\in M, e_i \in \prescript{}{B}{\Hom(M,B)}$ for all $1\le i\le n$,  such that: 
\begin{align} \label{27thnov232}
m =\sum_i e_i(m) e^i ~ \text{and} ~  f =\sum_i e_i f(e^i) 
\end{align}
for all $ m\in M $ and for all $ f\in \prescript{}{B}{\Hom(M,B)}$. In this case, $\{ e^i, e_i: i = 1, \cdots n \}$ is called a~dual basis for $M$. 
 
We consider the maps
\[
  \coev: B\to \prescript{}{B}{\Hom(M,B)}\otimes_B M\quad\text{and}\quad
  \ev: M\otimes_B \prescript{}{B}{\Hom(M,B)} \to B
\]
defined as
\begin{align} \label{evcoev}
  \coev(1)=\sum_i e_i \otimes_B e^i \quad\text{and}\quad
  \ev(m\otimes_Bf)=f(m).   
\end{align}
Then it is well-known that $\ev$ and $\coev$ are morphisms of the category $\qMod{}{B}{}{B}$ and satisfy the equations~\eqref{27thnov231} and hence $\prescript{}{B}{\Hom(M,B)}$ is a~right dual of $M$ in the category $\qMod{}{B}{}{B}$.

Let us make a~note of the following well-known categorical result which essentially follows from~\cite[Proposition 2.10.8]{EtingofTensorCat}.

\begin{lem}\label{22ndapril243}
    Let $V$ be an object in a~monoidal category $(\mathcal{C}, \otimes, 1_\mathcal{C})$ with a~fixed right dual $(\prescript{*}{}{V}, \ev, \coev)$. Consider the set $X$ consisting of 
    all pairs $ (\ev^{\prime}, \coev^{\prime}) $ such that $(\prescript{*}{}{V}, \ev^{\prime}, \coev^{\prime})$ is a~right dual of $V$. 
    Then the group $\Aut(V)$ acts on $X$ freely and     transitively.
\end{lem}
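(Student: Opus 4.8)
The plan is to exhibit the action and then verify freeness and transitivity by hand, using only the triangle (zig-zag) identities \eqref{27thnov231}. First I would fix notation: write $\ev\colon V\otimes \prescript{*}{}{V}\to 1_\mathcal{C}$ and $\coev\colon 1_\mathcal{C}\to \prescript{*}{}{V}\otimes V$ for the given duality data. Given $\varphi\in\Aut(V)$, define
\begin{align}\label{actiondefn}
\varphi\cdot(\ev^{\prime},\coev^{\prime})=\bigl(\ev^{\prime}\circ(\varphi^{-1}\otimes\id_{\prescript{*}{}{V}}),\ (\id_{\prescript{*}{}{V}}\otimes\varphi)\circ\coev^{\prime}\bigr).
\end{align}
The first task is to check this is well defined, i.e.\ that the new pair again satisfies \eqref{27thnov231}; this is a routine diagram chase in which the $\varphi$ and $\varphi^{-1}$ cancel against each other in each zig-zag identity, using naturality of the tensor product and interchange. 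One also checks the identity of $\Aut(V)$ acts trivially and that $(\varphi\psi)\cdot(-)=\varphi\cdot(\psi\cdot(-))$, which are immediate from functoriality.

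For transitivity, suppose $(\ev^{\prime},\coev^{\prime})$ is another right dual structure on the \emph{same} object $\prescript{*}{}{V}$. The key step is to produce the connecting automorphism explicitly: set
\begin{align}\label{connectauto}
\varphi=(\ev\otimes\id_V)\circ(\id_V\otimes\coev^{\prime})\colon V\to V.
\end{align}
Using the zig-zag identities for the unprimed data together with those for the primed data, one shows $\varphi$ is invertible, with inverse $(\ev^{\prime}\otimes\id_V)\circ(\id_V\otimes\coev)$, and that $\varphi\cdot(\ev^{\prime},\coev^{\prime})=(\ev,\coev)$. This is precisely the content of \cite[Proposition 2.10.8]{EtingofTensorCat}, which asserts that the dual of an object is unique up to unique isomorphism; here we repackage that uniqueness as an action of $\Aut(V)$ rather than of the ``isomorphisms of duals''. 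I expect this identification — matching the canonical isomorphism-of-duals with an automorphism of $V$ itself via \eqref{connectauto} — to be the main obstacle, in that it requires care about which side the automorphism acts on and the direction of the arrows.

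Finally, for freeness, suppose $\varphi\cdot(\ev^{\prime},\coev^{\prime})=(\ev^{\prime},\coev^{\prime})$, so that $\ev^{\prime}\circ(\varphi^{-1}\otimes\id)=\ev^{\prime}$ and $(\id\otimes\varphi)\circ\coev^{\prime}=\coev^{\prime}$. Feeding the second equation into the first zig-zag identity for the primed data,
\begin{align}\label{freeness}
\id_V=(\ev^{\prime}\otimes\id_V)(\id_V\otimes\coev^{\prime})=(\ev^{\prime}\otimes\id_V)(\id_V\otimes(\id_{\prescript{*}{}{V}}\otimes\varphi)\coev^{\prime})=\varphi\circ(\ev^{\prime}\otimes\id_V)(\id_V\otimes\coev^{\prime})=\varphi,
\end{align}
so $\varphi=\id_V$, giving freeness. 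Assembling the three verifications — well-definedness of \eqref{actiondefn}, transitivity via \eqref{connectauto}, and freeness via \eqref{freeness} — completes the proof.
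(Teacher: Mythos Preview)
Your argument is correct and matches what the paper does: the paper simply cites \cite[Proposition~2.10.8]{EtingofTensorCat} and records the explicit action formula \eqref{21stmay24}, so your write-up is in fact more detailed than the paper's own treatment. The action \eqref{actiondefn} agrees with the one the paper states, and your freeness computation \eqref{freeness} is clean.

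There is one small slip in the transitivity step, precisely at the point you yourself flagged as delicate. With $\varphi$ defined by \eqref{connectauto}, the identity one obtains is
\[
(\id_{\prescript{*}{}{V}}\otimes\varphi)\circ\coev
=(\id_{\prescript{*}{}{V}}\otimes\ev\otimes\id_V)\circ(\coev\otimes\id_{\prescript{*}{}{V}}\otimes\id_V)\circ\coev'
=\coev',
\]
using the second zig-zag identity for the \emph{unprimed} data; together with the analogous computation for $\ev$ this gives $\varphi\cdot(\ev,\coev)=(\ev',\coev')$, not $\varphi\cdot(\ev',\coev')=(\ev,\coev)$ as you wrote. Equivalently, it is $\varphi^{-1}$ that sends $(\ev',\coev')$ back to $(\ev,\coev)$. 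Of course either direction establishes transitivity, so the argument goes through unchanged; just adjust the sentence after \eqref{connectauto} accordingly.
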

 
More concretely, if  $V$ is an object in a~monoidal category $\mathcal{C}$ and  $\tau: V\to V$  an automorphism in $\mathcal{C}$, then the morphisms $\ev_\tau $ and $ \coev_\tau$ defined respectively by
\begin{equation} \label{21stmay24}
  \ev_\tau:= \ev\circ (\tau^{-1}\otimes\id)\ \text{and}\ \coev_\tau:= (\id\otimes \tau)\circ\coev 
\end{equation}
satisfy the evaluation-coevaluation equations. The action of $\Aut (V) $ on $X$ is given by
$$ \tau. (\ev, \coev) = (\ev_{\tau}, \coev_{\tau}). $$
    
\begin{eg} \label{eg:irreducible}
Let $H$ be a~Hopf algebra, and $V$ an irreducible left $H$-comodule $V$. Let us assume that $V$ admits a~right
dual ${}^\ast V$ (and $V$ is also a~right dual of~${}^\ast V$). Consider now the tensor product
\begin{align} \label{eqn:tensor.product}
({}^\ast V \oplus  V) \otimes ({}^\ast V \oplus  V) \simeq ({}^\ast V \otimes V) \oplus ({}^\ast V \otimes {}^\ast V) \oplus (V \otimes V) \oplus (V \otimes {}^\ast V).
\end{align}
Let $\ev_V: V \otimes {}^\ast V  \to \mathbb{C}$ be a~choice of evaluation map for $V$, and denote the uniquely
determined right coevaluation map by $\coev_V$.
Moreover, choose an~evaluation map $\ev_{{}^\ast V}: {}^\ast V \otimes V   \to \mathbb{C}$, and denote the uniquely
determined coevaluation map by $\coev_{{}^\ast V}$.
Then a~self-dual structure for $V \oplus {}^\ast V$ is given by 
\begin{align*}
\left(\ev := \ev_{{}^\ast V} \oplus ~ 0 \oplus ~ 0 \oplus \ev_{V}, \, \coev := \coev_{V} \oplus ~ 0 \oplus ~ 0 \oplus \coev_{{}^\ast V}\right)
\end{align*}

Let us now assume that $V$ is not isomorphic to ${}^\ast V$. Then by Schur's lemma for comodules (note that the case
of super Hopf algebras is more delicate; for example, see~\cite{NIS2}) we get the automorphism group of $V
\oplus {}^\ast V$ is $\mathrm{GL}_1(\mathbb{C}) \times \mathrm{GL}_1(\mathbb{C})$, which acts on its space of self-dual structures freely and transitively. Explicitly, for any automorphism $\tau = (\lambda_1,\lambda_2)$, where $\lambda_1, \lambda_2 \in \mathbb{C}^{\times}$, we see that
\begin{align*}
\left(\lambda_1^{-1} \ev_{V} + \lambda_2^{-1} \ev_{{}^\ast V}, \, \lambda_1 \coev_{V} + \lambda_2 \coev_{{}^\ast V} \right)
\end{align*}
is the associated twisted self-dual structure.

\end{eg}

\subsection{Hopf-algebras and relative Hopf-modules} \label{6thjune241}

Let $A$ be a Hopf-algebra with coproduct $\Delta: A\to A\otimes A$, counit $\epsilon: A\to \mathbb{C}$ and antipode $S:A\to A$. The antipode is always assumed to be bijective and moreover, we will use Sweedler notation to write 
\[
  \Delta (a) = a_{(1)} \otimes a_{(2)}.
\]
The symbol $\lMod{A}{}$ will denote the category of  left $A$ comodules. If $(M, \prescript{M}{}{\delta})$ is an object in $\lMod{A}{}$, we will often use  Sweedler notation for the coaction $\prescript{M}{}{\delta}:$
$$ \prescript{M}{}{\delta(m)}=\sum m_{(-1)}\otimes m_{(0)}. $$  

For a~Hopf-algebra $A$ and a~left $A$-comodule algebra $B$, the symbol $\qMod{A}{B}{}{B}$ is reserved for the
category of relative Hopf modules.
Thus, objects of $\qMod{A}{B}{}{B}$ are pairs $(M, \prescript{M}{}{\delta})$ where $M$ is a~$B$-bimodule and
$\prescript{M}{}{\delta}: M \to A\otimes M $ is a~coaction such that  
\[
  \prescript{M}{}{\delta(bmc)}= \sum b_{(1)}m_{(-1)} c_{(1)}\otimes b_{(2)}m_{(0)}c_{(2)}\quad \text{for all $b,c \in B$, $m \in M$}.
\]
A morphism $f: M \to N$ in $\qMod{A}{B}{}{B}$ is a~left $A$ colinear map which is also $B$-bilinear.

\begin{defn} \label{ Quantum homogeneous space }
Let $A, H$ be Hopf-algebras and $\pi:A\to H$ be a surjective Hopf-algebra homomorphism. Consider the homogeneous right $H$-coaction on $A$  given by $\delta^A =(\id\otimes \pi)\circ \Delta$ and let $B= A^{\co(H)} := \{ a\in A: \delta^A (a) = a\otimes 1\}$
be the coinvariant subalgebra.
We say that $B$ is a quantum homogeneous space if $A$ is faithfully flat as a right $B$-module.
\end{defn}

It is easy to see that if $B$ is a quantum homogeneous space of a Hopf-algebra $(A, \Delta)$, then the pair $(B, \Delta)$ is an object of the category $ \qMod{A}{B}{}{B}$.

  Now we recall the following well-known result.

\begin{prop} \label{12thdec236}
Let $B$ be a left $A$-comodule algebra and $M $ be an object of the monoidal category $ \qMod{A}{B}{}{B}$. If $M$ is  finitely generated and projective as a left $B$ module, consider the map 

  $$  \prescript{\prescript{}{B}{\Hom} (M, B)}{}{\delta}: \prescript{}{B}{\Hom(M,B)} \to A \otimes \prescript{}{B}{\Hom(M,B)} $$
  defined by $ \prescript{\prescript{}{B}{\Hom} (M, B)}{}{\delta}(f) =  f_{(-1)}\otimes f_{(0)}$, where
\begin{align}\label{27thnov233}
   \prescript{\prescript{}{B}{\Hom} (M, B)}{}{\delta}(f)  (m)= f_{(-1)}\otimes f_{(0)}(m) = S(m_{(-1)})[f(m_{(0)})]_{(-1)}\otimes [f(m_{0})]_{(0)}
\end{align}
   for $m\in M$. 
Then $ (\prescript{}{B}{\Hom} (M, B), \prescript{\prescript{}{B}{\Hom} (M, B)}{}{\delta})  $ is a right dual of $M$ in the category
$ \qMod{A}{B}{}{B}$.
\end{prop}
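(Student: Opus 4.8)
The statement to prove is Proposition~\ref{12thdec236}: that $\prescript{}{B}{\Hom}(M,B)$, equipped with the coaction~\eqref{27thnov233}, is a right dual of $M$ in the category $\qMod{A}{B}{}{B}$. The key point is that two things have already been done for us: first, the general excerpt recalls that for $M$ finitely generated and projective as a left $B$-module, the maps $\ev$ and $\coev$ of~\eqref{evcoev} exhibit $\prescript{}{B}{\Hom}(M,B)$ as a right dual of $M$ \emph{in} $\qMod{}{B}{}{B}$ (i.e.\ purely as a $B$-bimodule, the evaluation--coevaluation equations~\eqref{27thnov231} hold); second, the formula~\eqref{27thnov233} is handed to us. So the proposition reduces to three verifications: (a) the prescribed formula for $\prescript{\prescript{}{B}{\Hom}(M,B)}{}{\delta}$ is well-defined and makes $\prescript{}{B}{\Hom}(M,B)$ an object of $\qMod{A}{B}{}{B}$; (b) $\ev$ and $\coev$ of~\eqref{evcoev} are morphisms in $\qMod{A}{B}{}{B}$, i.e.\ they are left $A$-colinear (their $B$-bilinearity / the bimodule-map property is already known); (c) the triangle identities~\eqref{27thnov231} hold in $\qMod{A}{B}{}{B}$ — but these are equations between $B$-bimodule maps, so they are inherited verbatim from the $\qMod{}{B}{}{B}$ level once (a) and (b) are in place.

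\medskip

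\noindent\textbf{Step 1: the coaction is well-defined and $A$-comodule algebra compatible.} First I would check that the formula in~\eqref{27thnov233} does not depend on the presentation — the cleanest way is to rewrite it in terms of a dual basis: if $\{e^i, e_i\}$ is a dual basis for $M$, then using~\eqref{27thnov232} one has, for $f \in \prescript{}{B}{\Hom}(M,B)$,
\[
  \prescript{\prescript{}{B}{\Hom}(M,B)}{}{\delta}(f) = \sum_i S\big((e^i)_{(-1)}\big)\,[f(e^i{}_{(0)})]_{(-1)} \otimes \big( m \mapsto [f(e^i{}_{(0)})]_{(0)}\, e_i(m)\big),
\]
which exhibits the right-hand side as lying in $A \otimes \prescript{}{B}{\Hom}(M,B)$ (rather than just in a completed or formal object), since the sum is finite. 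One then checks coassociativity and counitality of $\prescript{\prescript{}{B}{\Hom}(M,B)}{}{\delta}$; this is a direct Sweedler computation using coassociativity of $\prescript{M}{}{\delta}$, the anti-coalgebra-morphism property of $S$, and coassociativity of $\prescript{B}{}{\delta}=\Delta$. Finally one verifies the relative-Hopf-module compatibility $\prescript{}{}{\delta}(b \cdot f \cdot c) = \sum b_{(1)} f_{(-1)} c_{(1)} \otimes b_{(2)} f_{(0)} c_{(2)}$ using the bimodule structure~\eqref{19thdec234} on $\prescript{}{B}{\Hom}(M,B)$ — here the appearance of the antipode in~\eqref{27thnov233} is exactly what is needed to convert the left action $b \cdot f = f(- \cdot b)$ correctly (the contravariance in the left slot of $\Hom$ is matched by $S$), and the fact that $\Delta$ is an algebra map handles the rest.

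\medskip

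\noindent\textbf{Step 2: $\ev$ and $\coev$ are $A$-colinear.} For $\ev(m \otimes_B f) = f(m)$ one must show $\prescript{B}{}{\delta}(f(m)) = \sum f_{(-1)} m_{(-1)} \otimes f_{(0)}(m_{(0)})$; substituting the formula~\eqref{27thnov233} for $f_{(-1)} \otimes f_{(0)}(m_{(0)})$, the telescoping $S(m_{(-1)}) m_{(-2)}$ — more precisely $S\big((m_{(0)})_{(-1)}\big)\,m_{(-1)}$ after applying coassociativity to $m$ — collapses via the antipode axiom to leave $[f(m)]_{(-1)} \otimes [f(m)]_{(0)} = \Delta(f(m))$, as required. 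For $\coev(1) = \sum_i e_i \otimes_B e^i$ one must show this element is coinvariant, i.e.\ annihilated (after the coaction on the tensor product over $B$) by subtracting $1 \otimes \coev(1)$; this amounts to the identity $\sum_i \prescript{}{\Hom}{\delta}(e_i) \otimes_B e^i = \sum_i e_i \otimes_B \prescript{M}{}{\delta}(e^i)$ in $A \otimes \prescript{}{B}{\Hom}(M,B) \otimes_B M$, which one proves by expanding the left side via~\eqref{27thnov233}, using the dual-basis relations~\eqref{27thnov232} to reconstitute $e^j{}_{(0)}$ inside, and again telescoping the antipode. These two colinearity checks are routine but are the technical heart of the argument.

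\medskip

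\noindent\textbf{Step 3: conclude.} Once Steps 1 and 2 are done, $\prescript{}{B}{\Hom}(M,B)$ is an object of $\qMod{A}{B}{}{B}$ and $\ev,\coev$ are morphisms there; the evaluation--coevaluation identities~\eqref{27thnov231} hold because they are equalities of morphisms in $\qMod{}{B}{}{B}$ and the forgetful functor $\qMod{A}{B}{}{B} \to \qMod{}{B}{}{B}$ is faithful and monoidal (it strictly preserves the tensor product $\otimes_B$ and the unit $B$), so an equation valid downstairs is valid upstairs. Hence $(\prescript{}{B}{\Hom}(M,B),\ev,\coev)$ is a right dual of $M$ in $\qMod{A}{B}{}{B}$.

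\medskip

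\noindent\textbf{Main obstacle.} The only genuinely delicate point is bookkeeping in Step 2 (and the relative-Hopf-module axiom in Step 1): keeping the Sweedler indices straight through the composite $S\big((\,\cdot\,)_{(-1)}\big)[f(\,\cdot\,_{(0)})]_{(-1)}$ when $f$ is itself fed an element with its own coaction, and making sure every antipode cancellation $S(x_{(1)})x_{(2)} = \epsilon(x)1$ is applied to the correct adjacent pair after the right number of coassociativity moves. There is no conceptual difficulty — faithful flatness of $A$ over $B$ (part of the hypothesis that $B$ is a quantum homogeneous space) is not even needed here; it is a purely diagrammatic verification — but it is the kind of computation where a misplaced leg of a coaction silently breaks the proof, so I would carry it out on a dual basis to keep everything finite and explicit.
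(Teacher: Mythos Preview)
Your approach is essentially the same as the paper's: the paper cites \cite[Lemma 2.2]{Ulb90} and \cite[Proposition A.9]{OSV} for your Step~1, recalls (as you do) that $\ev,\coev$ are $B$-bimodule maps, and then declares the $A$-colinearity of $\ev$ and $\coev$ a ``routine computation'' --- precisely your Step~2. One slip to fix: in the colinearity check for $\ev$ the diagonal coaction on $M\otimes_B \prescript{}{B}{\Hom}(M,B)$ gives $m_{(-1)}f_{(-1)}\otimes f_{(0)}(m_{(0)})$, not $f_{(-1)}m_{(-1)}\otimes\cdots$, so the telescoping is $m_{(-2)}S(m_{(-1)})=\epsilon(m_{(-1)})1$ rather than $S(m_{(-1)})m_{(-2)}$ (which does not collapse in a noncocommutative Hopf algebra); this is exactly the ``misplaced leg'' you warned yourself about.
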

\begin{proof}  \cite[Lemma 2.2]{Ulb90} implies  that $ (\prescript{}{B}{\Hom} (M, B), \prescript{\prescript{}{B}{\Hom} (M, B)}{}{\delta})   $  is a comodule while \cite[Proposition A.9]{OSV} proves that $\prescript{}{B}{\Hom} (M, B)$ is an object in $\qMod{A}{B}{}{B}$. It is well known that the maps $\ev, \coev$ defined in \eqref{evcoev} are morphisms in $\qMod{}{B}{}{B}$. By a routine computation, it can be checked that $\ev $ and $ \coev$ are also left $A$-colinear.
\end{proof}

\subsection{Takeuchi's equivalence} \label{19thdec235}

Throughout this subsection, $ A$ will denote a Hopf-algebra and $B = A^{\co  H}$ a quantum homogeneous space as in
Definition \ref{ Quantum homogeneous space }. We will need a few definitions and notations before stating the form of Takeuchi's equivalence which will be needed repeatedly in this article. 

Firstly, $\qMod{A}{B}{}{B}$  will continue to denote the category of relative Hopf-modules introduced in Subsection \ref{6thjune241}. Moreover, $ \qMod{H}{}{}{B} $ will denote the category whose objects are left $H$-comodules $ (V, \prescript{V}{}{\delta}) $ such that $V$ is a right $B$-module and 
$$ \prescript{V}{}{\delta} (v b) = v_{(- 1)} \pi (b_{(1)}) \otimes v_{(0)} b_{(2)} $$
for all $v \in V$ and $b \in B$. Here, $\pi: A \rightarrow H$ is the surjective Hopf-algebra homomorphism as in Definition \ref{ Quantum homogeneous space }. 

Morphisms in $ \qMod{H}{}{}{B} $ are left $H$-comodule maps which are also right $B$-linear.

Let $B^+$ denote $B \cap \ker (\epsilon). $ If $M$ is an object of $\qMod{A}{B}{}{B}$, then the map
$$  \frac{M}{B^+ M} \rightarrow H \otimes  \frac{M}{B^+ M}, ~  [ m ]  \mapsto \pi (m_{(- 1)}) \otimes [ m_{(0)} ] $$
is a coaction of $H$ on $ \frac{M}{B^+ M}$,
where $[ m ]$ denotes the equivalence class of $m \in M$ in $ \frac{M}{B^+ M}$. Thus, $ \frac{M}{B^+ M}$ is an object of the category  $\qMod{H}{}{}{B}$ with respect to the obvious right $B$-module structure  and we have a functor
\begin{equation} \label{2ndjuly241}
 \Phi: \qMod{A}{B}{}{B} \rightarrow \qMod{H}{}{}{B}, ~ M \rightarrow  \frac{M}{B^+ M}.
 \end{equation}
In the other direction, we have a functor 
\begin{equation} \label{2ndjuly242}
 \Psi: \qMod{H}{}{}{B}  \rightarrow \qMod{A}{B}{}{B}, ~ \Psi (V) = A \square_H V,
\end{equation}
where $A \square_H V$ is the cotensor product of $A$ and $V$ defined  to be the kernel of the map 
$ \delta^{A} \otimes \id - \id \otimes \prescript{V}{}{\delta}: A \otimes V \rightarrow A \otimes H \otimes V. $

 The left $A$-comodule structure of $\Psi (V)$  is defined on the first tensor factor, the right B-module structure is the diagonal one, and if  $\gamma$ is a morphism in $  \qMod{H}{}{}{B}, $ then $ \Psi (\gamma):= \id \otimes \gamma. $

\begin{defn} \label{10thjune241}
We define $ \modz{A}{B} $ to be the full subcategory of $ \qMod{A}{B}{}{B} $  whose objects $M$  are finitely generated as left $B$-modules and satisfy the condition $M B^+ = B^+ M$. Moreover, $\lmod{H}{}$ will denote the category of finite dimensional left $H$-comodules.
\end{defn}

The categories $\modz{A}{B}$ and $\lmod{H}{}$ are monoidal, the monoidal structure on $\modz{A}{B}$ is the usual tensor product of comodules and  $B$-bimodules while the monoidal structure on $\lmod{H}{}$  is given by the usual tensor product of comodules.

Then we have the following result:

\begin{thm}     \label{3rdapril241}
If $B = A^{\co(H)} $ is a quantum homogeneous space, then the functor
\[
  \Phi: \modz{A}{B} \rightarrow \lmod{H}{} 
\]
is a monoidal equivalence of categories.

In particular, if $M$ is a monoid object in $\modz{A}{B}$, then $\Phi (M) $ is a monoid object in $\lmod{H}{}$.
\end{thm}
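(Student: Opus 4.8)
The plan is to invoke Takeuchi's equivalence, whose existence is the content of the first part of the statement, and then observe that a monoidal equivalence automatically transports algebra (monoid) objects. Concretely, I would first recall the standard Takeuchi theorem: for a quantum homogeneous space $B = A^{\co(H)}$, the functor $\Phi$ of \eqref{2ndjuly241} and the functor $\Psi$ of \eqref{2ndjuly242} are mutually quasi-inverse equivalences between $\modz{A}{B}$ and $\lmod{H}{}$. This is the part that requires genuine work, and I would not reprove it from scratch; instead I would cite the original source (Takeuchi) together with the modern treatments available in the literature, and only check that the hypotheses match — faithful flatness of $A$ over $B$ (built into Definition \ref{ Quantum homogeneous space }) is exactly what makes $\Phi$ and $\Psi$ quasi-inverse, and one should verify that $\Phi$ indeed lands in the \emph{finite-dimensional} comodule category on objects of $\modz{A}{B}$ (this follows from finite generation as a left $B$-module together with the relation $M B^+ = B^+ M$, which forces $M/B^+M$ to be finite dimensional).

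Next I would verify that $\Phi$ is strong monoidal. The monoidal structure on $\modz{A}{B}$ is $\otimes_B$, and on $\lmod{H}{}$ it is $\otimes_{\mathbb{C}}$. The key natural isomorphism is
\[
  \Phi(M) \otimes_{\mathbb{C}} \Phi(N) \;=\; \frac{M}{B^+M} \otimes_{\mathbb{C}} \frac{N}{B^+N} \;\xrightarrow{\ \simeq\ }\; \frac{M \otimes_B N}{B^+ (M\otimes_B N)} \;=\; \Phi(M\otimes_B N),
\]
which one checks is well-defined, $H$-colinear, right $B$-linear, and compatible with associators and units; the condition $MB^+ = B^+M$ is what makes the two sides agree. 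Again, this is standard and I would cite it rather than grind through the diagram chases, remarking only on the compatibility with the unit object $B \mapsto B/B^+ \cong \mathbb{C}$.

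Finally, the ``in particular'' clause is a soft categorical consequence: any strong monoidal functor $F\colon (\mathcal{C},\otimes_\mathcal{C},1_\mathcal{C}) \to (\mathcal{D},\otimes_\mathcal{D},1_\mathcal{D})$ sends a monoid object $(M, \mu\colon M\otimes_\mathcal{C} M \to M, \eta\colon 1_\mathcal{C}\to M)$ to the monoid object $(F(M), F(\mu)\circ J_{M,M}, F(\eta)\circ J_0)$, where $J_{M,M}\colon F(M)\otimes_\mathcal{D} F(M)\to F(M\otimes_\mathcal{C} M)$ and $J_0\colon 1_\mathcal{D}\to F(1_\mathcal{C})$ are the monoidal structure morphisms; associativity and unitality of the image follow from those of $M$ together with the coherence (hexagon/triangle) axioms for $J$. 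Applying this with $F = \Phi$ gives that $\Phi(M)$ is a monoid object in $\lmod{H}{}$. The main obstacle, such as it is, is not in this last step but in being careful about the finiteness hypotheses in Definition \ref{10thjune241} — in particular checking that $\Phi$ and $\Psi$ genuinely restrict to an equivalence of the \emph{small} categories $\modz{A}{B}$ and $\lmod{H}{}$ (and not merely of the larger categories $\qMod{A}{B}{}{B}$ and $\qMod{H}{}{}{B}$), since it is only at this level that the monoidal structures and the relevant monoid objects live.
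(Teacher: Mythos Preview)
Your proposal is correct and matches the paper's own treatment: the paper does not reprove Takeuchi's equivalence but simply refers to Section~4 of \cite{Tak,Skryabin2007} for the details. If anything, you are more explicit than the paper about the monoidal structure isomorphism $\Phi(M)\otimes\Phi(N)\cong\Phi(M\otimes_B N)$, the finiteness check, and the general fact that strong monoidal functors preserve monoid objects.
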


We refer to Section 4 of \cite{Tak,Skryabin2007} for the details.

\begin{rem} \label{27thsep241}
We observe that any object of $\modz{A}{B}$ is automatically projective as a left $B$-module.

Indeed, the category $\lmod{H}{} $ is right rigid (see Proposition \ref{12thdec236}) and since $\Phi$ is a categorical equivalence, the category $\modz{A}{B}$ is also right rigid. Thus any object $M$ in $\modz{A}{B}$ admits a right dual in the category $\qMod{}{B}{}{B}$. Therefore, $M$ is projective as a left $B$-module. 
\end{rem}

\subsection{Differential calculi and metrics}

A differential calculus over an algebra $B$ is a differential graded algebra $(\Omega^{\bullet}(B) = \bigoplus_{k\ge 0} \Omega^k(B),  \wedge, d)$ such that $\Omega^0(B) = B$ and $ \Omega^\bullet (B) $ is generated as an algebra by $B$ and $dB$. 
%$\Omega^k(B)= \text{span} \{db_1 \wedge db_2 \cdots \wedge db_k b_{k+1} : b_i \in B\}$.

 A differential calculus $(\Omega^{\bullet}(B), \wedge, d) $ on a $\ast$-algebra  $B$ is called a $\ast$-differential calculus if there exists a conjugate linear involution $\ast: \Omega^{\bullet}(B) \to \Omega^{\bullet}(B) $ which extends the map $\ast: B \rightarrow B $ such that 
  $$ \ast(\Omega^k(B))\subseteq \Omega^k(B), ~ (d\omega)^{\ast} =d(\omega^{\ast}), ~ (\omega \wedge \nu)^{\ast} = (-1)^{kl} \nu^{\ast} \wedge \omega^{\ast} $$
for  all $ \omega \in \Omega^{k}(B), \nu \in \Omega^l (B)$.

 The main examples of differential calculi in this article come from certain comodule algebras (more precisely, quantum homogeneous spaces in the sense of Definition \ref{ Quantum homogeneous space }) where the differential calculus is compatible with the Hopf-algebra coaction. 

\begin{defn} \label{Covariant Calculi}
Let $(B, \prescript{B}{}{\delta})$ be a left $A$-comodule algebra for a Hopf-algebra $A$. A differential calculus $(\Omega^{\bullet} (B), \wedge, d)$ on $B$ is called left $A$-covariant if the coaction $ \prescript{B}{}{\delta} : B \rightarrow A \otimes B$ extends to a (necessarily unique) comodule algebra map $\prescript{\Omega^\bullet}{}{\delta}: \Omega^\bullet (B) \rightarrow A \otimes \Omega^\bullet (B) $ such that  $\Omega^k$ is an object of $\qMod{A}{B}{}{B}$ for each $k \geq 0$ and the map $d$ is $A$-covariant.

If $A$ is a Hopf $\ast$-algebra and $B$ is a comodule $\ast$-algebra, then a covariant differential calculus  $(\Omega^{\bullet} (B), \wedge, d)$ is called a $\ast$-differential calculus if  we additionally demand that $\Omega^\bullet (B)$ is a comodule $\ast$-algebra.
\end{defn}

If $(\Omega^{\bullet} (B), \wedge, d)$ is a left $A$-covariant calculus, then clearly, the canonical projection map $\Omega^\bullet (B) \rightarrow \Omega^k (B) $ as well as $\wedge: \Omega^k (B) \otimes_B \Omega^l (B) \rightarrow \Omega^{k + l} (B) $ are left $A$-covariant.

From now on, while referring to a differential calculus on $B$, we will often use the notations $\Omega^k$ and $\Omega^{\bullet}$ to denote $\Omega^k(B)$ and $\Omega^{\bullet}(B)$ respectively.

\begin{defn} (\cite{BeggsMajid:Leabh}) \label{4thmay242}
A metric on a differential calculus $(\Omega^\bullet, \wedge, d)$ on an algebra $B$ is a pair $(g, (~ , ~))$ where $g $ is an element of $\Omega^1 \otimes_B \Omega^1$ and $(~ , ~): \Omega^1 \otimes_B \Omega^1 \rightarrow B$ is a $B$-bilinear map such that the following conditions hold:
$$ ((\omega, ~) \otimes_B \id) g = \omega = (\id \otimes_B (~ , \omega)) g.  $$
A metric is said to be quantum symmetric if $\wedge g = 0$.
If the differential calculus is $A$-covariant, then we will say that the metric is covariant if  $(~ , ~)$ and the map 
\begin{equation} \label{23rdmay241}
\coev_g: B \rightarrow \Omega^1 \otimes_B \Omega^1 ~  \text{defined by} ~ \coev_g (b) = b g 
\end{equation}
are $A$-covariant.
\end{defn}

If $ (g, (~ , ~)) $ is a metric on $B$, then  by \cite[Lemma 1.16]{BeggsMajid:Leabh}, $g$ is central, i.e, $b g = g b$ for all $b \in B$. This implies that the map $\coev_g$ is actually $B$-bilinear. The following well-known characterization (see page 311 of \cite{BeggsMajid:Leabh}) of metrics will be used throughout the article repeatedly.

\begin{remark} \label{24thjuly241}
If $ (g, (~ , ~)) $ is a covariant metric on $\Omega^1$ and $\coev_g$ is defined in \eqref{23rdmay241}, then $ (\Omega^1, (~ , ~), \coev_g) $ is a right dual of $\Omega^1$ in the category $\qMod{A}{B}{}{B}$. 

Conversely, if a triplet $ (\Omega^1, \ev, \coev) $ is a right dual of $\Omega^1$ in $\qMod{A}{B}{}{B}$, then the pair $ (\coev (1), \ev) $  is a covariant metric on $\Omega^1$.
\end{remark}
Let us also make a note of the following well-known fact.
\begin{lem} \label{8thmay24jb1}
If $ (x, (~ ,~)) $ and $ (y, (~,~)) $ are two metrics on $\Omega^1, $ then $x = y$. Conversely, if $(x,(~,~)_1)$ and $(x, (~,~)_2)$ are two metrics on $\Omega^1$, then $(~,~)_1=(~,~)_2$.
\end{lem}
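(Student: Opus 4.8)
The plan is to exploit the two defining identities of a metric, namely $((\omega, ~) \otimes_B \id) g = \omega$ and $(\id \otimes_B (~, \omega)) g = \omega$, together with the uniqueness of right duals up to unique isomorphism recorded in Remark \ref{24thjuly241} and Lemma \ref{22ndapril243}. For the first assertion, suppose $(x, (~,~))$ and $(y, (~,~))$ share the same inner product $(~,~)$. Write $x = x' \otimes_B x''$ and $y = y' \otimes_B y''$ in Sweedler-type notation (sums suppressed). The key computation is to evaluate $(\id \otimes_B (~,~))$ applied to the element $x' \otimes_B x'' \otimes_B y' \otimes_B y''$ of $\Omega^1 \otimes_B \Omega^1 \otimes_B \Omega^1$ in two ways: pairing the middle two factors, the metric identity for $y$ collapses $x'' \otimes_B y' \otimes_B y''$ (via $((x'', ~) \otimes_B \id)y = x''$, reading the pairing on the first slot) to give back $x' \otimes_B x'' = x$; pairing instead using the metric identity for $x$ on the last two factors collapses $x' \otimes_B x'' \otimes_B y'$ to $y'$, leaving $y' \otimes_B y'' = y$. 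Matching the two evaluations forces $x = y$. One has to be a little careful about which slot of $(~,~)$ is being used and to check the intermediate tensor products are balanced over $B$, but since $g$ is central (as noted after Definition \ref{4thmay242}) there is no obstruction to moving scalars.

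For the converse assertion, suppose $(x, (~,~)_1)$ and $(x, (~,~)_2)$ are two metrics with the same $g$-element $x$. Again write $x = x' \otimes_B x''$. For any $\omega \in \Omega^1$ we compute, using the first metric identity for $(~,~)_1$ and the second for $(~,~)_2$,
\begin{align*}
(\omega, \eta)_1 &= \big((\omega, ~)_1 \otimes_B \id\big)\big(x' \otimes_B x''\big) \text{ paired with } (~, \eta)_2 \\
&= (\omega, x')_1 \, (x'', \eta)_2 = \big(\omega, (x', ~)... \big),
\end{align*}
so more precisely: $(\omega, \eta)_1 = (\omega, ~)_1\big((\id \otimes_B (~,\eta)_2)x\big)$ by the identity $(\id \otimes_B (~,\eta)_2)x = \eta$, while expanding the same expression with the bracket nesting the other way gives $\big(((\omega,~)_1 \otimes_B \id)x, \eta\big)_2 = (\omega, \eta)_2$ by the identity $((\omega,~)_1 \otimes_B \id)x = \omega$. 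Hence $(~,~)_1 = (~,~)_2$. This is exactly the standard argument that a right dual, once its object is fixed, determines its evaluation from its coevaluation and vice versa; indeed both halves of the lemma are instances of the uniqueness statement in Lemma \ref{22ndapril243}, since by Remark \ref{24thjuly241} a metric is the same data as a right dual structure on $\Omega^1$, and a right dual with a fixed evaluation (resp.\ coevaluation) has its coevaluation (resp.\ evaluation) uniquely determined.

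I expect the only genuine subtlety — rather than a real obstacle — to be bookkeeping: keeping track of which of the two slots of $(~,~)$ each application uses, ensuring all the tensor products $\Omega^1 \otimes_B \Omega^1 \otimes_B \Omega^1$ are well-defined $B$-bimodule tensor products, and confirming that the maps $(\omega,~) \colon \Omega^1 \to B$ and $(~,\omega)\colon \Omega^1 \to B$ that appear are genuinely $B$-linear on the relevant side so that $\otimes_B \id$ and $\id \otimes_B (-)$ make sense. None of this requires covariance, so the lemma holds for metrics in the sense of Definition \ref{4thmay242} regardless of any $A$-coaction; I would state the proof at that level of generality.
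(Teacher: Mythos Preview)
The paper does not give a proof of this lemma; it is stated as a ``well-known fact'' immediately after Remark \ref{24thjuly241}. Your approach is the standard one and is correct: for the first assertion one applies $\id \otimes_B (~,~) \otimes_B \id$ to $x \otimes_B y \in (\Omega^1)^{\otimes_B 4}$ and reads the result in two ways, and for the second one computes $(\omega, x')_1 (x'', \eta)_2$ in two ways. Your write-up has a few slips --- the element $x' \otimes_B x'' \otimes_B y' \otimes_B y''$ lives in a fourfold tensor product, not $\Omega^1 \otimes_B \Omega^1 \otimes_B \Omega^1$; and the displayed chain in the second part trails off --- but the underlying argument is the right one. Your closing remark that both halves are instances of ``coevaluation determines evaluation and vice versa'' is also accurate and arguably the cleanest way to phrase the proof.
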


As in \cite{BeggsMajid:Leabh}, we consider the well-defined antilinear map 
$$\dagger = \mathrm{flip}(*\otimes *) : \Omega^1(B) \otimes_B \Omega^1(B) \to \Omega^1(B) \otimes_B \Omega^1(B).$$

\begin{defn} \label{3rdjuly244}
 We say that a metric $(g, (~,~)) $ is real of $g^{\dagger} = g$.    
\end{defn}

Then we have the following corollary to Lemma \ref{8thmay24jb1}.

\begin{cor} \label{9thmay24jb2}
    If $(\Omega^\bullet, \wedge, d) $ is a $*$-differential calculus on $*$-algebra $B$ and  $(g, (~,~))$ is metric on $\Omega^1$ then $g^\dagger = g $ if and only if $(\omega, \eta)= (\eta^*, \omega^*)^*$ for all $\omega, \eta \in \Omega^1$.
\end{cor}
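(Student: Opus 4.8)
The plan is to deduce Corollary~\ref{9thmay24jb2} directly from Lemma~\ref{8thmay24jb1}, using the defining property of the map $\dagger$ together with the characterisation of metrics. First I would unwind the definition: for $g \in \Omega^1 \otimes_B \Omega^1$ write $g = g^{(1)} \otimes_B g^{(2)}$ in Sweedler-type notation, so that $g^\dagger = (g^{(2)})^* \otimes_B (g^{(1)})^*$; the only subtlety here is checking that $\dagger$ is well defined on the balanced tensor product, but that is recalled in the text just before the statement, so I may take it for granted. The idea is then that $g^\dagger$ is again (the metric element of) a metric, paired with a suitably modified inner product, and Lemma~\ref{8thmay24jb1} forces the two to coincide exactly when $g^\dagger = g$.

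Concretely, I would proceed as follows. Given the metric $(g,(~,~))$, define a new bilinear map $(~,~)': \Omega^1 \otimes_B \Omega^1 \to B$ by $(\omega,\eta)' := \overline{(\eta^*,\omega^*)}^{\,*} = (\eta^*,\omega^*)^*$; one checks using the $*$-algebra and $*$-calculus axioms that this is genuinely $B$-bilinear (the conjugate-linearity of $*$ on $B$ and the Leibniz-free identity $(b\omega)^* = \omega^* b^*$ convert left-linearity into right-linearity and vice versa). The key computation is that $(g^\dagger,(~,~)')$ satisfies the two metric compatibility equations of Definition~\ref{4thmay242}: applying $*$ to the identities $((\omega,\cdot)\otimes_B\id)g = \omega$ and $(\id\otimes_B(\cdot,\omega))g = \omega$ and using $\mathrm{flip}$ to swap tensor legs turns one of the two identities for $(g,(~,~))$ into one of the two identities for $(g^\dagger,(~,~)')$, and conversely; so $(g^\dagger, (~,~)')$ is again a metric on $\Omega^1$.

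Now I apply Lemma~\ref{8thmay24jb1} in both directions. Suppose first $g^\dagger = g$. Then $(g,(~,~))$ and $(g,(~,~)') = (g^\dagger,(~,~)')$ are both metrics with the same first component $g$, so by the second assertion of Lemma~\ref{8thmay24jb1} we get $(~,~) = (~,~)'$, which is precisely $(\omega,\eta) = (\eta^*,\omega^*)^*$ for all $\omega,\eta \in \Omega^1$. Conversely, suppose $(\omega,\eta) = (\eta^*,\omega^*)^*$ for all $\omega,\eta$, i.e.\ $(~,~) = (~,~)'$. Then $(g,(~,~))$ and $(g^\dagger,(~,~)') = (g^\dagger,(~,~))$ are both metrics sharing the same second component $(~,~)$, so by the first assertion of Lemma~\ref{8thmay24jb1} we conclude $g = g^\dagger$.

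The only real work is the bookkeeping in the middle step — verifying that $(~,~)'$ is $B$-bilinear and that $(g^\dagger,(~,~)')$ satisfies the compatibility equations — and this is a routine manipulation of the $*$-structure together with $\dagger = \mathrm{flip}\circ(*\otimes *)$; no conceptual obstacle is expected. One should be slightly careful about the order of the arguments (the $\mathrm{flip}$ is what makes the "first" equation for $g$ correspond to the "second" for $g^\dagger$), but once the notation is set up the identities fall out immediately. I would therefore present the proof as: (1) define $(~,~)'$ and note its bilinearity; (2) show $(g^\dagger,(~,~)')$ is a metric; (3) invoke Lemma~\ref{8thmay24jb1} twice to get the two implications.
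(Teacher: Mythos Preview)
Your proposal is correct and follows essentially the same approach as the paper: define $(\omega,\eta)_1 = (\eta^*,\omega^*)^*$, observe that $(g^\dagger,(~,~)_1)$ is again a metric, and then apply Lemma~\ref{8thmay24jb1} in each direction to obtain the equivalence. The paper simply asserts that $(g^\dagger,(~,~)_1)$ is a metric without writing out the bilinearity and compatibility checks you outline, but the argument is identical.
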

\begin{proof}
    Suppose $(g, (~,~))$ is a metric on  $\Omega^1(B)$. Then it is easy to check that $(g^\dagger, (~,~)_1)$ is also a metric where $(\omega,\eta)_1= (\eta^*, \omega^*)^*$. So, by  Lemma \ref{8thmay24jb1}, $g=g^\dagger $ if and only if $(\omega, \eta)= (\eta^*, \omega^*)^*$ for all $\omega, \eta \in \Omega^1$.
\end{proof}

\subsection{Complex structures}

Following the work of Polishchuk and Schwarz in \cite{PolSchwar2003}, the notion of complex structures for noncommutative differential calculus have been studied by many mathematicians, see for example, \cite{KLvSPodles}, \cite{BS}, and references therein. We will work with the following definition introduced in \cite{MMF2} where $\mathbb{N}_0$ will denote the set $\mathbb{N} \cup \{ 0 \}. $

\begin{defn} (\cite{MMF2})
  A {\em  complex structure} $\Om^{(\bullet,\bullet)}$ for a~differential $*$-calculus~$(\Om^{\bullet}, \wedge, d)$ on a $\ast$-algebra $B$
  is an $\bN^2_0$-algebra grading $\bigoplus_{(p,q)\in \bN^2_0} \Om^{(p,q)}$
  for $\Om^{\bullet}$ such that, for all $(p,q) \in \bN^2_0$, we have 
  \[
    \Om^k = \bigoplus_{p+q = k} \Om^{(p,q)},\qquad
    \big(\Om^{(p,q)}\big)^* = \Om^{(q,p)},\qquad
    \exd \Om^{(p,q)} \sseq \Om^{(p+1,q)} \oplus \Om^{(p,q+1)}.
  \]
\end{defn}
Here, $\Omega^{(0,0)} = \Omega^0 = B$. As $ \Omega^\bullet = \bigoplus_{(p,q)\in \bN^2_0} \Om^{(p,q)}$ is an $\bN^2_0$-algebra grading, it follows that $\Omega^{(p,q)}$ is a $B$-bimodule.

A combination of \cite[Lemma 2.15 and Remark 2.16]{MMF2}    implies that the above definition is equivalent to the definition of complex structures given by Beggs and Smith in \cite{BS} as well as that by Khalkhali, Landi and van Suijlekom in \cite{KLvSPodles}.

If $\Om^{(\bullet,\bullet)}$ is a complex structure, then for $p, q \in \bN_0$,  $\pi^{p,q}: \Omega^{p + q} \rightarrow \Omega^{(p, q)} $ will denote the canonical projections associated to the decomposition $  \Om^k = \bigoplus_{p+q = k} \Om^{(p,q)}$. Moreover, $\partial$ and $\overline{\partial}$ will denote the maps
$$ \partial:= \pi^{(p + 1, q)} \circ d: \Omega^{(p, q)} \rightarrow \Omega^{(p + 1, q)} ~ \text{and} ~ \overline{\partial}:= \pi^{(p, q + 1)} \circ d: \Omega^{(p,q)} \rightarrow \Omega^{(p, q + 1)}. $$
Then, from \cite{BS}, we have the following results, which will be used repeatedly in this article. 
\begin{lem}  \cite[ Section 3]{BS}  \label{12thdec233}
For a complex structure $\Omega^{(\bullet, \bullet)}$ on a $\ast$-algebra $B$, the following statements hold:
\begin{enumerate}
    \item  $d = \partial + \overline{\partial}. $ 
    \item The maps $\partial$ and $\overline{\partial}$ satisfy graded-Leibniz rule.
    \item For all $\omega \in \Omega^\bullet$, we have
    $$ \partial (\omega^\ast) = (\overline{\partial} (\omega))^\ast, ~ \overline{\partial} (\omega^\ast) = (\partial (\omega))^\ast. $$
\end{enumerate}
\end{lem}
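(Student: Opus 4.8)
The plan is to reduce all three assertions to statements about homogeneous elements of a fixed bidegree $(p,q)$ and then to exploit that the decomposition $\Omega^k = \bigoplus_{p+q=k}\Omega^{(p,q)}$ is a \emph{direct} sum, so that two elements of $\Omega^k$ coincide precisely when their $(p,q)$-components agree for every $(p,q)$. Recall also that $\partial$ and $\overline{\partial}$ are defined on each summand $\Omega^{(p,q)}$ as $\pi^{(p+1,q)}\circ d$ and $\pi^{(p,q+1)}\circ d$ and then extended additively. For statement~(i), fix $\omega \in \Omega^{(p,q)}$: the complex-structure axiom $d\,\Omega^{(p,q)} \subseteq \Omega^{(p+1,q)} \oplus \Omega^{(p,q+1)}$ says precisely that $\pi^{(r,s)}(d\omega) = 0$ unless $(r,s) \in \{(p+1,q),(p,q+1)\}$, and the two surviving components are by definition $\partial\omega$ and $\overline{\partial}\omega$; hence $d\omega = \partial\omega + \overline{\partial}\omega$, and this passes to all of $\Omega^\bullet$ by linearity.

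For statement~(ii), take homogeneous $\omega \in \Omega^{(p,q)}$ of total degree $k = p+q$ and $\eta \in \Omega^{(p',q')}$. Since the grading is an $\bN^2_0$-algebra grading, $\omega \wedge \eta \in \Omega^{(p+p',\,q+q')}$, so by~(i) its exterior derivative splits into a $(p+p'+1,q+q')$-part, namely $\partial(\omega\wedge\eta)$, and a $(p+p',q+q'+1)$-part, namely $\overline{\partial}(\omega\wedge\eta)$. Expanding the graded Leibniz rule for $d$ and substituting $d = \partial + \overline{\partial}$ in each factor, every term again lies in one of these two bidegrees; comparing the $(p+p'+1,q+q')$-components gives $\partial(\omega\wedge\eta) = (\partial\omega)\wedge\eta + (-1)^{k}\,\omega\wedge(\partial\eta)$, and comparing the $(p+p',q+q'+1)$-components gives the analogous identity for $\overline{\partial}$; bilinearity finishes this part.

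For statement~(iii), let $\omega \in \Omega^{(p,q)}$. By the axiom $\big(\Omega^{(p,q)}\big)^\ast = \Omega^{(q,p)}$ we have $\omega^\ast \in \Omega^{(q,p)}$, so~(i) applied to $\omega^\ast$ gives $d(\omega^\ast) = \partial(\omega^\ast) + \overline{\partial}(\omega^\ast)$ with $\partial(\omega^\ast) \in \Omega^{(q+1,p)}$ and $\overline{\partial}(\omega^\ast) \in \Omega^{(q,p+1)}$. On the other hand, the $\ast$-calculus axiom $(d\omega)^\ast = d(\omega^\ast)$ combined with~(i) applied to $\omega$ gives $d(\omega^\ast) = (\partial\omega)^\ast + (\overline{\partial}\omega)^\ast$, where now $(\partial\omega)^\ast \in \Omega^{(q,p+1)}$ and $(\overline{\partial}\omega)^\ast \in \Omega^{(q+1,p)}$, again because $\ast$ interchanges the two relevant bidegrees. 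Matching components in the direct sum forces $\partial(\omega^\ast) = (\overline{\partial}\omega)^\ast$ and $\overline{\partial}(\omega^\ast) = (\partial\omega)^\ast$.

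I do not expect a genuine obstacle here: the whole proof is bookkeeping with bidegrees, and the only point needing care is to perform each comparison on homogeneous elements of a single bidegree before extending by (bi)linearity — once this is arranged, directness of $\Omega^k = \bigoplus_{p+q=k}\Omega^{(p,q)}$ does all the work.
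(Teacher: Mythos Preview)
Your proof is correct and is exactly the standard bidegree bookkeeping argument one would expect. The paper itself does not supply a proof but simply cites \cite[Section~3]{BS}; your argument is essentially the one found there, so there is nothing to compare.
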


In the sequel, we will denote a complex structure on a differential calculus $(\Omega^\bullet, \wedge, d)$ by the quadruple $(\Omega^{(\bullet, \bullet)}, \wedge, \partial, \overline{\partial})$.

\begin{defn} \label{12thdec234}
A complex structure $ (\Omega^{(\bullet, \bullet)}, \wedge, \partial, \overline{\partial}) $ on a left $A$-comodule $\ast$-algebra $B$ is said to be left $A$-covariant if  $ (\Omega^\bullet, \wedge, d) $ is a covariant differential $\ast$-calculus in the sense of Definition \ref{Covariant Calculi} and moreover, if $\Omega^{(p,q)}$ is a left $A$-comodule for all $ (p, q) \in \bN^2_0. $
\end{defn}

In this case, the covariance of  $(\Omega^\bullet, \wedge, d)$ implies that the projections $\pi^{p,q}$ and $d$ are $A$-covariant and so the maps $\partial$ and $\overline{\partial}$ are also $A$-covariant.

\subsection{Bar Categories}
  Following \cite{BarCategory}, we recall the definition and some examples of bar categories which will be relevant for us. For a monoidal category $ (\mathcal{C}, \otimes)$ and objects $X, Y $ of $ \mathcal{C}, $ we denote by flip the functor from  $\mathcal{C}\times \mathcal{C} $ to $\mathcal{C}  \times \mathcal{C}$ which sends the pair $(X,Y)$ to $ (Y,X)$. We also denote by $1_{\mathcal{C}}$ the unit object. As usual, we will suppress the notations for the left unit, right unit as well as the associator of $\mathcal{C}$.
  
  \begin{defn} \label{15thjuly241}
  A bar category is a monoidal category $(\mathcal{C},\otimes,1_{\mathcal{C}})$ together with  a functor $\mathrm{bar}: \mathcal{C}\to \mathcal{C}$ (written as $X\mapsto \overline{X}$),
      a natural equivalence $\mathrm{bb}: \id_ {\mathcal{C}} \to \mathrm{bar}\circ \mathrm{bar}  $ between the identity and the bar $\circ$ bar functors on $\mathcal{C}$,
      an invertible morphism $ \star:1_{\mathcal{C}} \to \overline{1_{\mathcal{C}}}$ and
      a natural equivalence $\Upsilon$  between $\mathrm{bar}\circ \otimes$ and $\otimes \circ (\mathrm{bar}\times \mathrm{bar})\circ \mathrm{flip}$ from $\mathcal{C}\times \mathcal{C}$ to $\mathcal{C}$
  
such that the following compositions of morphisms are both equal to $1_{\overline{X}}:$
   $$ \overline{X} \xrightarrow{\cong} \overline{X \otimes 1_{\mathcal{C}}} \xrightarrow{\Upsilon_{X, 1_{\mathcal{C}}}} \overline{ 1_{\mathcal{C}}}\otimes  \overline{X} \xrightarrow{\star^{-1} \otimes \id}  1_{\mathcal{C}} \otimes \overline{X} \xrightarrow{\cong} \overline{X}, $$
   $$ \overline{X} \xrightarrow{\cong} \overline{ 1_{\mathcal{C}} \otimes X} \xrightarrow{\Upsilon_{ 1_{\mathcal{C}},X}}  \overline{X} \otimes\overline{1_{\mathcal{C}}}\xrightarrow{ \id \otimes  \star^{-1}}  \overline{X} \otimes  1_{\mathcal{C}} \xrightarrow{\cong} \overline{X},$$ 
  and moreover, the following equations hold:
$$ (\Upsilon_{Y,Z} \otimes \id) \Upsilon_{X, Y \otimes Z} = (\id \otimes \Upsilon_{X, Y}) \Upsilon_{X \otimes Y, Z}, ~ \overline{\star} \star = \mathrm{bb}_{1_{\mathcal{C}}}: 1_{\mathcal{C}} \to \overline{\overline{1_{\mathcal{C}}}}, ~ \overline{\mathrm{bb}_X} =\mathrm{bb}_{\ol{X}}: \overline{X} \to \overline{\overline{\overline{X}}}$$
for all objects $X, Y, Z$ in $\mathcal{C}$.

An object $X$ in a bar category is  called a star object if there is a morphism $\star_X: X\to \overline{X}$ such that $\ol{\star_X}\circ \star_X = \mathrm{bb}_X $.
 \end{defn}

In this article, for a vector space $M$, the symbol $\overline{M}$ will denote the vector space defined as 
$$ \overline{M}:= \{ \overline{m}: m \in M \}.$$ 
Moreover, if $B$ is a~$\ast$-algebra and $M$ a~$B$-bimodule, then $\overline{M}$ is equipped with the following $B$-bimodule structure:
\begin{align}\label{28thnov231}
  b\cdot \ol{m}= \ol{m\cdot b^{\ast}};\quad \ol{m}\cdot b= \ol{b^{\ast}\cdot m} \quad \text{for all $b\in B$, $m\in M$}.
\end{align}
If $(M, \prescript{M}{}{\delta})$ is a~left $A$-comodule for a~Hopf $\ast$-algebra $A$, then $\overline{M}$ has a~left $A$-comodule structure defined by
\begin{align}\label{25thnov232}
  \prescript{\ol{M}}{}{\delta}(\ol{m})= m_{(-1)}^{\ast}\otimes \ol{m_{(0)}},
\end{align}
where we have used Sweedler's notation $\prescript{M}{}{\delta}(m)= m_{(-1)}\otimes m_{(0)}$.

We will need the following examples of bar categories in this article. 

\begin{eg} (\cite[Section 2.8]{BeggsMajid:Leabh}) \label{2nddec231}

\begin{enumerate}
    \item If $B$ is a $\ast$-algebra, then $\qMod{}{B}{}{B}$ is a bar category.
      If $M$ is an object of $\qMod{}{B}{}{B}$, then  $\mathrm{bar} (M) := \overline{M}$,
      where $\overline{M}$ is the $B$-bimodule defined in~\eqref{28thnov231}.
      For $f \in \Hom(M,N)$, we define 
      $ \ol{f}\in \Hom(\ol{M}, \ol{N})$ by $\ol{f}(\ol{x})=\ol{f(x)}$.
      For more details, we refer to~\cite[Section 2.8]{BeggsMajid:Leabh}.

    \item If $A$ is a Hopf $\ast$-algebra, then $\lMod{A}{}$ is a bar category where $\mathrm{bar} (M) = \overline{M} $ and the left $A$-coaction on $\overline{M}$ is defined  by \eqref{25thnov232}.

    \item If A is a Hopf $\ast$-algebra and $B$ a left $A$-comodule $\ast$-algebra, then the category $\qMod{A}{B}{}{B}$ of relative Hopf modules is a bar category.

	Finally, let us note that if $(\Omega^{\bullet},\wedge, d)$ is an $A$-covariant $\ast$-differential calculus on $B$, then the map
 \begin{equation} \label{8thjuly241}
  \star_{\Omega^1}: \Omega^1 \rightarrow \overline{\Omega^1}, ~ \star_{\Omega^1} (\omega) := \overline{\omega^*} 
  \end{equation}
makes  $\Omega^1 $  a star object of $\qMod{A}{B}{}{B}$.
\end{enumerate}
\end{eg}

\begin{rem} \label{11thdec23n1}
Let $ (\Omega^{(\bullet, \bullet)}, \wedge, \partial, \overline{\partial}) $ be a complex structure over a $\ast$-algebra $B$. Then, as explained above, $\Omega^1$ is a star-object of the bar-category $\qMod{}{B}{}{B}$. Moreover, Beggs and Smith observed in \cite{BS} that the restrictions of the map $\star_{\Omega^1}$ on $\Omega^{(1,0)}$ and $\Omega^{(0,1)}$ are isomorphisms onto $\overline{\Omega^{(0,1)}}$ and $\overline{\Omega^{(1,0)}}$ respectively. 
Indeed, this follows from the fact that $ (\Omega^{(p,q)})^\ast = \Omega^{(q,p)}. $
\end{rem}

The following result will play a key role in this article.

\begin{prop} \label{10thmay24jb1}
If $B = A^{\co(H)}$ is a quantum homogeneous space of a Hopf $\ast$-algebra $A$, then the category $\modz{A}{B}$ introduced in Definition \ref{10thjune241} is a bar category. 
\end{prop}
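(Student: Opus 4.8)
The plan is to transport the bar-category structure of $\qMod{A}{B}{}{B}$, recorded in Example \ref{2nddec231}(3), along the inclusion functor $\modz{A}{B} \hookrightarrow \qMod{A}{B}{}{B}$, checking that all the structural data restrict to the subcategory. Concretely, I would first verify that $\modz{A}{B}$ is closed under the bar functor: if $M$ is an object of $\modz{A}{B}$, then $M$ is finitely generated as a left $B$-module and satisfies $M B^+ = B^+ M$, and I claim $\overline{M}$ has the same two properties. For the condition $\overline{M} B^+ = B^+ \overline{M}$, note that by \eqref{28thnov231} we have $b \cdot \overline{m} = \overline{m \cdot b^*}$ and $\overline{m} \cdot b = \overline{b^* \cdot m}$, so $B^+ \cdot \overline{M} = \overline{M \cdot (B^+)^*} = \overline{M \cdot B^+}$ (using that $B^+ = B \cap \ker\epsilon$ is a $\ast$-invariant subspace, since $\epsilon(b^*) = \overline{\epsilon(b)}$) and similarly $\overline{M} \cdot B^+ = \overline{(B^+)^* \cdot M} = \overline{B^+ \cdot M}$; hence $B^+ \cdot \overline{M} = \overline{M B^+} = \overline{B^+ M} = \overline{M} \cdot B^+$. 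For finite generation as a left $B$-module: if $m_1, \dots, m_k$ generate $M$ as a left $B$-module, then since $\overline{m} \cdot b = \overline{b^* \cdot m}$, the elements $\overline{m_1}, \dots, \overline{m_k}$ generate $\overline{M}$ as a \emph{right} $B$-module; to get left generation I would instead argue that $\overline{M}$ is finitely generated as a left $B$-module because, by Remark \ref{27thsep241}, $M$ is finitely generated projective as a left $B$-module, so it is also finitely generated as a right $B$-module (a dual basis argument, or passing through $\prescript{}{B}{\Hom}(M,B)$), whence $\overline{M}$ is finitely generated as a left $B$-module. Alternatively, and more cleanly, I would transport finiteness through $\Phi$: since $\Phi(M)$ is finite-dimensional and $\overline{\Phi(M)}$ (the bar of a finite-dimensional $H$-comodule, as in Example \ref{2nddec231}(2)) is again finite-dimensional, it suffices to observe that the bar functor on $\qMod{A}{B}{}{B}$ intertwines, up to natural isomorphism, with the bar functor on $\lmod{H}{}$ under $\Phi$, so $\overline{M} \cong \Psi(\overline{\Phi(M)})$ lies in $\modz{A}{B}$.

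Having established closure under $\mathrm{bar}$, the remaining task is routine: the natural equivalence $\mathrm{bb}: \id \to \mathrm{bar} \circ \mathrm{bar}$, the morphism $\star: 1_{\mathcal C} \to \overline{1_{\mathcal C}}$ (with $1_{\mathcal C} = B$ and $\star(b) = \overline{b^*}$), and the natural equivalence $\Upsilon: \mathrm{bar} \circ \otimes_B \to \otimes_B \circ (\mathrm{bar} \times \mathrm{bar}) \circ \mathrm{flip}$, given by $\Upsilon_{M,N}(\overline{m \otimes_B n}) = \overline{n} \otimes_B \overline{m}$, are all inherited from $\qMod{A}{B}{}{B}$; one only needs that $\modz{A}{B}$ is closed under $\otimes_B$, which is part of the statement that it is a monoidal subcategory (stated just after Definition \ref{10thjune241}), and that the components $\mathrm{bb}_M$, $\Upsilon_{M,N}$ land in $\modz{A}{B}$, which is automatic since their source and target objects already do. The coherence axioms of Definition \ref{15thjuly241} (the two triangle-type compatibilities forcing the composites to equal $1_{\overline{X}}$, the hexagon $(\Upsilon_{Y,Z}\otimes\id)\Upsilon_{X,Y\otimes Z} = (\id\otimes\Upsilon_{X,Y})\Upsilon_{X\otimes Y,Z}$, and $\overline{\star}\,\star = \mathrm{bb}_{1_{\mathcal C}}$, $\overline{\mathrm{bb}_X} = \mathrm{bb}_{\overline X}$) hold in $\modz{A}{B}$ because they hold in the ambient bar category $\qMod{A}{B}{}{B}$ and the inclusion is a full faithful monoidal functor, so each identity of morphisms simply restricts.

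The main obstacle is the closure of $\modz{A}{B}$ under the bar functor, specifically the \emph{left} finite-generation of $\overline{M}$: the naive argument only gives right finite generation, and one genuinely needs to invoke the rigidity/projectivity of objects of $\modz{A}{B}$ (Remark \ref{27thsep241}) or the compatibility of $\mathrm{bar}$ with Takeuchi's equivalence $\Phi$ to conclude. Everything else is a matter of checking that previously-verified structure on $\qMod{A}{B}{}{B}$ restricts, which is formal once closure is in hand. I would therefore organize the proof as: (i) prove $\overline{M} \in \modz{A}{B}$ for $M \in \modz{A}{B}$, using projectivity to handle finite generation and $\ast$-invariance of $B^+$ to handle the $B^+$-condition; (ii) note $\modz{A}{B}$ is a monoidal subcategory, hence closed under $\otimes_B$; (iii) observe that $\mathrm{bb}$, $\star$, $\Upsilon$ and all coherence axioms are inherited verbatim from $\qMod{A}{B}{}{B}$.
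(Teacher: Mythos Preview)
Your overall plan is correct and aligns with the paper's: reduce to showing $\modz{A}{B}$ is closed under the bar functor, then inherit the remaining bar-category data and coherence from the ambient bar category $\qMod{A}{B}{}{B}$. Your verification of $\overline{M}B^+ = B^+\overline{M}$ via $*$-invariance of $B^+$ is essentially the paper's.

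The one real point of divergence is the left finite-generation of $\overline{M}$, which you correctly flag as the obstacle. The paper does not go through rigidity or through $\Phi$. Instead it passes through the intermediate category $\Modz{A}{B}$ (objects with $MB^+ = B^+M$, no finiteness condition) and uses the structural formula, valid for any $N \in \Modz{A}{B}$,
\[
n \cdot b \;=\; n_{(-2)}\, b\, S(n_{(-1)}) \cdot n_{(0)},
\]
obtained from Takeuchi's unit isomorphism. Applied to $N = \overline{M}$ this reads $\overline{m}\cdot b = \bigl(m_{(-2)}^{\,*}\, b\, S(m_{(-1)}^{\,*})\bigr)\cdot \overline{m_{(0)}}$. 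Hence if $m_1,\dots,m_k$ generate $M$ as a left $B$-module and one writes each coaction $\delta(m_i)$ as a finite sum $\sum_j a_{ij}\otimes m_{ij}$, then every $\overline{m} = \sum_i \overline{m_i}\cdot c_i^{\,*}$ lies in the left $B$-span of the finitely many $\overline{m_{ij}}$. This is more direct than either of your routes: it avoids rigidity and avoids first proving that $\Phi$ commutes with bar.

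On your route (a): left f.g.\ projectivity of $M$ (which is all Remark~\ref{27thsep241} gives) does not by itself force a $B$-bimodule to be f.g.\ on the right; a left dual basis yields left-linear functionals, not right generators. The repair is to observe that $\lmod{H}{}$, and hence $\modz{A}{B}$, is also \emph{left} rigid (the antipode is bijective), so $M$ is f.g.\ projective as a \emph{right} $B$-module, whence $\overline{M}$ is f.g.\ on the left. Your route (b) is valid, but note that in the paper the isomorphism $\Phi(\overline{M}) \cong \overline{\Phi(M)}$ (Lemma~\ref{27thmarch243}) is proved \emph{using} the present proposition; to avoid circularity you would have to establish that isomorphism independently, which is straightforward once $\overline{M}B^+ = B^+\overline{M}$ is in hand.
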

\begin{proof}
This has been proved in the appendix. See Proposition \ref{3rdjuly243}. 
\end{proof}

\subsection{Connections} \label{4thaugust241}

If  $(\Omega^{\bullet}, \wedge, d) $ is a differential calculus over an algebra $B$, then a left connection on a $B$-bimodule $\mathcal{E}$ is a $\mathbb{C}$-linear map
$\nabla: \mathcal{E} \rightarrow \Omega^1 \otimes_B \mathcal{E}$ such that
$\nabla (b e) = b \nabla (e)  + db \otimes_B e $
for all $e \in \mathcal{E}$ and for all $b \in B$.

A left connection $\nabla$ on a $B$-bimodule $\mathcal{E}$ is called a left $\sigma$-bimodule connection if there exists a $B$-bimodule map 
$\sigma: \mathcal{E} \otimes_B \Omega^1 \rightarrow \Omega^1 \otimes_B \mathcal{E}$
such that
\begin{equation} \label{19thoct237}
\nabla (e b) = \nabla (e) b + \sigma (e \otimes_B db)
\end{equation}
for all $e \in \mathcal{E}$ and for all $b \in B$.

If \eqref{19thoct237} is satisfied, then we will sometimes say that $ (\nabla, \sigma) $ is a left bimodule connection.

Suppose $(\nabla_{\mathcal{E}}, \sigma_{\mathcal{E}}) $ and $ (\nabla_ {\mathcal{F}}, \sigma_ {\mathcal{F}}) $ are bimodule connections  on $B$-bimodules $\mathcal{E}$ and $\mathcal{F}$.  Then by \cite[Theorem 3.78]{BeggsMajid:Leabh},  we have a bimodule connection $ (\nabla_{\mathcal{E} \otimes_B \mathcal{F}}, \sigma_{\mathcal{E} \otimes_B \mathcal{F}}) $ on $\mathcal{E} \otimes_B \mathcal{F} $ defined as
\begin{equation} \label{4thmay241}
 \nabla_{\mathcal{E} \otimes_B \mathcal{F}}:= \nabla_{\mathcal{E}} \otimes_B \id + (\sigma_{\mathcal{E}} \otimes_B \id) (\id \otimes_B \nabla_{\mathcal{F}}), ~ \sigma_ {\mathcal{E} \otimes_B \mathcal{F}}  =  (\sigma_{\mathcal{E}} \otimes_B \id)  (\id \otimes_B \sigma_{\mathcal{F}}).
\end{equation}
Thus, the following definition makes sense.

\begin{defn} (\cite{BeggsMajid:Leabh}) \label{4thjuly241}
Suppose $(g, ( ~, ~ )) $ is a metric on the space of one-forms $\Omega^1$ of a differential calculus on an algebra $B$ in the sense of Definition \ref{4thmay242}. Then a bimodule connection  $ (\nabla, \sigma) $ is said to be compatible with $(g, (~,~))$ if $\nabla g = 0$, where, by an abuse of notation, we have denoted the connection on $\Omega^1 \otimes_B \Omega^1$ defined by \eqref{4thmay241} by $\nabla$ again.
\end{defn}

Now we discuss the notions of $\partial$ and $\overline{\partial}$-connections for a complex structure. 

Suppose a $\ast$-algebra $B$ is equipped with a complex structure  $ (\Omega^{(\bullet, \bullet)}, \wedge, \partial, \overline{\partial}) $ and $\mathcal{E}$ is a $B$-bimodule. Then a left  $\overline{\partial}$-connection on $\mathcal{E}$ is a $\mathbb{C}$-linear map $\overline{\partial}_{\mathcal{E}}:\mathcal{E}\to \Omega^{(0,1)}\otimes_B \mathcal{E}$ such that 
   $$ \overline{\partial}_{\mathcal{E}}(ae)= a \overline{\partial}_{\mathcal{E}}(e)+ \overline{\partial}(a)\otimes_B e. $$
Similarly,  a left $\partial$-connection on $\mathcal{E}$ is a $\mathbb{C}$-linear map $\partial_{\mathcal{E}}:\mathcal{E}\to \Omega^{(1,0)}\otimes_B \mathcal{E}$ satisfying the equation
${\partial}_{\mathcal{E}}(ae)= a {\partial}_{\mathcal{E}}(e)+ {\partial}(a)\otimes_B e$.

 Moreover, right $\partial$-connections and right $\overline{\partial}$-connections are defined analogously.

Generalizing the definition of a bimodule connection, Beggs and Majid (\cite{BeggsMajidChern}) called a left $\overline{\partial}$-connection $\overline{\partial}_{\mathcal{E}}$ on $\mathcal{E}$ to be a left $\sigma$-bimodule $\overline{\partial}$-connection if there exists a $B$-bimodule map 
$\sigma: \mathcal{E} \otimes_B \Omega^{(0,1)} \rightarrow \Omega^{(0, 1)} \otimes_B \mathcal{E}$
such that
$$\overline{\partial}_{\mathcal{E}} (e b) = \overline{\partial}_{\mathcal{E}} (e) b + \sigma (e \otimes_B \overline{\partial} b)$$
for all $e \in \mathcal{E}$ and for all $b \in B$. Similarly, we have the notions of right bimodule $\overline{\partial}$-connections and  left (or right) bimodule $\partial$-connections.

We end this subsection with the definition of a holomorphic bimodule. 

\begin{defn} \label{11thdec231}
Suppose $ (\Omega^{(\bullet, \bullet)}, \wedge, \partial, \overline{\partial}) $ is a complex structure on a $\ast$-algebra $B$. The holomorphic curvature of a left $\overline{\partial}$-connection $\overline{\partial}_{\mathcal{E}}$  on a $B$-bimodule $\mathcal{E}$  is the map $R_{\mathcal{E}}^{\Hol}: \mathcal{E} \rightarrow \Omega^{(0, 2)} \otimes_B \mathcal{E} $ defined as:
\begin{align*}
    R_{\mathcal{E}}^{\Hol}(e)=(\ol{\partial}\otimes_B id- id\wedge \ol{\partial}_{\mathcal{E}})\ol{\partial}_{\mathcal{E}} (e).
\end{align*}
A holomorphic structure on $\mathcal{E}$ is the choice of a $\overline{\partial}$-connection $\overline{\partial}_{\mathcal{E}}$ on $\mathcal{E}$ whose holomorphic curvature vanishes. In this case, we say that the pair $(\mathcal{E}, \overline{\partial}_{\mathcal{E}})$ is a holomorphic $B$-bimodule. 

If the complex structure on $B$ is left $A$-covariant in the sense of Definition \ref{12thdec234}, then a holomorphic $B$-bimodule $ (\mathcal{E}, \overline{\partial}_{\mathcal{E}})  $ is called left $A$-covariant if $\mathcal{E}$ is an object of the category $\qMod{A}{B}{}{B}$ and if the map $\overline{\partial}_\mathcal{E}$ is left $A$-covariant.
\end{defn}

\section{Construction of metrics on quantum homogeneous spaces} \label{10thaugust242}

In this section, we start by classifying covariant metrics on a quantum homogeneous space of the form $B = A^{\co(H)}, $ where $A$ is a Hopf $\ast$-algebra and $H$ is a compact quantum group algebra 
(see Subsection \ref{3rdjuly242}). This classification is proved in Theorem \ref{8thmay24jb5}. We will use this result to classify covariant metrics on the space of one-forms of the Heckenberger--Kolb calculus in Theorem \ref{9thmay24jb5}. In the second subsection, we work in the set up of covariant complex structures on a quantum homogeneous space as above. Here, we construct a family of covariant metrics $(g, (~ , ~)) $ which are real in the sense of Definition \ref{3rdjuly244} and satisfies the condition $ (\omega, \eta) = 0 $ if $\omega, \eta \in \Omega^{(1,0)}$ or if $\omega, \eta \in \Omega^{(0,1)}$. 

This result will allow us to construct a class of Hermitian metrics on the space of one-forms of a differential calculus equipped with a complex structure (see Theorem \ref{9thmay24jb21}).  

Throughout this section,  $B=A^{\co(H)}$ will denote a quantum homogeneous space of a Hopf $\ast$-algebra $A$, where $H$ is a compact quantum group algebra. Then we have the monoidal category $\modz{A}{B}$ defined in Definition \ref{10thjune241} which is a bar-category by Proposition \ref{10thmay24jb1}. We will repeatedly use Takeuchi's equivalence $\Phi$ (Theorem \ref{3rdapril241}) which is a monoidal equivalence between the categories $\modz{A}{B}$ and $\lmod{H}{}$. Moreover, we will use the fact that the canonical map \eqref{5thjuly241} defines an isomorphism from $\Phi (\overline{M}) $ to  $  \overline{\Phi (M)} $ for any object $M$ in $\modz{A}{B}$ (see Lemma \ref{27thmarch243}).

\subsection{The moduli space of metrics}

We start by showing the existence of a covariant metric on a quantum homogeneous space of the form $B = A^{\co(H)}, $ where $H$ is a compact quantum group algebra. For the definition of a compact quantum group algebra and its properties relevant for this article, we refer to  Subsection \ref{3rdjuly242}.

So  let us assume that  $H$ is a compact quantum group algebra  and moreover, $\Omega^1$ be the space of one-forms of a covariant $\ast$-differential calculus on $B$ such that $\Omega^1$ is an object of the category $\modz{A}{B}$. We will use the notation $V$ to denote $ \Phi(\Omega^1)$.  Since the map $\star_{\Omega^1}$ defined in \eqref{8thjuly241} is an isomorphism in $\modz{A}{B}$, we know that $\Phi(\star_{\Omega^1}) $ induces an isomorphism in $\lmod{H}{}$  from $ V $ to $ \overline{V}$ via Lemma \ref{27thmarch243}.  Moreover, if we fix an $H$-invariant inner product on $V$ (conjugate-linear on the right) in the sense of \eqref{28thmarch24n2}, then we have the isomorphism $\psi: \overline{V}\to \Hom(V, \mathbb{C})$ in $\lmod{H}{}$  from Lemma \ref{28feb241}. Hence,  we have a right dual pairing of $V$ with itself defined by the following  morphisms in $\lmod{H}{}$: 
\begin{align}
    V\otimes V \xrightarrow{\id\otimes \Phi(\star_{\Omega^1})} V\otimes \overline{V} &\xrightarrow{\id \otimes \psi} V \otimes \Hom(V, \mathbb{C}) \xrightarrow{\ev_{V} }\mathbb{C};
    \label{22ndapril241}\\
    \mathbb{C}\xrightarrow[]{\coev_V}\Hom(V, \mathbb{C}) \otimes V &\xrightarrow[]{\psi^{-1}\otimes \id}\overline{V}\otimes V \xrightarrow[]{\Phi(\star_{\Omega^1}^{-1})\otimes\id} V \otimes V \label{22ndapril242}.
\end{align}
In the above two equations, we have implicitly used the isomorphism $ \Phi (\overline{\Omega^1}) \rightarrow \overline{\Phi (\Omega^1)} $ from Lemma \ref{27thmarch243}. 

Now, by Takeuchi's equivalence (Theorem \ref{3rdapril241}), we have morphisms $\ev: \Omega^1 \otimes_B \Omega^1 \to B$ and $\coev: B \to \Omega^1 \otimes_B \Omega^1$ in $\modz{A}{B}$ where $\Phi (\ev) $ and $\Phi (\coev) $ are defined by  \eqref{22ndapril241} and \eqref{22ndapril242} respectively and moreover, the pair $(\ev, \coev )$ satisfies \eqref{27thnov231}. Thus, by Remark \ref{24thjuly241}, we immediately have the following result.

\begin{prop} \label{22ndmay241}
 Suppose $ (\Omega^\bullet, \wedge, d) $ is a left $A$-covariant $\ast$-differential calculus on a quantum homogeneous space  $B= A^{\co(H)}$  such that $H$ is a compact quantum group algebra. If $\Omega^1$ is an object of the category $\modz{A}{B}$ and $\langle~,~\rangle$ be an invariant inner product on $V:= \Phi(\Omega^1)$ in the sense of \eqref{28thmarch24n2},  consider the morphisms $\ev: \Omega^1 \otimes_B \Omega^1 \to B$ and $\coev: B \to \Omega^1 \otimes\Omega^1$ as defined above.   If $g=\coev(1_B)$, then the pair $(g, \ev)$ defines an $A$-covariant metric on $B$. 
   \end{prop}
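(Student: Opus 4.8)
The statement is essentially a corollary of Remark~\ref{24thjuly241}, so the whole task is to verify that the pair $(\ev, \coev)$ constructed above really is a right-dual structure on $\Omega^1$ \emph{in the category} $\qMod{A}{B}{}{B}$, i.e.\ that the evaluation--coevaluation identities \eqref{27thnov231} hold. The strategy is to transport the problem across Takeuchi's equivalence $\Phi$ (Theorem~\ref{3rdapril241}): since $\Phi$ is a monoidal equivalence between $\modz{A}{B}$ and $\lmod{H}{}$, it suffices to check that $\Phi(\ev)$ and $\Phi(\coev)$, which by construction are the composites \eqref{22ndapril241} and \eqref{22ndapril242} in $\lmod{H}{}$, satisfy the snake identities there. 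Once that is done, $(\Omega^1,\ev,\coev)$ is a right dual of $\Omega^1$ in $\modz{A}{B}$; composing with the forgetful functor $\modz{A}{B}\hookrightarrow \qMod{A}{B}{}{B}$ (which is monoidal and faithful) gives a right-dual structure there, and Remark~\ref{24thjuly241} then tells us that $(g,\ev)=(\coev(1_B),\ev)$ is an $A$-covariant metric on $B$.

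\textbf{Key steps.} First I would observe that \eqref{22ndapril241} and \eqref{22ndapril242} are built entirely out of morphisms in $\lmod{H}{}$: the isomorphism $\Phi(\star_{\Omega^1})\colon V\to\overline V$ (using $\Phi(\overline{\Omega^1})\cong\overline{V}$ from Lemma~\ref{27thmarch243}), the isomorphism $\psi\colon\overline V\to\Hom(V,\mathbb C)$ coming from the fixed $H$-invariant inner product (Lemma~\ref{28feb241}), and the \emph{standard} evaluation/coevaluation $\ev_V,\coev_V$ of the finite-dimensional comodule $V$ paired with $\Hom(V,\mathbb C)$. The standard pair $(\ev_V,\coev_V)$ satisfies the snake equations by the usual rigidity of finite-dimensional comodules. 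Then the composites \eqref{22ndapril241}--\eqref{22ndapril242} differ from $(\ev_V,\coev_V)$ only by conjugating with the \emph{invertible} morphism $\tau:=\psi\circ\Phi(\star_{\Omega^1})\colon V\to\Hom(V,\mathbb C)$; concretely $\Phi(\ev)=\ev_V\circ(\id\otimes\tau)$ and $\Phi(\coev)=(\tau^{-1}\otimes\id)\circ\coev_V$. This is exactly the situation of \eqref{21stmay24} (with the roles of the twist on $V$ versus its dual), so the snake identities for $(\Phi(\ev),\Phi(\coev))$ follow formally from those for $(\ev_V,\coev_V)$: one checks $(\Phi(\ev)\otimes\id)(\id\otimes\Phi(\coev))=\id_V$ and the mirror identity by inserting $\tau\tau^{-1}$ and using the standard snake relations. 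Next, since $\Phi$ is a monoidal functor, $\Phi(\Omega^1\otimes_B\Omega^1)\cong V\otimes V$ canonically, so the morphisms $\ev,\coev$ in $\modz{A}{B}$ obtained from $\Phi(\ev),\Phi(\coev)$ via the equivalence automatically satisfy \eqref{27thnov231} in $\modz{A}{B}$ — monoidal equivalences preserve (co)evaluation equations. Finally, apply Remark~\ref{24thjuly241}: a right-dual triple $(\Omega^1,\ev,\coev)$ in $\qMod{A}{B}{}{B}$ yields the covariant metric $(\coev(1_B),\ev)$, and $g=\coev(1_B)$ by definition.

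\textbf{Main obstacle.} The only genuine subtlety is bookkeeping around the bar-category isomorphisms: one must be careful that the identification $\Phi(\overline{\Omega^1})\cong\overline{\Phi(\Omega^1)}$ from Lemma~\ref{27thmarch243} is compatible with $\Phi$ being monoidal and with the star structure $\star_{\Omega^1}$, so that the composite defining $\Phi(\ev)$ is literally a morphism in $\lmod{H}{}$ and agrees with $\ev_V\circ(\id\otimes\tau)$ under the monoidal coherence isomorphisms — i.e.\ that no associativity or bar-compatibility constraint is violated. Once this compatibility is in place (it is the content of the paragraph preceding the proposition, together with Lemmas~\ref{27thmarch243}, \ref{28feb241}, and the rigidity of $\lmod{H}{}$), the argument is purely formal; the proof itself is short, essentially just invoking Remark~\ref{24thjuly241} after noting that \eqref{22ndapril241}--\eqref{22ndapril242} satisfy \eqref{27thnov231} by construction.
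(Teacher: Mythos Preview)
Your proposal is correct and follows exactly the paper's approach: the paper does not give a separate proof but simply notes, in the paragraph preceding the proposition, that by Takeuchi's equivalence the pair $(\ev,\coev)$ satisfies \eqref{27thnov231}, and then invokes Remark~\ref{24thjuly241}. Your write-up just makes explicit the elementary verification (conjugating the standard dual pairing by the isomorphism $\tau=\psi\circ\Phi(\star_{\Omega^1})$) that the paper leaves implicit.
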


Now, an application of the categorical result of Lemma \ref{22ndapril243} yields the following theorem: 

\begin{thm} \label{8thmay24jb5}
Let $B = A^{\co(H)}$ be a quantum homogeneous space of a Hopf $\ast$-algebra $A$ such that $H$ is a compact quantum group algebra. If $ (\Omega^\bullet, \wedge, d) $ is an $A$-covariant $\ast$-differential calculus on $B$ and $\Omega^1  $ is an object of the category $\modz{A}{B}$, then the space of left $A$-covariant metrics on $\Omega^1  $ admits a free and transitive action of the group $ \Aut (\Phi (\Omega^1)). $

In particular, if $\Phi(\Omega^1) \simeq \bigoplus_{i=1}^m V_i^{\oplus n_i}$
is the decomposition of $\Phi(\Omega^1)$ into irreducible $H$-comodules such that $V_i$ and $V_j$ are not isomorphic for $i \neq j$, then
\begin{align} \label{eqn:auto.group}
 \Aut (\Phi (\Omega^1)) \cong  \prod_{i=1}^m \mathrm{GL}_{n_i}(\mathbb{C}).
\end{align}
In this case, the set of all left $A$-covariant metrics on $\Omega^1$ is parametrized by the group $ \prod_{i=1}^m \mathrm{GL}_{n_i}(\mathbb{C})$.
\end{thm}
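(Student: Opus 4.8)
The plan is to combine the two preceding results: Proposition~\ref{22ndmay241}, which produces at least one left $A$-covariant metric on $\Omega^1$, and the categorical rigidity statement of Lemma~\ref{22ndapril243}. First I would invoke Remark~\ref{24thjuly241} to translate the problem entirely into the language of duals: a left $A$-covariant metric on $\Omega^1$ is the same data as a right dual structure $(\Omega^1, \ev, \coev)$ for $\Omega^1$ in the category $\modz{A}{B}$ (using that $\Omega^1$ is its own dual, via the self-duality furnished by Proposition~\ref{22ndmay241}), where the underlying object $\prescript{*}{}{(\Omega^1)} = \Omega^1$ is fixed and only the pair $(\ev,\coev)$ varies. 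Since $\Omega^1 \in \modz{A}{B}$ by hypothesis and this category is monoidal (and right rigid, by Remark~\ref{27thsep241}), Lemma~\ref{22ndapril243} applies verbatim with $\mathcal{C} = \modz{A}{B}$ and $V = \Omega^1$: the group $\Aut_{\modz{A}{B}}(\Omega^1)$ acts freely and transitively on the set $X$ of all such $(\ev',\coev')$. Proposition~\ref{22ndmay241} guarantees $X \neq \varnothing$, so $X$ is a torsor under $\Aut(\Omega^1)$, which is exactly the first assertion once one checks that $\Aut_{\modz{A}{B}}(\Omega^1) \cong \Aut_{\lmod{H}{}}(\Phi(\Omega^1))$ — but this is immediate because $\Phi$ is a monoidal equivalence (Theorem~\ref{3rdapril241}), hence induces a group isomorphism on automorphism groups of corresponding objects.

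For the second assertion I would pass through Takeuchi's equivalence to $\lmod{H}{}$, where $\Phi(\Omega^1)$ is a finite-dimensional left $H$-comodule. Since $H$ is a compact quantum group algebra, $\lmod{H}{}$ is a semisimple category (every finite-dimensional comodule decomposes into irreducibles), so we may write $\Phi(\Omega^1) \cong \bigoplus_{i=1}^m V_i^{\oplus n_i}$ with the $V_i$ pairwise non-isomorphic irreducibles. Then the standard computation of endomorphism algebras in a semisimple abelian category gives $\End_{\lmod{H}{}}(\Phi(\Omega^1)) \cong \bigoplus_{i=1}^m M_{n_i}(\End(V_i))$, and Schur's lemma for comodules (over the field $\mathbb{C}$) gives $\End(V_i) = \mathbb{C}$; taking invertible elements yields $\Aut(\Phi(\Omega^1)) \cong \prod_{i=1}^m \mathrm{GL}_{n_i}(\mathbb{C})$, which is~\eqref{eqn:auto.group}. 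The final sentence — that the set of left $A$-covariant metrics is parametrized by $\prod_{i=1}^m \mathrm{GL}_{n_i}(\mathbb{C})$ — then follows by combining the torsor statement with this identification (a non-canonical bijection, after choosing the base point supplied by Proposition~\ref{22ndmay241}).

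The steps are, in order: (1) recast covariant metrics as right-dual structures via Remark~\ref{24thjuly241}; (2) apply Lemma~\ref{22ndapril243} in $\modz{A}{B}$ to get the free transitive action, using Proposition~\ref{22ndmay241} for non-emptiness; (3) transport automorphism groups along the monoidal equivalence $\Phi$; (4) decompose $\Phi(\Omega^1)$ into isotypic components and compute $\Aut$ via Schur's lemma. I expect step~(1) to be the only genuinely delicate point: one must be careful that in Lemma~\ref{22ndapril243} the dual \emph{object} is held fixed while the evaluation/coevaluation vary, and match this against Lemma~\ref{8thmay24jb1}, which says precisely that the metric tensor $g = \coev(1)$ and the bilinear form $\ev = (\,,\,)$ determine each other — so that the set of covariant metrics really is in bijection with the set $X$ of dual pairings on the fixed object $\Omega^1$, not with some larger moduli of abstract duals. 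Steps~(2)--(4) are then formal, relying only on semisimplicity of $\lmod{H}{}$ for a compact quantum group algebra and on Schur's lemma over $\mathbb{C}$.
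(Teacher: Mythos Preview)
Your proposal is correct and follows essentially the same route as the paper: non-emptiness via Proposition~\ref{22ndmay241}, then Remark~\ref{24thjuly241} together with Lemma~\ref{22ndapril243} in $\modz{A}{B}$ for the free transitive action, transport along Takeuchi's equivalence to identify $\Aut(\Omega^1)\cong\Aut(\Phi(\Omega^1))$, and finally Schur's lemma for the $\mathrm{GL}_{n_i}$ description. Your added remarks on step~(1) (fixing the dual object and invoking Lemma~\ref{8thmay24jb1}) are more explicit than the paper's treatment but not in conflict with it.
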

\begin{proof}
Since $H$ is a compact quantum group algebra, the set of all $A$-covariant metrics on $\Omega^1  $ is non-empty by Proposition \ref{22ndmay241}. Consider the monoidal category $\modz{A}{B}$. Then by a combination of Remark \ref{24thjuly241} and Lemma \ref{22ndapril243}, it follows  that the space of $A$-covariant  metrics is acted upon freely and transitively by the automorphism group of $\Omega^1 $ in the category $\modz{A}{B}$. However, by Takeuchi's equivalence, this group is isomorphic to the automorphism group of $\Phi(\Omega^1)$ in the category $\lmod{H}{}$.

The proof of \eqref{eqn:auto.group} now follows from Schur's lemma and the fact that $ \End (V^{\oplus n_i}_i) \cong M_{n_i} (\mathbb{C}) $  (see the proof of Proposition 2.2.15 in \cite{NeshveyevTuset}).
\end{proof}

\subsection{Metrics on quantum homogeneous spaces with a complex structure}

Throughout this subsection,  $(\Omega^{(\bullet,\bullet)}, \wedge, \partial, \overline{\partial})$ will denote an $A$-covariant complex structure on a quantum homogeneous space $B=A^{\co(H)}$ such that $\Omega^1$ is an object of the category $\modz{A}{B}$. It can be easily seen that under this assumption, $\Omega^{(1,0)}$ and $\Omega^{(0,1)}$ are also objects of $\modz{A}{B}$.  We will use the following notations: 
$$V^{(1,0)}:= \Phi(\Omega^{(1,0)}) = \frac{\Omega^{(1,0)}}{B^+ \Omega^{(1,0)}} \text{ and } V^{(0,1)}:= \Phi(\Omega^{(0,1)}) =  \frac{\Omega^{(0,1)}}{B^+ \Omega^{(0,1)}} .$$
Moreover, for an object $M$ in $\modz{A}{B}$ and $m \in M$, the symbol $[ m ]$ will denote the equivalence class of $m$ in $\frac{M}{B^+ M}$. We also recall from Remark \ref{11thdec23n1} that the map $\star_{\Omega^1}: \Omega^1 \rightarrow \overline{\Omega^1}$ of \eqref{8thjuly241} restricts to an isomorphism from $\Omega^{(0,1)}$ onto $\overline{\Omega^{(1,0)}}$.

\begin{lem} \label{29thmarch24jb3}
      Let $B=A^{\co(H)}$ be a quantum homogeneous space, where $H$ is a compact quantum group algebra and $(\Omega^{(\bullet,\bullet)}, \wedge, \partial, \overline{\partial})$  an $A$-covariant complex structure on $B$. Let $\langle ~, ~\rangle$ be an $H$-invariant inner product on $V^{(1,0)}$. Then $V^{(0,1)}$ is a right dual of $V^{(1,0)}$ in $\lmod{H}{}$ via the morphisms:
    \begin{align}{\label{29thmarch24jb2}}
        &\ev_{V^{(1,0)}}:V^{(1,0)}\otimes V^{(0,1)}\to \mathbb{C}; \quad \ev_{V^{(1,0)}}([\omega] \otimes [\eta])= \langle [\omega] , [\eta^*]\rangle\\
        &\coev_{V^{(1,0)}}:\mathbb{C}\to V^{(0,1)}\otimes V^{(1,0)}; \quad \coev_{V^{(1,0)}}(1)= \sum_{i=1}^{\dim(V)}\Phi(\star_{\Omega^1}^{-1})(\overline{v_i})\otimes v_i,
    \end{align} where $\{v_i\}_{i=1}^{\dim(V)}$ is an  orthonormal basis of $(V^{(1,0)},\langle ~,~\rangle)$ and we have implicitly used the isomorphism    $ \overline{V^{(1,0)}} = \overline{\Phi (\Omega^{(1,0)}) } \rightarrow \Phi (\overline{\Omega^{(1,0)}}) $ via  Lemma \ref{27thmarch243}. 
\end{lem}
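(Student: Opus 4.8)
The plan is to verify that the two maps $\ev_{V^{(1,0)}}$ and $\coev_{V^{(1,0)}}$ defined in the statement are indeed morphisms in $\lmod{H}{}$ and that they satisfy the two snake (evaluation-coevaluation) identities of \eqref{27thnov231}. The strategy is essentially to \emph{transport} the canonical right dual structure. Recall that $\Omega^{(0,1)}$ is a star object of $\modz{A}{B}$ via the restriction of $\star_{\Omega^1}$, which by Remark \ref{11thdec23n1} is an isomorphism $\Omega^{(0,1)} \xrightarrow{\cong} \overline{\Omega^{(1,0)}}$; applying $\Phi$ and Lemma \ref{27thmarch243} gives an isomorphism $V^{(0,1)} \xrightarrow{\cong} \overline{V^{(1,0)}}$ in $\lmod{H}{}$. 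Composing with the isomorphism $\psi \colon \overline{V^{(1,0)}} \xrightarrow{\cong} \Hom(V^{(1,0)}, \mathbb{C})$ provided by the fixed $H$-invariant inner product (cf. Lemma \ref{28feb241}), and using that $(\Hom(V^{(1,0)},\mathbb{C}), \ev_{V^{(1,0)}}, \coev_{V^{(1,0)}})$ is the standard right dual of the finite-dimensional comodule $V^{(1,0)}$, one obtains a right dual structure on $V^{(0,1)}$. This is exactly the construction appearing in \eqref{22ndapril241}--\eqref{22ndapril242} (there applied to $\Omega^1$ itself, here applied to the summand $\Omega^{(1,0)}$), so the abstract fact that the resulting pair is a right dual is already available.

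What remains is to check that, under these identifications, the abstract evaluation and coevaluation unwind to the explicit formulas written in the statement. For the evaluation: given $[\omega] \in V^{(1,0)}$ and $[\eta] \in V^{(0,1)}$, the isomorphism $V^{(0,1)} \to \overline{V^{(1,0)}}$ sends $[\eta]$ to $\overline{[\eta^*]}$ (this is the content of $\star_{\Omega^1}(\eta) = \overline{\eta^*}$ in \eqref{8thjuly241}, pushed through $\Phi$), and then $\psi$ turns $\overline{[\eta^*]}$ into the functional $\langle\, \cdot\,, [\eta^*]\rangle$; evaluating on $[\omega]$ yields $\langle [\omega], [\eta^*]\rangle$, which is the stated formula. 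For the coevaluation: the standard $\coev$ for $V^{(1,0)}$ is $1 \mapsto \sum_i v_i^* \otimes v_i$ where $\{v_i\}$ is any basis and $\{v_i^*\}$ the dual basis; choosing $\{v_i\}$ orthonormal with respect to $\langle~,~\rangle$ makes $v_i^* = \langle\,\cdot\,, v_i\rangle$, so $\psi^{-1}(v_i^*) = \overline{v_i}$, and applying $\Phi(\star_{\Omega^1}^{-1})$ (the inverse of the isomorphism $V^{(0,1)} \to \overline{V^{(1,0)}}$) to each $\overline{v_i}$ produces precisely $\sum_i \Phi(\star_{\Omega^1}^{-1})(\overline{v_i}) \otimes v_i$.

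Having matched the formulas, the snake identities are inherited for free: they hold for $(\Hom(V^{(1,0)},\mathbb{C}), \ev, \coev)$ as the canonical dual of a finite-dimensional comodule, and right duals are preserved under the isomorphisms $\psi$ and $\Phi(\star_{\Omega^1})$ since conjugating the structure maps of a right dual by an isomorphism again yields a right dual (this is the standard transport-of-structure used implicitly throughout the paper; compare the discussion around Lemma \ref{22ndapril243}). Colinearity of $\ev_{V^{(1,0)}}$ and $\coev_{V^{(1,0)}}$ likewise follows because each map in the composition — $\Phi(\star_{\Omega^1})$, $\psi$, and the canonical $\ev_{V^{(1,0)}}$, $\coev_{V^{(1,0)}}$ for $V^{(1,0)}$ — is a morphism in $\lmod{H}{}$; in particular, $H$-invariance of the inner product is exactly what guarantees $\psi$ is colinear.

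I expect the only genuinely delicate point to be the careful bookkeeping of the two natural isomorphisms being invoked at once — the ``$\Phi$ commutes with $\mathrm{bar}$'' isomorphism of Lemma \ref{27thmarch243} and the inner-product isomorphism $\psi$ of Lemma \ref{28feb241} — together with verifying that $\Phi$ applied to $\star_{\Omega^1}\colon \Omega^{(0,1)} \to \overline{\Omega^{(1,0)}}$ really does send the class $[\eta]$ to $\overline{[\eta^*]}$ after identification, i.e. that the quotient $M \mapsto M/B^+M$ is compatible with the $\ast$-operation in the expected way. This is essentially a diagram chase, and once it is set up correctly the result drops out; the computational content is minimal, so I would present it as a short verification rather than a long calculation.
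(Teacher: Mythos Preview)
Your proposal is correct and follows essentially the same approach as the paper: both arguments transport the right dual structure from $\overline{V^{(1,0)}}$ (obtained via the $H$-invariant inner product, Lemma \ref{28feb241}) to $V^{(0,1)}$ along the isomorphism $\Phi(\star_{\Omega^1}^{-1})$ together with Lemma \ref{27thmarch243}, and then read off the explicit formulas. The paper is slightly terser---it invokes the dual pair $(\overline{V^{(1,0)}},\,\text{maps of }\eqref{30thmarch241})$ directly rather than passing through $\Hom(V^{(1,0)},\mathbb{C})$ and $\psi$---and it adds a one-line remark on well-definedness of $\ev_{V^{(1,0)}}$ using $MB^+ = B^+M$, which your transport-of-structure formulation handles implicitly.
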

\begin{proof}
  Using the fact that $M B^+ = B^+ M$ for any object $M$ in $\modz{A}{B}$ (see Definition \ref{10thjune241}), it can be easily checked that the map $\ev_{V^{(1,0)}}$ is well-defined. Moreover, the definition of $\coev_{V^{(1,0)}}$ is independent of the choice of the orthonormal basis.
  
     Since $\langle~,~\rangle$ is an $H$-invariant inner product on $V^{(1,0)}$, Lemma \ref{28feb241}  implies that $\overline{V^{(1,0)}}$ is a right dual of $V^{(1,0)}$ via the evaluation and coevaluation maps defined in \eqref{30thmarch241}. Moreover, since   $\star_{\Omega^1}: \Omega^{(0,1)} \to \overline{\Omega^{(1,0)}}$  an isomorphism in $\modz{A}{B}, $ then using Lemma \ref{27thmarch243}, it follows that the map $\Phi(\star^{-1}_{\Omega^1}):\overline{V^{(1,0)}} \to V^{(0,1)}$ induces an isomorphism in $\lmod{H}{}$. Now the lemma can be verified by using that  $\Phi$ is monoidal. 
\end{proof}

We continue to use the notations and assumptions of Lemma \ref{29thmarch24jb3} in the next result, the proof of which is immediate.  
\begin{cor} \label{29thmarch24jb4}
    Let $\langle~,~\rangle_{V^{(1,0)}}$ be an $H$-invariant inner product on $V^{(1,0)}$.  We define $$(~,~)^{1,0}:\Omega^{(1,0)} \otimes_B \Omega^{(0,1)}\to B\text{ and }\coev^{1,0}: B \to \Omega^{(0,1)} \otimes_B \Omega^{(1,0)}$$ as $\Phi((~,~)^{1,0})=\ev_{V^{(1,0)}}$ and $\Phi(\coev^{1,0})=\coev_{V^{(1,0)}}$. Then $(\Omega^{(0,1)}, (~,~)^{1,0}, \coev^{1,0})$ is a right dual of $\Omega^{(1,0)}$ in the category $\modz{A}{B}$.
\end{cor}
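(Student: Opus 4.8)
The plan is to deduce the statement directly from Lemma~\ref{29thmarch24jb3} together with Takeuchi's monoidal equivalence (Theorem~\ref{3rdapril241}), with essentially no further computation. Recall that we have fixed an $H$-invariant inner product $\langle~,~\rangle_{V^{(1,0)}}$ on $V^{(1,0)}$ and, by Lemma~\ref{29thmarch24jb3}, the pair $(\ev_{V^{(1,0)}}, \coev_{V^{(1,0)}})$ exhibits $V^{(0,1)}$ as a right dual of $V^{(1,0)}$ in $\lmod{H}{}$; in particular these two morphisms satisfy the evaluation--coevaluation identities~\eqref{27thnov231} in $\lmod{H}{}$.

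First I would invoke the monoidal equivalence $\Phi: \modz{A}{B} \to \lmod{H}{}$. Since $\Phi$ is essentially surjective and fully faithful, and since $\ev_{V^{(1,0)}}$ and $\coev_{V^{(1,0)}}$ are morphisms between the images $\Phi(\Omega^{(1,0)} \otimes_B \Omega^{(0,1)})$, $\Phi(B)$ and $\Phi(\Omega^{(0,1)} \otimes_B \Omega^{(1,0)})$ respectively (using the monoidal structure isomorphisms of $\Phi$ to identify $\Phi(\Omega^{(1,0)}\otimes_B\Omega^{(0,1)}) \cong V^{(1,0)}\otimes V^{(0,1)}$ and likewise for the other factor, and $\Phi(B) \cong \mathbb{C} = 1_{\lmod{H}{}}$), there exist unique morphisms
\[
  (~,~)^{1,0}: \Omega^{(1,0)} \otimes_B \Omega^{(0,1)} \to B, \qquad \coev^{1,0}: B \to \Omega^{(0,1)} \otimes_B \Omega^{(1,0)}
\]
in $\modz{A}{B}$ with $\Phi((~,~)^{1,0}) = \ev_{V^{(1,0)}}$ and $\Phi(\coev^{1,0}) = \coev_{V^{(1,0)}}$. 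This is exactly the definition given in the statement, so the only thing left is to check that this pair satisfies~\eqref{27thnov231} in $\modz{A}{B}$.

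For that, apply $\Phi$ to the two composites appearing in~\eqref{27thnov231}. Because $\Phi$ is a monoidal functor, it sends the relevant composites of tensor products of identities and our two morphisms to the corresponding composites built from $\ev_{V^{(1,0)}}$, $\coev_{V^{(1,0)}}$ and identities in $\lmod{H}{}$ (the coherence isomorphisms of $\Phi$ cancel in the usual way). By Lemma~\ref{29thmarch24jb3} those composites equal $\id_{V^{(1,0)}}$ and $\id_{V^{(0,1)}}$ respectively. Since $\Phi$ is faithful (indeed an equivalence), we conclude that the original composites in $\modz{A}{B}$ equal $\id_{\Omega^{(1,0)}}$ and $\id_{\Omega^{(0,1)}}$. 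Hence $(\Omega^{(0,1)}, (~,~)^{1,0}, \coev^{1,0})$ is a right dual of $\Omega^{(1,0)}$ in $\modz{A}{B}$, as claimed.

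There is no serious obstacle here; the proof is a formal transport of structure along the monoidal equivalence $\Phi$. The only point requiring a modicum of care—and the one I would spell out if pressed—is the bookkeeping of the monoidal coherence isomorphisms of $\Phi$ when identifying $\Phi$ of an iterated tensor product (over $B$) with the corresponding iterated tensor product (over $\mathbb{C}$) in $\lmod{H}{}$, and checking that these isomorphisms are compatible with the associativity and unit constraints so that the duality triangle identities are genuinely preserved. This is precisely the content of ``$\Phi$ is monoidal'' applied to the defining diagrams of a dual object, so in practice it is immediate and the proof can legitimately be stated as ``immediate'' as in the excerpt.
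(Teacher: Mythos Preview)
Your proposal is correct and follows exactly the approach intended by the paper: the corollary is stated as ``immediate'' from Lemma~\ref{29thmarch24jb3}, and the only content is transporting the duality along the monoidal equivalence $\Phi$ of Theorem~\ref{3rdapril241}, which is precisely what you do. Your careful remark about the monoidal coherence isomorphisms is the one point worth noting, and it is indeed handled by the fact that $\Phi$ is monoidal.
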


At this point, let us make an observation. In what follows, we will identify $\Phi (B) $ with $ \mathbb{C}, $ via the isomorphism $[ b ] \mapsto \epsilon (b), $ where   $[ b ]$ denotes the class of $b \in B$ in  $\Phi (B) = \frac{B}{B^+ B}$.  
Now,  by virtue of \eqref{29thmarch24jb2} and the definition of $ (~,~)^{1,0}, $ we have
    \begin{equation}{\label{29thmarch24jb1}}
        [(\omega,\eta)^{1,0}]= \langle [\omega],[\eta^*]\rangle_{V^{(1,0)}} \quad \text{for all $\omega \in \Omega^{(1,0)}$, $\eta\in \Omega^{(0,1)}$}.
    \end{equation}
Therefore, for $\omega \in \Omega^{(1,0)}, \eta\in \Omega^{(0,1)}$, we obtain
    \begin{align*}
        \Phi((~,~)^{1,0})([\omega]\otimes [\eta])&= \langle [\omega], [\eta^*]\rangle_{V^{(1,0)}}\quad \text{(by~\eqref{29thmarch24jb1})}\\
        &=\overline{\langle[\eta^*], [\omega]\rangle_{V^{(1,0)}}}\\
        &= \overline{[(\eta^*,\omega^*)^{1,0}]}\quad \text{(by~\eqref{29thmarch24jb1})}\\
        &= \overline{\epsilon((\eta^*,\omega^*)^{1,0})}\\
        &= \epsilon (((\eta^*,\omega^*)^{1,0})^*).
        %&= \epsilon_B(((\omega, \eta))^{1,0})\\
        %&= [((\omega, \eta))^{1,0}] = \Phi((~,~))([\omega]\otimes[\eta])
    \end{align*}
    Thus, 
    \begin{equation} \label{19thapril244}
\Phi((~,~)^{1,0})([\omega]\otimes [\eta]) = \epsilon (((\eta^*,\omega^*)^{1,0})^*).
    \end{equation}

\begin{remark}\label{29thmarch24jb5}
    Let $\langle~,~\rangle_{V^{(0,1)}}$ be an $H$-invariant inner product on $V^{(0,1)}=\Phi(\Omega^{(0,1)})$. A verbatim adaptation of Lemma \ref{29thmarch24jb3} yields morphisms $\ev_{V^{(0,1)}}: V^{(0,1)}\otimes V^{(1,0)}\to \mathbb{C}$ and $\coev_{V^{(0,1)}}: \mathbb{C} \to V^{(1,0)}\otimes V^{(0,1)}$. Thus we have morphisms :
    $(~,~)_{0,1}:\Omega^{(0,1)}\otimes_B \Omega^{(1,0)} \to B$ and $\coev_{0,1}:B\to \Omega^{(1,0)}\otimes_B \Omega^{(0,1)}$ in $\modz{A}{B}$ making $\Omega^{(1,0)}$ a right dual of $\Omega^{(0,1)}$  and we get
    \begin{equation}\label{31stmarch241}
        [(\omega,\eta)^{0,1}]= \langle [\omega],[\eta^*]\rangle_{V^{(0,1)}} \quad \text{for all $\omega \in \Omega^{(0,1)}$, $\eta\in \Omega^{(1,0)}$}.
    \end{equation}
\end{remark}

Now we are in a position to state the main result of this subsection.

\begin{thm} \label{2ndapril242}
    Let $(\Omega^{(\bullet, \bullet)}, \wedge, \partial,\overline{\partial})$ be an $A$-covariant complex structure on a quantum homogeneous space $B=A^{\co(H)}$, where $H$ is a compact quantum group algebra and $\Omega^1$ is an object of the category $\modz{A}{B}$. We fix  $H$-invariant inner products $\langle~,~\rangle_{V^{(1,0)}}$ and $\langle~,~\rangle_{V^{(0,1)}}$ on $V^{(1,0)}$ and $V^{(0,1)}$ respectively.
    Consider the decomposition
    $$ \Omega^1\otimes_B \Omega^1 \cong (\Omega^{(1,0)}\otimes_B \Omega^{(0,1)}) \oplus (\Omega^{(0,1)} \otimes_B \Omega^{(1,0)}) \oplus (\Omega^{(1,0)} \otimes_B \Omega^{(1,0)})\oplus (\Omega^{(0,1)}\otimes_B \Omega^{(0,1)}).$$
       Then for any $\lambda \in \mathbb{R} \setminus \{ 0 \}, $ we  define 
    \begin{align*}
        (~,~)_{\Omega^1}: \Omega^1 \otimes_B \Omega^1 \to B ~ \text{and} ~ 
        \coev_{\Omega^1}: B \to \Omega^1 \otimes_B \Omega^1 
    \end{align*}
    as $(~,~)_{\Omega^1}= (~,~)^{1,0} \oplus (- \lambda^{-1}) (~,~)^{0,1} \oplus 0 \oplus 0$ and $\coev_{\Omega^1}=\coev^{1,0}\oplus (- \lambda) \coev^{0,1}\oplus 0\oplus 0$, where $(~,~)^{1,0}$, $\coev^{1,0}$ and $(~,~)^{0,1},\coev^{0,1}$ are as defined in Corollary \ref{29thmarch24jb4} and Remark \ref{29thmarch24jb5} respectively. If we denote $\coev_{\Omega^1} (1)$ by the symbol $g$, then $(g , (~,~)_{\Omega^1})$ defines an $A$-covariant real metric on $\Omega^1$ (see Definition \ref{3rdjuly244}) which satisfies the  condition
    \begin{equation} \label{18thapril24jb1}
        (\omega, \eta)=0\text{ if }\omega, \eta \in \Omega^{(1,0)} \text{ or if }\omega, \eta \in \Omega^{(0,1)}.
    \end{equation}
    \end{thm}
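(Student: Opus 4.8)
The plan is to verify directly that $(g,(~,~)_{\Omega^1})$ is a metric, is covariant, is real, and satisfies \eqref{18thapril24jb1}, using the building blocks from Corollary~\ref{29thmarch24jb4} and Remark~\ref{29thmarch24jb5}. First I would note that the metric condition $((\omega,~)\otimes_B\id)g=\omega=(\id\otimes_B(~,\omega))g$ is equivalent, via Remark~\ref{24thjuly241}, to the statement that $(\Omega^1,(~,~)_{\Omega^1},\coev_{\Omega^1})$ is a right dual of $\Omega^1$ in $\modz{A}{B}$, i.e. that the pair satisfies the evaluation--coevaluation identities~\eqref{27thnov231}. Since both maps are built by applying $\Phi^{-1}$ to morphisms in $\lmod{H}{}$ (using that $\Phi$ is a monoidal equivalence by Theorem~\ref{3rdapril241}), it suffices to check~\eqref{27thnov231} for $\Phi((~,~)_{\Omega^1})$ and $\Phi(\coev_{\Omega^1})$ in $\lmod{H}{}$. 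Here the direct-sum decomposition of $\Omega^1\otimes_B\Omega^1$ is respected by $\Phi$, and the cross terms vanish by construction; on the two surviving summands the identities reduce to the fact that $(V^{(0,1)},\ev_{V^{(1,0)}},\coev_{V^{(1,0)}})$ is a right dual of $V^{(1,0)}$ (Lemma~\ref{29thmarch24jb3}) and symmetrically for the $(0,1)$ part (Remark~\ref{29thmarch24jb5}). The scalars $-\lambda^{-1}$ and $-\lambda$ are reciprocal up to sign, so they cancel in the compositions $(\ev\otimes\id)(\id\otimes\coev)$; this is exactly the twisting by the automorphism $(\,1,-\lambda^{-1}\,)$ (or its inverse on coevaluation) described after Lemma~\ref{22ndapril243}, so the evaluation--coevaluation equations are preserved.

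Next, covariance: since $(~,~)^{1,0},\coev^{1,0},(~,~)^{0,1},\coev^{0,1}$ are morphisms in $\modz{A}{B}\subseteq\qMod{A}{B}{}{B}$, any $\mathbb{C}$-linear combination of direct sums of them is again a morphism in $\qMod{A}{B}{}{B}$, hence $A$-covariant; in particular $\coev_{\Omega^1}$ and $(~,~)_{\Omega^1}$ are covariant, which is precisely the covariance condition of Definition~\ref{4thmay242} (note $\coev_g$ there is our $\coev_{\Omega^1}$ since $g=\coev_{\Omega^1}(1)$ is central by \cite[Lemma 1.16]{BeggsMajid:Leabh}). The condition~\eqref{18thapril24jb1} is immediate from the definition: the $\Omega^{(1,0)}\otimes_B\Omega^{(1,0)}$ and $\Omega^{(0,1)}\otimes_B\Omega^{(0,1)}$ components of $(~,~)_{\Omega^1}$ are declared to be $0$.

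The one genuinely computational point — and the step I expect to be the main obstacle — is \emph{reality}, i.e. $g^\dagger=g$. By Corollary~\ref{9thmay24jb2} this is equivalent to $(\omega,\eta)_{\Omega^1}=\overline{(\eta^*,\omega^*)_{\Omega^1}}$ for all $\omega,\eta\in\Omega^1$. Decomposing $\omega,\eta$ into $(1,0)$ and $(0,1)$ parts and using~\eqref{18thapril24jb1}, one only needs to check this on the two pairings $(~,~)^{1,0}\colon\Omega^{(1,0)}\otimes_B\Omega^{(0,1)}\to B$ and $(-\lambda^{-1})(~,~)^{0,1}\colon\Omega^{(0,1)}\otimes_B\Omega^{(1,0)}\to B$, and since $*$ interchanges $\Omega^{(1,0)}$ and $\Omega^{(0,1)}$, the reality identity relates these two pairings to each other. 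Concretely I would show that for $\omega\in\Omega^{(1,0)}$, $\eta\in\Omega^{(0,1)}$ one has $(\eta^*,\omega^*)^{0,1}=\big((\omega,\eta)^{1,0}\big)^*$ up to the scalar $\lambda$ — this forces the choice of coupling constants: the $(0,1)$-block must carry the factor $-\lambda^{-1}$ (with the sign $-1$ absorbing a $\dagger$-induced sign, and $\lambda\in\mathbb{R}$ ensuring the scalar is compatible with $*$). To prove the underlying identity $\epsilon\big((\eta^*,\omega^*)^{0,1}\big)=\overline{\epsilon\big((\omega,\eta)^{1,0}\big)}$ I would pass through $\Phi$ and formula~\eqref{19thapril244} (and its $(0,1)$-analogue from Remark~\ref{29thmarch24jb5}), reducing everything to a symmetry property of the chosen $H$-invariant inner products $\langle~,~\rangle_{V^{(1,0)}}$ and $\langle~,~\rangle_{V^{(0,1)}}$ under the bar-category structure — namely that the inner products are conjugate-symmetric and that $\Phi(\star_{\Omega^1}^{-1})$ intertwines them appropriately. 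Unwinding the Sweedler-notation bookkeeping for the coaction on $\overline{\Omega^1}$ from~\eqref{25thnov232} and the bimodule structure~\eqref{28thnov231} is where the care is needed, but it is a finite check once the abstract identification $g^\dagger \leftrightarrow (\eta^*,\omega^*)^*$ of Corollary~\ref{9thmay24jb2} has localized the problem to these two pairings.
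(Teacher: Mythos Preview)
Your treatment of the metric axiom, covariance, and condition~\eqref{18thapril24jb1} is correct and matches the paper's ``routine computation'' remark. The gap is in your handling of reality.

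You claim that since $*$ interchanges $\Omega^{(1,0)}$ and $\Omega^{(0,1)}$, the identity $(\omega,\eta)_{\Omega^1}=((\eta^*,\omega^*)_{\Omega^1})^*$ relates $(~,~)^{1,0}$ to $(~,~)^{0,1}$. This is wrong: if $\omega\in\Omega^{(1,0)}$ and $\eta\in\Omega^{(0,1)}$, then $\eta^*\in\Omega^{(1,0)}$ and $\omega^*\in\Omega^{(0,1)}$, so $\eta^*\otimes_B\omega^*$ lies again in $\Omega^{(1,0)}\otimes_B\Omega^{(0,1)}$ and is evaluated by $(~,~)^{1,0}$, not by $(~,~)^{0,1}$. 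The reality condition therefore \emph{decouples} into two separate identities, each relating one pairing to itself:
\[
(\omega,\eta)^{1,0}=\bigl((\eta^*,\omega^*)^{1,0}\bigr)^*\qquad\text{and}\qquad(\widetilde\omega,\widetilde\eta)^{0,1}=\bigl((\widetilde\eta^{\,*},\widetilde\omega^{\,*})^{0,1}\bigr)^*.
\]
This is why \emph{any} $\lambda\in\mathbb{R}\setminus\{0\}$ works: the scalar plays no role beyond being real (so that $(-\lambda^{-1})^*=-\lambda^{-1}$). Your assertions that reality ``forces the choice of coupling constants'' and that the sign $-1$ ``absorbs a $\dagger$-induced sign'' are unfounded; the cross-identity you propose to prove, $(\eta^*,\omega^*)^{0,1}=((\omega,\eta)^{1,0})^*$, cannot hold in general since $\langle~,~\rangle_{V^{(1,0)}}$ and $\langle~,~\rangle_{V^{(0,1)}}$ are chosen independently.

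The paper proves the first displayed identity by defining $\xi^{1,0}(\omega\otimes_B\eta):=((\eta^*,\omega^*)^{1,0})^*$, checking that $\xi^{1,0}$ is a morphism in $\modz{A}{B}$, and then observing that $\Phi(\xi^{1,0})=\Phi((~,~)^{1,0})$ directly from~\eqref{19thapril244} (which was derived just from conjugate-symmetry of $\langle~,~\rangle_{V^{(1,0)}}$). Takeuchi's equivalence then forces $\xi^{1,0}=(~,~)^{1,0}$; the $(0,1)$ case is identical. Once you correct the decoupling, your invocation of Corollary~\ref{9thmay24jb2} and~\eqref{19thapril244} is exactly the right route.
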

\begin{proof}
    The fact that $(\coev_{\Omega^1} (1), (~,~)_{\Omega^1})$ is a covariant metric  and the verification of \eqref{18thapril24jb1} follow by  routine computations. For verifying that the metric is real, we introduce a morphism $\xi^{1,0}$ in the category $\modz{A}{B}$. We define  
    \begin{equation*}
        \xi^{1,0}: \Omega^{(1,0)}\otimes_B \Omega^{(0,1)} \to B \quad \text{by} \quad \xi^{1,0} (\omega \otimes_B \eta)=((\eta^*,\omega^*)^{1,0})^*.
    \end{equation*}
Now,
$$ \Phi (\xi^{1,0}) ([ \omega ] \otimes [ \eta ]) = [ {((\eta^*, \omega^*)^{1,0}})^*  ] = \epsilon ({((\eta^*, \omega^*)^{1,0})}^*) = \Phi((~,~)^{1,0})([\omega]\otimes [\eta])  $$
by \eqref{19thapril244}.
Since, $\Phi$ is a categorical equivalence,  $(~,~)^{1,0}= \xi^{1,0}$, i.e.,
\begin{equation} \label{8thmay24jb2}
(\omega, \eta)^{1,0}=((\eta^*,\omega^*)^{1,0})^*\qquad\text{for all $\omega \in \Omega^{(1,0)}$, $\eta\in \Omega^{(0,1)}$}.
\end{equation}

   In a similar way,  using Remark \ref{29thmarch24jb5} and \eqref{31stmarch241}, we deduce that  
   \begin{equation}  \label{8thmay24jb3}
     (\widetilde{\omega}, \widetilde{\eta})^{0,1}=(((\widetilde{\eta})^*, (\widetilde{\omega})^*)^{0,1})^*
     \qquad\text{for all $\widetilde{\omega} \in \Omega^{(0,1)}$, $\widetilde{\eta}\in \Omega^{(1,0)}$}.
\end{equation}
  Then,   by combining \eqref{8thmay24jb2} and \eqref{8thmay24jb3}  with \eqref{18thapril24jb1},
we get 
\[
  (\omega,\eta)_{\Omega^1}=(\eta^*,\omega^*)_{\Omega^1}^*\qquad\text{for all $\omega, \eta \in \Omega^1$}.
\]
Therefore,  Corollary \ref{9thmay24jb2} proves that the metric is real.
  \end{proof}

We end this section with a partial converse to Theorem \ref{2ndapril242} in the case when $\Omega^{(1,0)}$ and $\Omega^{(0,1)}$ are non-isomorphic simple objects in $\modz{A}{B}$. 

\begin{prop} \label{27thjuly241}
Let $(\Omega^{(\bullet, \bullet)}, \wedge, \partial, \overline{\partial})$ be an $A$-covariant complex structure on a quantum homogeneous space $B = A^{\co(H)}$ where $H$ is a Hopf $\ast$-algebra, $\Omega^1$ is an object of $\modz{A}{B}$ such that $\Omega^{(1,0)}$ and $\Omega^{(0,1)}$ are non-isomorphic simple objects in $\modz{A}{B}$. 

Then any $A$-covariant metric $(g, (~,  ~))$ on $\Omega^1$ satisfies \eqref{18thapril24jb1} and $g \in \Omega^{(1,0)} \otimes_B \Omega^{(0,1)} \oplus \Omega^{(0,1)} \otimes_B \Omega^{(1,0)}. $
\end{prop}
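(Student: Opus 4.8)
The plan is to exploit the monoidal equivalence $\Phi\colon \modz{A}{B} \to \lmod{H}{}$ together with the categorical description of metrics as self-dual structures (Remark \ref{24thjuly241}). First I would record that since $\Omega^{(1,0)}$ and $\Omega^{(0,1)}$ are simple and non-isomorphic in $\modz{A}{B}$, their images $V^{(1,0)} = \Phi(\Omega^{(1,0)})$ and $V^{(0,1)} = \Phi(\Omega^{(0,1)})$ are simple and non-isomorphic in $\lmod{H}{}$, and $\Phi(\Omega^1) = V^{(1,0)} \oplus V^{(0,1)}$. Now given an $A$-covariant metric $(g, (~,~))$ on $\Omega^1$, Remark \ref{24thjuly241} says that $(\Omega^1, (~,~), \coev_g)$ is a right dual of $\Omega^1$ in $\modz{A}{B}$; applying $\Phi$, we get that $V^{(1,0)} \oplus V^{(0,1)}$ is equipped with a right self-dual structure in $\lmod{H}{}$, i.e. morphisms $\ev\colon (V^{(1,0)} \oplus V^{(0,1)}) \otimes (V^{(1,0)} \oplus V^{(0,1)}) \to \mathbb{C}$ and a coevaluation satisfying the snake identities.

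Next I would decompose $\ev$ according to the four summands of the tensor product, exactly as in \eqref{eqn:tensor.product} of Example \ref{eg:irreducible}. The key point is a Schur-type argument: $\ev$ restricted to $V^{(1,0)} \otimes V^{(1,0)}$ is a morphism to the trivial comodule $\mathbb{C}$, equivalently a morphism $V^{(1,0)} \to \Hom(V^{(1,0)}, \mathbb{C}) \cong {}^*(V^{(1,0)})$; since $V^{(1,0)}$ is simple, this is either zero or an isomorphism, but if it were an isomorphism then $V^{(1,0)} \cong {}^*(V^{(1,0)})$, and (using that $V^{(0,1)}$ is the dual of $V^{(1,0)}$, as established in Lemma \ref{29thmarch24jb3}/Corollary \ref{29thmarch24jb4}) this would force $V^{(1,0)} \cong V^{(0,1)}$, contradicting the non-isomorphism hypothesis. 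Hence $\ev|_{V^{(1,0)} \otimes V^{(1,0)}} = 0$, and symmetrically $\ev|_{V^{(0,1)} \otimes V^{(0,1)}} = 0$. Transporting back through $\Phi$ (and using that $\Phi(B) \cong \mathbb{C}$, so $(\omega,\eta) \in B^+ B$ iff $\Phi$ kills it — more precisely, the component of $(~,~)$ on $\Omega^{(1,0)} \otimes_B \Omega^{(1,0)}$ is a morphism in $\modz{A}{B}$ whose image under $\Phi$ vanishes, hence it vanishes) gives $(\omega,\eta) = 0$ for $\omega,\eta \in \Omega^{(1,0)}$ and likewise for $\Omega^{(0,1)}$, which is \eqref{18thapril24jb1}.

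For the statement about $g$ itself: $g = \coev_g(1) \in \Omega^1 \otimes_B \Omega^1$ decomposes into four components along the summands $\Omega^{(1,0)} \otimes_B \Omega^{(0,1)}$, $\Omega^{(0,1)} \otimes_B \Omega^{(1,0)}$, $\Omega^{(1,0)} \otimes_B \Omega^{(1,0)}$, $\Omega^{(0,1)} \otimes_B \Omega^{(0,1)}$. Applying $\Phi$ and running the same Schur argument on $\coev$ (a morphism $\mathbb{C} \to \Phi(\Omega^1) \otimes \Phi(\Omega^1)$, equivalently $\Hom(\Phi(\Omega^1), \mathbb{C}) \to \Phi(\Omega^1)$, whose components land in $\Hom(V^{(1,0)},\mathbb{C}) \to V^{(1,0)}$ etc.) shows the $\Omega^{(1,0)}\otimes_B\Omega^{(1,0)}$ and $\Omega^{(0,1)}\otimes_B\Omega^{(0,1)}$ components of $\Phi(\coev_g)$ vanish; since these components are themselves morphisms in $\modz{A}{B}$ (being $\Phi$-preimages), they vanish, so $g \in \Omega^{(1,0)} \otimes_B \Omega^{(0,1)} \oplus \Omega^{(0,1)} \otimes_B \Omega^{(1,0)}$.

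I expect the main obstacle to be bookkeeping rather than conceptual depth: one must be careful that "the component of a morphism along a direct summand is again a morphism" holds in $\modz{A}{B}$ (it does, since the decomposition $\Omega^1 = \Omega^{(1,0)} \oplus \Omega^{(0,1)}$ is one of objects in $\modz{A}{B}$ by $A$-covariance of the complex structure, and similarly the four-fold decomposition of $\Omega^1 \otimes_B \Omega^1$), and that vanishing of $\Phi$ applied to a morphism in $\modz{A}{B}$ implies vanishing of the morphism — which is exactly the faithfulness half of the equivalence in Theorem \ref{3rdapril241}. The Schur argument also requires knowing that $V^{(0,1)} \cong {}^*(V^{(1,0)})$ in $\lmod{H}{}$, which is precisely the content of Lemma \ref{29thmarch24jb3} (with $H$ merely a Hopf $\ast$-algebra here, one should check that this identification did not secretly use compactness of $H$; if it did, one can instead argue directly that a nonzero morphism $V^{(1,0)} \to {}^*(V^{(1,0)})$ together with the given self-duality of $\Omega^1$ produces an isomorphism $\Omega^{(1,0)} \cong \Omega^{(0,1)}$, contradicting simplicity and non-isomorphism).
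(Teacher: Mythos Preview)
Your proposal is correct and follows essentially the same line as the paper: decompose $(~,~)$ and $\coev_g$ along the four summands, identify the right dual of $\Omega^{(1,0)}$ with $\Omega^{(0,1)}$ via Corollary~\ref{29thmarch24jb4}, and kill the diagonal components by Schur's lemma. The only difference is cosmetic: the paper runs the argument directly in $\modz{A}{B}$, using the tensor--hom adjunction $\Hom(\Omega^{(1,0)}\otimes_B\Omega^{(1,0)},B)\cong\Hom(\Omega^{(1,0)},{}^\ast(\Omega^{(1,0)}))$ there, whereas you first transport everything through $\Phi$ to $\lmod{H}{}$ and then invoke faithfulness of $\Phi$ to come back. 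Since $\modz{A}{B}$ is already a rigid monoidal category in which $\Omega^{(1,0)}$ and $\Omega^{(0,1)}$ are simple non-isomorphic objects, the detour through $\Phi$ is unnecessary --- Schur's lemma applies in $\modz{A}{B}$ itself --- and staying upstairs spares you the bookkeeping about components being morphisms and about faithfulness.

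Your caveat about compactness of $H$ is well spotted: the paper's own proof also cites Corollary~\ref{29thmarch24jb4} (which was stated for $H$ a compact quantum group algebra) to identify ${}^\ast(\Omega^{(1,0)})\cong\Omega^{(0,1)}$. Your suggested workaround is along the right lines; concretely, once a covariant metric exists, $\Omega^1$ is self-dual, so ${}^\ast\Omega^{(1,0)}\oplus{}^\ast\Omega^{(0,1)}\cong\Omega^{(1,0)}\oplus\Omega^{(0,1)}$, and since duals of simples are simple, Krull--Schmidt leaves only the two possibilities ${}^\ast\Omega^{(1,0)}\cong\Omega^{(0,1)}$ or ${}^\ast\Omega^{(1,0)}\cong\Omega^{(1,0)}$; the latter is ruled out by the bar-isomorphism $\Omega^{(1,0)}\cong\overline{\Omega^{(0,1)}}$ of Remark~\ref{11thdec23n1} together with the standard identification of conjugates with duals for Hopf $\ast$-algebra comodules.
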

\begin{proof}
The map $(~ , ~): \Omega^{(1,0)} \otimes_B \Omega^{(1,0)} \rightarrow B $ is a morphism of the category $\modz{A}{B}$. Indeed, it belongs to
\begin{align*}
\Hom (\Omega^{(1,0)} \otimes_B \Omega^{(1,0)}, B)
&\cong  \Hom (\Omega^{(1,0)},  \prescript{\ast}{}{(\Omega^{(1,0)})})\quad\text{(by~\cite[(2.52)]{EtingofTensorCat})}\\
&\cong \Hom (\Omega^{(1,0)}, \Omega^{(0,1)}) \quad\text{(by Corollary~\ref{29thmarch24jb4})}\\
&= 0
\end{align*}
by Schur's lemma. Similarly, the restriction of $(~ , ~)$ to $\Omega^{(0,1)} \otimes_B \Omega^{(0,1)}$ is also zero. Thus, \eqref{18thapril24jb1} is satisfied. 

Next, since $g = \coev (1) $ for a morphism $\coev: B \rightarrow \Omega^1 \otimes_B \Omega^1 $ in $\modz{A}{B}$ (see Remark \ref{24thjuly241}), similar applications of Schur's lemma proves the assertion for $g$.
\end{proof}

\section{Hermitian metrics and their construction} \label{10thaugust243}

If~$B$ is an~$A$-comodule $\ast$-algebra, then from Example~\ref{2nddec231}, we know that $\qMod{}{B}{}{B}$ and $\qMod{A}{B}{}{B}$ are  bar-categories. Moreover, if~$M$ is an~object of $\qMod{A}{B}{}{B}$, then $\prescript{}{B}{\Hom(M,B)}$ is also an~object of $\qMod{A}{B}{}{B}$ by Proposition~\ref{12thdec236}. We will use these facts tacitly throughout this section.

\begin{defn} (\cite[Definition 4.1]{beggs2011compatible}) \label{4thdec231}
Let $B$ be a~$\ast$-algebra and $\mathcal{E}$ be a~$B$-bimodule. A~(nondegenerate) Hermitian metric on~$\mathcal{E}$ is an~isomorphism $\mathscr{H}:\overline{\mathcal{E}} \to \prescript{}{B}{\Hom(\mathcal{E},B)}$ in the category $\qMod{}{B}{}{B}$ such that 
\begin{align*}
  \langle f, \overline{e} \rangle^\ast = \langle e, \overline{f} \rangle \quad \text{for all $e, f \in \mathcal{E}$},
\end{align*}
where the map
\begin{equation} \label{3rddec231}
  \text{$\langle ~ , ~ \rangle: \mathcal{E} \otimes_B \overline{\mathcal{E}} \rightarrow B$
    is defined as  $\langle e , \overline{f} \rangle:= \ev (e \otimes_B \mathscr{H} (\overline{f}))$}.
\end{equation}
If $B$ is a~left $A$-comodule $\ast$-algebra and $\mathcal{E}$ is an~object of~$\qMod{A}{B}{}{B}$, then an~Hermitian metric~$\mathscr{H}$ on~$\mathcal{E}$ is called $A$-covariant if~$\mathscr{H}$ is an~$A$-covariant map from~$\overline{\mathcal{E}}$ to~$\prescript{}{B}{\Hom(\mathcal{E},B)}$.
\end{defn}

In the above definition, a $B$-bilinear map $\langle \cdot, ~ \cdot \rangle: \mathcal{E} \otimes_B \overline{\mathcal{E}} \rightarrow B $ was constructed from an Hermitian metric $\mathscr{H}: \overline{\mathcal{E}} \rightarrow \prescript{}{B}{\Hom(\mathcal{E},B)}$. Conversely, we have the following:

\begin{prop} \label{15thdec231}
Let $B$ be a $\ast$-algebra and let $\mathcal{E}$ be a $B$-bimodule. Suppose, we have a $B$-bilinear map
$ \langle \cdot, ~ \cdot \rangle: \mathcal{E} \otimes_B \overline{\mathcal{E}} \rightarrow B $
such that $ \langle e, \overline{f} \rangle = (\langle f, \overline{e} \rangle)^* $ and moreover, the map
$\mathscr{H}: \overline{\mathcal{E}} \rightarrow \prescript{}{B}{\Hom(\mathcal{E},B)}$ defined  by $ \mathscr{H} (\overline{f}) (e) = \langle e, \overline{f} \rangle $ is invertible, then $\mathscr{H}$ is an Hermitian metric on $\mathcal{E}$. 

Moreover, if $B$ is an $A$-comodule $\ast$-algebra and $\mathcal{E}$ an object of $\qMod{A}{B}{}{B}$ and $\langle \cdot, ~ \cdot \rangle $ is $A$-covariant, then $\mathscr{H}$ is $A$-covariant.
\end{prop}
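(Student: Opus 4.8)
The plan is to verify the three requirements in Definition~\ref{4thdec231} for the map $\mathscr{H}$, then check covariance separately. First I would confirm that $\mathscr{H}$ is a morphism in $\qMod{}{B}{}{B}$, i.e. that it is both left and right $B$-linear. For right $B$-linearity: by the bimodule structure on $\overline{\mathcal{E}}$ from \eqref{28thnov231}, $\overline{f}\cdot b = \overline{b^*\cdot f}$, and using right $B$-linearity of $\langle\cdot,~\cdot\rangle$ in the $\overline{\mathcal{E}}$-slot together with the $\ast$-symmetry hypothesis one computes $\mathscr{H}(\overline{f}\cdot b)(e) = \langle e, \overline{b^*\cdot f}\rangle = (\langle b^*\cdot f, \overline{e}\rangle)^* = (\langle f, \overline{e}\rangle b^*)^*$; meanwhile the right action on $\prescript{}{B}{\Hom(\mathcal{E},B)}$ from \eqref{19thdec234} gives $(\mathscr{H}(\overline{f})\cdot b)(e) = \mathscr{H}(\overline{f})(e)\,b = \langle e,\overline{f}\rangle b = (\langle f, \overline{e}\rangle)^* b$, and these agree. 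The left $B$-linearity is an entirely analogous bookkeeping exercise using the left action $b\cdot\overline{f} = \overline{f\cdot b^*}$ and the left action on $\prescript{}{B}{\Hom(\mathcal{E},B)}$.

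Second, $\mathscr{H}$ is assumed invertible, which is exactly the nondegeneracy requirement. Third, the compatibility condition $\langle f,\overline{e}\rangle^* = \langle e,\overline{f}\rangle$ appearing in Definition~\ref{4thdec231} is, via \eqref{3rddec231} and the defining formula $\mathscr{H}(\overline{f})(e) = \langle e,\overline{f}\rangle$, literally the hypothesis $\langle e,\overline{f}\rangle = (\langle f,\overline{e}\rangle)^*$ that we are given (one just needs to observe that $\ev(e\otimes_B\mathscr{H}(\overline{f})) = \mathscr{H}(\overline{f})(e)$ by the definition of $\ev$). So this comes for free once the identification is made explicit. Putting the three points together, $\mathscr{H}$ is an Hermitian metric on $\mathcal{E}$.

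For the covariance statement: if $B$ is an $A$-comodule $\ast$-algebra, $\mathcal{E}$ an object of $\qMod{A}{B}{}{B}$, and $\langle\cdot,~\cdot\rangle$ is $A$-covariant, then I would show $\mathscr{H}$ is $A$-covariant, i.e. intertwines the coaction on $\overline{\mathcal{E}}$ given by \eqref{25thnov232} with that on $\prescript{}{B}{\Hom(\mathcal{E},B)}$ given by \eqref{27thnov233}. The cleanest route is to write $\mathscr{H}$ as a composite of already-covariant maps: from $A$-covariance of $\langle\cdot,~\cdot\rangle$, the map $\widetilde{\mathscr{H}}:\overline{\mathcal{E}} \to \prescript{}{B}{\Hom(\mathcal{E},B)}$, $\overline{f}\mapsto \langle -,\overline{f}\rangle$, agrees with the adjoint of $\langle\cdot,~\cdot\rangle$ under the Hom-tensor adjunction, and $\ev$, $\coev$ are colinear by Proposition~\ref{12thdec236}; alternatively one chases Sweedler indices directly, using that $\langle e_{(0)}, \overline{f_{(0)}}\rangle$ picks up $S(f_{(-1)})e_{(-1)}$-type factors that match \eqref{27thnov233} after applying the $\ast$ built into \eqref{25thnov232}.

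The only genuinely delicate point is the $B$-bilinearity check in the first step, where the conjugate-linear twists in the bimodule structures on $\overline{\mathcal{E}}$ and the left/right $B$-actions on $\prescript{}{B}{\Hom(\mathcal{E},B)}$ interact with the $\ast$ in the symmetry hypothesis; getting the sides and the conjugations to match requires care but no new ideas. Everything else is a direct unwinding of definitions, so the proposition is essentially a formal consequence of Definition~\ref{4thdec231} together with Proposition~\ref{12thdec236}.
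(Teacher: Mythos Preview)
Your approach is essentially the paper's: verify $B$-bilinearity of $\mathscr{H}$ from the bimodule structures \eqref{19thdec234} and \eqref{28thnov231}, then read off the symmetry and nondegeneracy conditions, and for covariance do the Sweedler chase against \eqref{27thnov233}. The paper opts for the direct Sweedler computation you mention as your second route.

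One slip to fix: in your right $B$-linearity check you write $(\langle b^*\cdot f,\overline{e}\rangle)^* = (\langle f,\overline{e}\rangle\, b^*)^*$, but left $B$-linearity in the first slot gives $\langle b^* f,\overline{e}\rangle = b^*\langle f,\overline{e}\rangle$, so the result is $\langle e,\overline{f}\rangle\, b$, not $b\,\langle e,\overline{f}\rangle$. In fact you do not need the $\ast$-symmetry hypothesis here at all: right $B$-linearity of $\langle\cdot,\cdot\rangle$ alone gives $\mathscr{H}(\overline{f}\cdot b)(e)=\langle e,\overline{f}\cdot b\rangle=\langle e,\overline{f}\rangle\, b=(\mathscr{H}(\overline{f})\cdot b)(e)$ directly, which is how the paper (implicitly) argues.
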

\begin{proof} Using the $B$-bilinearity of $\langle \cdot, ~ \cdot \rangle$ and the $B$-bimodule structures of $\prescript{}{B}{\Hom(\mathcal{E},B)}$  and $\overline{\mathcal{E}}$  (see \eqref{19thdec234}  and  \eqref{28thnov231}), it follows that $\mathscr{H}$ is $B$-bilinear. Now, from the definition of $\mathscr{H}$, it is  clear that $\mathscr{H}$ is an Hermitian metric. 

For the second assertion, we assume that $\langle \cdot, ~ \cdot \rangle$ is $A$-covariant. Then for $e, f \in \mathcal{E}$, we have 
\begin{eqnarray*}
\prescript{B}{}{\delta} (\mathscr{H} (\overline{f}) (e)) &=&  \prescript{B}{}{\delta} (\langle e, \overline{f} \rangle)\\
&=& (\id \otimes \langle \cdot, ~ \cdot \rangle) \prescript{\mathcal{E} \otimes_B \overline{\mathcal{E}}}{}{\delta} (e \otimes_B \overline{f})\\
&=& (\id \otimes \langle \cdot, ~ \cdot \rangle) (e_{(- 1)} f^*_{(- 1)} \otimes e_{(0)} \otimes_B \overline{f_{(0)}})
    \quad\text{(by~\eqref{25thnov232})}\\
&=& e_{(- 1)} f^*_{(- 1)} \otimes \langle e_{(0)}, \overline{f_{(0)}} \rangle\\
&=& e_{(- 1)} f^*_{(- 1)} \otimes \mathscr{H} (\overline{f_{(0)}}) (e_{(0)}).
\end{eqnarray*}
Using this, for any $e$ in $\mathcal{E}$, we obtain
\begin{eqnarray*}
\prescript{\prescript{}{B}{\Hom(\mathcal{E},B)}}{}{\delta} (\mathscr{H} (\overline{f})) (e) &=& S (e_{(- 1)}) [ \mathscr{H} (\overline{f}) (e_{(0)}) ]_{(- 1)} \otimes [ \mathscr{H} (\overline{f}) (e_{(0)}) ]_{(0)}\quad\text{(by~\eqref{27thnov233})}\\
&=& S (e_{(- 2)}) e_{(- 1)} f^*_{(- 1)} \otimes \mathscr{H} (\overline{f_{(0)}}) (e_{(0)})\\
&=& f^*_{(- 1)} \otimes \mathscr{H} (\overline{f_{(0)}}) (e)\\
&=& (\id \otimes \mathscr{H}) \prescript{\overline{\mathcal{E}}}{}{\delta} (\overline{f}) (e).
\end{eqnarray*}
This completes the proof of the proposition.
\end{proof}

\subsection{Construction of Hermitian metrics on quantum homogeneous spaces}

Proposition 8.30 of \cite{BeggsMajid:Leabh} gives a prescription to construct an Hermitian metric from a metric in the sense of Definition \ref{4thmay242}. In fact, we have the following result.

\begin{prop} \label{19thapril241}
        Let $(\Omega^{\bullet}, \wedge, d)$ be an $A$-covariant $\ast$-differential calculus on a left $A$-comodule $\ast$-algebra $B$ such that $\Omega^1  $ is finitely generated and projective as a left $B$-module. Then there is a one to one correspondence between real $A$-covariant metrics (in the sense of Definition \ref{3rdjuly244})  and $A$-covariant Hermitian metrics on $\Omega^1$. 
    \end{prop}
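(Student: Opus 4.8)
The plan is to construct the correspondence explicitly in both directions and then check the two composites are the identity, all within the bar-category $\qMod{A}{B}{}{B}$. The starting point is the well-known passage between metrics and Hermitian metrics recorded (without $\ast$-structure or covariance) in \cite[Proposition 8.30]{BeggsMajid:Leabh}. Concretely, given an $A$-covariant real metric $(g,(~,~))$ on $\Omega^1$, I would use the $B$-bilinear map
\[
  \langle ~ , ~ \rangle : \Omega^1 \otimes_B \overline{\Omega^1} \longrightarrow B, \qquad
  \langle \omega , \overline{\eta} \rangle := (\omega , \eta^\ast),
\]
and define $\mathscr{H}(\overline{\eta})(\omega) := \langle \omega , \overline{\eta}\rangle$. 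The invertibility of $\mathscr{H}$ follows because the metric $(g,(~,~))$ exhibits $(\Omega^1,(~,~),\coev_g)$ as a right dual of $\Omega^1$ in $\qMod{}{B}{}{B}$ (Remark \ref{24thjuly241}), so $(~,~)$ is nondegenerate with explicit inverse built from $g$; precomposing with the star isomorphism $\star_{\Omega^1}\colon\Omega^1\to\overline{\Omega^1}$ of \eqref{8thjuly241} (which is an isomorphism in $\qMod{A}{B}{}{B}$ by Example~\ref{2nddec231}(1), using finite generation and projectivity) transports this into invertibility of $\mathscr{H}$. The Hermitian symmetry $\langle f,\overline{e}\rangle^\ast = \langle e,\overline{f}\rangle$ is exactly the statement $(\omega,\eta)=(\eta^\ast,\omega^\ast)^\ast$, which by Corollary~\ref{9thmay24jb2} is equivalent to $g^\dagger=g$, i.e.\ to $(g,(~,~))$ being real. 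Finally, $A$-covariance of $\mathscr{H}$ follows from $A$-covariance of $(~,~)$ together with the colinearity of $\ast$ and of $\star_{\Omega^1}$, via Proposition~\ref{15thdec231}.

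In the reverse direction, given an $A$-covariant Hermitian metric $\mathscr{H}\colon\overline{\Omega^1}\to\prescript{}{B}{\Hom(\Omega^1,B)}$, I would define
\[
  (~,~) : \Omega^1 \otimes_B \Omega^1 \longrightarrow B, \qquad
  (\omega,\eta) := \langle \omega, \overline{\eta^\ast}\rangle = \ev\big(\omega \otimes_B \mathscr{H}(\overline{\eta^\ast})\big),
\]
and take $g$ to be the image of $1_B$ under the coevaluation for the right dual $(\prescript{}{B}{\Hom(\Omega^1,B)},\ev,\coev)$ of $\Omega^1$ in $\qMod{A}{B}{}{B}$ (Proposition~\ref{12thdec236}), transported along $\mathscr{H}^{-1}$ and $\star_{\Omega^1}^{-1}$; that is, $g=(\id\otimes_B(\star_{\Omega^1}^{-1}\circ\mathscr{H}^{-1}))\coev(1_B)$. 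One then has to verify the metric axioms $((\omega,~)\otimes_B\id)g=\omega=(\id\otimes_B(~,\omega))g$, which is just the rewriting of the evaluation–coevaluation identities \eqref{27thnov231} for the dual $(\prescript{}{B}{\Hom(\Omega^1,B)},\ev,\coev)$ under the isomorphism $\mathscr{H}$; covariance of $(~,~)$ and of $\coev_g$ is automatic since all maps in sight are $A$-colinear; and reality $g^\dagger=g$ follows again from the Hermitian symmetry of $\mathscr{H}$ via Corollary~\ref{9thmay24jb2}. Checking that the two constructions are mutually inverse is then a direct diagram chase, using the uniqueness statements of Lemma~\ref{8thmay24jb1} (a metric element determines its pairing and vice versa) to avoid explicit index computations.

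The main obstacle, I expect, is the careful bookkeeping of the three bar-category coherences simultaneously: keeping track of $\star_{\Omega^1}$, the canonical identification $\overline{\overline{\Omega^1}}\cong\Omega^1$ via $\mathrm{bb}$, and the flip $\dagger=\mathrm{flip}(\ast\otimes\ast)$ on $\Omega^1\otimes_B\Omega^1$, so that the identity $(\omega,\eta)=(\eta^\ast,\omega^\ast)^\ast$ really does match the Hermitian condition $\langle f,\overline e\rangle^\ast=\langle e,\overline f\rangle$ on the nose (rather than up to a sign or a coherence isomorphism). A secondary point requiring care is ensuring that nondegeneracy is preserved in both directions; here finite generation and projectivity of $\Omega^1$ as a left $B$-module is essential, since it is what guarantees $\prescript{}{B}{\Hom(\Omega^1,B)}$ is an honest right dual (Proposition~\ref{12thdec236}) and hence that ``invertible $\mathscr{H}$'' and ``nondegenerate $(~,~)$'' are genuinely interchangeable. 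Once these coherences are pinned down, the bijectivity of the correspondence and the matching of covariance are formal.
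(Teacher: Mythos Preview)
Your proposal is correct and follows essentially the same route as the paper: in both directions you use the star isomorphism $\star_{\Omega^1}$ to pass between $(~,~)$ and $\langle~,~\rangle$, invoke Corollary~\ref{9thmay24jb2} to match reality with Hermitian symmetry, and appeal to Proposition~\ref{15thdec231} for covariance. One small slip: in your formula for $g$ the transport $\star_{\Omega^1}^{-1}\circ\mathscr{H}^{-1}$ must act on the \emph{first} tensor factor of $\coev(1_B)\in\prescript{}{B}{\Hom(\Omega^1,B)}\otimes_B\Omega^1$, not the second (this is exactly the paper's $g=\sum_i\star_{\Omega^1}^{-1}\mathscr{H}^{-1}(e_i)\otimes_B e^i$).
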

    \begin{proof}
    For the proof, we will use the fact that the map $\star_{\Omega^1}: \Omega^1 \rightarrow \overline{\Omega^1} $ of \eqref{8thjuly241} is a left $A$-covariant $B$-bilinear map.
    
Let $(g, (~,~))$ be a real $A$-covariant metric.  We define 
$$\langle~,~\rangle:  \Omega^1  \otimes_B\overline{\Omega^1} \to B ~ \text{by} ~ \langle\omega, \overline{\eta}\rangle=(\omega, \eta^*).$$
Since  $\langle~,~\rangle = (~ , ~) \circ (\id \otimes_B  \star_{\Omega^1}^{-1}), $ the map  $\langle~,~\rangle $ is $A$-covariant. Since the metric is real, we have $(\omega, \eta)^*=(\eta^*, \omega^*)$ for all $\omega, \eta \in \Omega^1$ by Corollary \ref{9thmay24jb2}. Therefore,  
$$ \langle \omega, \overline{\eta} \rangle = (\langle \eta, \overline{\omega} \rangle)^* $$
for all $\omega, \eta \in \Omega^1$.
Next, we define a $B$-bilinear map
$$\mathscr{H}:\overline{\Omega^1} \to \prescript{  }{B}{\Hom(\Omega^1, B)} ~ \text{as} ~ \mathscr{H}(\overline{e})(f)=\langle f, \overline{e}\rangle.$$ 
The invertibility of the map $\mathscr{H}$ easily follows from the fact that $(g, (~,~))$ is a metric on $\Omega^1$.

Thus, Proposition \ref{15thdec231} implies that $\mathscr{H}$ is an $A$-covariant Hermitian metric.

    Conversely, given an $A$-covariant Hermitian metric $\mathscr{H}$ on $\Omega^1$, we define $g$ and $(~,~)$ by $g:= \sum^n_{i = 1} \star_{\Omega^1}^{-1} \mathscr{H}^{-1} (e_i) \otimes_B e^i$ where $ \{ e^i, e_i: i = 1, \cdots n \}$ is a dual basis of $\Omega^1$ and $ (\omega, \eta) = \langle \omega, \overline{\eta^*}\rangle$. 
    
    Since the element $ \sum^n_{i = 1} e_i \otimes_B e^i $ is the inverse image of $\id_{\Omega^1}$ under the canonical isomorphism $ \Omega^1 \otimes_B \prescript{}{B}{\Hom (\Omega^1, B) } \rightarrow \prescript{}{B}{\Hom (\Omega^1, \Omega^1)}, $ the definition of $g$ is independent of the choice of the dual basis.  It can be easily checked that
    $ (g, (~, ~)) $ is a covariant metric on $\Omega^1. $
    \end{proof}
    
Now we prove that under some conditions on the calculus, the covariant Hermitian metric on $\Omega^1$ constructed from a real covariant metric has a special form.

 \begin{thm} \label{9thmay24jb21}
    Suppose $(\Omega^{(\bullet,\bullet)}, \wedge, \partial, \overline{\partial})$ is an $A$-covariant complex structure on a quantum homogeneous space $B=A^{\co(H)}$, where $A$ is a Hopf $\ast$-algebra and $\Omega^1$ is an object of $\modz{A}{B}$.  Moreover, we assume that $\Omega^{(1,0)}$ and $\Omega^{(0,1)}$ are non isomorphic simple objects in $\modz{A}{B}$.

    If $(g, (~, ~))$ is an $A$-covariant real metric on $\Omega^1$, then by the prescription of Proposition \ref{19thapril241}, we have an $A$-covariant Hermitian metric $  \mathscr{H} $ on $\Omega^1$. Moreover, if we define
    \begin{equation*}
            \mathscr{H}_1:\overline{\Omega^{(1,0)}} \to \prescript{}{B}{ \Hom (\Omega^{(1,0)}, B)} \quad \text{by} \quad \mathscr{H}_1(\overline{\omega})(\eta)= (\eta, \omega^*) ~ \text{and}
        \end{equation*}
        
        \begin{equation*}
           \mathscr{H}_2:\overline{\Omega^{(0,1)}} \to \prescript{}{B}{ \Hom (\Omega^{(0,1)}, B)} \quad \text{by} \quad \mathscr{H}_2(\overline{\omega})(\eta)=  (\eta, \omega^*),
        \end{equation*}
        then $\mathscr{H}_1$ and   $\mathscr{H}_2$ are $A$-covariant Hermitian metrics  on $ \Omega^{(1,0)} $ and $\Omega^{(0,1)}$ respectively and we have
				$\mathscr{H} = \mathscr{H}_1 \oplus \mathscr{H}_2$.
    \end{thm}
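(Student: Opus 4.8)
The plan is to verify that $\mathscr{H}_1$ and $\mathscr{H}_2$ are genuine $A$-covariant Hermitian metrics on the respective summands, and then to show that the Hermitian metric $\mathscr{H}$ produced by Proposition~\ref{19thapril241} from $(g,(~,~))$ decomposes as their direct sum. First I would note that by Proposition~\ref{27thjuly241}, the metric $(g,(~,~))$ automatically satisfies \eqref{18thapril24jb1}, so $(\omega,\eta)=0$ whenever $\omega,\eta\in\Omega^{(1,0)}$ or $\omega,\eta\in\Omega^{(0,1)}$. In particular, for $\eta\in\Omega^{(1,0)}$ and $\omega\in\Omega^{(1,0)}$ we have $\omega^*\in\Omega^{(0,1)}$, so $(\eta,\omega^*)$ pairs a $(1,0)$-form with a $(0,1)$-form and is generically nonzero; this is exactly the pairing $(~,~)^{1,0}$ appearing in Corollary~\ref{29thmarch24jb4}. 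Thus $\mathscr{H}_1$ is, up to the identification $\star_{\Omega^1}$, the map $\overline{\Omega^{(1,0)}}\to\prescript{}{B}{\Hom(\Omega^{(1,0)},B)}$ induced by $(~,~)^{1,0}$; similarly for $\mathscr{H}_2$ using $(~,~)^{0,1}$.

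Next I would check the Hermitian metric axioms for $\mathscr{H}_1$ (and symmetrically for $\mathscr{H}_2$) via Proposition~\ref{15thdec231}. Define $\langle\omega,\overline{\eta}\rangle_1:=(\omega,\eta^*)$ for $\omega,\eta\in\Omega^{(1,0)}$; this is $B$-bilinear and $A$-covariant because it is the composite of $(~,~)$ restricted to $\Omega^{(1,0)}\otimes_B\Omega^{(0,1)}$ with $\id\otimes_B\star_{\Omega^1}^{-1}$, both covariant maps. The conjugate-symmetry $\langle\omega,\overline{\eta}\rangle_1=(\langle\eta,\overline{\omega}\rangle_1)^*$ is precisely $(\omega,\eta^*)=((\eta,\omega^*))^*$, which follows from reality of $g$ via Corollary~\ref{9thmay24jb2}. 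For invertibility of $\mathscr{H}_1$, I would use that $(~,~)^{1,0}$ together with $\coev^{1,0}$ exhibits $\Omega^{(0,1)}$ as a right dual of $\Omega^{(1,0)}$ (Corollary~\ref{29thmarch24jb4})—but here one must be careful, since the $(~,~)$ coming from the given $(g,(~,~))$ need not be the normalized one of Theorem~\ref{2ndapril242}. Since $\Omega^{(1,0)}$ and $\Omega^{(0,1)}$ are simple, the restriction of $(~,~)$ to $\Omega^{(1,0)}\otimes_B\Omega^{(0,1)}$ is a nonzero morphism between a simple object and (the dual identified with) another simple object, hence an isomorphism by Schur; equivalently it is a nonzero scalar multiple of $(~,~)^{1,0}$, so $\mathscr{H}_1$ is invertible. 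Then Proposition~\ref{15thdec231} gives that $\mathscr{H}_1$ is an $A$-covariant Hermitian metric, and likewise $\mathscr{H}_2$.

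Finally I would establish $\mathscr{H}=\mathscr{H}_1\oplus\mathscr{H}_2$. Under the decomposition $\Omega^1=\Omega^{(1,0)}\oplus\Omega^{(0,1)}$ we get $\overline{\Omega^1}=\overline{\Omega^{(1,0)}}\oplus\overline{\Omega^{(0,1)}}$ and a corresponding block decomposition of $\prescript{}{B}{\Hom(\Omega^1,B)}$. The recipe of Proposition~\ref{19thapril241} defines $\mathscr{H}(\overline{e})(f)=\langle f,\overline{e}\rangle=(f,e^*)$. Taking $\overline{e}\in\overline{\Omega^{(1,0)}}$ and $f\in\Omega^{(0,1)}$ gives $(f,e^*)$ with $f\in\Omega^{(0,1)}$, $e^*\in\Omega^{(0,1)}$, which vanishes by \eqref{18thapril24jb1}; symmetrically the other off-diagonal block vanishes. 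Hence $\mathscr{H}$ is block-diagonal, and on the $\Omega^{(1,0)}$-block it sends $\overline{\omega}\mapsto\big(\eta\mapsto(\eta,\omega^*)\big)$, which is exactly $\mathscr{H}_1$; on the $\Omega^{(0,1)}$-block it is $\mathscr{H}_2$. Therefore $\mathscr{H}=\mathscr{H}_1\oplus\mathscr{H}_2$, completing the proof. The main obstacle I anticipate is the invertibility/well-definedness bookkeeping in the second step: one must carefully justify that the restrictions of the given (possibly non-normalized) $(~,~)$ to the mixed summands are nondegenerate, which is where simplicity of $\Omega^{(1,0)},\Omega^{(0,1)}$ and Schur's lemma are essential, and one should double-check that the identification of $\prescript{}{B}{\Hom(\Omega^{(1,0)},B)}$ with $\Omega^{(0,1)}$ used implicitly in defining $\mathscr{H}_1$ matches the one from Corollary~\ref{29thmarch24jb4}.
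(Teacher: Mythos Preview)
Your overall architecture matches the paper's: use Proposition~\ref{27thjuly241} to get the vanishing \eqref{18thapril24jb1}, deduce the block-diagonal form of $\mathscr{H}$, and then reduce to showing $\mathscr{H}_1,\mathscr{H}_2$ are isomorphisms. The place where you diverge is the invertibility step. The paper does this by hand: it writes $g = X_{(1)}\otimes_B X_{(2)} + Y_{(1)}\otimes_B Y_{(2)}$ with $X\in\Omega^{(0,1)}\otimes_B\Omega^{(1,0)}$, $Y\in\Omega^{(1,0)}\otimes_B\Omega^{(0,1)}$, extracts from the metric axioms the identities $\omega = X_{(1)}(X_{(2)},\omega)$ for $\omega\in\Omega^{(0,1)}$ and $(\omega,X_{(1)})X_{(2)}=\omega$ for $\omega\in\Omega^{(1,0)}$, and then uses these to write down an explicit preimage for any $f\in\prescript{}{B}{\Hom(\Omega^{(1,0)},B)}$, namely $x=f(X_{(2)})^*X_{(1)}^*$. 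Your Schur-lemma route is more conceptual and also works, with two caveats you should tighten.

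First, you invoke Corollary~\ref{29thmarch24jb4} and the normalized pairing $(~,~)^{1,0}$, but those results assume $H$ is a compact quantum group algebra, which the present theorem does not. You do not actually need that corollary: it suffices to observe that $\overline{\Omega^{(1,0)}}\cong\Omega^{(0,1)}$ is simple (Remark~\ref{11thdec23n1}) and that $\prescript{}{B}{\Hom(\Omega^{(1,0)},B)}$, being the right dual of a simple object in the rigid category $\modz{A}{B}$, is simple; then Schur applies directly to $\mathscr{H}_1$. Second, you assert $\mathscr{H}_1$ is nonzero without saying why. This is easy: if $\mathscr{H}_1=0$ then $(\eta,\omega^*)=0$ for all $\eta,\omega\in\Omega^{(1,0)}$, which together with \eqref{18thapril24jb1} kills $(\eta,\cdot)$ on all of $\Omega^1$, contradicting the metric axiom $((\eta,\,\cdot\,)\otimes_B\id)g=\eta$. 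With these two fixes your argument is complete; the paper's explicit-inverse approach has the advantage of producing a concrete formula usable later, while yours avoids any computation.
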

\begin{proof} 
We begin by observing that by virtue of  Remark \ref{27thsep241}, $\Omega^1$ is projective as a left $B$-module.
The fact that $ \mathscr{H} $ is indeed an $A$-covariant Hermitian metric follows directly from Proposition \ref{19thapril241}. 

For the assertions regarding $\mathscr{H}_1$ and $\mathscr{H}_2$, we start by making some observations using Proposition \ref{27thjuly241}. Indeed, by that result, 
 $g \in \Omega^{(1,0)} \otimes_B \Omega^{(0,1)} \oplus \Omega^{(0,1)} \otimes_B \Omega^{(1,0)}$.
Let us use Sweedler like notation to write 
$$ g = X_{(1)} \otimes_B X_{(2)} + Y_{(1)} \otimes_B Y_{(2)}, $$
where 
$ X_{(1)} \otimes_B X_{(2)} \in \Omega^{(0, 1)} \otimes_B \Omega^{(1,0)} $ and  $ Y_{(1)} \otimes_B Y_{(2)} \in \Omega^{(1, 0)} \otimes_B \Omega^{(0,1)}. $
Then if $\omega \in \Omega^{(0, 1)}$, we have
\begin{equation} \omega = g_{(1)} (g_{(2)}, \omega) = X_{(1)} (X_{(2)}, \omega)  +  Y_{(1)} (Y_{(2)}, \omega)  = X_{(1)} (X_{(2)}, \omega)  \label{19thapril242} 
\end{equation}
as the equation \eqref{18thapril24jb1} is satisfied by Proposition \ref{27thjuly241}.  Similarly, if $\omega \in \Omega^{(1,0)}$, then
\begin{equation} \label{9thjuly241}
(\omega, X_{(1)}) X_{(2)} = \omega.
\end{equation}

For the second statement, it is clear that it is enough to prove that $\mathscr{H}_1$ and $\mathscr{H}_2$ are  isomorphisms.
If $\mathscr{H}_1(\overline{x})=0$ for some $x \in \Omega^{(1,0)}$ so that  $ (y, x^*) =0$ for all $y\in \Omega^{(1,0)}$,  then
 $$0  = X_{(1)}(X_{(2)}, x^*) = x^* $$
by \eqref{19thapril242}, as $x^* \in \Omega^{(0, 1)}. $ Thus, $x = 0$ proving that $\mathscr{H}_1$ is one-one.

        Now suppose $f\in \prescript{}{B}{ \Hom (\Omega^{(1,0)}, B)}$. Let $x= f(X_{(2)})^* X_{(1)}^* \in \Omega^{(1, 0)}. $ Then for all $y\in \Omega^{(1,0)}$, we have
$$ \mathscr{H}_1(\overline{x})(y)=  (y, x^*)  = (y, X_{(1)})f(X_{(2)}) = f((y, X_{(1)}) X_{(2)})  = f(y)$$
        by \eqref{9thjuly241} and the left $B$-linearity of $f$. This proves that $\mathscr{H}_1$ is onto.

Similarly,  $\mathscr{H}_2$ is also an $A$-covariant Hermitian metric on $ \Omega^{(0, 1)}$. This completes the proof. 
\end{proof}

\section{The Chern connection as a bimodule connection} \label{4thaugust243}

Let $(\mathcal{E}, \overline{\partial}_{\mathcal{E}}) $ be a holomorphic $B$-bimodule (in the sense of Definition \ref{11thdec231}) on a $\ast$-algebra $B$ equipped with  an Hermitian metric $\mathscr{H}$. Then there exists a unique connection $\nabla$ on $\mathcal{E}$ which is compatible with $\mathscr{H}$ (see Definition \ref{19thoct231}) and whose $(0, 1)$-part coincides with $\overline{\partial}_{\mathcal{E}}$. This was proved by Beggs and Majid in \cite{BeggsMajidChern}. The connection $\nabla$ is called the Chern connection for the pair $(\mathcal{E}, \mathscr{H})$.

In this section, we start by observing  that if a complex structure $(\Omega^{(\bullet, \bullet)}, \wedge, \partial, \overline{\partial})$ on $B$ is factorizable, then the $\overline{\partial}$-connection $\overline{\partial}_{\Omega^{(1, 0)}}$ for the holomorphic $B$-bimodule $(\Omega^{(1, 0)}, \overline{\partial}_{\Omega^{(1, 0)}}) $ of Proposition \ref{25thsep232} is in fact a bimodule $\overline{\partial}$-connection. The second assertion of Theorem \ref{chern} then implies that the Chern connection $\nabla_{\Ch}$ for any Hermitian metric $\mathscr{H}_1$ on $\Omega^{(1, 0)}$ is a bimodule connection. It follows that under the factorizability assumption, this result continues to hold for a Chern connection $\nabla_{\Ch, ~ \op}$ on $\Omega^{(0, 1)}$ for the opposite complex structure and an Hermitian metric $\mathscr{H}_2$ on $\Omega^{(0, 1)}$. As a result, if $\mathscr{H}_1$ and $\mathscr{H}_2$ are Hermitian metrics on $\Omega^{(1, 0)}$ and $\Omega^{(0, 1)}$ respectively, then the  Chern connection $\nabla_{\Ch}$ for the pair $(\Omega^{(1, 0)}, \mathscr{H}_1)$ extends to a bimodule connection $\nabla_{\Ch} + \nabla_{\Ch, \op}$ on $\Omega^1$. 

\subsection{Definition and existence of the Chern connection} \label{4thaugust242}

We start by recalling the definition of compatibility of a connection on a bimodule $\mathcal{E}$ with an Hermitian metric.

\begin{defn} (\cite[Definition 8.33]{BeggsMajid:Leabh}) \label{19thoct231}
 Let $\nabla$ be a left connection on a $B$-bimodule $\mathcal{E}$, where $B$ is a $\ast$-algebra equipped with a $\ast$-differential calculus. Then $\nabla$ is said to be compatible with an Hermitian metric $\mathscr{H}$ on $\mathcal{E}$   if
\begin{align*}
    d\langle ~ , ~ \rangle = (\id \otimes_B \langle ~ , ~\rangle)(\nabla \otimes_B \id) + (\langle ~ , ~ \rangle \otimes_B \id) (\id\otimes_B \widetilde{\nabla})
\end{align*}
as maps from $ \mathcal{E} \otimes_B \overline{\mathcal{E}} $ to $ \Omega^1$.

 Here, $\langle ~ , ~ \rangle : \mathcal{E} \otimes_B \overline{\mathcal{E}} \to B $ is the map introduced in \eqref{3rddec231} while $\widetilde{\nabla}: \overline{\mathcal{E}}\to \overline{\mathcal{E}}\otimes_B \Omega^1$ is the right connection constructed from $\nabla$ by defining 
 $$\widetilde{\nabla} (\overline{e})= \sum_i \overline{x_i}\otimes_B \omega_i^{\ast},$$
 where $\nabla (e)= \sum_i \omega_i \otimes_B x_i $.
\end{defn}

If $\nabla$ is a left connection on a $B$-bimodule $\mathcal{E}$ and $(\Omega^{(\bullet, \bullet)}, \wedge, \partial, \overline{\partial})$ is a complex structure on $B$, then the $(1, 0)$-part of the connection $\nabla$ is defined as:
$$ \nabla^{(1, 0)}: = (\pi^{(1, 0)} \otimes_B \id) \nabla: \mathcal{E} \rightarrow \Omega^{(1, 0)} \otimes_B \mathcal{E}, $$
while the $(0, 1)$-part is defined as
$$ \nabla^{(0, 1)}: = (\pi^{(0, 1)} \otimes_B \id) \nabla: \mathcal{E} \rightarrow \Omega^{(0, 1)} \otimes_B \mathcal{E}.$$

The following theorem of Beggs and Majid generalizes the existence of Chern connection on holomorphic Hermitian vector bundles on complex manifolds.

\begin{thm} \label{chern}  \cite[Proposition 4.2 and Proposition 4.3]{BeggsMajidChern} 
 Suppose $ (\Omega^{(\bullet, \bullet)}, \wedge, \partial, \overline{\partial}) $ is a complex structure on a $\ast$-algebra $B$ and $\mathscr{H}$ an Hermitian metric on a holomorphic $B$-bimodule $ (\mathcal{E}, \overline{\partial}_{\mathcal{E}})$, where $\mathcal{E}$ is finitely generated and projective as a left $B$-module. Then the following statements hold:
\begin{enumerate}
\item There exists a unique left connection $\nabla_{\bf\Ch}$ on $\mathcal{E}$ called the Chern connection, which is compatible with $\mathscr{H}$ (in the sense of Definition \ref{19thoct231}) and whose $(0, 1)$-part coincides with $\overline{\partial}_{\mathcal{E}}$. 

Hence, we have
\begin{equation} \label{19thoct233}
\nabla_{\Ch} = (\pi^{(1, 0)} \otimes_B \id) \nabla_{\Ch} + \overline{\partial}_{\mathcal{E}}.
\end{equation}

\item If $\sigma_{\mathcal{E}, (0, 1)}: \mathcal{E} \otimes_B \Omega^{(0, 1)} \rightarrow \Omega^{(0, 1)} \otimes_B \mathcal{E} $ is a $B$-bimodule map such that 
$(\overline{\partial}_{\mathcal{E}}, \sigma_{\mathcal{E}, (0, 1)})$ is a left bimodule $\overline{\partial}$-connection on $\mathcal{E}$, then there exists a $B$-bimodule map
  $$ \sigma_{\mathcal{E}, (1, 0)}: \mathcal{E} \otimes_{B} \Omega^{(1, 0)} \rightarrow \Omega^{(1, 0)} \otimes_{B} \mathcal{E}$$
	such that $(\nabla_{\Ch}, \sigma_{\mathcal{E}, (1, 0)} \oplus \sigma_{\mathcal{E}, (0, 1)})$ is a bimodule connection. 
\end{enumerate}
\end{thm}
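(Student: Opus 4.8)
Since Theorem~\ref{chern} is quoted from \cite{BeggsMajidChern}, the plan is to reproduce the Beggs--Majid argument, treating the two assertions in turn; if one simply cites \cite{BeggsMajidChern}, all the verifications below are subsumed there.

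\textbf{Part (1).} The plan is to split any left connection $\nabla$ on $\mathcal{E}$ into its bidegree components $\nabla^{(1,0)}=(\pi^{(1,0)}\otimes_B\id)\nabla$ and $\nabla^{(0,1)}=(\pi^{(0,1)}\otimes_B\id)\nabla$, so that imposing $\nabla^{(0,1)}=\overline{\partial}_{\mathcal{E}}$ leaves only $\nabla^{(1,0)}$ to be determined, and then to decompose the compatibility identity of Definition~\ref{19thoct231} according to $\Omega^1=\Omega^{(1,0)}\oplus\Omega^{(0,1)}$, using $d=\partial+\overline{\partial}$ (Lemma~\ref{12thdec233}) together with the observation that if $\nabla(e)=\sum_i\omega_i\otimes_Bx_i$ then $\widetilde{\nabla}(\overline{e})=\sum_i\overline{x_i}\otimes_B\omega_i^*$ interchanges the two bidegrees, since $(\Omega^{(p,q)})^*=\Omega^{(q,p)}$. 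The $(1,0)$-component then reads
\[
  \partial\langle e,\overline{f}\rangle=\langle\nabla^{(1,0)}(e),\overline{f}\rangle+\langle e,\widetilde{\overline{\partial}_{\mathcal{E}}}(\overline{f})\rangle,
\]
where $\widetilde{\overline{\partial}_{\mathcal{E}}}\colon\overline{\mathcal{E}}\to\overline{\mathcal{E}}\otimes_B\Omega^{(1,0)}$ is the associated right $\partial$-connection formed as in Definition~\ref{19thoct231}; and one checks, using the Hermitian symmetry $\langle e,\overline{f}\rangle^*=\langle f,\overline{e}\rangle$ and part~(iii) of Lemma~\ref{12thdec233}, that applying $*$ to this equation (with $e,f$ interchanged) yields precisely the $(0,1)$-component, so the two are equivalent. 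To solve for $\nabla^{(1,0)}$, fix a dual basis $\{e_\alpha\in\prescript{}{B}{\Hom(\mathcal{E},B)},\,e^\alpha\in\mathcal{E}\}$ of $\mathcal{E}$ as a left $B$-module (which exists since $\mathcal{E}$ is finitely generated projective) and set $\overline{f_\alpha}:=\mathscr{H}^{-1}(e_\alpha)$, so that $e_\alpha(e)=\langle e,\overline{f_\alpha}\rangle$ and $e=\sum_\alpha\langle e,\overline{f_\alpha}\rangle e^\alpha$ by~\eqref{27thnov232}; then the displayed equation forces
\[
  \nabla^{(1,0)}_{\Ch}(e)=\sum_\alpha\Big(\partial\langle e,\overline{f_\alpha}\rangle-\langle e,\widetilde{\overline{\partial}_{\mathcal{E}}}(\overline{f_\alpha})\rangle\Big)\otimes_B e^\alpha,
\]
and one verifies that the right-hand side is independent of the chosen dual basis, solves the equation, and defines a $\partial$-connection by applying $\partial$ to $\langle ae,\overline{f}\rangle=a\langle e,\overline{f}\rangle$ and invoking uniqueness. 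Setting $\nabla_{\Ch}:=\nabla^{(1,0)}_{\Ch}+\overline{\partial}_{\mathcal{E}}$ then gives the unique compatible connection with $(0,1)$-part $\overline{\partial}_{\mathcal{E}}$, and \eqref{19thoct233} is immediate.

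\textbf{Part (2).} Now assume $(\overline{\partial}_{\mathcal{E}},\sigma_{\mathcal{E},(0,1)})$ is a left bimodule $\overline{\partial}$-connection. The plan is to extract $\sigma_{\mathcal{E},(1,0)}\colon\mathcal{E}\otimes_B\Omega^{(1,0)}\to\Omega^{(1,0)}\otimes_B\mathcal{E}$ from the explicit formula for $\nabla^{(1,0)}_{\Ch}$ above. Concretely, I would compute $\nabla^{(1,0)}_{\Ch}(eb)-\nabla^{(1,0)}_{\Ch}(e)\cdot b$ by applying $\partial$ to $\langle eb,\overline{f_\alpha}\rangle=\langle e,\overline{f_\alpha b^*}\rangle$ (using the $\overline{\mathcal{E}}$-bimodule structure \eqref{28thnov231} and the $\prescript{}{B}{\Hom}$-structure \eqref{19thdec234}) and transporting the bimodule property of $\overline{\partial}_{\mathcal{E}}$, i.e.\ $\sigma_{\mathcal{E},(0,1)}$, through $\mathscr{H}$; this shows the difference depends on $e$ and $b$ only through $e\otimes_B\partial b$, which defines $\sigma_{\mathcal{E},(1,0)}$, and its $B$-bilinearity follows from that of $\langle\cdot,\cdot\rangle$, $\mathscr{H}$ and $\sigma_{\mathcal{E},(0,1)}$. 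Finally, since $\Omega^1=\Omega^{(1,0)}\oplus\Omega^{(0,1)}$ and $db=\partial b+\overline{\partial}b$, the connection $\nabla_{\Ch}=\nabla^{(1,0)}_{\Ch}+\overline{\partial}_{\mathcal{E}}$ satisfies $\nabla_{\Ch}(eb)=\nabla_{\Ch}(e)b+(\sigma_{\mathcal{E},(1,0)}\oplus\sigma_{\mathcal{E},(0,1)})(e\otimes_B db)$, so it is a bimodule connection with the stated generalized braiding.

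\textbf{Main obstacle.} The delicate points are bookkeeping-heavy rather than conceptual: in Part~(1), checking that the right-hand side of the $(1,0)$-compatibility equation genuinely assembles into a well-defined element of $\Omega^{(1,0)}\otimes_B\mathcal{E}$ (basis-independence) and that the resulting map obeys the $\partial$-Leibniz rule; and in Part~(2), verifying the well-definedness and $B$-bilinearity of $\sigma_{\mathcal{E},(1,0)}$. Both require careful use of the bar-category bimodule conventions \eqref{28thnov231}, the $\prescript{}{B}{\Hom}$-conventions \eqref{19thdec234}, the interplay of $\partial$ and $\overline{\partial}$ with $*$ via Lemma~\ref{12thdec233}(iii), and the Hermitian symmetry $\langle e,\overline{f}\rangle^*=\langle f,\overline{e}\rangle$.
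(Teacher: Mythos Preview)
The paper does not give its own proof of Theorem~\ref{chern}; the result is simply cited from \cite[Propositions~4.2 and~4.3]{BeggsMajidChern}, with the remark after the statement that the explicit formula for $\sigma_{\mathcal{E},(1,0)}$ may be found there. Your proposal correctly reconstructs the Beggs--Majid argument and is consistent with the matrix identity \eqref{4thdec232} that the paper later recalls from the same source when deriving its own expression for the $(1,0)$-part of the Chern connection in Section~\ref{1staugust241}.
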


For the explicit expression of $\sigma_{\mathcal{E}, (1, 0)}$ in terms of $\sigma_{\mathcal{E}, (0, 1)}$ and $\mathscr{H}$, we refer to \cite[Proposition 4.3]{BeggsMajidChern}.

The proof of the next theorem is the content of Section \ref{1staugust241}. Since the proof uses various notations and  identities which are not used in the rest of the article, we have given the proof in a separate section.  

\begin{thm} \label{23rdoct231} 
Let $ (\Omega^{(\bullet, \bullet)}, \wedge, \partial, \overline{\partial}) $ be an $A$-covariant  complex structure on a $\ast$-algebra $B$. If $ (\mathcal{E}, \overline{\partial}_{\mathcal{E}})$ is a covariant holomorphic $B$-bimodule equipped with an $A$-covariant Hermitian metric $\mathscr{H}$ with $\mathcal{E}$ finitely generated and projective as a left $B$-module, then the Chern connection
$\nabla_{\Ch}$ is a covariant connection.
\end{thm}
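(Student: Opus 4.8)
The plan is to exploit the uniqueness statement in Theorem~\ref{chern}(i) together with the fact that all the structural data entering the definition of the Chern connection is $A$-covariant. Concretely, write $V$ for the coaction $\prescript{\mathcal{E}}{}{\delta}\colon \mathcal{E}\to A\otimes\mathcal{E}$ and recall that, since the complex structure is $A$-covariant, the projections $\pi^{(p,q)}$, the maps $\partial,\overline{\partial}$, and (by hypothesis) $\overline{\partial}_{\mathcal{E}}$ and $\mathscr{H}$ are all $A$-covariant. First I would make precise the notion of a ``twisted'' or ``translated'' connection: given a covariant left connection candidate and the coaction, define for each ``group-like direction'' a new map by conjugating $\nabla_{\Ch}$ through the coaction, i.e. consider $\nabla' := (\id\otimes\nabla_{\Ch})$-type composite built so that $\nabla'$ is again a left connection on $\mathcal{E}$. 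The cleanest formulation is: show that the failure of $\nabla_{\Ch}$ to be covariant is measured by the map
\[
\Theta := (\id\otimes\nabla_{\Ch})\circ\prescript{\mathcal{E}}{}{\delta} - (\id\otimes_B\text{-multiplication on }\Omega^1\otimes_B\mathcal{E})\circ(\prescript{\Omega^1\otimes_B\mathcal{E}}{}{\delta})\circ\nabla_{\Ch},
\]
and then argue that $\Theta$ identically vanishes.

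The key steps, in order, are as follows. Step one: observe that $\qMod{A}{B}{}{B}$ is a monoidal category and that $\Omega^1\otimes_B\mathcal{E}$, $\Omega^{(1,0)}\otimes_B\mathcal{E}$, $\Omega^{(0,1)}\otimes_B\mathcal{E}$ all carry canonical covariant structures; hence it makes sense to ask whether $\nabla_{\Ch}$ is a morphism up to the Leibniz defect. Step two: using that $\overline{\partial}_{\mathcal{E}}$ is covariant, show that the $(0,1)$-part of $\nabla_{\Ch}$, which by \eqref{19thoct233} equals $\overline{\partial}_{\mathcal{E}}$, is automatically covariant. Step three (the heart): show that the Chern compatibility equation in Definition~\ref{19thoct231} is itself an equation between $A$-covariant maps, because $d$, $\langle~,~\rangle$ (built from the covariant $\mathscr{H}$ via \eqref{3rddec231}), and the passage $\nabla\mapsto\widetilde{\nabla}$ all intertwine the coactions. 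Step four: apply the coaction to the defining properties of $\nabla_{\Ch}$ and check that the ``translated'' connection $\nabla_{\Ch}^{\delta}$ — defined by first coacting on $\mathcal{E}$, applying $\id\otimes\nabla_{\Ch}$, and then using the bar/flip and the canonical isomorphism $A\otimes(\Omega^1\otimes_B\mathcal{E})\cong\Omega^1\otimes_B(A\otimes\mathcal{E})$ appropriately — is again a left connection on $\mathcal{E}$ whose $(0,1)$-part is $\overline{\partial}_{\mathcal{E}}$ and which is again compatible with $\mathscr{H}$. By the uniqueness clause of Theorem~\ref{chern}(i), $\nabla_{\Ch}^{\delta}=\nabla_{\Ch}$, which is exactly the covariance identity $\prescript{\Omega^1\otimes_B\mathcal{E}}{}{\delta}\circ\nabla_{\Ch}=(\id\otimes\nabla_{\Ch})\circ\prescript{\mathcal{E}}{}{\delta}$.

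The main obstacle I expect is Step four, namely verifying carefully that $\nabla_{\Ch}^{\delta}$ genuinely lands back in the same space and satisfies the left Leibniz rule with the \emph{same} $d$ on $B$: one must use that $d\colon B\to\Omega^1$ is $A$-covariant (part of Definition~\ref{Covariant Calculi}) and untangle the Sweedler bookkeeping coming from $\prescript{B}{}{\delta}(b) = b_{(-1)}\otimes b_{(0)}$ against $\prescript{\mathcal{E}}{}{\delta}(be)$; the coaction on $\mathcal{E}$ being an algebra-module-compatible map is what forces the defect terms to cancel. A secondary subtlety is that the construction $\nabla\mapsto\widetilde\nabla$ involves the $\ast$-operation and the bar functor, so to see that Chern-compatibility is preserved under translation one needs the compatibility of the coaction with $\ast$ (i.e. $B$ is a comodule $\ast$-algebra and $\Omega^\bullet(B)$ is a comodule $\ast$-algebra, from Definition~\ref{Covariant Calculi}), together with \eqref{25thnov232} for the coaction on $\overline{\mathcal{E}}$. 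Once these bookkeeping points are in place, the uniqueness argument closes immediately, and covariance follows; since $\mathcal{E}$ is finitely generated projective as a left $B$-module by hypothesis, Theorem~\ref{chern} applies and there are no further finiteness issues.
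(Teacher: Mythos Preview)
Your approach is genuinely different from the paper's. The paper does not invoke the uniqueness clause of Theorem~\ref{chern}; instead it derives an explicit formula for the $(1,0)$-part $\widehat{\nabla}$ of the Chern connection (Theorem~\ref{1,0partofchernconnection}), expressing $\widehat{\nabla}$ in terms of the dual right $\overline{\partial}$-connection $\widetilde{\overline{\partial}}_{\mathcal{E}}$ on $\prescript{}{B}{\Hom}(\mathcal{E},B)$, the Hermitian metric $\mathscr{H}$, and the star map. Covariance of $\widehat{\nabla}$ is then checked directly from the covariance of these ingredients via a Sweedler computation in the bar category (Lemma~\ref{tilde_partial_E_H_covariant} and Theorem~\ref{30thjuly241}); combined with the assumed covariance of $\overline{\partial}_{\mathcal{E}}$ this gives the result.

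Your uniqueness idea is natural, but Step four as written has a real gap. The composite you describe---coact, apply $\id\otimes\nabla_{\Ch}$, then use an isomorphism $A\otimes(\Omega^1\otimes_B\mathcal{E})\cong \Omega^1\otimes_B(A\otimes\mathcal{E})$---lands in $\Omega^1\otimes_B(A\otimes\mathcal{E})$, not in $\Omega^1\otimes_B\mathcal{E}$, so it is not a connection on $\mathcal{E}$ to which Theorem~\ref{chern} applies. Collapsing the extra $A$ via $\epsilon\otimes\id$ simply returns $\nabla_{\Ch}$ and the argument becomes vacuous. This is precisely where the coaction setting differs from an honest group action: there is no single ``translate by $g$'' automorphism of $\mathcal{E}$ to conjugate by. One can salvage the strategy by extending scalars and applying the uniqueness of Theorem~\ref{chern} over the $\ast$-algebra $A\otimes B$ with calculus $A\otimes\Omega^{(\bullet,\bullet)}$: one must show that both $\id_A\otimes\nabla_{\Ch}$ and the transport of $\nabla_{\Ch}$ along the coaction-induced isomorphism of $(A\otimes B)$-modules are Chern connections on $A\otimes\mathcal{E}$ for the \emph{same} extended holomorphic structure and Hermitian metric (this is where covariance of $\overline{\partial}_{\mathcal{E}}$ and $\mathscr{H}$ enters), whence they coincide. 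Making this precise requires setting up the two $(A\otimes B)$-module structures on $A\otimes\mathcal{E}$, the intertwiner between them, and verifying all the hypotheses of Theorem~\ref{chern} over $A\otimes B$; the bookkeeping is comparable in length to the paper's direct computation.
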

\begin{proof}
See Theorem \ref{30thjuly241}.
\end{proof}

\subsection{The Chern connection and factorizable complex structures}

\begin{defn} \label{13thdec231}
 A complex structure $ (\Omega^{(\bullet, \bullet)}, \wedge, \partial, \overline{\partial}) $ over a $\ast$-algebra $B$ is called factorizable if  
the restriction of the wedge maps 
$$ \wedge_{(0, q), (p,  0)}: \Omega^{(0, q)} \otimes_{B} \Omega^{(p, 0)} \rightarrow \Omega^{(p,q)} ~ \text{and} ~  \wedge_{(p, 0), (0,  q)}: \Omega^{(p, 0)} \otimes_{B} \Omega^{(0, q)} \rightarrow \Omega^{(p,q)}      $$ 
are invertible for all $ (p, q) \in \mathbb{N} \times \mathbb{N}. $
\end{defn}

In what follows, we will denote the inverses of the maps $\wedge_{(0, q), (p,  0)}$ and $ \wedge_{(p, 0), (0,  q)} $ by the symbols $\theta^{(p,q)}_l$ and $\theta^{(p, q)}_r$ respectively. Thus, if the differential calculus is factorizable, then
\begin{equation} \label{25thsep231} 
\theta^{(p, q)}_l: \Omega^{(p,q)} \rightarrow \Omega^{(0, q)} \otimes_{B} \Omega^{(p, 0)} ~ \text{and} ~  \theta^{(p, q)}_r: \Omega^{(p,q)} \rightarrow \Omega^{(p, 0)} \otimes_{B} \Omega^{(0, q)}  \end{equation}
are $B$-bimodule isomorphisms and the following equations hold:
\begin{equation} \label{19thoct236}
\wedge_{(0, q), (p, 0)} \circ \theta^{(p, q)}_l = \id_{\Omega^{(p, q)}} = \wedge_{(p, 0), (0, q)} \circ \theta^{(p, q)}_r.
\end{equation}

Factorizable complex structures endow $\Omega^{(1,0)}$ with a canonical holomorphic structure.

\begin{prop} \label{25thsep232}  \cite[Proposition 6.1]{BS}
If $ (\Omega^{(\bullet, \bullet)}, \wedge, \partial, \overline{\partial}) $ is a factorizable complex structure on a $\ast$-algebra $B$ as in Definition \ref{13thdec231} and $\theta^{(1, 1)}_l$ is the map as in \eqref{25thsep231}, then
\begin{equation} \label{19thoct232}
\overline{\partial}_{\Omega^{(1,  0)}}:= \theta^{(1, 1)}_l \circ \overline{\partial}: \Omega^{(1,  0)} \rightarrow  \Omega^{(0,  1)} \otimes_{B} \Omega^{(1,  0)}
\end{equation}
is a $\overline{\partial}$-connection with zero holomorphic curvature. 

Thus, the pair $ (\Omega^{(1, 0)}, \overline{\partial}_{\Omega^{(1,  0)}}) $ is a holomorphic $B$-bimodule if the complex structure is factorizable. 
\end{prop}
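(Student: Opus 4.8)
The plan is to verify directly that $\overline{\partial}_{\Omega^{(1,0)}} := \theta^{(1,1)}_l \circ \overline{\partial}$ is a left $\overline{\partial}$-connection, and then that its holomorphic curvature vanishes, which by Definition~\ref{11thdec231} is exactly the statement that $(\Omega^{(1,0)}, \overline{\partial}_{\Omega^{(1,0)}})$ is a holomorphic $B$-bimodule. For the Leibniz property, take $a \in B$ and $\omega \in \Omega^{(1,0)}$. Since $\overline{\partial}$ satisfies the graded-Leibniz rule on $\Omega^\bullet$ (Lemma~\ref{12thdec233}(ii)), and $\omega$ has bidegree $(1,0)$ so that $\overline{\partial}(a\omega) = \overline{\partial}(a)\wedge \omega + a\,\overline{\partial}(\omega)$, I would apply the bimodule map $\theta^{(1,1)}_l$ to both sides. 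On the first summand, $\theta^{(1,1)}_l$ inverts $\wedge_{(0,1),(1,0)}$, so $\theta^{(1,1)}_l\big(\overline{\partial}(a)\wedge\omega\big) = \overline{\partial}(a)\otimes_B \omega$ because $\overline{\partial}(a)\otimes_B\omega$ already lies in $\Omega^{(0,1)}\otimes_B\Omega^{(1,0)}$ and maps to $\overline{\partial}(a)\wedge\omega$ under $\wedge_{(0,1),(1,0)}$. On the second summand, left $B$-linearity of $\theta^{(1,1)}_l$ gives $\theta^{(1,1)}_l(a\,\overline{\partial}\omega) = a\,\theta^{(1,1)}_l(\overline{\partial}\omega) = a\,\overline{\partial}_{\Omega^{(1,0)}}(\omega)$. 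Combining, $\overline{\partial}_{\Omega^{(1,0)}}(a\omega) = \overline{\partial}(a)\otimes_B\omega + a\,\overline{\partial}_{\Omega^{(1,0)}}(\omega)$, as required.

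For the vanishing of the holomorphic curvature, I need to show $R^{\Hol}_{\Omega^{(1,0)}} = (\overline{\partial}\otimes_B\id - \id\wedge\overline{\partial}_{\Omega^{(1,0)}})\circ\overline{\partial}_{\Omega^{(1,0)}} = 0$ as a map $\Omega^{(1,0)} \to \Omega^{(0,2)}\otimes_B\Omega^{(1,0)}$. The key input is that $\overline{\partial}^2 = 0$ on the whole complex (since $d^2 = 0$ and, by comparing bidegrees in $d = \partial + \overline{\partial}$, one extracts $\overline{\partial}^2 = 0$, $\partial^2 = 0$, $\partial\overline{\partial} + \overline{\partial}\partial = 0$). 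The strategy is to compose $R^{\Hol}_{\Omega^{(1,0)}}$ with the (injective) wedge map $\wedge_{(0,2),(1,0)}\colon \Omega^{(0,2)}\otimes_B\Omega^{(1,0)} \to \Omega^{(2,... wait — rather $\Omega^{(0,2)}\wedge\Omega^{(1,0)} \subseteq \Omega^{(1,2)}$)$, and show the composite is zero; since the complex is factorizable, $\wedge_{(0,2),(1,0)}$ is invertible, hence injective, and this suffices. Concretely, for $\omega \in \Omega^{(1,0)}$ write $\overline{\partial}_{\Omega^{(1,0)}}(\omega) = \theta^{(1,1)}_l(\overline{\partial}\omega) = \sum_i \eta_i \otimes_B \omega_i$ with $\eta_i \in \Omega^{(0,1)}$, $\omega_i \in \Omega^{(1,0)}$ and $\sum_i \eta_i \wedge \omega_i = \overline{\partial}\omega$. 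Then applying $\wedge_{(0,2),(1,0)}$ to $R^{\Hol}_{\Omega^{(1,0)}}(\omega)$ and using the graded-Leibniz rule for $\overline{\partial}$ together with the definition $\overline{\partial}_{\Omega^{(1,0)}} = \theta^{(1,1)}_l\circ\overline{\partial}$ (and $\wedge\circ\theta^{(1,1)}_l = \id$), the two terms reorganize into $\overline{\partial}\big(\sum_i \eta_i\wedge\omega_i\big) = \overline{\partial}(\overline{\partial}\omega) = \overline{\partial}^2\omega = 0$; the sign bookkeeping in the graded-Leibniz rule matches the sign in $\id\wedge\overline{\partial}_{\Omega^{(1,0)}}$ precisely because $\eta_i$ has odd degree. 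Hence $\wedge_{(0,2),(1,0)}\circ R^{\Hol}_{\Omega^{(1,0)}} = 0$, and injectivity of $\wedge_{(0,2),(1,0)}$ forces $R^{\Hol}_{\Omega^{(1,0)}} = 0$.

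The main obstacle is the careful sign and bidegree bookkeeping in the curvature computation: one must track that the chosen representative $\sum_i \eta_i\otimes_B\omega_i$ of $\overline{\partial}_{\Omega^{(1,0)}}(\omega)$ behaves correctly under both $\overline{\partial}\otimes_B\id$ and $\id\wedge\overline{\partial}_{\Omega^{(1,0)}}$, and that the identity $\wedge\circ\theta^{(1,1)}_l = \id_{\Omega^{(1,1)}}$ of \eqref{19thoct236} is invoked at the right place to collapse the telescoping sum into $\overline{\partial}^2\omega$. A cleaner route, which I would adopt if the bare-hands version gets unwieldy, is to note that $R^{\Hol}_{\Omega^{(1,0)}}$ is the obstruction to $\overline{\partial}_{\Omega^{(1,0)}}$ squaring to zero in the appropriate sense, and that $\overline{\partial}_{\Omega^{(1,0)}}$ is, up to the isomorphisms $\theta^{(p,1)}_l$, nothing but the restriction of $\overline{\partial}$ itself to the $(1,0)$-column of the bicomplex; since $\overline{\partial}^2 = 0$ there, transporting along the $B$-bimodule isomorphisms $\theta^{(p,q)}_l$ yields $R^{\Hol}_{\Omega^{(1,0)}} = 0$ directly. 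Either way, no deep new idea is needed beyond $\overline{\partial}^2 = 0$ and the factorizability isomorphisms; this is essentially the content of \cite[Proposition 6.1]{BS}, to which I would also refer for the detailed verification.
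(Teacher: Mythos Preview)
Your proposal is correct. The paper does not supply its own proof of this proposition; it simply cites \cite[Proposition~6.1]{BS} and moves on. Your direct verification---Leibniz rule via the $B$-bilinearity of $\theta^{(1,1)}_l$ and $\wedge\circ\theta^{(1,1)}_l=\id$, and curvature vanishing by composing with the injective $\wedge_{(0,2),(1,0)}$ and collapsing to $\overline{\partial}^2\omega=0$ via the graded Leibniz rule---is exactly the standard argument behind that reference, so there is nothing to compare.
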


\begin{cor} \label{17thjuly241}
Let $ (\Omega^{(\bullet, \bullet)}, \wedge, \partial, \overline{\partial}) $ be an $A$-covariant factorizable complex structure on a $\ast$-algebra $B$ such that $\Omega^1  $ is finitely generated and projective as a left $B$-module and $(\Omega^{(1,0)}, \overline{\partial}_{\Omega^{(1,0)}}) $ the holomorphic structure on $\Omega^{(1,0)}$ from Proposition \ref{25thsep232}. Then the Chern connection on $\Omega^{(1, 0)}$ for any $A$-covariant Hermitian metric on $\Omega^{(1,0)}$ is $A$-covariant.
\end{cor}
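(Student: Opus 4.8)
The strategy is to assemble the statement from three results already available in the excerpt. By Proposition \ref{25thsep232}, factorizability of the complex structure yields the holomorphic $B$-bimodule $(\Omega^{(1,0)}, \overline{\partial}_{\Omega^{(1,0)}})$ with $\overline{\partial}_{\Omega^{(1,0)}} = \theta^{(1,1)}_l \circ \overline{\partial}$. By Theorem \ref{chern}(i), for any Hermitian metric $\mathscr{H}$ on $\Omega^{(1,0)}$ there is a unique Chern connection $\nabla_{\Ch}$ compatible with $\mathscr{H}$ and with $(0,1)$-part equal to $\overline{\partial}_{\Omega^{(1,0)}}$; here we use that $\Omega^{(1,0)}$, being a direct summand of $\Omega^1$, is finitely generated and projective as a left $B$-module. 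By Theorem \ref{23rdoct231}, if $(\mathcal{E}, \overline{\partial}_{\mathcal{E}})$ is a \emph{covariant} holomorphic $B$-bimodule and $\mathscr{H}$ a \emph{covariant} Hermitian metric, then $\nabla_{\Ch}$ is a covariant connection. So the only thing that actually needs to be verified is that the holomorphic bimodule $(\Omega^{(1,0)}, \overline{\partial}_{\Omega^{(1,0)}})$ coming from Proposition \ref{25thsep232} is $A$-covariant in the sense of Definition \ref{11thdec231}; then the Corollary follows by invoking Theorem \ref{23rdoct231}.

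First I would record that $\Omega^{(1,0)}$ is an object of $\qMod{A}{B}{}{B}$: this is part of the definition of an $A$-covariant complex structure (Definition \ref{12thdec234}), which forces each $\Omega^{(p,q)}$ to be a left $A$-comodule, and the bimodule structure is inherited from $\Omega^{\bullet}$. Next, I would show that $\overline{\partial}_{\Omega^{(1,0)}} = \theta^{(1,1)}_l \circ \overline{\partial}$ is left $A$-colinear. The map $\overline{\partial}\colon \Omega^{(1,0)} \to \Omega^{(1,1)}$ is $A$-covariant because $d$ and the projections $\pi^{p,q}$ are $A$-covariant (this is exactly the remark following Definition \ref{12thdec234}). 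For $\theta^{(1,1)}_l$, recall it is the inverse of the wedge map $\wedge_{(0,1),(1,0)}\colon \Omega^{(0,1)}\otimes_B\Omega^{(1,0)}\to\Omega^{(1,1)}$. Since the calculus is $A$-covariant, $\wedge$ is a morphism in $\qMod{A}{B}{}{B}$, hence so is its restriction $\wedge_{(0,1),(1,0)}$; an invertible morphism in $\qMod{A}{B}{}{B}$ has an inverse which is again a morphism there, so $\theta^{(1,1)}_l$ is $A$-colinear. Composing, $\overline{\partial}_{\Omega^{(1,0)}}$ is $A$-colinear, and one checks it lands in $\Omega^{(0,1)}\otimes_B\Omega^{(1,0)}$ with the correct Leibniz property (already guaranteed by Proposition \ref{25thsep232}). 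Thus $(\Omega^{(1,0)}, \overline{\partial}_{\Omega^{(1,0)}})$ is a left $A$-covariant holomorphic $B$-bimodule.

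With this in hand, the Corollary is immediate: given an $A$-covariant Hermitian metric $\mathscr{H}$ on $\Omega^{(1,0)}$, Theorem \ref{chern}(i) produces the Chern connection $\nabla_{\Ch}$ for the pair $(\Omega^{(1,0)}, \mathscr{H})$, and Theorem \ref{23rdoct231}, applied with $\mathcal{E} = \Omega^{(1,0)}$ and $\overline{\partial}_{\mathcal{E}} = \overline{\partial}_{\Omega^{(1,0)}}$, shows $\nabla_{\Ch}$ is $A$-covariant. I do not expect any serious obstacle here: the real content has been deferred to Proposition \ref{25thsep232} and Theorem \ref{23rdoct231}, and the only genuine (and routine) step is checking colinearity of $\theta^{(1,1)}_l$, which reduces to the standard fact that inverses of isomorphisms in a category of comodules are again comodule maps. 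If anything requires a line of care, it is confirming that $\Omega^{(1,0)}$ inherits finite generation and projectivity as a left $B$-module from $\Omega^1$; but since $\Omega^1 = \Omega^{(1,0)}\oplus\Omega^{(0,1)}$ as left $B$-modules, a direct summand of a finitely generated projective module is finitely generated projective, so this is automatic from the hypothesis on $\Omega^1$.
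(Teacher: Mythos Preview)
Your proof is correct and follows essentially the same approach as the paper: you verify that $(\Omega^{(1,0)}, \overline{\partial}_{\Omega^{(1,0)}})$ is a covariant holomorphic bimodule by observing that $\wedge_{(0,1),(1,0)}$ (hence its inverse $\theta^{(1,1)}_l$) and $\overline{\partial}$ are $A$-covariant, and then invoke Theorem~\ref{23rdoct231}. Your version is simply more explicit about the auxiliary points (existence of $\nabla_{\Ch}$ via Theorem~\ref{chern}, and finite generation and projectivity of $\Omega^{(1,0)}$ as a direct summand of $\Omega^1$), which the paper leaves implicit.
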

\begin{proof} As the differential calculus is $A$-covariant,  the map  $\wedge_{(0,1), (1, 0)}$ is $A$-covariant so that $\theta^{(1, 1)}_l$ is also $A$-covariant. Moreover, since the complex structure is $A$-covariant,  $\overline{\partial}$ is $A$-covariant and hence, $\overline{\partial}_{\Omega^{(1,  0)}} = \theta^{(1, 1)}_l \circ \overline{\partial} $ is $A$-covariant.

Therefore, $(\Omega^{(1,0)}, \overline{\partial}_{\Omega^{(1,0)}}) $ is a covariant holomorphic $B$-bimodule and hence, the corollary follows from Theorem \ref{23rdoct231}.
\end{proof}

Next we establish  the following result, whose proof is straightforward. We will use the symbol $\theta^{(1, 1)}_l$ defined in \eqref{25thsep231}.

\begin{prop} \label{20thoct232}
Let $ (\Omega^{(\bullet, \bullet)}, \wedge, \partial, \overline{\partial}) $ be a factorizable complex structure over a $\ast$-algebra $B$  with $\Omega^1 $  finitely generated and projective as a left $B$-module  and $\overline{\partial}_{\Omega^{(1, 0)}}$ be the  $\overline{\partial}$-connection  of Proposition \ref{25thsep232}. 

Consider the $B$-bimodule map
$$ \sigma_{(1,0), (0, 1)}:= - \theta^{(1, 1)}_l \circ \wedge_{(1,0), (0, 1)}: \Omega^{(1, 0)} \otimes_B \Omega^{(0, 1)} \rightarrow \Omega^{(0, 1)} \otimes_B \Omega^{(1, 0)}.  $$
Then the following statements hold:
\begin{enumerate}
\item The pair $ (\overline{\partial}_{\Omega^{(1, 0)}},  \sigma_{(1,0), (0, 1)}) $ is a left bimodule $\overline{\partial}$-connection.

\item Suppose $\mathscr{H}_1$ is an Hermitian metric on $\Omega^{(1,0)}$. Then there exists a $B$-bimodule map
$$ \sigma_{(1,0), (1, 0)}: \Omega^{(1, 0)} \otimes_B \Omega^{(1,0)} \rightarrow \Omega^{(1, 0)} \otimes_B \Omega^{(1,0)}  $$
such that the Chern connection $\nabla_{\Ch}$ on $ (\Omega^{(1, 0)}, H_1)$ is a $\sigma_{ (1, 0), (1, 0)} \oplus \sigma_{(1, 0), (0, 1)}$-bimodule connection. 
\end{enumerate}
\end{prop}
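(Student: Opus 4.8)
The plan is to verify part (i) by a direct computation with the graded Leibniz rule, and then deduce part (ii) as an immediate application of Theorem \ref{chern}(ii). For part (i), I first recall that $\overline{\partial}_{\Omega^{(1,0)}} = \theta^{(1,1)}_l \circ \overline{\partial}$ by Proposition \ref{25thsep232}, so that $\overline{\partial}_{\Omega^{(1,0)}}$ is genuinely a left $\overline{\partial}$-connection. What must be shown is the bimodule identity
\[
\overline{\partial}_{\Omega^{(1,0)}}(\omega b) = \overline{\partial}_{\Omega^{(1,0)}}(\omega)\,b + \sigma_{(1,0),(0,1)}\big(\omega \otimes_B \overline{\partial} b\big)
\]
for all $\omega \in \Omega^{(1,0)}$ and $b \in B$. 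The natural strategy is to apply the isomorphism $\wedge_{(0,1),(1,0)}$ to both sides and use \eqref{19thoct236}, since $\wedge_{(0,1),(1,0)} \circ \theta^{(1,1)}_l = \id$; this reduces the claim to an identity living in $\Omega^{(1,1)}$, where one can compute with the honest wedge product and the graded Leibniz rule for $\overline{\partial}$ (Lemma \ref{12thdec233}(2)).

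Carrying this out: applying $\wedge_{(0,1),(1,0)}$ to the left-hand side gives $\overline{\partial}(\omega b)$, and by the graded Leibniz rule this equals $\overline{\partial}(\omega)\,b + (-1)^{1}\,\omega \wedge \overline{\partial} b = \overline{\partial}(\omega)b - \omega \wedge \overline{\partial} b$. Applying $\wedge_{(0,1),(1,0)}$ to the right-hand side: the first term $\overline{\partial}_{\Omega^{(1,0)}}(\omega) b$ maps to $\overline{\partial}(\omega) b$ (using that $\wedge$ is $B$-bilinear over $\otimes_B$ and right $B$-linearity of $\overline{\partial}$ on the nose is not needed—only $\wedge_{(0,1),(1,0)} \circ \theta^{(1,1)}_l = \id$ together with right $B$-linearity of the composite), while the second term $\sigma_{(1,0),(0,1)}(\omega \otimes_B \overline{\partial} b) = -\theta^{(1,1)}_l(\omega \wedge \overline{\partial} b)$ maps to $-\omega \wedge \overline{\partial} b$ by \eqref{19thoct236} again. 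The two sides agree, and since $\wedge_{(0,1),(1,0)}$ is injective (indeed an isomorphism, by factorizability), the original identity follows. One small point to be careful about is checking that $\sigma_{(1,0),(0,1)}$ as defined is genuinely a $B$-bimodule map, but this is immediate since it is a composite of the $B$-bimodule maps $\wedge_{(1,0),(0,1)}$ and $\theta^{(1,1)}_l$.

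For part (ii), once (i) is established we simply invoke Theorem \ref{chern}(ii) with $\mathcal{E} = \Omega^{(1,0)}$, $\overline{\partial}_{\mathcal{E}} = \overline{\partial}_{\Omega^{(1,0)}}$, and $\sigma_{\mathcal{E},(0,1)} = \sigma_{(1,0),(0,1)}$: since $\Omega^{(1,0)}$ is finitely generated projective as a left $B$-module and $\mathscr{H}_1$ is an Hermitian metric on it, the theorem produces a $B$-bimodule map $\sigma_{(1,0),(1,0)} := \sigma_{\mathcal{E},(1,0)}\colon \Omega^{(1,0)} \otimes_B \Omega^{(1,0)} \to \Omega^{(1,0)} \otimes_B \Omega^{(1,0)}$ such that $\big(\nabla_{\Ch}, \sigma_{(1,0),(1,0)} \oplus \sigma_{(1,0),(0,1)}\big)$ is a bimodule connection. (Here the codomain of $\sigma_{\mathcal{E},(1,0)}$ is $\Omega^{(1,0)} \otimes_B \Omega^{(1,0)}$ because for the $(1,0)$-calculus the relevant space is $\Omega^{(1,0)} \otimes_B \mathcal{E} = \Omega^{(1,0)} \otimes_B \Omega^{(1,0)}$.) I do not expect any genuine obstacle here; the only mildly delicate point in the whole argument is bookkeeping the sign in the graded Leibniz rule and making sure the minus sign built into the definition of $\sigma_{(1,0),(0,1)}$ is exactly what cancels it, which is why the definition carries that sign in the first place.
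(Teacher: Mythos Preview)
Your proposal is correct and follows essentially the same approach as the paper. The paper's own proof is terser: for (i) it simply cites \cite[Proposition~7.33]{BeggsMajid:Leabh}, and for (ii) it invokes Theorem~\ref{chern}(ii) exactly as you do; your direct Leibniz-rule computation is just an unpacking of what that reference does.
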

\begin{proof}
    The first assertion was already observed in  \cite[Proposition 7.33]{BeggsMajid:Leabh}.
    The second statement now follows from the second assertion of Theorem \ref{chern}.
\end{proof}

\subsection{The opposite complex structure on the calculus} \label{17thjuly242}

If $ (\Omega^{(\bullet, \bullet)} \wedge, \partial, \overline{\partial}) $ is a complex structure on $B$, then the opposite complex structure on the calculus is given by the $\bN^2_0$-graded algebra
 $\bigoplus_{(a,b)\in \bN^2_0} \Om^{(a,b), \op}$, where
$$ \Om^{(a,b), \op}:= \Omega^{(b, a)}.  $$
Consequently, the $\partial$ and $\overline{\partial}$-operators of the opposite complex structure are given by:
$$ \partial_{\op}:= \overline{\partial}: \Omega^{(a, b), \op} \rightarrow \Omega^{(a + 1, b), \op}, ~  \overline{\partial}_{\op}:= \partial: \Omega^{(a, b), \op} \rightarrow \Omega^{(a, b + 1), \op}. $$
In the sequel, we will denote this opposite complex structure by the quadruple $(\Omega^{(\bullet, \bullet) \op}, \wedge, \partial_{\op}, \overline{\partial}_{\op}). $

Let $ (\Omega^{(\bullet, \bullet)}, \wedge, \partial, \overline{\partial}) $ be a factorizable complex structure over a $\ast$-algebra $B$ with $\Omega^1 $ finitely generated and projective as a left $B$-module. Then by a combination of Proposition \ref{25thsep232} and Proposition \ref{20thoct232}, we have a similar result for the opposite complex structure. Indeed, let us define a map
$$ \overline{\partial}_{\Omega^{(1,0)}, \op}: \Omega^{(0,1)} \rightarrow \Omega^{(1,0)} \otimes_B \Omega^{(0, 1)} ~ \text{by} ~ \overline{\partial}_{\Omega^{(1,0)}, \op} = \theta^{(1, 1)}_r \circ \overline{\partial_\op}, $$
where $\theta^{(1, 1)}_r$ is defined in \eqref{25thsep231}.

 Then the pair $ (\Omega^{(0, 1)}, \overline{\partial}_{\Omega^{(1,0)}, ~ \op})$ is a holomorphic $B$-module with respect to the opposite complex structure $ (\Omega^{(\bullet, \bullet) \op}, \wedge, \partial_{\op}, \overline{\partial}_{\op}). $ Moreover, if $\sigma_{(0, 1), (1, 0)}: \Omega^{(0, 1)} \otimes_B \Omega^{(1, 0)} \rightarrow \Omega^{(1, 0)} \otimes_B \Omega^{(0, 1)} $ is defined as 
$$ \sigma_{(0, 1), (1, 0)}:= - \theta^{(1, 1)}_r \circ \wedge_{(0, 1), (1, 0)},$$
  then $ (\overline{\partial}_{\Omega^{(1, 0)}, \op},  \sigma_{(0,1), (1, 0)}) $ is a left bimodule $\overline{\partial}^{\op}$-connection.

 If $\mathscr{H}_2$ is an Hermitian metric on $\Omega^{(1,0), \op}$, then  the Chern connection  for the pair  $ (\Omega^{(1, 0), \op}, \mathscr{H}_2)$ exists and is a bimodule connection, to be denoted by $ \nabla_{\Ch, \op}. $

Summarizing, we have the following result, where we use the fact that the sum of two bimodule connections is again a bimodule connection.

\begin{thm} \label{23rdoct236}
Let $ (\Omega^{(\bullet, \bullet)}, \wedge, \partial, \overline{\partial}) $ be a factorizable complex structure over a $\ast$-algebra $B$ as above such that $\Omega^1$ is finitely generated and projective as a left $B$-module. Let $\mathscr{H}_1$ and $\mathscr{H}_2$ be Hermitian metrics on $\Omega^{(1,0)}$ and $\Omega^{(0, 1)} = \Omega^{(1, 0), \op} $ respectively. If $ \nabla_{\Ch} $ and $\nabla_{\Ch,  \op}$ denote the Chern connections for the pairs $ (\Omega^{(1, 0)}, \mathscr{H}_1) $  and $(\Omega^{(1, 0), \op}, \mathscr{H}_2) $ respectively, then the connection
$$\nabla: = \nabla_{\Ch} + \nabla_{\Ch,  \op}: \Omega^1 \rightarrow \Omega^1 \otimes_B \Omega^1 $$
is a bimodule connection on $\Omega^1$. 
\end{thm}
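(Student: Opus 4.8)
The statement is essentially a bookkeeping result: it asserts that the direct sum of two bimodule connections is a bimodule connection, once one has properly set up the two Chern connections on the summands $\Omega^{(1,0)}$ and $\Omega^{(0,1)}$ of the decomposition $\Omega^1 = \Omega^{(1,0)} \oplus \Omega^{(0,1)}$. The plan is to assemble the pieces already established in the preceding subsections. First, I would invoke Proposition \ref{25thsep232} applied to the factorizable complex structure to obtain the holomorphic bimodule $(\Omega^{(1,0)}, \overline{\partial}_{\Omega^{(1,0)}})$, and then Proposition \ref{20thoct232} to get a bimodule map $\sigma_{(1,0),(1,0)} \oplus \sigma_{(1,0),(0,1)}$ making the Chern connection $\nabla_{\Ch}$ for the pair $(\Omega^{(1,0)}, \mathscr{H}_1)$ a bimodule connection. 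Symmetrically, applying the same reasoning to the opposite complex structure $(\Omega^{(\bullet,\bullet)\op}, \wedge, \partial_\op, \overline{\partial}_\op)$ — which is again factorizable, since the wedge maps $\wedge_{(0,q),(p,0)}$ and $\wedge_{(p,0),(0,q)}$ simply swap roles — yields a bimodule map $\sigma_{(0,1),(0,1)} \oplus \sigma_{(0,1),(1,0)}$ making $\nabla_{\Ch,\op}$ for $(\Omega^{(1,0),\op}, \mathscr{H}_2) = (\Omega^{(0,1)}, \mathscr{H}_2)$ a bimodule connection on $\Omega^{(0,1)}$.

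Once both $\nabla_{\Ch}$ and $\nabla_{\Ch,\op}$ are bimodule connections on their respective summands, I would form $\nabla := \nabla_{\Ch} + \nabla_{\Ch,\op}$ and exhibit the explicit bimodule map $\sigma: \Omega^1 \otimes_B \Omega^1 \to \Omega^1 \otimes_B \Omega^1$ witnessing the bimodule property. On the decomposition
\[
\Omega^1 \otimes_B \Omega^1 \cong (\Omega^{(1,0)} \otimes_B \Omega^{(1,0)}) \oplus (\Omega^{(1,0)} \otimes_B \Omega^{(0,1)}) \oplus (\Omega^{(0,1)} \otimes_B \Omega^{(1,0)}) \oplus (\Omega^{(0,1)} \otimes_B \Omega^{(0,1)}),
\]
the natural candidate is $\sigma := \sigma_{(1,0),(1,0)} \oplus \sigma_{(1,0),(0,1)} \oplus \sigma_{(0,1),(1,0)} \oplus \sigma_{(0,1),(0,1)}$. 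The verification that $\nabla(\omega b) = \nabla(\omega)b + \sigma(\omega \otimes_B db)$ for $\omega \in \Omega^1$, $b \in B$, then reduces, by $\mathbb{C}$-linearity and the decomposition $\omega = \pi^{(1,0)}(\omega) + \pi^{(0,1)}(\omega)$, to the component-wise statements, noting that for $\omega_{(1,0)} \in \Omega^{(1,0)}$ one has $db = \partial b + \overline{\partial}b$ with $\partial b \in \Omega^{(1,0)}$ and $\overline{\partial}b \in \Omega^{(0,1)}$, and that $\nabla_{\Ch,\op}(\omega_{(1,0)}) = 0$ by construction (the two Chern connections act only on their own summands). Thus $\nabla(\omega_{(1,0)} b) = \nabla_{\Ch}(\omega_{(1,0)} b)$, which equals $\nabla_{\Ch}(\omega_{(1,0)})b + (\sigma_{(1,0),(1,0)} \oplus \sigma_{(1,0),(0,1)})(\omega_{(1,0)} \otimes_B db)$ by Proposition \ref{20thoct232}, and this is exactly the $(1,0)$-piece of the desired identity. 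The $(0,1)$-piece is handled identically using the opposite-complex-structure analogue.

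The only genuine subtlety — and the step I would be most careful about — is checking that the right-hand sides match up on the four tensor-summand components, i.e. that the $\sigma$-maps from Proposition \ref{20thoct232} and its opposite-structure counterpart really do land in the right summands of $\Omega^1 \otimes_B \Omega^1$ and that no cross-terms are omitted; here one uses that $\sigma_{(1,0),(1,0)}$ has target $\Omega^{(1,0)} \otimes_B \Omega^{(1,0)}$, $\sigma_{(1,0),(0,1)}$ has target $\Omega^{(0,1)} \otimes_B \Omega^{(1,0)}$ (from the definition $\sigma_{(1,0),(0,1)} = -\theta^{(1,1)}_l \circ \wedge_{(1,0),(0,1)}$), and dually for the opposite structure. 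Since the paper explicitly remarks before the statement that "the sum of two bimodule connections is again a bimodule connection", the cleanest exposition is simply to cite Proposition \ref{20thoct232}, its opposite-complex-structure version established in Subsection \ref{17thjuly242}, and the additivity fact, with the explicit $\sigma$ displayed as above; no hard estimate or new idea is needed.
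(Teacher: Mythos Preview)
Your proposal is correct and follows exactly the paper's approach: the paper presents this theorem as a summary of Proposition~\ref{20thoct232}, its opposite-complex-structure analogue from Subsection~\ref{17thjuly242}, and the observation (stated immediately before the theorem) that the direct sum of two bimodule connections is again a bimodule connection. Your explicit unpacking of $\sigma$ on the four tensor summands is more detailed than the paper's own treatment, which gives no formal proof at all, but the underlying argument is identical.
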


\vspace{4mm}

We end this section by observing that the conclusion of Theorem \ref{23rdoct236} is applicable to the set up of Theorem \ref{9thmay24jb21}. 

\begin{cor} \label{27thjuly243}
 Suppose $(\Omega^{(\bullet,\bullet)}, \wedge, \partial, \overline{\partial})$ is a factorizable $A$-covariant complex structure on a quantum homogeneous space $B=A^{\co(H)}$ satisfying the hypotheses of Theorem  \ref{9thmay24jb21}. Let $(g, ( ~, ~ ) )$ be a real $A$-covariant metric on $\Omega^1$ and $ \mathscr{H}_1, \mathscr{H}_2$ be the $A$-covariant Hermitian metrics on $\Omega^{(1,0)}$ and $\Omega^{(0,1)}$ constructed in Theorem  \ref{9thmay24jb21}. Then the connection $\nabla = \nabla_{\Ch} + \nabla_{\Ch,  \op}$ of Theorem \ref{23rdoct236} is an $A$-covariant bimodule connection on $\Omega^1$.
 \end{cor}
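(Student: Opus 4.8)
\textbf{Proof plan for Corollary \ref{27thjuly243}.}
The plan is to simply assemble the pieces already in place. First I would invoke Theorem \ref{9thmay24jb21}: since $(g,(~,~))$ is a real $A$-covariant metric on $\Omega^1$ and the hypotheses of that theorem hold (factorizability is not needed there, but the simplicity and non-isomorphism of $\Omega^{(1,0)}$ and $\Omega^{(0,1)}$ are), we obtain $A$-covariant Hermitian metrics $\mathscr{H}_1$ on $\Omega^{(1,0)}$ and $\mathscr{H}_2$ on $\Omega^{(0,1)}$ with $\mathscr{H}=\mathscr{H}_1\oplus\mathscr{H}_2$. In particular $\Omega^1$ is finitely generated and projective as a left $B$-module by Remark \ref{27thsep241}, so the finite-generation/projectivity hypotheses of the theorems of this section are met.

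Next I would apply Theorem \ref{23rdoct236} to the factorizable complex structure, with the two Hermitian metrics $\mathscr{H}_1$ and $\mathscr{H}_2$ just produced: this yields that $\nabla=\nabla_{\Ch}+\nabla_{\Ch,\op}$ is a bimodule connection on $\Omega^1$. It only remains to check covariance. For $\nabla_{\Ch}$ on $\Omega^{(1,0)}$, Corollary \ref{17thjuly241} applies directly — the complex structure is $A$-covariant and factorizable, $\Omega^1$ is finitely generated projective as a left $B$-module, and $\mathscr{H}_1$ is $A$-covariant — so $\nabla_{\Ch}$ is $A$-covariant. For $\nabla_{\Ch,\op}$ on $\Omega^{(0,1)}=\Omega^{(1,0),\op}$ I would observe that the opposite complex structure $(\Omega^{(\bullet,\bullet)\op},\wedge,\partial_\op,\overline{\partial}_\op)$ is again $A$-covariant (the grading is merely relabelled, $\partial_\op=\overline{\partial}$ and $\overline{\partial}_\op=\partial$ are $A$-covariant) and again factorizable, with $\Omega^{(1,0),\op}=\Omega^{(0,1)}$ finitely generated projective as a left $B$-module; since $\mathscr{H}_2$ is $A$-covariant, Corollary \ref{17thjuly241} applied to the opposite complex structure shows $\nabla_{\Ch,\op}$ is $A$-covariant.

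Finally, the sum of two $A$-covariant maps is $A$-covariant, so $\nabla=\nabla_{\Ch}+\nabla_{\Ch,\op}$ is an $A$-covariant bimodule connection on $\Omega^1$, completing the proof.

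There is essentially no obstacle here: the genuine content was already discharged in Theorem \ref{23rdoct231} (covariance of the Chern connection, proved in Section \ref{1staugust241}) and in Theorem \ref{9thmay24jb21} (the splitting $\mathscr{H}=\mathscr{H}_1\oplus\mathscr{H}_2$). The only mild point to be careful about is that Corollary \ref{17thjuly241} is stated for $\Omega^{(1,0)}$ of a given complex structure, so one must explicitly note that it applies verbatim to the opposite complex structure with $\Omega^{(1,0),\op}$ in the role of $\Omega^{(1,0)}$; this is immediate from the definitions in Subsection \ref{17thjuly242}.
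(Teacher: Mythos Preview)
Your proposal is correct and follows essentially the same approach as the paper: invoke Remark \ref{27thsep241} for projectivity, apply Corollary \ref{17thjuly241} (to both the given and the opposite complex structure) to obtain $A$-covariance of $\nabla_{\Ch}$ and $\nabla_{\Ch,\op}$, and use Theorem \ref{23rdoct236} for the bimodule property. Your write-up is in fact more explicit than the paper's, which simply asserts that Corollary \ref{17thjuly241} handles both Chern connections without spelling out the passage to the opposite complex structure.
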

\begin{proof}
We will use the fact that $\Omega^1$ is projective as a left $B$-module which follows from Remark \ref{27thsep241}. 
By Corollary \ref{17thjuly241}, we know that both $ \nabla_{\Ch} $ and $  \nabla_{\Ch,  \op} $ are $A$-covariant. Moreover,  Theorem \ref{23rdoct236} implies that $\nabla$ is a bimodule connection. This completes the proof.
\end{proof}

\section{Levi-Civita connections for the Heckenberger--Kolb calculi} \label{4thaugust244}

This section is about the Heckenberger--Kolb calculi for the quantized irreducible flag manifolds. We apply the results obtained in the previous sections (in particular Theorem \ref{8thmay24jb5}) to classify covariant metrics on these calculi. In particular, we show that there exists a unique (up to scalar) real covariant quantum symmetric metric on the bimodule of one-forms of the Heckenberger--Kolb calculi. Thus, this particular quantum symmetric metric agrees with the generalized Fubini-Study metric on quantum projective spaces constructed by Matassa in  \cite{matassalevicivita}. Finally, by using the results of Section \ref{4thaugust243}, we prove the existence and uniqueness of Levi-Civita connection for any covariant real metric on the Heckenberger--Kolb calculi. This generalizes the existence and uniqueness theorem proved by Matassa for quantum projective spaces.

We begin by recalling the relevant definitions of the quantized universal enveloping algebra, the corresponding quantum co-ordinate algebras and in particular, the quantized irreducible flag manifolds. 

\subsection{Drinfeld--\/Jimbo quantum groups}

Let $\mathfrak{g}$ be a finite-dimensional complex simple Lie algebra of rank $r$. We fix a Cartan subalgebra $\mathfrak{h}$ with corresponding root system $\Delta \subseteq \mathfrak{h}^{\ast}$, where $\mathfrak{h}^{\ast}$ denotes the
linear dual of $\mathfrak{h}$. With respect to a choice of simple roots $\Pi= \{ \alpha_1, \alpha_2, \cdots \alpha_r\}$, denote by $(\cdot, \cdot)$ the symmetric bilinear form induced on $\mathfrak{h}^{\ast}$ by the Killing form of $\mathfrak{g}$, normalized
so that any shortest simple root $\alpha_i$ satisfies $(\alpha_i, \alpha_i)=2$. The coroot $\alpha_i^\vee$ of a simple
root $\alpha_i$ is defined by
\begin{align*}
    \alpha^\vee_i:=\frac{ \alpha_i}{d_i}= 2\frac{\alpha_i}{(\alpha_i, \alpha_i)}, \quad\text{where }  d_i=\frac{(\alpha_i, \alpha_i)}{2}.
\end{align*}
The Cartan matrix $\mathcal{A} = (a_{i j})_{i j}$ of $\mathfrak{g}$ is the $(r \times r)$-matrix defined by $a_{ij}:= (\alpha^\vee_i, \alpha_j)$. Let $\{\varpi_1,\cdots \varpi_r\}$ denote the corresponding set of fundamental weights of $\mathfrak{g}$, which is to say, the dual basis of the coroots.

Let $q \in  \mathbb{R}$ such that $q \notin \{-1, 0, 1\}$, and denote $q_i:=q^{d_i}$. The quantized universal enveloping algebra $U_q(\mathfrak{g})$ is the noncommutative associative algebra generated by the
elements $E_i, F_i, K_i$, and $K_i^{-1}$, for $i = 1,\cdots,r$, 
satisfying the relations (12) - (16) of  \cite[Chapter 6]{KSLeabh}.

A Hopf algebra structure is defined on $U_q (\mathfrak{g})$ by
\begin{gather*}
  \Delta E_i = E_i\otimes K_i + 1\otimes E_i,\quad
  \Delta F_i = F_i\otimes 1 + K^{-1}_i \otimes F_i,\quad
  \Delta K^{\pm}_i = K^{\pm}_i \otimes K^{\pm}_i,\\
  S(E_i) = - E_iK^{-1}_i,\quad
  S(F_i) = - K_iF_i,\quad
  S(K^{\pm}_i) = K^{\mp}_i,\\
  \epsilon (K_i) = 1,\quad \epsilon (E_i)=\epsilon(F_i)=0.
\end{gather*}

A Hopf $\ast$-algebra structure, called the compact real form of $U_q (\mathfrak{g})$, is defined by
\begin{align*}
    K^{\ast}_i:= K_i, \quad E^{\ast}_i:= K_iF_i, \quad F^{\ast}_i= E_i K^{-1}_i.
\end{align*}
Let $\mathcal{P}\subset\fh^\ast$ be the weight lattice of $\mathfrak{g}$, and $\mathcal{P}_+\subset\mathcal{P}$ its set of dominant integral weights.
For each $\mu \in \mathcal{P}_+$ there exists an irreducible finite-dimensional $U_q (\mathfrak{g})$-module~$V_{\mu}$,
uniquely defined by the existence of a~vector $v_{\mu}\in  V_{\mu}$, which we call a highest weight vector, satisfying
\begin{align*}
    E_i \triangleright v_{\mu}=0, \quad K_i\triangleright v_{\mu}= q^{(\mu,\alpha^\vee_i)}v_{\mu}, \quad \text{for all $i=1, \cdots ,r$}.
\end{align*}
Moreover, $v_{\mu}$ is the unique such element up to scalar multiple. We call any finite direct
sum of such $U_q (\mathfrak{g})$-representations a finite-dimensional type-$1$ representation. 
For further details on Drinfeld--Jimbo quantized enveloping algebras, we refer the reader to the standard
texts~\cite{ChariPressley, KSLeabh}, or to the seminal papers~\cite{DrinfeldICM, Jimbo1986}.

\subsection{Quantum Coordinate Algebras and irreducible flag manifolds} 
In this subsection we recall some necessary material about quantized coordinate algebras.
Let $V$ be a finite-dimensional left $U_q(\mathfrak{g})$-module, $v \in V$, and $f \in V^*$, the $\mathbb{C}$-linear dual of $V$, endowed with its  right \mbox{$U_q(\mathfrak{g})$-module} structure.

Consider the function  $c^{\tiny{V}}_{f,v}:U_q(\mathfrak{g}) \to \mathbb{C}$ defined by $c^{\tiny{V}}_{f,v}(X) := f\big(X \triangleright v\big)$. The \emph{space of matrix coefficients} of $V$ is the subspace
\begin{align*}
C(V) := \Span_{\mathbb{C}}\!\left\{ c^{\tiny{V}}_{f,v} \,| \, v \in V,  f \in V^*\right\} \subseteq U_q(\mathfrak{g})^*.
\end{align*}

Let $U_q(\mathfrak{g})^\circ$ denote the Hopf dual of $U_q(\mathfrak{g})$. It is easily checked that a Hopf  subalgebra of $U_q(\mathfrak{g})^{\circ}$ is given by
\begin{equation}\label{eq:PeterWeyl}
\mathcal{O}_q(G) := \bigoplus_{\mu \in \mathcal{P}^+} C(V_{\mu}).
\end{equation}
We call $\mathcal{O}_q(G)$ the {\em quantum coordinate algebra of~$G$}, where~$G$ is the compact, connected, simply-connected, simple Lie group  having~$\mathfrak{g}$ as its complexified Lie algebra.
We note that $\mathcal{O}_q(G)$ is a cosemisimple Hopf algebra by construction.

Now we recall the definition of quantized flag manifolds.

For $\{\alpha_i\}_{i\in S} \subseteq \Pi$ a subset of simple roots,  consider the Hopf $*$-subalgebra
\begin{align*}
U_q(\mathfrak{l}_S) := \big< K_i, E_j, F_j \,|\, i = 1, \ldots, r; j \in S \big>.
\end{align*} 
%Just as for $U_q(\mathfrak{g})$, see for example~\cite[\textsection 7]{KSLeabh},
The category of $U_q(\mathfrak{l}_S)$-modules is known to be  semisimple. The Hopf $\ast$-algebra embedding $\iota_S:U_q(\mathfrak{l}_S) \hookrightarrow U_q(\mathfrak{g})$ induces the dual Hopf \mbox{$*$-algebra} map $\iota_S^{\circ}: U_q(\mathfrak{g})^{\circ} \to U_q(\mathfrak{l}_S)^{\circ}$. By construction $\mathcal{O}_q(G) \subseteq U_q(\mathfrak{g})^{\circ}$, so we can consider the restriction map
\begin{align*}
\pi_S:= \iota_S^{\circ}|_{\mathcal{O}_q(G)}: \mathcal{O}_q(G) \to U_q(\mathfrak{l}_S)^{\circ},
\end{align*}
and the Hopf $*$-subalgebra 
$
\mathcal{O}_q(L_S) := \pi_S\big(\mathcal{O}_q(G)\big) \subseteq U_q(\mathfrak{l}_S)^\circ.
$
The {\em quantum flag manifold associated} to $S$ is the quantum homogeneous space associated to the surjective  Hopf $*$-algebra map  $\pi_S:\mathcal{O}_q(G) \to \mathcal{O}_q(L_S)$. We denote it by
\begin{align*}
\mathcal{O}_q\big(G/L_S\big) := \mathcal{O}_q \big(G\big)^{\co\left(\mathcal{O}_q(L_S)\right)}.
\end{align*} 
Since the category of $U_q(\mathfrak{l}_S)$-modules is semisimple,  $\mathcal{O}_q(L_S)$ must be a cosemisimple Hopf algebra. 

\begin{defn}
A quantum flag manifold is \emph{irreducible} if the defining subset of simple roots is of the form
$$
S = \{1, \dots, r \} \setminus \{s\}
$$
where $\alpha_s$ has coefficient $1$ in the expansion of the highest root of $\mathfrak{g}$.
\end{defn}

\begin{rem}
Since the map  $\pi_S:\mathcal{O}_q(G) \to \mathcal{O}_q(L_S)$ is a surjective Hopf $\ast$-algebra morphism 
and $\mathcal{O}_q(G)$ is a compact quantum group algebra, an application of \cite[Theorem 1.6.7]{NeshveyevTuset} shows that $\mathcal{O}_q(L_S)$ is a compact quantum group algebra. We will use this fact throughout this section. 
\end{rem}

Now we are in a position to state the existence of a covariant differential calculus on $\mathcal{O}_q(G/L_S)$.

\begin{thm}\label{thm:HKClass}
Over any irreducible quantum flag manifold $\mathcal{O}_q(G/L_S)$, there exists a unique finite-dimensional left $\mathcal{O}_q(G)$-covariant differential $*$-calculus
\[
  \Omega^{\bullet}_q(G/L_S) \in \modz{\cO_q(G)}{\cO_q(G)(G/L_S)}.
\]
of classical dimension, i.e, 
\begin{align*}
  \dim \Phi\!\left(\Omega^{k}_q(G/L_S)\right) = \binom{2M}{k}, & & \text{ for all $k = 0, \dots, 2 M$},
\end{align*}
where $M$ is the complex dimension of the corresponding classical manifold.
\end{thm}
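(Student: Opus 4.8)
\emph{Proof strategy.} The plan is to deduce this statement from the work of Heckenberger and Kolb \cite{HK,HKdR}. First I would invoke their classification of first-order differential calculi: over an irreducible quantum flag manifold $\mathcal{O}_q(G/L_S)$ there are, up to isomorphism, exactly two finite-dimensional irreducible left $\mathcal{O}_q(G)$-covariant first-order differential $*$-calculi, which we denote $\Omega^{(1,0)}_q(G/L_S)$ and $\Omega^{(0,1)}_q(G/L_S)$. Applying Takeuchi's equivalence (Theorem \ref{3rdapril241}) identifies $\Phi$ of each with an irreducible $\mathcal{O}_q(L_S)$-comodule; a weight computation shows these comodules are the $q$-deformations of the holomorphic and antiholomorphic cotangent spaces of the classical flag manifold, so each has dimension $M$, and the two are mutually non-isomorphic. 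Since the $*$-operation interchanges the two calculi (one is the conjugate of the other), the first-order part of any covariant $*$-calculus of classical rank is forced to be
\[
  \Omega^1_q(G/L_S) \;=\; \Omega^{(1,0)}_q(G/L_S) \oplus \Omega^{(0,1)}_q(G/L_S), \qquad \dim \Phi\!\left(\Omega^1_q(G/L_S)\right) = 2M,
\]
and \cite{HK} moreover shows this is the unique covariant first-order $*$-calculus of this dimension.

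Next I would pass to higher degrees. Take $\Omega^\bullet_q(G/L_S)$ to be the maximal prolongation of $\Omega^1_q(G/L_S)$, carrying the induced $*$-structure. By the dimension computation of \cite{HKdR} this graded algebra is finite-dimensional with $\dim \Phi(\Omega^k_q(G/L_S)) = \binom{2M}{k}$ for all $k$; in particular $\Omega^{2M}_q(G/L_S)$ is one-dimensional and the calculus vanishes in degrees $> 2M$, so it has classical dimension. Uniqueness then follows because any finite-dimensional covariant $*$-calculus of classical dimension has first-order part isomorphic to $\Omega^1_q(G/L_S)$ by the previous paragraph, hence is a graded quotient of its maximal prolongation, and matching dimensions in each degree forces that quotient map to be an isomorphism. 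I would also record at this point that the $\bN^2_0$-grading assembling $\Omega^{(1,0)}_q(G/L_S)$ and $\Omega^{(0,1)}_q(G/L_S)$ is a complex structure in the sense used throughout this paper, which is \cite{HK} together with \cite{MarcoConj}.

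Finally I would check membership in $\modz{\cO_q(G)}{\cO_q(G/L_S)}$ of Definition \ref{10thjune241}. Covariance of the calculus makes each $\Omega^k_q(G/L_S)$ an object of $\qMod{\cO_q(G)}{\cO_q(G/L_S)}{}{}$; finiteness of $\Phi(\Omega^k_q(G/L_S))$ together with Takeuchi's equivalence then shows that it is finitely generated as a left $\mathcal{O}_q(G/L_S)$-module, and it is automatically projective by Remark \ref{27thsep241}. The remaining point is the relation $\Omega^k_q(G/L_S)\,B^+ = B^+\,\Omega^k_q(G/L_S)$: for $k=1$ this is part of the Heckenberger--Kolb construction of the calculus, and for $k \ge 2$ it propagates through the wedge product because $\Omega^\bullet_q(G/L_S)$ is generated in degrees $0$ and $1$.

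The genuine difficulty lies entirely inside the cited papers: the two-fold classification of \cite{HK} rests on a delicate analysis of the quantum tangent space and of $\mathcal{O}_q(G/L_S)$ as a $U_q(\mathfrak{l}_S)$-module, while the dimension count for the maximal prolongation in \cite{HKdR} is equally intricate. Within the present article there is nothing further to do beyond recording these results and carrying out the categorical bookkeeping above.
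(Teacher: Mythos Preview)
Your proposal is correct and matches the paper's treatment: the paper states this theorem without proof, simply attributing it to Heckenberger and Kolb \cite{HK,HKdR} (with the complex structure coming from \cite{MarcoConj}). Your sketch accurately summarizes what one would extract from those references, and in fact provides more detail than the paper itself, which treats the result as background.
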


We will refer to the calculus $\Omega^{\bullet}_q(G/L_S)$ as the \emph{Heckenberger--Kolb calculus} on $\mathcal{O}_q(G/L_S)$.

We recall  the existence of a  covariant complex structure, following from the results of~\cite{HK}, \cite{HKdR}, and~\cite{MarcoConj}.

\begin{prop} \label{prop:complexstructure}
Let $\mathcal{O}_q(G/L_S)$ be an irreducible quantum flag manifold, and $\Omega^{\bullet}_q(G/L_S)$  its Heckenberger--Kolb differential $*$-calculus. Then the following hold:
\begin{enumerate}
\item $\Omega^{\bullet}_q(G/L_S)$ admits precisely two left $\mathcal{O}_q(G)$-covariant complex structures, each of which is opposite to the other, 
\item for each complex structure, $\Omega^{(1,0)}_q (G/L_S) $ and $\Omega^{(0,1)}_q (G/L_S) $ are simple objects in $\modz{\mathcal{O}_q(G)}{\mathcal{O}_q(G/L_S)}$ and are non-isomorphic to each other.
\end{enumerate}
\end{prop}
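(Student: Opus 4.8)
The plan is to push the entire statement through Takeuchi's equivalence $\Phi$ of Theorem~\ref{3rdapril241} and reduce it to the Heckenberger--Kolb classification on the representation side. The input from the literature I would use is the fact, established in~\cite{HK}, that $\Phi\big(\Omega^1_q(G/L_S)\big)$ decomposes in $\lmod{\mathcal{O}_q(L_S)}{}$ as a direct sum $V_+ \oplus V_-$ of two \emph{irreducible} objects which are \emph{mutually non-isomorphic} (indeed $V_-$ is the dual of $V_+$, and these are not isomorphic for the highest weights occurring here). Since $\Phi$ is a monoidal equivalence between $\modz{\mathcal{O}_q(G)}{\mathcal{O}_q(G/L_S)}$ and $\lmod{\mathcal{O}_q(L_S)}{}$ and $\lmod{\mathcal{O}_q(L_S)}{}$ is semisimple, the preimages $E_\pm := \Phi^{-1}(V_\pm)$ are simple, mutually non-isomorphic sub-objects of $\Omega^1_q(G/L_S)$ with $\Omega^1_q(G/L_S) = E_+ \oplus E_-$, and, by Schur's lemma transported across $\Phi$, the only decomposition of $\Omega^1_q(G/L_S)$ into two non-zero sub-objects of $\modz{\mathcal{O}_q(G)}{\mathcal{O}_q(G/L_S)}$ is, up to order, $E_+ \oplus E_-$.

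Granting this, part~(ii) is immediate once part~(i) tells us that $\{\Omega^{(1,0)}_q(G/L_S), \Omega^{(0,1)}_q(G/L_S)\}$ must coincide with $\{E_+, E_-\}$: simplicity and non-isomorphism of the two summands are then inherited from $V_\pm$. For part~(i) I would first observe that a left $\mathcal{O}_q(G)$-covariant complex structure on $\Omega^\bullet_q(G/L_S)$ is determined by its degree-one part. Indeed, by definition of a first-order differential calculus $\Omega^1_q(G/L_S)$ is generated as a $B$-bimodule by $d\mathcal{O}_q(G/L_S)$; writing $d = \partial + \overline{\partial}$ with $\partial$, $\overline{\partial}$ of bidegrees $(1,0)$ and $(0,1)$, one sees that $\Omega^{(1,0)}_q(G/L_S)$ and $\Omega^{(0,1)}_q(G/L_S)$ are exactly the sub-bimodules generated by $\partial\mathcal{O}_q(G/L_S)$ and $\overline{\partial}\mathcal{O}_q(G/L_S)$ respectively (they contain these, and their sum is the whole of $\Omega^1_q(G/L_S)$ which is a direct sum), after which multiplicativity of the $\mathbb{N}_0^2$-grading forces $\Omega^{(p,q)}_q(G/L_S)$ in every bidegree. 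By Definition~\ref{12thdec234} the two degree-one summands are sub-objects in $\modz{\mathcal{O}_q(G)}{\mathcal{O}_q(G/L_S)}$, and both are non-zero (if $\Omega^{(0,1)}_q(G/L_S) = 0$ then $\Omega^{(1,0)}_q(G/L_S) = \big(\Omega^{(0,1)}_q(G/L_S)\big)^{*} = 0$, contradicting $\Omega^1_q(G/L_S) \neq 0$), so by the previous paragraph the decomposition must be $E_+ \oplus E_-$ up to order. Hence there are at most two such complex structures; they are distinct because $E_+ \neq E_-$, and they are interchanged by $(p,q) \mapsto (q,p)$, i.e.\ each is the opposite of the other.

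It then remains to exhibit at least one genuine complex structure, say the one with $\Omega^{(1,0)}_q(G/L_S) = E_+$ (its opposite then realises the choice $\Omega^{(1,0)}_q(G/L_S) = E_-$). This is precisely what \cite{HKdR} and \cite{MarcoConj} provide: there the full exterior algebra $\Omega^\bullet_q(G/L_S)$ and its higher wedge powers are described explicitly, and one verifies the complex-structure axioms $\big(\Omega^{(p,q)}_q(G/L_S)\big)^{*} = \Omega^{(q,p)}_q(G/L_S)$ and $d\,\Omega^{(p,q)}_q(G/L_S) \subseteq \Omega^{(p+1,q)}_q(G/L_S) \oplus \Omega^{(p,q+1)}_q(G/L_S)$. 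The step I expect to be the real obstacle is exactly this existence half: the uniqueness half is soft (Schur's lemma through $\Phi$, together with the fact that the calculus is generated in degrees $\le 1$), whereas showing that the candidate degree-one splitting $E_+ \oplus E_-$ actually propagates to an honest $\mathbb{N}_0^2$-algebra grading compatible with $d$ and $*$ cannot be read off from $\Omega^1_q(G/L_S)$ alone and genuinely requires the detailed Heckenberger--Kolb description of the calculus and its top form — which is why we invoke the cited works at that point rather than reproving them here.
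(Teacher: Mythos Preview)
Your proposal is correct and matches the paper's treatment: the paper does not give its own proof of this proposition but simply records it as following from \cite{HK}, \cite{HKdR}, and \cite{MarcoConj}. Your sketch correctly identifies the same sources and the same logical structure --- uniqueness via Takeuchi's equivalence and Schur's lemma applied to the irreducible decomposition $V_+\oplus V_-$ from \cite{HK}, and existence of an honest $\mathbb{N}_0^2$-grading from \cite{HKdR} and \cite{MarcoConj} --- so there is nothing further to add.
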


In fact, we have more. We recall that a notion of K\"ahler  structures on noncommutative differential calculus was defined and studied  in \cite{MMF3}. 

\begin{rem}
In Theorem 5.9 of \cite{MarcoConj}, Matassa shows that the Heckenberger--Kolb calculi for the quantum irreducible flag manifolds $\mathcal{O}_q (G/L_S) $ admit a K\"ahler  structure for all but finitely many values of $q$.  
\end{rem}

The following proposition will play a key role in the rest of the section.

\begin{prop}  \cite[Proposition 3.11]{HKdR}  \label{23rdoct238}
The Heckenberger-Kolb calculi on an  irreducible quantum flag manifold  is factorizable.
\end{prop}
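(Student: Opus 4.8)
The plan is to transport the question to the semisimple category $\lmod{\mathcal{O}_q(L_S)}{}$ via Takeuchi's equivalence $\Phi$ of Theorem~\ref{3rdapril241}, where an isomorphism is detected by surjectivity together with equality of dimensions. Write $A=\mathcal{O}_q(G)$, $B=\mathcal{O}_q(G/L_S)$, $H=\mathcal{O}_q(L_S)$, put $V^+:=\Phi(\Omega^{(1,0)})$ and $V^-:=\Phi(\Omega^{(0,1)})$, and let $M$ be the complex dimension of the underlying classical flag manifold. Each $\Omega^{(p,q)}$ is a direct summand of $\Omega^{p+q}\in\modz{\mathcal{O}_q(G)}{\mathcal{O}_q(G/L_S)}$ cut out by the $\mathbb{N}_0^2$-grading, hence is again an object of $\modz{\mathcal{O}_q(G)}{\mathcal{O}_q(G/L_S)}$; consequently the wedge maps $\wedge_{(p,0),(0,q)}$ and $\wedge_{(0,q),(p,0)}$ of Definition~\ref{13thdec231} are morphisms there, and since $\Phi$ is monoidal it suffices to prove that
\[
  \Phi\big(\wedge_{(p,0),(0,q)}\big)\colon\Phi(\Omega^{(p,0)})\otimes\Phi(\Omega^{(0,q)})\longrightarrow\Phi(\Omega^{(p,q)})
\]
and its mirror $\Phi(\wedge_{(0,q),(p,0)})$ are isomorphisms in $\lmod{\mathcal{O}_q(L_S)}{}$. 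As this category is semisimple, it is enough to check \textbf{(a)} surjectivity and \textbf{(b)} equality of the dimensions of source and target.

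For \textbf{(b)} I would use the refinement of Theorem~\ref{thm:HKClass} underlying Proposition~\ref{prop:complexstructure}, namely that the Heckenberger--Kolb calculus has classical dimension \emph{in each bidegree}, $\dim\Phi(\Omega^{(p,q)})=\binom{M}{p}\binom{M}{q}$ (see \cite{HK} and \cite{MarcoConj}). In particular $\dim\Phi(\Omega^{(p,0)})=\binom{M}{p}$ and $\dim\Phi(\Omega^{(0,q)})=\binom{M}{q}$, so by monoidality of $\Phi$ the source of $\Phi(\wedge_{(p,0),(0,q)})$ has dimension $\binom{M}{p}\binom{M}{q}=\dim\Phi(\Omega^{(p,q)})$, as needed. (Alternatively one can avoid importing the bidegreewise count: granting \textbf{(a)} for all $(p,q)$ gives $\dim\Phi(\Omega^{(p,q)})\le\binom{M}{p}\binom{M}{q}$, and summing over $p+q=k$ and comparing with $\dim\Phi(\Omega^{k})=\binom{2M}{k}$ through the Vandermonde identity forces equality in every bidegree --- provided one separately knows $\dim\Phi(\Omega^{(p,0)})=\binom{M}{p}$, i.e.\ that the holomorphic subalgebra $\Omega^{(\bullet,0)}$ is a flat deformation of the classical holomorphic exterior algebra; this is where irreducibility enters, the relevant cotangent module $\mathfrak{g}_{-1}$ being abelian.)

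For \textbf{(a)}, the calculus is generated as an algebra by $B$ and $dB\subseteq\Omega^{(1,0)}\oplus\Omega^{(0,1)}$, and since the bidegree decomposition is an $\mathbb{N}_0^2$-grading, $\Omega^{(p,q)}$ is spanned by wedge products of exactly $p$ one-forms from $\Omega^{(1,0)}$ and $q$ one-forms from $\Omega^{(0,1)}$ in arbitrary order; in particular $\Omega^{(1,1)}=\Omega^{(1,0)}\wedge\Omega^{(0,1)}+\Omega^{(0,1)}\wedge\Omega^{(1,0)}$. The whole argument then hinges on the single commutation relation $\Omega^{(0,1)}\wedge\Omega^{(1,0)}\subseteq\Omega^{(1,0)}\wedge\Omega^{(0,1)}$ inside $\Omega^{(1,1)}$ (equivalently, surjectivity of $\wedge_{(1,0),(0,1)}$ in bidegree $(1,1)$): granting it, an induction on the number of ``inversions'' in a monomial --- repeatedly rewriting a $(0,1)$-factor immediately followed by a $(1,0)$-factor --- shows every element of $\Omega^{(p,q)}$ lies in $\Omega^{(p,0)}\wedge\Omega^{(0,q)}=\mathrm{im}\,\wedge_{(p,0),(0,q)}$, giving surjectivity of $\wedge_{(p,0),(0,q)}$, and symmetrically of $\wedge_{(0,q),(p,0)}$. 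I expect this $(1,1)$ base case to be the main obstacle, since it is the only point at which the fine structure of the Heckenberger--Kolb calculus genuinely enters; I would settle it by reading the relation off the explicit quadratic presentation of $\Omega^{\bullet}_q(G/L_S)$ in \cite{HK}, or equivalently by checking that the canonical surjection $V^+\otimes V^-\oplus V^-\otimes V^+\twoheadrightarrow\Phi(\Omega^{(1,1)})$ restricts to surjections on each of $V^+\otimes V^-$ and $V^-\otimes V^+$ --- a statement that follows by comparing isotypic components once the $U_q(\mathfrak{l}_S)$-module $\Phi(\Omega^{(1,1)})$ is identified, as it is in \cite{HK}.
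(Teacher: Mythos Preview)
The paper does not supply its own proof of this proposition; it is simply quoted from \cite[Proposition~3.11]{HKdR}. Your outline is a sound reconstruction of how such an argument runs: transport the wedge maps along the monoidal equivalence $\Phi$, then verify that the resulting linear maps are isomorphisms by checking surjectivity together with a dimension count, reducing surjectivity in general bidegree to the $(1,1)$ base case by reordering monomials. One small comment: you do not need semisimplicity of $\lmod{\mathcal{O}_q(L_S)}{}$ for the final step --- a surjective linear map between finite-dimensional spaces of equal dimension is bijective by plain linear algebra.

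That said, your proposal is a roadmap rather than an independent proof: both ingredients you identify as decisive --- the bidegreewise dimension formula $\dim\Phi(\Omega^{(p,q)})=\binom{M}{p}\binom{M}{q}$ and the $(1,1)$ commutation $\Omega^{(0,1)}\wedge\Omega^{(1,0)}\subseteq\Omega^{(1,0)}\wedge\Omega^{(0,1)}$ --- are themselves deferred to \cite{HK,HKdR,MarcoConj}. In that sense your write-up and the paper's bare citation land in the same place, with yours having the advantage of isolating exactly which two facts from the Heckenberger--Kolb analysis are actually needed.
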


Applying Proposition \ref{25thsep232} and the discussion in Subsection \ref{17thjuly242}  to Proposition \ref{23rdoct238}, we immediately obtain the following result:

\begin{cor} \label{23rdoct237}
If $(\Omega^{(\bullet, \bullet)}_q (G/L_S), \wedge, \partial, \overline{\partial})$ denotes the Heckenberger-Kolb calculus on an irreducible quantum flag manifold $\mathcal{O}_q (G/L_S), $  then $\Omega^{(1, 0)}_q (G/L_S) $ and $\Omega^{(0, 1)}_q (G/L_S) $ are holomorphic $\mathcal{O}_q (G/L_S)$-bimodules with respect to the given complex structure and the opposite complex structure respectively. 
\end{cor}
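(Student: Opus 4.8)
The plan is to read the statement off directly from the structural results already assembled, so that essentially no new computation is required. The first step is to invoke Proposition~\ref{23rdoct238}, which guarantees that the Heckenberger--Kolb calculus $\Omega^{\bullet}_q(G/L_S)$ is factorizable in the sense of Definition~\ref{13thdec231}; in particular the wedge maps $\wedge_{(0,1),(1,0)}$ and $\wedge_{(1,0),(0,1)}$ are $\mathcal{O}_q(G/L_S)$-bimodule isomorphisms, so their inverses $\theta^{(1,1)}_l$ and $\theta^{(1,1)}_r$ of~\eqref{25thsep231} are at our disposal.

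Next, for the given covariant complex structure $(\Omega^{(\bullet,\bullet)}_q(G/L_S), \wedge, \partial, \overline{\partial})$ (which exists by Proposition~\ref{prop:complexstructure}), I would apply Proposition~\ref{25thsep232} verbatim with $B = \mathcal{O}_q(G/L_S)$: factorizability implies that $\overline{\partial}_{\Omega^{(1,0)}} := \theta^{(1,1)}_l \circ \overline{\partial}$ is a left $\overline{\partial}$-connection on $\Omega^{(1,0)}_q(G/L_S)$ whose holomorphic curvature vanishes, and hence $(\Omega^{(1,0)}_q(G/L_S), \overline{\partial}_{\Omega^{(1,0)}})$ is a holomorphic $\mathcal{O}_q(G/L_S)$-bimodule with respect to the given complex structure.

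For the opposite complex structure $(\Omega^{(\bullet,\bullet)\op}_q(G/L_S), \wedge, \partial_{\op}, \overline{\partial}_{\op})$, I would run the argument of Subsection~\ref{17thjuly242}: the opposite complex structure is again factorizable, since its wedge maps coincide with those of the original calculus up to the reindexing $\Omega^{(a,b),\op} = \Omega^{(b,a)}$, so applying the same reasoning to $\overline{\partial}_{\Omega^{(1,0)}, \op} := \theta^{(1,1)}_r \circ \overline{\partial_\op}$ shows that $(\Omega^{(0,1)}_q(G/L_S), \overline{\partial}_{\Omega^{(1,0)}, \op})$ is a holomorphic bimodule with respect to the opposite complex structure. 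I expect no genuine obstacle in this argument; the only point to keep in mind is the bookkeeping between the bigradings of the original and opposite calculi, which is immediate from the definitions recalled in Subsection~\ref{17thjuly242}. For good measure I would also note, with an eye to the applications in the rest of the section, that $\Omega^1_q(G/L_S)$ is finitely generated and projective as a left $\mathcal{O}_q(G/L_S)$-module by Remark~\ref{27thsep241}, since Theorem~\ref{thm:HKClass} places it in $\modz{\mathcal{O}_q(G)}{\mathcal{O}_q(G/L_S)}$.
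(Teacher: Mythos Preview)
Your proposal is correct and follows exactly the approach the paper intends: the corollary is stated as an immediate consequence of applying Proposition~\ref{25thsep232} and the discussion in Subsection~\ref{17thjuly242} to the factorizability result of Proposition~\ref{23rdoct238}, which is precisely what you do. The extra remark on projectivity via Remark~\ref{27thsep241} is not needed for the corollary itself but is harmless and indeed relevant for the subsequent Chern-connection arguments.
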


\subsection{Some consequences of Takeuchi's equivalence}

 We will denote $\Phi (\Omega^k_q (G/L_S)) $ and $\Phi (\Omega^{(a, b)}_q (G/L_S)) $ by the symbols $V^k$ and $V^{(a,b)}$ respectively. We will use the symbol $V$ to denote the vector space $V^1$.

\begin{lem}
  In $\modz{\cO_q(G)}{\cO_q(G/L_S)}$ we have the following:
  \begin{subequations} \label{23rdjuly242}
    \begin{align}
    &  {}  \prescript{\mathrm{co} (\mathcal{O}_q (G)) }{}{\left(\Omega_q^1(G/L_S)^{\otimes3} \right)} = 0, \label{eq:3tensLocal0} \\
%    {} & \Hom(\Omega_q^1(G/L_S))^{\otimes3},\  \cO_q(G/L_S)) = 0, \\
   & {}  \Hom(\Omega_q^{(1,0)}(G/L_S),\ \Omega_q^2(G/L_S)) = 0, \label{23rdjuly243} \\
   & {}  \Hom(\Omega_q^{(0,1)}(G/L_S),\  \Omega_q^2(G/L_S)) = 0. \label{23rdjuly244}
  \end{align}
  \end{subequations}
\end{lem}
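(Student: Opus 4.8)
The plan is to deduce all three statements from Takeuchi's equivalence (Theorem~\ref{3rdapril241}) together with the dimension count in Theorem~\ref{thm:HKClass}, the complex structure data in Proposition~\ref{prop:complexstructure}, and some elementary representation theory of the semisimple category $\lmod{\cO_q(L_S)}{}$. Throughout I write $V = V^1$, $V^{(1,0)}$, $V^{(0,1)}$, $V^2$ for the images under $\Phi$, so that $V^1 \cong V^{(1,0)} \oplus V^{(0,1)}$, and, since $\Phi$ is monoidal, $\Phi(\Omega^1_q(G/L_S)^{\otimes 3}) \cong V^{\otimes 3}$ and $\Phi(\Omega^2_q(G/L_S)) \cong V^2$. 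By Theorem~\ref{thm:HKClass} we have $\dim V = 2M$ and $\dim V^2 = \binom{2M}{2} = M(2M-1)$, and by Proposition~\ref{prop:complexstructure}(ii) the objects $V^{(1,0)}$ and $V^{(0,1)}$ are simple and mutually non-isomorphic in $\lmod{\cO_q(L_S)}{}$; each has dimension $M$ (this is part of the Heckenberger--Kolb classification, being the classical dimension of $\Omega^{(1,0)}$).

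\textbf{Statement \eqref{eq:3tensLocal0}.} Translating through $\Phi$, the space of $\mathcal{O}_q(G)$-coinvariants of $\Omega^1_q(G/L_S)^{\otimes 3}$ corresponds to $\Hom_{\lmod{\cO_q(L_S)}{}}(\mathbb{C}, V^{\otimes 3})$, i.e.\ the multiplicity of the trivial comodule in $V^{\otimes 3}$. Now $V^{\otimes 3}$ is a direct sum of eight summands of the form $V^{(\varepsilon_1)}\otimes V^{(\varepsilon_2)}\otimes V^{(\varepsilon_3)}$ with each $\varepsilon_i \in \{(1,0),(0,1)\}$. The trivial comodule can only appear in a tensor product $X\otimes Y\otimes Z$ of simples if $Z^*$ occurs in $X\otimes Y$; here the only self-duality available is $V^{(0,1)}\cong {}^{*}(V^{(1,0)})$ from Corollary~\ref{29thmarch24jb4} (and its opposite), so any summand contributing a trivial subobject must have a balanced number of $(1,0)$'s and $(0,1)$'s — impossible for a product of an odd number of factors. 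Hence $\Hom(\mathbb{C}, V^{\otimes 3}) = 0$. (Alternatively, and more robustly, one invokes that $\Omega^{\bullet}_q(G/L_S)$ is of classical dimension, so $\dim\Phi(\Omega^3_q) = \binom{2M}{3}$, and the known structure of the wedge map; but the parity/duality argument above is the cleanest.)

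\textbf{Statements \eqref{23rdjuly243}–\eqref{23rdjuly244}.} By monoidality of $\Phi$ these become $\Hom_{\lmod{\cO_q(L_S)}{}}(V^{(1,0)}, V^2) = 0$ and $\Hom_{\lmod{\cO_q(L_S)}{}}(V^{(0,1)}, V^2) = 0$. Since $V^{(1,0)}$ is simple, by Schur's lemma it suffices to show that $V^{(1,0)}$ does not occur as a subcomodule of $V^2$. The key structural input is that the wedge product gives a surjection $\wedge\colon \Omega^1\otimes_B\Omega^1 \twoheadrightarrow \Omega^2$, hence a surjection $V\otimes V \twoheadrightarrow V^2$, and under the bigrading this refines: $V^2 = V^{(2,0)}\oplus V^{(1,1)}\oplus V^{(0,2)}$ where $V^{(2,0)}$ is a quotient of $V^{(1,0)}\otimes V^{(1,0)}$, $V^{(0,2)}$ of $V^{(0,1)}\otimes V^{(0,1)}$, and $V^{(1,1)}$ of $(V^{(1,0)}\otimes V^{(0,1)}) \oplus (V^{(0,1)}\otimes V^{(1,0)})$. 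So I must show $V^{(1,0)}$ is not a constituent of any of $V^{(2,0)}$, $V^{(1,1)}$, $V^{(0,2)}$. The arguments for $\eqref{23rdjuly243}$ and $\eqref{23rdjuly244}$ are symmetric under passing to the opposite complex structure, so I focus on $V^{(1,0)}$. The cleanest route is via highest weights: $\Omega^{(1,0)}_q(G/L_S)$ is, as an $\cO_q(L_S)$-comodule, an irreducible of a specific known highest weight (coming from the Heckenberger--Kolb description — roughly the $\mathfrak{l}_S$-module $\mathfrak{g}/\mathfrak{p}$), and one checks that this weight cannot be realised inside $\Lambda^2$ of $V$ for weight reasons: the weights of $V^{(2,0)}$ (resp.\ $V^{(0,2)}$, $V^{(1,1)}$) are sums of two weights of $\Omega^{(1,0)}$ (resp.\ of $\Omega^{(0,1)}$, resp.\ one of each), and no such sum equals a weight of $\Omega^{(1,0)}$. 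A slicker alternative that avoids explicit weight bookkeeping: invoke that the calculus is factorizable (Proposition~\ref{23rdoct238}), so $\theta^{(1,1)}_l\colon \Omega^{(1,1)} \xrightarrow{\sim} \Omega^{(0,1)}\otimes_B\Omega^{(1,0)}$ is an isomorphism, whence $V^{(1,1)} \cong V^{(0,1)}\otimes V^{(1,0)} \cong {}^{*}V^{(1,0)}\otimes V^{(1,0)}$; by Schur this contains $\mathbb{C}$ with multiplicity one and its remaining constituents, were $V^{(1,0)}$ among them, would force (by rigidity) $V^{(1,0)}$ to occur in $V^{(1,0)}\otimes V^{(1,0)}\otimes V^{(1,0)} \subseteq V^{\otimes 3}$, and then pairing with $\ev$ would produce a nonzero element of $\Hom(\mathbb{C}, V^{\otimes 4})$ whose parity is again even — consistent — so this particular shortcut needs the genuine highest-weight input after all. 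I therefore expect the \textbf{main obstacle} to be precisely \eqref{23rdjuly243}–\eqref{23rdjuly244}: ruling out $V^{(1,0)} \hookrightarrow V^{(1,1)}$ genuinely requires knowing the $\cO_q(L_S)$-comodule structure (highest weight) of $\Omega^{(1,0)}_q(G/L_S)$ and comparing it against the decomposition of ${}^{*}V^{(1,0)}\otimes V^{(1,0)}$, and I would carry this out case-by-case (or uniformly, using the cominuscule description of the irreducible flag manifolds, where $\mathfrak{g}/\mathfrak{p}$ is an irreducible $\mathfrak{l}_S$-module and $\Lambda^2(\mathfrak{g}/\mathfrak{p})$ is known explicitly) rather than attempting a purely formal argument.
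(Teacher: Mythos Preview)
Your reduction via Takeuchi's equivalence to statements about $\lmod{\cO_q(L_S)}{}$ is exactly what the paper does. But after that point you miss the key device, and this is why \eqref{23rdjuly243}--\eqref{23rdjuly244} look hard to you when in fact they are no harder than \eqref{eq:3tensLocal0}.

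The paper does not touch highest weights or case-by-case cominuscule combinatorics. Instead it uses the distinguished group-like central element $Z = K_1^{a_1}\cdots K_r^{a_r}$ of $U_q(\mathfrak{l}_S)$ (determined by $\det(\mathcal{A})\,\varpi_s = \sum_i a_i\alpha_i$). One computes that $Z$ acts on $V^{(a,0)}$ by $q^{-a(\varpi_s,\alpha_s)\det(\mathcal{A})}$ and on $V^{(0,b)}$ by $q^{b(\varpi_s,\alpha_s)\det(\mathcal{A})}$; since $Z$ is group-like and the calculus is factorizable, $Z$ acts on $V^{(a,b)}$ by $q^{(b-a)(\varpi_s,\alpha_s)\det(\mathcal{A})}$. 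This is a genuine $\mathbb{Z}$-grading by $b-a$, not just a parity. Then all three statements are immediate: on the summands of $V^{\otimes 3}$ the $Z$-eigenvalues are $q^{\pm(\varpi_s,\alpha_s)\det(\mathcal{A})}$ and $q^{\pm 3(\varpi_s,\alpha_s)\det(\mathcal{A})}$, never $1$; and $V^{(1,0)}$ has $Z$-eigenvalue with exponent $-1$ while the pieces $V^{(2,0)},V^{(1,1)},V^{(0,2)}$ of $V^2$ have exponents $-2,0,2$.

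Your parity argument for \eqref{eq:3tensLocal0} is really a shadow of this: you are implicitly using the $(b-a)\bmod 2$ grading, but you never justify why such a grading exists (your duality sentence does not by itself exclude, say, $V^{(0,1)}$ from appearing inside $V^{(1,0)}\otimes V^{(1,0)}$). Once you supply the grading via $Z$, you get the full $\mathbb{Z}$-valued invariant, and \eqref{23rdjuly243}--\eqref{23rdjuly244} fall out for free with no weight bookkeeping at all. So the ``main obstacle'' you identify is not an obstacle; it is resolved by exactly the same mechanism as the first statement, made precise.
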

\begin{proof}
  Since $\Phi$ is an equivalence of monoidal categories, the statement of the lemma is equivalent to the following three equations:  
  \begin{subequations}\label{eq:HomsLocal}
  \begin{align}
    & {} \Hom_{U_q(\fl_S)}(\mathbb{C},\ V^{\otimes3}) = 0,\label{eq:3tensLocal}\\
    & {} \Hom_{U_q(\fl_S)}(V^{(1,0)},\ V^2) = 0,\label{eq:3tensLocalb}\\
    & {} \Hom_{U_q(\fl_S)}(V^{(0,1)},\ V^2) = 0. \label{eq:3tensLocalc}
  \end{align}
  \end{subequations}
  Indeed, the equivalence of \eqref{eq:3tensLocal0} and \eqref{eq:3tensLocal} follows from Corollary \ref{23rdjuly241} while the other two equivalences are straightforward. 
  
  We recall that an element $X$ of the center $Z(U_q(\fl_S))$ of $U_q(\fl_S)$ acts on any irreducible
  finite-dimensional type-1 $U_q(\fl_S)$-module~$W$ by a  central character $\chi_W \in \Hom(Z(U_q(\fl_S)),\mathbb{C})$, i.e, 
  $$ X \triangleright w = \chi_W (X) w $$
for all $w \in W$. 
  
   Now, it was noted in~\cite[\S4.4]{Fano} (also see the discussion in~\cite[\S4.5]{HolVBs}) that there is a distinguished group-like
  element $Z$ in the center of $U_q(\fl_S)$ defined as 
  \[
    \text{$Z = K^{a_1}_1 \cdots K^{a_r}_r$, where $\det (\mathcal{A}) \varpi_s = a_1 \alpha_1 + \cdots + a_r \alpha_r$},
  \]
  $\mathcal{A}$ is the Cartan matrix and $\alpha_1, \cdots \alpha_r$ are the simple roots. 
 
   In fact, from the proof of Theorem 4.12 of \cite{Fano}, we know that
 $$ \chi_{V^{(a,0)}} (Z) = q^{- a (\varpi_s, \alpha_s) \det (\mathcal{A}) }. $$
 Similarly, $\chi_{V^{(0,b)}} (Z) = q^{b (\varpi_s, \alpha_s) \det (\mathcal{A}) }$.

 Moreover, if $W_1$ and $W_2$ are $U_q(\fl_S)$-modules, then
 $$ \chi_{W_1 \otimes W_2} (Z) = \chi_{W_1} (Z) \chi_{W_2} (Z). $$
Thus, the factorizability of the calculus (Proposition \ref{23rdoct238}) implies the following equation:
\[
    \chi_{V^{(a,b)}}(Z) = q^{(b-a) (\varpi_s,\alpha_s)\det(\mathcal{A})}.
  \]
  The isomorphisms~\eqref{eq:HomsLocal} follow from these equations.
  For example, since
  \[
  V  = V^{(1,0)} \oplus V^{(0,1)},
  \]
  we see that the action of $Z$ on the summands of $V^{\otimes 3}$ are given as multiplication by the following real numbers:
  \[
    q^{3 (\varpi_s,\alpha_s)\det(\mathcal{A})},\quad
    q^{(\varpi_s,\alpha_s)\det(\mathcal{A})},\quad
    q^{- (\varpi_s,\alpha_s)\det(\mathcal{A})},\quad
    q^{-3 (\varpi_s,\alpha_s)\det(\mathcal{A})}.
  \]
  Since $(\varpi_s,\alpha_s)\neq0$ %and $\epsilon (Z) = 1, $ we immediately see that~\eqref{eq:3tensLocal} holds.
  and the elements of the center of~$U_q(\fl_S)$ separate irreducible
  finite-dimensional type-1 $U_q(\fl_S)$-modules, we have that~\eqref{eq:3tensLocal} holds.
  The proofs of the equivalences~\eqref{eq:3tensLocalb} and~\eqref{eq:3tensLocalc} are similar.
    \end{proof}

\begin{lem}\label{8thmay24jb4}
  Let $(\Omega^{(0,1)}_q (G/L_S), \ev_1, \coev_1)$ be a~right dual of $\Omega^{(1,0)}_q (G/L_S)$
  and $(\Omega^{(1,0)}_q (G/L_S), \ev_2, \coev_2)$ be a~right dual of $\Omega^{(0,1)}_q (G/L_S)$. 
  We have that
  \[
    \wedge\circ\coev_1(1) = \lambda \wedge\circ\coev_2(1)
  \]
  for some $\lambda \in \mathbb{C}^\times$.
\end{lem}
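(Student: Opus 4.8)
The plan is to transport the whole statement through Takeuchi's equivalence $\Phi$ (Theorem~\ref{3rdapril241}) and reduce it to a one-dimensionality statement for a space of $U_q(\fl_S)$-invariants. First I would arrange that $\coev_1$ and $\coev_2$ are morphisms in $\modz{\mathcal{O}_q(G)}{\mathcal{O}_q(G/L_S)}$. By the simplicity of $\Omega^{(1,0)}_q(G/L_S)$ (Proposition~\ref{prop:complexstructure}) together with Lemma~\ref{22ndapril243}, any right dual of $\Omega^{(1,0)}_q(G/L_S)$ having $\Omega^{(0,1)}_q(G/L_S)$ as underlying object differs from the covariant one constructed in Corollary~\ref{29thmarch24jb4} by rescaling $(\ev,\coev)$ by a nonzero scalar; since rescaling $\coev_1$ or $\coev_2$ only rescales the constant $\lambda$ to be produced, we may assume $\coev_1$ and $\coev_2$ are covariant. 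Then $\wedge\circ\coev_1$ and $\wedge\circ\coev_2$ are morphisms from $\mathcal{O}_q(G/L_S)$ to $\Omega^{(1,1)}_q(G/L_S)$ in $\modz{\mathcal{O}_q(G)}{\mathcal{O}_q(G/L_S)}$.

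Next I would compute the relevant $\Hom$-space. Applying the monoidal functor $\Phi$ to the wedge map $\wedge_{(0,1),(1,0)}\colon \Omega^{(0,1)}_q(G/L_S)\otimes_B\Omega^{(1,0)}_q(G/L_S)\to\Omega^{(1,1)}_q(G/L_S)$, which is invertible by factorizability (Proposition~\ref{23rdoct238}), gives $V^{(1,1)}\cong V^{(0,1)}\otimes V^{(1,0)}$ in $\lmod{\mathcal{O}_q(L_S)}{}$. Moreover $V^{(0,1)}$ is a right dual of $V^{(1,0)}$, since $\Phi$ is monoidal and $\Omega^{(0,1)}_q(G/L_S)$ is a right dual of $\Omega^{(1,0)}_q(G/L_S)$ by Corollary~\ref{29thmarch24jb4}. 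Hence $\Hom_{U_q(\fl_S)}(\mathbb{C},V^{(1,1)})\cong\Hom_{U_q(\fl_S)}(\mathbb{C},V^{(0,1)}\otimes V^{(1,0)})\cong\End_{U_q(\fl_S)}(V^{(1,0)})$, and the last space is one-dimensional by Schur's lemma because $V^{(1,0)}$ is simple (Proposition~\ref{prop:complexstructure}). By Takeuchi's equivalence, $\Hom(\mathcal{O}_q(G/L_S),\Omega^{(1,1)}_q(G/L_S))$ in $\modz{\mathcal{O}_q(G)}{\mathcal{O}_q(G/L_S)}$ is therefore one-dimensional, so $\wedge\circ\coev_1$ and $\wedge\circ\coev_2$ are scalar multiples of one another.

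Finally I would check that the scalar is nonzero. Since $\coev_1(1)=0$ would force $\id_{\Omega^{(1,0)}_q(G/L_S)}=0$ through the evaluation--coevaluation identities~\eqref{27thnov231}, we have $\coev_1(1)\neq0$; and since $\wedge_{(0,1),(1,0)}$ is injective, $\wedge\circ\coev_1(1)\neq0$. The same argument with $\wedge_{(1,0),(0,1)}$ gives $\wedge\circ\coev_2(1)\neq0$. Thus $\Phi(\wedge\circ\coev_1)$ and $\Phi(\wedge\circ\coev_2)$ are two nonzero vectors in a one-dimensional space and hence differ by a factor $\lambda\in\mathbb{C}^{\times}$; by faithfulness of $\Phi$ the same holds for $\wedge\circ\coev_1$ and $\wedge\circ\coev_2$, and evaluating at $1$ gives the claim.

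I expect the substantive point to be the identification $\Hom_{U_q(\fl_S)}(\mathbb{C},V^{(1,1)})\cong\End_{U_q(\fl_S)}(V^{(1,0)})$ and the resulting one-dimensionality; I would stress that this uses the simplicity of $\Omega^{(1,0)}_q(G/L_S)$ essentially, and not merely the central-character computation of the previous lemma, since the element $Z$ acts as the identity on all of $V^{(1,1)}$ and so does not separate multiplicities there. The reduction to covariant duals and the nonvanishing step are routine but worth writing out carefully, the latter being exactly what guarantees $\lambda\neq0$ rather than merely $\lambda\in\mathbb{C}$.
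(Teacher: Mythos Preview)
Your approach is essentially the same as the paper's: transport through Takeuchi's equivalence and use that $\dim\big(V^{(1,1)}\big)^{U_q(\fl_S)}=1$. The paper simply cites \cite[Lemma~3.4]{Fano} for this one-dimensionality, whereas you supply a self-contained argument via factorizability, duality, and Schur's lemma (which is a nice addition), and you make the nonvanishing step explicit where the paper leaves it implicit. Your initial reduction to covariant $\coev_i$ is unnecessary---in the paper the lemma is already stated and used for duals in $\modz{\mathcal{O}_q(G)}{\mathcal{O}_q(G/L_S)}$, so $\coev_1,\coev_2$ are covariant from the outset---but this does not affect correctness.
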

\begin{proof}
  Since the complex structure on $\Omega^\bullet_q(G/L_S)$ is factorizable, we have that
  \[
    \Phi(\wedge\circ\coev_1)(1),\ \Phi(\wedge\circ\coev_2)(1) \in \left(V^{(1,1)}\right)^{U_q(\fl_S)}.
  \]
  It is easy to show that $\dim \left(V^{(1,1)}\right)^{U_q(\fl_S)} = 1$, for example, see~\cite[Lemma~3.4]{Fano}.
  Therefore, there is $\lambda\in\mathbb{C}^\times$ such that
  \(
    \Phi(\wedge\circ \coev_1)(1) = \lambda \Phi(\wedge \circ \coev_2)(1)
  \)
  which proves the claim.
\end{proof}

\subsection{Metrics on the Heckenberger--Kolb calculi}

We begin by constructing a real $\mathcal{O}_q (G/L_S)$-covariant quantum symmetric (see Definition \ref{4thmay242}) metric on the space of one-forms of the Heckenberger--Kolb calculus. 

We fix  $\mathcal{O}_q (L_S) $-invariant inner products $\langle ~,~\rangle^{1,0}$ and $\langle~,~ \rangle^{0,1}$ on $V^{(1,0)}$ and $V^{(0,1)}$ respectively. We  recall the notations $((~,~)^{1,0}, \coev^{1,0})$ and $((~,~)^{0,1}, \coev^{0,1})$ introduced in Corollary \ref{29thmarch24jb4} and Remark \ref{29thmarch24jb5} respectively. Moreover, we define  
$$g^{1,0}= \coev^{1,0}(1)\quad \text{ and }\quad g^{0,1}= \coev^{0,1}(1).$$ Then by Lemma~\ref{8thmay24jb4}, it follows that there exist $\lambda \in \mathbb{C}^\times $ such that \begin{equation}
    \wedge g^{1,0} = \lambda \wedge g^{0,1}.
    \label{9thmay24jb1}
\end{equation}
Finally, we define $$g = g^{1,0} -\lambda g^{0,1}, \quad (~,~) = (~,~)^{1,0} -\lambda^{-1}  (~, ~)^{0,1}.$$

\begin{thm}\label{9thmay24jb4}
    We fix $\mathcal{O}_q(L_S)$-invariant inner-products $\langle ~,~\rangle^{1,0}$ and $\langle~,~ \rangle^{0,1}$ on $V^{(1,0)}$ and $V^{(0,1)}$ respectively. If $\lambda$ is the scalar defined in \eqref{9thmay24jb1}, then the pair $(g, (~,~))$ is a real $\mathcal{O}_q (G) $-covariant quantum symmetric metric on $\Omega^1_q(G/L_S)$.
   \end{thm}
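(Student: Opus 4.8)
The plan is to identify $(g,(~,~))$ with a member of the family of metrics furnished by Theorem~\ref{2ndapril242}. By construction $g = g^{1,0} - \lambda g^{0,1} = \coev^{1,0}(1) + (-\lambda)\coev^{0,1}(1)$ and $(~,~) = (~,~)^{1,0} - \lambda^{-1}(~,~)^{0,1}$, which are exactly the data $\coev_{\Omega^1}(1)$ and $(~,~)_{\Omega^1}$ of Theorem~\ref{2ndapril242} for the parameter $\lambda$; moreover all hypotheses of that theorem hold here, since $\mathcal{O}_q(L_S)$ is a compact quantum group algebra, the complex structure is $\mathcal{O}_q(G)$-covariant by Proposition~\ref{prop:complexstructure}, and $\Omega^1_q(G/L_S) \in \modz{\mathcal{O}_q(G)}{\mathcal{O}_q(G/L_S)}$ by Theorem~\ref{thm:HKClass}. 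Granting that $\lambda$ is a nonzero \emph{real} number, Theorem~\ref{2ndapril242} then immediately gives that $(g,(~,~))$ is an $\mathcal{O}_q(G)$-covariant real metric on $\Omega^1_q(G/L_S)$, while quantum symmetry $\wedge g = \wedge g^{1,0} - \lambda \wedge g^{0,1} = 0$ is precisely the defining relation~\eqref{9thmay24jb1}. Thus the whole substance of the proof is the single claim $\lambda \in \mathbb{R}\setminus\{0\}$: nonvanishing is already part of Lemma~\ref{8thmay24jb4}, and the reality of $\lambda$ is the one point where genuine work is needed.

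To see $\lambda \in \mathbb{R}$ I would first show that $g^{1,0}$ and $g^{0,1}$ are each fixed by $\dagger = \mathrm{flip}(\ast\otimes\ast)$. For this, apply Theorem~\ref{2ndapril242} with the real parameter $1$ (and the same inner products $\langle~,~\rangle^{1,0}$, $\langle~,~\rangle^{0,1}$): the resulting metric $g_1 := g^{1,0} - g^{0,1}$ is real, so $g_1^{\dagger} = g_1$. Since $\ast$ interchanges $\Omega^{(1,0)}$ and $\Omega^{(0,1)}$, the map $\dagger$ sends each of the summands $\Omega^{(1,0)}\otimes_B\Omega^{(0,1)}$ and $\Omega^{(0,1)}\otimes_B\Omega^{(1,0)}$ of $\Omega^1\otimes_B\Omega^1$ into itself; as $g^{0,1}\in\Omega^{(1,0)}\otimes_B\Omega^{(0,1)}$ and $g^{1,0}\in\Omega^{(0,1)}\otimes_B\Omega^{(1,0)}$ lie in two distinct such summands, comparing components in $g_1^{\dagger} = g_1$ yields $(g^{1,0})^{\dagger} = g^{1,0}$ and $(g^{0,1})^{\dagger} = g^{0,1}$. (Alternatively, these two identities can be derived directly from~\eqref{8thmay24jb2} and~\eqref{8thmay24jb3} together with the zig-zag relations of Corollary~\ref{29thmarch24jb4} and Remark~\ref{29thmarch24jb5} and the uniqueness of a coevaluation for a fixed evaluation, in the spirit of Lemma~\ref{8thmay24jb1}.)

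With these identities in hand, recall that $\dagger$ is antilinear, so $g^{\dagger} = (g^{1,0})^{\dagger} - \overline{\lambda}\,(g^{0,1})^{\dagger} = g^{1,0} - \overline{\lambda}\,g^{0,1}$. On one hand $\wedge g = 0$ by~\eqref{9thmay24jb1}, and from the $\ast$-calculus identity $(\alpha\wedge\beta)^{\ast} = -\beta^{\ast}\wedge\alpha^{\ast}$ for one-forms one checks that $\wedge(\xi^{\dagger}) = -(\wedge\xi)^{\ast}$ for all $\xi\in\Omega^1\otimes_B\Omega^1$, so $\wedge g^{\dagger} = -(\wedge g)^{\ast} = 0$. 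On the other hand $\wedge g^{\dagger} = \wedge g^{1,0} - \overline{\lambda}\wedge g^{0,1} = (\lambda - \overline{\lambda})\wedge g^{0,1}$, again by~\eqref{9thmay24jb1}. Since the Heckenberger--Kolb calculus is factorizable (Proposition~\ref{23rdoct238}), the restricted wedge $\wedge_{(1,0),(0,1)}\colon \Omega^{(1,0)}\otimes_B\Omega^{(0,1)}\to\Omega^{(1,1)}$ is an isomorphism, and $g^{0,1} = \coev^{0,1}(1)\neq0$ by nondegeneracy of the right-dual pairing of Remark~\ref{29thmarch24jb5} (together with $\Omega^{(0,1)}\neq0$); hence $\wedge g^{0,1}\neq0$ and therefore $\lambda - \overline{\lambda} = 0$, i.e.\ $\lambda\in\mathbb{R}$. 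This supplies the last missing ingredient; the only non-formal step is this reality of $\lambda$, and everything else is bookkeeping on top of Theorem~\ref{2ndapril242} and the structural results recalled above.
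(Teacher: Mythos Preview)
Your proof is correct and follows the same overall route as the paper: reduce everything to Theorem~\ref{2ndapril242} once $\lambda\in\mathbb{R}\setminus\{0\}$ is known, and deduce the reality of $\lambda$ from $(g^{1,0})^{\dagger}=g^{1,0}$, $(g^{0,1})^{\dagger}=g^{0,1}$ together with factorizability. The only difference is in how you obtain these $\dagger$-invariances: the paper argues directly via uniqueness of the coevaluation for a fixed evaluation (your parenthetical ``alternative''), while your primary argument bootstraps from Theorem~\ref{2ndapril242} applied with the real parameter $1$ and then separates by bigrading---a neat shortcut that avoids rerunning the duality argument.
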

\begin{proof}
  We start by observing that  $(g^{1,0})^\dagger=g^{1,0}$ and $(g^{0,1})^\dagger= g^{0,1}$.
Indeed, let us define
$$ \xi^{1,0}: \Omega^{(1,0)}_q (G/L_S)  \otimes_ {\mathcal{O}_q (G/L_S)}  \Omega^{(0,1)}_q (G/L_S)
\to  \mathcal{O}_q (G/L_S) $$
as in the proof of Theorem \ref{2ndapril242}. Then by \eqref{8thmay24jb2}, we have
\begin{equation} \label{12thnov241} (~,~)^{1,0}= \xi^{1,0}. \end{equation}
Now, as $g^{1,0}$ is a central $\mathcal{O}_q ( G ) $-coinvariant element, so is $ (g^{1,0})^\dagger. $
Therefore, we have a morphism
  $ \eta:  \mathcal{O}_q (G/L_S) \rightarrow \Omega^{(0,1)}_q ( G/L_S ) \otimes _ {\mathcal{O}_q (G/L_S)} \Omega^{(1,0)}_q ( G/L_S )   $
in $\modz{\mathcal{O}_q(G)}{\mathcal{O}_q(G/L_S)}$ defined by
$ \eta ( b ) = b (g^{1,0})^\dagger.$

It can be easily checked that $ ( \Omega^{(0,1)}_q ( G/L_S ), \xi^{1,0}, \eta )  $ is a right dual of $\Omega^{(1,0)}_q ( G/L_S ). $ Since  $ ( \Omega^{(0,1)}_q ( G/L_S ), ( ~ , ~ )^{1,0}, \coev^{1,0} )  $ is also a right dual of  $ \Omega^{(1,0)}_q ( G/L_S ),$ \eqref{12thnov241} implies that
$  \coev^{1,0} = \eta, $ i.e, $ g^{1,0} = (  g^{1,0}  )^\dagger. $

Similarly, it follows that  $(g^{0,1})^\dagger= g^{0,1}$. 

   Thus, 
    \begin{equation} \label{27thjuly242}
        (\wedge g^{1,0})^* = - \wedge g^{1,0} \quad \text{and} \quad (\wedge g^{0,1})^* = - \wedge g^{0,1}.
    \end{equation}
    We claim that $\lambda\in \mathbb{R}\setminus \{0\}$. Indeed, by \eqref{9thmay24jb1} and \eqref{27thjuly242}, we obtain 
    \begin{align*}
        \lambda \wedge g^{0,1}= \wedge g^{1,0} = - (\wedge g^{1,0})^* = -(\lambda \wedge g^{0,1})^*=- \overline{\lambda}(\wedge g^{0,1})^*=\overline{\lambda} \wedge g^{0,1}.
    \end{align*} 
    Since $g^{0,1} \in \Omega^{(1,0)}_q (G/L_S) \otimes_{\mathcal{O}_q (G/L_S)} \Omega^{(0,1)}_q (G/L_S) $ is not zero, the factorizability of the calculus implies that $\wedge g^{0,1} \ne 0$ and so  $\lambda= \overline{\lambda}$. 

    Thus, $g=g^{1,0} - \lambda g^{0,1}$, where $\lambda \in \mathbb{R} \setminus \{ 0 \}. $ Hence,  by Theorem \ref{2ndapril242}, we conclude   that $(g, (~,~))$ is a real $\mathcal{O}_q(G)$-covariant metric.
    Since   $\wedge g^{1,0} = \lambda \wedge g^{0,1}$, $(g, (~,~))$ is quantum symmetric. 
\end{proof}

Now we classify covariant metrics on the Heckenberger--Kolb calculi.
Let us make the following definition for the sake of brevity.
\begin{defn} \label{18thjuly241}
    We say that a metric $(g_1, (~,~)_1)$ is a scalar multiple of another metric $(g_2, (~,~)_2)$ if there is $\lambda \in \mathbb{C}^\times $ such that $g_2= \lambda g_1$.
\end{defn}

In this case, Lemma \ref{8thmay24jb1} implies that $(~,~)_2 = \frac{1}{\lambda}(~,~)_1$.

\begin{thm}\label{9thmay24jb5}
    The set of all $\mathcal{O}_q(G)$-covariant metrics on $\Omega^1_q(G/L_S)$ is a two parameter family. Moreover, there exists a unique (up to scalar) quantum symmetric $\mathcal{O}_q(G)$-covariant metric on $\Omega^1_q(G/L_S)$.
\end{thm}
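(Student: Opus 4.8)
The plan is to deduce both assertions directly from the general classification of Theorem~\ref{8thmay24jb5} together with the representation-theoretic facts assembled in the preceding subsections. First I would apply Theorem~\ref{8thmay24jb5} with $A = \mathcal{O}_q(G)$ and $H = \mathcal{O}_q(L_S)$: since $\mathcal{O}_q(L_S)$ is a compact quantum group algebra and $\Omega^1_q(G/L_S)$ is an object of $\modz{\mathcal{O}_q(G)}{\mathcal{O}_q(G/L_S)}$, the space of covariant metrics carries a free and transitive action of $\Aut(\Phi(\Omega^1_q(G/L_S))) = \Aut(V)$. By Proposition~\ref{prop:complexstructure}, $V = V^{(1,0)} \oplus V^{(0,1)}$ with $V^{(1,0)}$ and $V^{(0,1)}$ simple and non-isomorphic objects of $\lmod{\mathcal{O}_q(L_S)}{}$; hence by the decomposition formula \eqref{eqn:auto.group} in Theorem~\ref{8thmay24jb5} we get $\Aut(V) \cong \mathrm{GL}_1(\mathbb{C}) \times \mathrm{GL}_1(\mathbb{C}) = \mathbb{C}^\times \times \mathbb{C}^\times$. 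This immediately gives the first claim: the set of covariant metrics is a two-parameter family, parametrized (non-canonically, via a choice of base metric) by $\mathbb{C}^\times \times \mathbb{C}^\times$.

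Next I would establish existence of a quantum symmetric covariant metric: this is exactly the metric $(g, (~,~))$ constructed just above in Theorem~\ref{9thmay24jb4}, which was shown there to be real, $\mathcal{O}_q(G)$-covariant, and quantum symmetric. So existence is free, and only uniqueness up to scalar remains. For uniqueness, I would take an arbitrary covariant metric $(g', (~,~)')$ and, by the transitivity half of Theorem~\ref{8thmay24jb5}, write $g' = \coev_{\tau}(1)$ for a unique $\tau = (\lambda_1, \lambda_2) \in \mathbb{C}^\times \times \mathbb{C}^\times$ acting on the base metric $g$ of Theorem~\ref{9thmay24jb4}. Using Example~\ref{eg:irreducible} (with $V^{(1,0)}$ playing the role of $V$ and $V^{(0,1)} \cong \prescript{*}{}{V^{(1,0)}}$ via Corollary~\ref{29thmarch24jb4}), the twisted coevaluation decomposes along \eqref{eqn:tensor.product} as $g' = \lambda_1^{-1} g^{1,0} + \lambda_2^{-1}(-\lambda) g^{0,1}$ (with appropriate identification of the $\mathbb{C}^\times$-factors), so that $\wedge g' = \lambda_1^{-1} \wedge g^{1,0} - \lambda_2^{-1}\lambda \wedge g^{0,1}$. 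By \eqref{9thmay24jb1}, $\wedge g^{1,0} = \lambda \wedge g^{0,1}$, whence $\wedge g' = (\lambda_1^{-1} - \lambda_2^{-1})\lambda \wedge g^{0,1}$. Since the calculus is factorizable (Proposition~\ref{23rdoct238}) and $g^{0,1} \in \Omega^{(1,0)}_q(G/L_S) \otimes_{\mathcal{O}_q(G/L_S)} \Omega^{(0,1)}_q(G/L_S)$ is nonzero, we have $\wedge g^{0,1} \neq 0$; therefore $\wedge g' = 0$ forces $\lambda_1 = \lambda_2$, i.e. $\tau$ lies in the diagonal $\mathbb{C}^\times$. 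This means $g'$ is a scalar multiple of $g$, which together with Lemma~\ref{8thmay24jb1} (forcing the inner product to rescale inversely) gives uniqueness up to scalar in the sense of Definition~\ref{18thjuly241}.

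The main obstacle I anticipate is bookkeeping rather than conceptual: one must carefully track how the abstract $\Aut(V)$-action of Theorem~\ref{8thmay24jb5} translates, under Takeuchi's equivalence and the self-duality identification of Example~\ref{eg:irreducible}, into the concrete rescaling $g^{1,0} \mapsto \lambda_1^{-1} g^{1,0}$, $g^{0,1} \mapsto \lambda_2^{-1} g^{0,1}$ of the two summands of $g$, including getting the inversion on the coevaluation side right. A clean way to avoid this is to observe that any covariant metric $(g', (~,~)')$ has, by Proposition~\ref{27thjuly241}, $g' \in \Omega^{(1,0)}_q(G/L_S) \otimes_{\mathcal{O}_q(G/L_S)} \Omega^{(0,1)}_q(G/L_S) \oplus \Omega^{(0,1)}_q(G/L_S) \otimes_{\mathcal{O}_q(G/L_S)} \Omega^{(1,0)}_q(G/L_S)$, and that the component in each summand is a morphism $\mathcal{O}_q(G/L_S) \to \Omega^{(1,0)}_q \otimes_B \Omega^{(0,1)}_q$ (respectively the other summand) in $\modz{\mathcal{O}_q(G)}{\mathcal{O}_q(G/L_S)}$; by Corollary~\ref{29thmarch24jb4} and Schur's lemma each such $\Hom$-space is one-dimensional, so $g' = \mu_1 g^{1,0} + \mu_2 g^{0,1}$ for scalars $\mu_1, \mu_2$, and one reruns the $\wedge$-computation above. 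Either route leads to the same conclusion; I would present the second, as it is more self-contained and sidesteps the delicate identification of the automorphism-group action with explicit rescalings.
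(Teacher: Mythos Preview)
Your proposal is correct and follows essentially the same route as the paper: apply Theorem~\ref{8thmay24jb5} together with Proposition~\ref{prop:complexstructure} to get the $\mathbb{C}^\times \times \mathbb{C}^\times$-parametrization, take the base metric from Theorem~\ref{9thmay24jb4}, and then use \eqref{9thmay24jb1} plus $\wedge g^{0,1}\neq 0$ (from factorizability) to force the two parameters to coincide for a quantum symmetric metric. The only discrepancy is cosmetic: the paper reads off directly from \eqref{21stmay24} that $\coev_\tau = (\id\otimes\tau)\circ\coev$ acts on the second tensor leg, giving $\widetilde g = \lambda_1 g^{1,0} - \lambda\lambda_2 g^{0,1}$ rather than your $\lambda_i^{-1}$ version, but since the conclusion $\lambda_1=\lambda_2$ is unaffected this is exactly the bookkeeping issue you anticipated; your alternative route via Proposition~\ref{27thjuly241} and the duality adjunction $\Hom(B,\Omega^{(0,1)}\otimes_B\Omega^{(1,0)})\cong\End(\Omega^{(1,0)})\cong\mathbb{C}$ is a clean way to sidestep it.
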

\begin{proof}
 By Proposition \ref{prop:complexstructure}, $V= V^{(1,0)} \oplus V^{(0,1)}$ is a decomposition into irreducible $\mathcal{O}_q(L_S)$-comodules and $V^{(1,0)} $ is not isomorphic to $V^{(0,1)}$. Thus, the first statement follows directly from Theorem \ref{8thmay24jb5}.

 We have already proved the existence of a covariant real quantum symmetric metric $(g, (~ , ~))$ in Theorem \ref{9thmay24jb4}, where $g = g^{1,0} - \lambda g^{0,1}$, where $\lambda \in \mathbb{R} \setminus \{ 0 \}$.
 Let $ (\widetilde{g}, \widetilde{(~ , ~)}) $ be another covariant quantum symmetric metric. Then by Theorem \ref{8thmay24jb5}, $\widetilde{g} = \tau. g  $ for  an automorphism $\tau  = (\lambda_1, \lambda_2) \in \mathbb{C}^\times \times \mathbb{C}^\times $ of $V= V^{(1,0)} \oplus V^{(0,1)}.
 $ Thus, by \eqref{21stmay24}, 
 $$ \widetilde{g} = \lambda_1 g_{1,0} + \lambda_2 (- \lambda) g_{0,1}. $$
 We have
 $$ 0 = \wedge (\widetilde{g}) = \wedge (\lambda_1 g_{1,0} + \lambda_2 (- \lambda) g_{0,1}) = \lambda (\lambda_1 - \lambda_2) \wedge g_{0,1},$$ 
where we have used \eqref{9thmay24jb1}. As observed in the proof of Theorem \ref{9thmay24jb4}, $\wedge g_{0,1} \neq 0$ and therefore, $\lambda_1 = \lambda_2. $ This completes the proof of the theorem. 
\end{proof}

\begin{remark} \label{29thjuly241}
    In \cite[Section 6]{matassalevicivita}, Matassa introduces a real quantum symmetric metric for Heckenberger--Kolb calculi which coincides with the Fubini-Study metric  for classical complex projective spaces. By the uniqueness proved in Theorem \ref{9thmay24jb5}, the metric obtained in Theorem \ref{9thmay24jb4} coincides up to scalar (see Definition \ref{18thjuly241}) with the generalized Fubini-Study metric of Matassa for quantum projective spaces. 
\end{remark}

\subsection{The Levi-Civita connection}

In this subsection, we prove the existence and uniqueness of Levi-Civita connection for any real $\mathcal{O}_q (G) $-covariant metric on the space of one-forms of the Heckenberger--Kolb calculi.

 \begin{thm} \label{29thjuly242}
Let $(g, (~, ~))$ be an $\mathcal{O}_q (G) $-covariant real metric on the space of one-forms of the Heckenberger--Kolb calculi.
Then there exists a unique covariant connection $\nabla$ on $\Omega^1_q (G/L_S) $ which is torsionless and compatible with the metric $ (g, (~, ~))$. 
\end{thm}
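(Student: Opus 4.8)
The plan is to build the connection as $\nabla = \nabla_{\Ch} + \nabla_{\Ch,\op}$ as described in the introduction, and then verify torsion-freeness and metric-compatibility using the central element $Z \in U_q(\mathfrak{l}_S)$. First I would observe that the Heckenberger--Kolb calculus satisfies all the hypotheses needed: it is factorizable (Proposition \ref{23rdoct238}), its complex structure is $\mathcal{O}_q(G)$-covariant, and $\Omega^{(1,0)}_q(G/L_S)$, $\Omega^{(0,1)}_q(G/L_S)$ are non-isomorphic simple objects in $\modz{\mathcal{O}_q(G)}{\mathcal{O}_q(G/L_S)}$ (Proposition \ref{prop:complexstructure}). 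Hence, starting from the real covariant metric $(g,(~,~))$, Theorem \ref{9thmay24jb21} produces $\mathcal{O}_q(G)$-covariant Hermitian metrics $\mathscr{H}_1$ on $\Omega^{(1,0)}_q(G/L_S)$ and $\mathscr{H}_2$ on $\Omega^{(0,1)}_q(G/L_S)$ with $\mathscr{H} = \mathscr{H}_1 \oplus \mathscr{H}_2$, and Corollary \ref{27thjuly243} tells us that $\nabla := \nabla_{\Ch} + \nabla_{\Ch,\op}$ is an $\mathcal{O}_q(G)$-covariant bimodule connection on $\Omega^1_q(G/L_S)$. This settles existence of a covariant bimodule connection for which metric-compatibility even makes sense; it remains to check $\nabla g = 0$ and $\wedge \nabla = -d$ (torsion-freeness), and then uniqueness.

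For metric-compatibility, I would argue that each Chern connection is compatible with its Hermitian metric by construction (Definition \ref{19thoct231} and Theorem \ref{chern}), and then translate compatibility with $\mathscr{H}_1 \oplus \mathscr{H}_2$ into compatibility with $(g,(~,~))$ using the one-to-one correspondence of Proposition \ref{19thapril241} together with the splitting from Theorem \ref{9thmay24jb21}. The point is that $g \in \Omega^{(1,0)}\otimes_B\Omega^{(0,1)} \oplus \Omega^{(0,1)}\otimes_B\Omega^{(1,0)}$ by Proposition \ref{27thjuly241}, and the two summands of $g$ are, up to the star operation, exactly the coevaluation elements associated to $\mathscr{H}_1$ and $\mathscr{H}_2$; the mixed nature of $\nabla$ (a $(1,0)$-connection on $\Omega^{(1,0)}$ paired with a $(0,1)$-connection on $\Omega^{(0,1)}$, and vice versa) matches the mixed bidegree of $g$, so $\nabla g = 0$ reduces to the Hermitian compatibility of $\nabla_{\Ch}$ and $\nabla_{\Ch,\op}$. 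For torsion, I would use the explicit $\sigma$-maps from Proposition \ref{20thoct232} and Subsection \ref{17thjuly242}, namely $\sigma_{(1,0),(0,1)} = -\theta^{(1,1)}_l \circ \wedge_{(1,0),(0,1)}$ and $\sigma_{(0,1),(1,0)} = -\theta^{(1,1)}_r \circ \wedge_{(0,1),(1,0)}$: these are designed precisely so that $\wedge \circ \sigma = -\wedge$, and combined with $\wedge_{(0,1),(1,0)}\circ\overline{\partial}_{\Omega^{(1,0)}} = \overline{\partial}$ (and its opposite), the torsion tensor $T_\nabla = \wedge\nabla + d$ will be forced to land in $\Hom(\Omega^{(1,0)}, \Omega^2) \oplus \Hom(\Omega^{(0,1)},\Omega^2)$ as a morphism in $\modz{\mathcal{O}_q(G)}{\mathcal{O}_q(G/L_S)}$; by \eqref{23rdjuly243}--\eqref{23rdjuly244} (equivalently the central-character computation with $Z$ showing $\Hom(V^{(1,0)}, V^2) = \Hom(V^{(0,1)}, V^2) = 0$), this forces $T_\nabla = 0$.

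Finally, for uniqueness, suppose $\nabla'$ is another torsionless covariant connection compatible with $(g,(~,~))$. The difference $\nabla' - \nabla$ is a left $\mathcal{O}_q(G/L_S)$-linear, $\mathcal{O}_q(G)$-covariant map $\Omega^1 \to \Omega^1 \otimes_B \Omega^1$, i.e. a morphism in $\modz{\mathcal{O}_q(G)}{\mathcal{O}_q(G/L_S)}$; torsion-freeness of both forces $\wedge(\nabla' - \nabla) = 0$, and metric-compatibility of both forces a further vanishing condition relating $\nabla' - \nabla$ to $g$. Applying Takeuchi's equivalence $\Phi$ and the central-character analysis (the $Z$-eigenvalues separate the bidegree-graded pieces of $V \otimes V$, and $\mathrm{Hom}$-spaces into $V^2$ vanish), I expect to conclude that the space of such difference tensors is zero, so $\nabla' = \nabla$. \emph{The main obstacle I anticipate} is the torsion computation: one must carefully keep track of how $\nabla$ decomposes across the four summands of $\Omega^1 \otimes_B \Omega^1$ (the Chern connection has both a holomorphic $(0,1)$-part equal to $\overline\partial_{\Omega^{(1,0)}}$ and a $(1,0)$-part determined by $\mathscr{H}_1$), check that each piece of $\wedge\nabla + d$ is genuinely a relative-Hopf-module morphism into $\Omega^2$, and then invoke the representation-theoretic vanishing; the bookkeeping with $\theta^{(1,1)}_l$, $\theta^{(1,1)}_r$, $\pi^{(1,0)}$, $\pi^{(0,1)}$ and the opposite complex structure is where errors are most likely to creep in.
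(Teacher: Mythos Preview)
Your construction of $\nabla = \nabla_{\Ch} + \nabla_{\Ch,\op}$ via Theorem~\ref{9thmay24jb21} and Corollary~\ref{27thjuly243} matches the paper exactly. Where you diverge is in the verification of the three properties, and here the paper's route is both simpler and safer than yours.

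For \emph{metric compatibility}, you propose to deduce $\nabla g = 0$ from the Hermitian compatibility of $\nabla_{\Ch}$ with $\mathscr{H}_1$ and of $\nabla_{\Ch,\op}$ with $\mathscr{H}_2$. This is the step I would flag as a genuine gap: compatibility in the sense of Definition~\ref{19thoct231} (involving $d\langle\cdot,\cdot\rangle$ and $\widetilde\nabla$) is not the same notion as $\nabla g = 0$ in Definition~\ref{4thjuly241}, and Proposition~\ref{19thapril241} only matches the metrics as data, not the two compatibility conditions. Passing from one to the other requires knowing how the tensor-product connection on $\Omega^1\otimes_B\Omega^1$ interacts with the bidegree splitting of $g$ and with both $\sigma$-maps, which is exactly the bookkeeping you identify as the main obstacle. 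The paper sidesteps all of this: since $g$ is $\mathcal{O}_q(G)$-coinvariant and $\nabla$ is covariant, $\nabla g$ is a coinvariant element of $\Omega^1\otimes_B\Omega^1\otimes_B\Omega^1$, and this space of coinvariants is zero by \eqref{eq:3tensLocal0}. No reference to the Hermitian compatibility is needed at all.

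For \emph{torsion}, your plan to track the $\sigma$-maps and the pieces $\theta^{(1,1)}_l$, $\theta^{(1,1)}_r$ is unnecessary. The torsion $T_\nabla = \wedge\circ\nabla - d$ is automatically left $\mathcal{O}_q(G/L_S)$-linear and $\mathcal{O}_q(G)$-covariant (because $\nabla$, $\wedge$, and $d$ are), hence by Proposition~\ref{20thdec231} it is a morphism $\Omega^1 \to \Omega^2$ in $\modz{\mathcal{O}_q(G)}{\mathcal{O}_q(G/L_S)}$. Takeuchi's equivalence and \eqref{23rdjuly243}--\eqref{23rdjuly244} then give $T_\nabla = 0$ immediately. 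You arrive at the same vanishing, but the explicit $\sigma$-analysis is wasted effort.

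For \emph{uniqueness}, you invoke torsion-freeness and metric-compatibility of $\nabla'$ to constrain $\nabla'-\nabla$. The paper's argument is stronger and shorter: it shows $\Hom_{U_q(\mathfrak l_S)}(V, V\otimes V) = 0$ directly (again via the $Z$-eigenvalue computation), so there is only \emph{one} covariant connection on $\Omega^1_q(G/L_S)$, and no further constraints are needed.

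In summary: your outline is not wrong, but the paper replaces all three of your computational verifications with pure representation-theoretic vanishing, so the ``main obstacle'' you anticipate never arises.
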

\begin{proof}
Since $\Omega^{(1,0)}_q (G/L_S) $ and $\Omega^{(0,1)}_q (G/L_S) $
are non-isomorphic simple objects in $\modz{\mathcal{O}_q(G)}{\mathcal{O}_q(G/L_S)}$ by Proposition~\ref{prop:complexstructure}, we can apply Theorem \ref{9thmay24jb21} to get $\mathcal{O}_q (G) $-covariant Hermitian  metrics $\mathscr{H}_1$ and $\mathscr{H}_2$ on  $\Omega^{(1,0)}_q (G/L_S) $ and $\Omega^{(0,1)}_q (G/L_S) $
respectively. 

By virtue of Corollary \ref{23rdoct237}, the Chern connection $\nabla_{\Ch}$ for  $ (\Omega^{(1, 0)}_q (G/L_S), \mathscr{H}_1) $ as well as the Chern connection $\nabla_{\Ch, \op}$ (for the opposite complex structure) for the pair $(\Omega^{(0, 1)}_q (G/L_S), \mathscr{H}_2)$ exist. 

We define a  connection
\begin{equation} \label{23rdoct239}
\nabla:= \nabla_{\Ch} + \nabla_{\Ch, \op}.
\end{equation}
Then by Corollary \ref{27thjuly243}, $\nabla$ is an $\mathcal{O}_q (G) $-covariant bimodule connection. We will prove that $\nabla$ is torsionless and compatible with the metric. 

Firstly, since $\nabla$ is a bimodule connection, we can make sense of the term $\nabla g$ (see Definition \ref{4thjuly241}).

By Remark \ref{24thjuly241}, $g = \coev (1) $ for a left $\mathcal{O}_q (G)$-covariant map 
$$\coev: \mathcal{O}_q (G/L_S) \rightarrow \Omega^{1}_q (G/L_S) \otimes_{\mathcal{O}_q (G/L_S) } \Omega^{1}_q (G/L_S).$$ Therefore,  $g$ is an $\mathcal{O}_q (G)$-coinvariant element of $\Omega^{1}_q (G/L_S) \otimes_{\mathcal{O}_q (G/L_S) } \Omega^{1}_q (G/L_S)$. Hence,  $\nabla g$ is an $\mathcal{O}_q (G) $ coinvariant element of $ \Omega^1_q (G / L_S) \otimes_{\mathcal{O}_q (G/L_S) } \Omega^1_q  (G / L_S) \otimes_{\mathcal{O}_q (G/L_S) } \Omega^1_q  (G / L_S). $ Therefore, 
$\nabla g = 0$ by \eqref{eq:3tensLocal0} and so $\nabla$ is compatible with the metric $(g, (~,~))$ in the sense of Definition \ref{4thjuly241}.

Now we prove that $\nabla$ is torsionless. The torsion of $\nabla$, denoted by $T_\nabla$ is given by $T_\nabla = \wedge \circ \nabla - d$ which is clearly  $\mathcal{O}_q (G) $-covariant and  left $\mathcal{O}_q (G / L_S)$-linear. Thus, we can apply Takeuchi's equivalence (see Proposition \ref{20thdec231}) to see that $ \Phi (T_\nabla) $ is a left $U_q (\mathfrak{l}_S)  $-covariant map from $V$ to $V^2$.
However,
$$ \Hom_{U_q (\mathfrak{l}_S)} (V, V^2) \cong \Hom_{U_q (\mathfrak{l}_S)} (V^{(1,0)}, V^2) \oplus \Hom_{U_q (\mathfrak{l}_S)} (V^{(0,1)}, V^2) = 0 $$
by  \eqref{eq:3tensLocalb} and \eqref{eq:3tensLocalc}. This proves that $T_\nabla = 0$.

The uniqueness follows from the fact that there is a unique covariant connection on $\Omega^1_q (G/L_S) $ and follows along the lines of Theorem 4.5 of \cite{HolVBs}. Indeed, let $\nabla_1$ and $\nabla_2$ are two covariant connections on $\Omega^1_q (G/L_S). $ Then $\nabla_1 - \nabla_2 $ is a covariant and left $\mathcal{O}_q (G) $ linear map from $\Omega^1_q (G/L_S)$ to $\Omega^1_q (G/L_S) \otimes_{\mathcal{O}_q (G/L_S)} \Omega^1_q (G/L_S)$ so that $\Phi (\nabla_1 - \nabla_2) \in \Hom_{U_q (\mathfrak{l}_S)} (V, V \otimes V). $ 

Since  $\Omega^{(1,0)}$ and $ \Omega^{(0,1)}$ are simple, the proof of Theorem 4.5 of \cite{HolVBs} shows that   $\Hom_{U_q (\mathfrak{l}_S)}(V^{(1,0)}, V \otimes V^{(1,0)})$ and  $\Hom_{U_q (\mathfrak{l}_S)}(V^{(0,1)}, V \otimes V^{(0,1)})$ are zero. By a similar computation,  $\Hom_{U_q (\mathfrak{l}_S)}(V^{(1,0)}, V \otimes V^{(0,1)})$ and $\Hom_{U_q (\mathfrak{l}_S)}(V^{(0,1)}, V \otimes V^{(1,0)})$ are also zero. Therefore,   $\Hom_{U_q (\mathfrak{l}_S)}(V, V \otimes V)= 0 $ implying that $\nabla_1 - \nabla_2 = 0$. 
\end{proof}

\begin{rem}
By virtue of Remark \ref{29thjuly241}, Theorem \ref{29thjuly242} proves the existence and uniqueness of Levi-Civita connection for the  generalized Fubini-Study metric constructed by Matassa in \cite{matassalevicivita}. Thus, Theorem \ref{29thjuly242} generalizes   \cite[Theorem 8.4]{matassalevicivita}.
\end{rem}

Since $\nabla$ is a Levi-Civita connection, it is automatically cotorsion free. We end the section by giving an independent  short proof of this fact.

\begin{lem}
  The connection $\nabla$ defined in \eqref{23rdoct239} is cotorsion free.
\end{lem}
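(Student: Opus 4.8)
The plan is to run the same kind of Takeuchi-equivalence argument that was used for torsion-freeness and metric compatibility in the proof of Theorem~\ref{29thjuly242}: realize the cotorsion as an $\mathcal{O}_q(G)$-coinvariant element of an object of $\modz{\mathcal{O}_q(G)}{\mathcal{O}_q(G/L_S)}$, push it through $\Phi$, and kill the resulting space of $U_q(\mathfrak{l}_S)$-invariants with the central element $Z$. First I would recall that, since $\nabla$ is an $\mathcal{O}_q(G)$-covariant bimodule connection by Corollary~\ref{27thjuly243}, the Beggs--Majid cotorsion tensor $\coT_\nabla$ of $\nabla$ with respect to the metric $g$ is a well-defined element of $\Omega^2_q(G/L_S)\otimes_{\mathcal{O}_q(G/L_S)}\Omega^1_q(G/L_S)$, assembled entirely out of the $\mathcal{O}_q(G)$-covariant maps $d$, $\wedge$ and $\nabla$ together with the $\mathcal{O}_q(G)$-coinvariant element $g=\coev(1)$ (Remark~\ref{24thjuly241}). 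Consequently $\coT_\nabla$ is itself $\mathcal{O}_q(G)$-coinvariant.

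Next I would transport this through Takeuchi's equivalence: by Corollary~\ref{23rdjuly241} together with the monoidality of $\Phi$ (Theorem~\ref{3rdapril241}), and using that $\Omega^1_q(G/L_S)$ and $\Omega^2_q(G/L_S)$ lie in $\modz{\mathcal{O}_q(G)}{\mathcal{O}_q(G/L_S)}$ (Theorem~\ref{thm:HKClass}), the coinvariant element $\coT_\nabla$ corresponds to a vector of
\[
  \bigl(\Phi(\Omega^2_q(G/L_S))\otimes\Phi(\Omega^1_q(G/L_S))\bigr)^{U_q(\mathfrak{l}_S)}=\bigl(V^2\otimes V\bigr)^{U_q(\mathfrak{l}_S)},
\]
so it remains to show this invariant space is zero. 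For this I would reuse the distinguished central group-like element $Z\in Z(U_q(\mathfrak{l}_S))$ from the Lemma establishing~\eqref{eq:3tensLocal0}. Setting $c:=(\varpi_s,\alpha_s)\det(\mathcal{A})\neq0$, we have $\chi_{V^{(a,b)}}(Z)=q^{(b-a)c}$ by factorizability (Proposition~\ref{23rdoct238}); decomposing $V^2\otimes V=\bigl(V^{(2,0)}\oplus V^{(1,1)}\oplus V^{(0,2)}\bigr)\otimes\bigl(V^{(1,0)}\oplus V^{(0,1)}\bigr)$, the element $Z$ therefore acts on the six irreducible summand types by the scalars $q^{-3c},\,q^{-c},\,q^{-c},\,q^{c},\,q^{c},\,q^{3c}$. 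Since $0<q<1$ and $c\neq0$, none of these equals $1=\epsilon(Z)$, and because the centre of $U_q(\mathfrak{l}_S)$ separates finite-dimensional type-$1$ modules there is no nonzero $U_q(\mathfrak{l}_S)$-morphism $\mathbb{C}\to V^2\otimes V$. Hence $\coT_\nabla=0$, i.e.\ $\nabla$ is cotorsion free.

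I do not anticipate a genuine obstacle: the substantive part is the $Z$-eigenvalue bookkeeping, which is the same computation already carried out in the cited Lemma, now performed on $V^2\otimes V$ rather than on $\Omega^1_q(G/L_S)^{\otimes 3}$; the only point to check is that this shift still avoids the eigenvalue $1$, which it does. (Alternatively one could observe that $\coT_\nabla$ is the image under $\wedge\otimes\id$ of the coinvariant element $\nabla g\in\Omega^1_q(G/L_S)^{\otimes 3}$, which vanishes by~\eqref{eq:3tensLocal0}; but that route passes through metric compatibility, whereas the argument above is independent of it.)
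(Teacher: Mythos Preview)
Your proof is correct and follows essentially the same strategy as the paper: identify $\coT_\nabla$ as an $\mathcal{O}_q(G)$-coinvariant element of $\Omega^2_q(G/L_S)\otimes_{\mathcal{O}_q(G/L_S)}\Omega^1_q(G/L_S)$ and then show the corresponding space of $U_q(\mathfrak{l}_S)$-invariants under $\Phi$ vanishes. The only cosmetic difference is that the paper invokes the already-established vanishing results \eqref{23rdjuly243}--\eqref{23rdjuly244} (together with duality) rather than redoing the $Z$-eigenvalue count on $V^2\otimes V$ as you do; since those equations were themselves proved by exactly this computation, the two arguments are the same in substance.
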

\begin{proof}
  The cotorsion of $\nabla$ is 
  defined as
  \[ \coT_{\nabla}:=
  \left(\exd\otimes_{\mathcal{O}_q (G/L_S)} \id - (\wedge\otimes_{\mathcal{O}_q (G/L_S)} \id)\circ(\id\otimes_{\mathcal{O}_q (G/L_S)} \nabla) \right) (g).
       \]
  We observe that the map 
  $\exd\otimes_{\mathcal{O}_q (G/L_S)} \id - (\wedge\otimes_{\mathcal{O}_q (G/L_S)} \id)\circ(\id\otimes_{\mathcal{O}_q (G/L_S)} \nabla)$ from 
  $\Omega_q^1(G/L_S)\otimes_{\mathcal{O}_q (G/L_S)} \Omega_q^1(G/L_S) $ to $\Omega^2_q(G/L_S)\otimes_{\mathcal{O}_q (G/L_S)} \Omega^1_q(G/L_S)$
   is left $\cO_q(G)$ -covariant.
   
  Since the element $g\in {}^{\co(\cO_q(G))}\left(\Omega^1_q(G/L_S)\otimes_{\mathcal{O}_q (G/L_S)}\Omega_q(G/L_S)\right)$, we
  have that $\coT_\nabla \in {}^{\co(\cO_q(G))}(\Omega^2_q(G/L_S)\otimes_{\mathcal{O}_q (G/L_S)} \Omega^1_q(G/L_S)) = 0$ by a combination of~\eqref{23rdjuly243} and~\eqref{23rdjuly244}.
\end{proof}

\section{The covariance of the Chern connection} \label{1staugust241}

Suppose $ (\Omega^{(\bullet, \bullet)}, \wedge, \partial, \overline{\partial}) $ is an $A$-covariant complex structure over an $A$-comodule $\ast$-algebra $B$ and $(\mathcal{E}, \overline{\partial}_{\mathcal{E}})$ is an $A$-covariant holomorphic $B$-bimodule in the sense of Definition \ref{11thdec231}. Our goal in this section is to prove that if $\mathscr{H}$ is an $A$-covariant Hermitian metric on $\mathcal{E}$, then the Chern connection for the pair $(\mathcal{E}, \mathscr{H})$ is $A$-covariant. This was stated as Theorem \ref{23rdoct231} and was used in Section \ref{4thaugust243} and Section \ref{4thaugust244}. 

Theorem \ref{23rdoct231} was known, but since the proof is not available in the literature, we decided to include it for the sake of completeness. After setting up some notations and proving some preparatory results,   we deduce a formula for the $(1, 0)$-part of the Chern connection in the second subsection, while in the third subsection, we use this formula to verify the covariance of the Chern connection.  

Throughout this section, we will be using the notions of $\partial$-connections, $\overline{\partial}$-connections and holomorphic bimodules from Subsection \ref{4thaugust241}. Moreover, the compatibility of a connection with an Hermitian metric and the definition of the Chern connection have been defined in Subsection \ref{4thaugust242}. 

\subsection{Some preparatory results}
Let $ (\Omega^{\bullet}, \wedge, d)$ be a differential calculus on an algebra $B$. 
If $\mathcal{E}$ is  finitely generated and projective as a left $B$-module and $\{ e^i, e_i: i = 1, \cdots n \}$ is a dual basis for $\mathcal{E}$ as in \eqref{27thnov232}, then for a connection $\nabla$ on $\mathcal{E}$, there exist elements $\Gamma^i_j$ in $\Omega^1 (B) $ called the Christoffel symbols of the connection $\nabla$ such that 
$$\nabla (e^i) = - \sum^n_{j = 1} \Gamma^i_j \otimes_B e^j.$$

We have the following result:  

\begin{prop} \label{beggsmajidchriistoffel} (\cite[Proposition 3.23]{BeggsMajid:Leabh})
Given a left connection $\nabla $ on a $B$-bimodule $\mathcal{E}$ which is finitely generated and projective as a left $B$-module, consider the matrices 
\begin{align}\label{P Matrix}
    \Gamma=\left (\Gamma^i_j\right)_{i,j} ~ \text{and} ~ P = \left (\ev(e^i\otimes_B e_j)\right)_{i,j}.
\end{align}
Then $\Gamma P= \Gamma $ and $\Gamma = P\Gamma -(dP) P$. 

Conversely, a choice of Christoffel symbols satisfying these conditions define a left connection on $\mathcal{E}$.
\end{prop}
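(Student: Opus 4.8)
The plan is to phrase everything in terms of the dual basis $\{e^i,e_i\}$ and the matrix $P=(P_{ij})$ with $P_{ij}:=\ev(e^i\otimes_B e_j)=e_j(e^i)\in B$. First I would record the identities I will use without further comment: each $e_i$ is left $B$-linear; $m=\sum_i e_i(m)e^i$ for $m\in\mathcal{E}$ and $f=\sum_i e_i\,f(e^i)$ for $f\in\prescript{}{B}{\Hom(\mathcal{E},B)}$; and, specializing the first to $m=e^i$, $e^i=\sum_j P_{ij}e^j$. From $e_k(e^i)=e_k\big(\sum_j P_{ij}e^j\big)=\sum_j P_{ij}P_{jk}$ one immediately gets $P^2=P$ (and hence $(dP)P+P(dP)=dP$ in $M_n(\Omega^1)$, by the Leibniz rule applied entrywise). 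I would then fix once and for all the \emph{canonical} Christoffel symbols $\Gamma^i_j:=-(\id_{\Omega^1}\otimes_B e_j)\big(\nabla(e^i)\big)$, where $\id_{\Omega^1}\otimes_B e_j\colon\Omega^1\otimes_B\mathcal{E}\to\Omega^1\otimes_B B\cong\Omega^1$; writing $\nabla(e^i)=\sum_\alpha\omega_\alpha\otimes_B m_\alpha$ and using $\omega\,e_j(m)\otimes_B e^j=\omega\otimes_B e_j(m)e^j$ together with $\sum_j e_j(m)e^j=m$ shows that this choice does satisfy $\nabla(e^i)=-\sum_j\Gamma^i_j\otimes_B e^j$.

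For the forward direction, to prove $\Gamma P=\Gamma$ I would compute $(\Gamma P)_{il}=\sum_k\Gamma^i_k\,e_l(e^k)=-\sum_\alpha\omega_\alpha\sum_k e_k(m_\alpha)e_l(e^k)$ and collapse the inner sum by left $B$-linearity of $e_l$ and $\sum_k e_k(m_\alpha)e^k=m_\alpha$, landing on $-\sum_\alpha\omega_\alpha e_l(m_\alpha)=\Gamma^i_l$. To prove $\Gamma=P\Gamma-(dP)P$, I would apply $\nabla$ to the identity $e^i=\sum_j P_{ij}e^j$, use the Leibniz rule to get $\nabla(e^i)=\sum_j\big(P_{ij}\nabla(e^j)+d(P_{ij})\otimes_B e^j\big)$, and then apply $\id_{\Omega^1}\otimes_B e_l$ to both sides: the left-hand side gives $-\Gamma^i_l$ and the right-hand side gives $-(P\Gamma)_{il}+((dP)P)_{il}$.

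For the converse, given a matrix $\Gamma$ over $\Omega^1$ satisfying both relations, I would define $\nabla$ on all of $\mathcal{E}$ by
\[
  \nabla(m):=\sum_i\Big(d\big(e_i(m)\big)\otimes_B e^i-e_i(m)\sum_j\Gamma^i_j\otimes_B e^j\Big),
\]
which is well defined since the coefficients $e_i(m)$ are uniquely determined by $m$. That this is a left connection, i.e.\ that it is $\mathbb{C}$-linear and $\nabla(bm)=b\,\nabla(m)+db\otimes_B m$, follows at once from $e_i(bm)=b\,e_i(m)$, the Leibniz rule for $d$, and $\sum_i e_i(m)e^i=m$; this step uses neither relation. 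The relations enter only in checking that $\Gamma$ is genuinely a system of Christoffel symbols for this $\nabla$: evaluating the formula at $m=e^k$ gives $\nabla(e^k)=\sum_i(dP)_{ki}\otimes_B e^i-\sum_j(P\Gamma)_{kj}\otimes_B e^j$; substituting $P\Gamma=\Gamma+(dP)P$ and using $\sum_i\big((dP)P\big)_{ki}\otimes_B e^i=\sum_i(dP)_{ki}\otimes_B e^i$ (which follows from $\sum_i P_{li}e^i=e^l$) collapses the right-hand side to $-\sum_j\Gamma^k_j\otimes_B e^j$, as needed. I would then note that the remaining relation $\Gamma P=\Gamma$ is exactly what guarantees that the canonical Christoffel symbols of this $\nabla$ — which by the computation in the first paragraph come out as $\Gamma P$ — equal $\Gamma$, so that the two constructions are mutually inverse.

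The only real difficulty is bookkeeping. Since $\{e^j\}$ is not a basis of $\mathcal{E}$, none of the identities that live in $\Omega^1\otimes_B\mathcal{E}$ can be compared entrywise; each has to be tested against the maps $\id_{\Omega^1}\otimes_B e_l$, and throughout one must keep track of which of the two $B$-module structures on $\Omega^1$ is being used when scalars are pushed across $\otimes_B$ and when the matrix products $P\Gamma$, $\Gamma P$, $(dP)P$ are formed.
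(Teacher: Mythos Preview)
Your proof is correct. The paper does not give its own proof of this proposition; it simply cites \cite[Proposition 3.23]{BeggsMajid:Leabh} and moves on, so there is nothing to compare against beyond noting that your argument is exactly the standard dual-basis computation one would expect (and essentially what is in Beggs--Majid). Your bookkeeping is careful, and your closing remark that $\Gamma P=\Gamma$ is precisely what makes the two constructions mutually inverse is a nice clarification of the role each relation plays.
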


Now if $ B $ is a $\ast$-algebra and $ (\Omega^{(\bullet, \bullet)}, \wedge, \partial, \overline{\partial}) $ is a complex structure on $B$, then we have an analogous result for a $\partial$-connection on $\mathcal{E}$.

\begin{rem}
Suppose $\{ e^i, e_i: i = 1, \cdots n \} $ be a dual basis for $\mathcal{E}$ and let  $\partial_{\mathcal{E}}$ be a $\partial$-connection on $\mathcal{E}$ such that
$\partial_{\mathcal{E}}(e^i)= -\sum_j s^i_j\otimes_B e^j $
for some elements $s^i_j$ in $\Omega^{(1, 0)}$. If $s$ denotes the matrix whose $(i, j)$-th entry is $s^i_j$
 and $P$ is the matrix defined  in \eqref{P Matrix}, then by a verbatim adaptation of the proof of Proposition \ref{beggsmajidchriistoffel}, we can deduce  the following two equations:
\begin{align}\label{sp=s}
    sP=s ~ \text{and} ~     s=Ps-(\partial P)P.
\end{align}
\end{rem}

Now we introduce some notations and collect a few identities for  Hermitian metrics. So let $\{ e^i, e_i: i = 1, \cdots n  \}$ be a dual basis of a $B$-bimodule $\mathcal{E}$ as above.   Then, given an Hermitian metric $\mathscr{H}$ on $\mathcal{E}$,  there exist elements $h^{ij}, h_{ij}$ in $B$ for all $i,j \in \{ 1, \cdots, n \}$, such that
\begin{equation} \label{3rddec234}
 {h^{ij}}=\langle e^i, \overline{e^j}\rangle, ~ \mathscr{H}^{-1}(e_i) =\sum_j \overline{ h_{ij}e^j}.
\end{equation}
 
In the next proposition, we collect some useful identities which connect $h^{ij}, h_{ij}$ and the matrix $P$ defined in \eqref{P Matrix}. Here and elsewhere, for a matrix $X=(x_{ij})_{i,j} \in M_n(B)$, $X^\dagger$ will denote the $B$-valued matrix  $ X^{\dagger}=(x_{ji}^{\ast})_{i,j}$.

\begin{prop} (\cite[Proposition 4.2]{beggs2011compatible}) \label{metric equations}
If $\mathscr{H}$ is an~Hermitian metric on a~$B$-bimodule $\mathcal{E}$ which is finitely generated and projective as a~left $B$-module. With $h^{ij}$ and $h_{ij}$ as in~\eqref{3rddec234}, we will let $h$ and $\tilde{h}$ denote the matrices whose $(i,j)$-th entries are $h^{ij}$ and $h_{ij}$ respectively.
Then the following equations hold:
\begin{gather*}
  \mathscr{H}(\overline{e^i}) =\sum_j e_j h^{ji},\ h \tilde{h}=P,\ \tilde{h} h = P^{\dagger},\\
  (h)^{\dagger} =h,\ (\tilde{h})^{\dagger}=\tilde{h},\ \tilde{h}P= \tilde{h},\ Ph=h.
\end{gather*}
\end{prop}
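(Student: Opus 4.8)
The plan is to reduce everything to a computation with the dual basis $\{e^i, e_i\}$ of $\mathcal{E}$ as a left $B$-module and the matrix $P = \big(e_j(e^i)\big)_{i,j}$ of \eqref{P Matrix}, using three ingredients repeatedly: the dual-basis identities $m = \sum_i e_i(m)\,e^i$ and $f = \sum_i e_i\,f(e^i)$ from \eqref{27thnov232}; the defining Hermitian symmetry of Definition~\ref{4thdec231}, namely $\mathscr{H}(\overline e)(f)^\ast = \mathscr{H}(\overline f)(e)$ for all $e,f\in\mathcal{E}$; and the identity $\overline{c\,m} = \overline m \cdot c^\ast$ (for $c\in B$, $m\in\mathcal{E}$), an immediate consequence of the bar-bimodule structure \eqref{28thnov231}. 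Throughout I would write $\mathscr{H}^{-1}(e_i) = \overline{m_i}$ for the unique $m_i\in\mathcal{E}$ and take $h_{ij}$ to be the dual-basis coefficients $h_{ij} = e_j(m_i)$, which indeed realise $\mathscr{H}^{-1}(e_i) = \sum_j \overline{h_{ij}\,e^j}$.

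The two ``structural'' identities are immediate: expanding the functional $\mathscr{H}(\overline{e^i})$ in the dual basis gives $\mathscr{H}(\overline{e^i}) = \sum_j e_j\,\mathscr{H}(\overline{e^i})(e^j) = \sum_j e_j\,h^{ji}$, and the Hermitian symmetry gives $(h^\dagger)_{ij} = (h^{ji})^\ast = \langle e^j,\overline{e^i}\rangle^\ast = \langle e^i,\overline{e^j}\rangle = h^{ij}$, i.e. $h^\dagger = h$. Next, $Ph = h$ follows from the left $B$-linearity of $\mathscr{H}(\overline{e^j})$ applied to $e^i = \sum_k e_k(e^i)\,e^k = \sum_k P_{ik}\,e^k$, which yields $h^{ij} = \sum_k P_{ik}\,h^{kj}$; and $\tilde h P = \tilde h$ follows from the left $B$-linearity of $e_j$ applied to $m_i = \sum_k e_k(m_i)\,e^k$, which yields $(\tilde h P)_{ij} = \sum_k e_k(m_i)\,e_j(e^k) = e_j(m_i) = \tilde h_{ij}$.

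The remaining identities $\tilde h^\dagger = \tilde h$, $h\tilde h = P$ and $\tilde h h = P^\dagger$ are where the bar-bimodule bookkeeping — the only place I anticipate any friction — has to be done carefully. The crucial point for $\tilde h^\dagger = \tilde h$ is that $\overline{m_i} = \mathscr{H}^{-1}(e_i)$ forces $\mathscr{H}(\overline{m_i}) = e_i$; substituting $e = m_i$, $f = m_j$ into $\mathscr{H}(\overline e)(f)^\ast = \mathscr{H}(\overline f)(e)$ then gives $(e_i(m_j))^\ast = e_j(m_i)$, i.e. $(\tilde h_{ji})^\ast = \tilde h_{ij}$. For $h\tilde h = P$ I would rewrite $\overline{m_i} = \sum_j \overline{h_{ij}\,e^j} = \sum_j \overline{e^j}\cdot h_{ij}^\ast$, apply $\mathscr{H}$ using right $B$-linearity together with $\mathscr{H}(\overline{e^j}) = \sum_l e_l\,h^{lj}$ to obtain $e_i = \mathscr{H}(\overline{m_i}) = \sum_{j,l} e_l\cdot\big(h^{lj}\,h_{ij}^\ast\big)$, and then evaluate at $e^k$ to get $P_{ki} = \sum_l P_{kl}\,(h\,\tilde h^\dagger)_{li}$, i.e. $P = P\,h\,\tilde h^\dagger = h\,\tilde h^\dagger = h\tilde h$, the last two equalities using $Ph = h$ and $\tilde h^\dagger = \tilde h$. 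Finally $\tilde h h = (h\tilde h)^\dagger = P^\dagger$, since $(h\tilde h)^\dagger = \tilde h^\dagger h^\dagger = \tilde h h$ by $h^\dagger = h$ and $\tilde h^\dagger = \tilde h$.

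All of this is of course the matrix incarnation of $\mathscr{H}$ being a self-adjoint $B$-bimodule isomorphism $\overline{\mathcal{E}} \to \prescript{}{B}{\Hom(\mathcal{E},B)}$, and could be argued invariantly, but the dual-basis route above is the most direct and is exactly the argument of \cite[Proposition~4.2]{beggs2011compatible}, which I would follow.
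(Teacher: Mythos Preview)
Your argument is correct, and it is indeed the dual-basis computation of \cite[Proposition~4.2]{beggs2011compatible}; the paper itself does not supply a proof but simply cites that reference, so there is nothing further to compare.
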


As a~consequence, we have the following proposition:

\begin{prop} \label{g tilde delbar P g=0} 
Let $(\Omega^{(\bullet, \bullet)}, \wedge, \partial, \overline{\partial})$ be a~complex structure on a~$\ast$-algebra $B$ and $\mathcal{E}$ be a~$B$-bimodule which is finitely generated and projective as a~left $B$-module and $\mathscr{H}$ be an~Hermitian metric on~$\mathcal{E}$. If $P$ denotes the matrix defined in~\eqref{P Matrix}, then with the notations as above, we have
\begin{align*}
    \tilde{h}\overline{\partial}(P) h = 0.
\end{align*}
\end{prop}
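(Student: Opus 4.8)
The plan is to use the relations among $h$, $\tilde h$ and $P$ collected in Proposition~\ref{metric equations}, together with the fact that $\overline\partial$ satisfies the graded Leibniz rule (Lemma~\ref{12thdec233}). Recall $h\tilde h = P$, $\tilde h h = P^\dagger$, $P h = h$, $\tilde h P = \tilde h$, $P^2 = P$ (since $P$ is the idempotent matrix $\ev(e^i\otimes_B e_j)$ coming from the projectivity of $\mathcal E$). First I would differentiate the idempotent relation $P^2 = P$: applying $\overline\partial$ entrywise and using the Leibniz rule gives $\overline\partial(P)\,P + P\,\overline\partial(P) = \overline\partial(P)$, equivalently $P\,\overline\partial(P)\,P = 0$ after sandwiching, and more usefully $(\id - P)\,\overline\partial(P)\,P = \overline\partial(P)\,P$ and $P\,\overline\partial(P)\,(\id-P) = P\,\overline\partial(P)$. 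The key point will be to rewrite $\tilde h\,\overline\partial(P)\,h$ so that a factor of $P$ gets trapped against $\overline\partial(P)$ from both sides.

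The cleanest route: start from $h = Ph$, so $\overline\partial(P)\,h = \overline\partial(P)\,P\,h$. Next, from $\tilde h = \tilde h P$ we get $\tilde h\,\overline\partial(P) = \tilde h P\,\overline\partial(P)$. Hence
\[
\tilde h\,\overline\partial(P)\,h = \tilde h\,P\,\overline\partial(P)\,P\,h.
\]
Now I would invoke $P\,\overline\partial(P)\,P = 0$, which follows by differentiating $P = P^3$ (or $P = P^2$) and then conjugating appropriately: from $\overline\partial(P) = \overline\partial(P^2) = \overline\partial(P)P + P\overline\partial(P)$, multiply on the left by $P$ and on the right by $P$ to obtain $P\overline\partial(P)P = P\overline\partial(P)P + P\overline\partial(P)P = 2P\overline\partial(P)P$, so $P\overline\partial(P)P = 0$. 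Substituting this into the displayed equation gives $\tilde h\,\overline\partial(P)\,h = 0$.

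I expect the only subtlety — and the main thing to be careful about — is bookkeeping: checking that $\overline\partial$ of a matrix over $B$ genuinely obeys the entrywise Leibniz rule with no sign issues (the entries of $P$ lie in $B = \Omega^{(0,0)}$, so there are no Koszul signs, and $\overline\partial(xy) = \overline\partial(x)y + x\overline\partial(y)$ for $x,y\in B$ directly from Lemma~\ref{12thdec233}), and confirming that $P$ is indeed idempotent, which is exactly the content of the dual-basis identity $f = \sum_i e_i f(e^i)$ applied to $f = e_j$ together with \eqref{evcoev}. Once those two facts are in hand, the computation is the three-line manipulation above, so there is no real obstacle beyond correctly citing Proposition~\ref{metric equations} for the relations $Ph = h$ and $\tilde h P = \tilde h$.
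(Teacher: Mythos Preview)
Your argument is correct. It differs from the paper's proof, which is even shorter: the paper applies the Leibniz rule directly to the product $\tilde h P$ to write $\tilde h\,\overline\partial(P) = \overline\partial(\tilde h P) - \overline\partial(\tilde h)\,P$, then multiplies on the right by $h$ and uses $\tilde h P = \tilde h$ and $Ph = h$ to obtain $\overline\partial(\tilde h)h - \overline\partial(\tilde h)h = 0$ in one line. Your route instead isolates the projector identity $P\,\overline\partial(P)\,P = 0$ (valid for any idempotent matrix over $B$) and then sandwiches with $\tilde h = \tilde h P$ and $h = Ph$. The paper's version is marginally more economical since it never needs to invoke $P^2 = P$ explicitly, while your version has the minor conceptual advantage of separating out a metric-independent fact about idempotents from the metric relations $\tilde h P = \tilde h$, $Ph = h$; either way the computation is entirely routine.
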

\begin{proof}
  Using the fact that $\overline{\partial}$ satisfies the Leibniz rule and the equation $\tilde{h} P = \tilde{h}$ and $P h = h$ from Proposition~\ref{metric equations}, we obtain
$$   \tilde{h}\overline{\partial}(P)h =\overline{\partial}(\tilde{h}P) h -\overline{\partial}(\tilde{h})P h = \overline{\partial}(\tilde{h}) h -\overline{\partial}(\tilde{h}) h =0. $$
\end{proof}

Let $\{ e^i, e_i: i = 1, \cdots, n \} $ be a dual basis of a holomorphic $B$-bimodule $\mathcal{E}$ as above. Let us write 
$$ \overline{\partial}_{\mathcal{E}} (e^i) = - \sum^n_{j = 1} (\Gamma_{-})^i_j \otimes_B e^j $$
for some elements $ (\Gamma_{-})^i_j$ in $\Omega^{(0,1)}$.
Then from the proof of \cite[Proposition 4.2]{BeggsMajidChern}, it follows that the $(1, 0)$-part of the Chern connection for a pair $(\mathcal{E}, \mathscr{H})$ is given as
$$ \nabla^{(1, 0)} (e^i) = - \sum^n_{j = 1} (\Gamma_{+})^i_j \otimes_B e^j, $$
where the matrix $ \Gamma_{+}: = ((\Gamma_{+})^i_j)_{i,j} $ is determined by the equation
\begin{equation} \label{4thdec232}
 - \Gamma_+ = \partial h. \tilde{h} + h (\Gamma_{-})^{\dagger} \tilde{h},
 \end{equation}
and the matrices $h$ and $\tilde{h}$ are defined in Proposition \ref{metric equations}.

\subsection{The $(1, 0)$-part of the Chern connection}

Our construction of the $(1, 0)$-part of the Chern connection is inspired by and closely related to the formula derived in \cite[Proposition 7.3]{OSV}. However, our novelty lies in the usage of the language of bar-categories which makes the construction more transparent.

We recall \cite[Proposition 3.32]{BeggsMajid:Leabh} for the anti-holomorphic part of the complex structure as the first step of our construction.

\begin{prop} \label{del tilde lemma}
For a left $\overline{\partial}$-connection $\overline{\partial}_{\mathcal{E}}$ on a $B$-bimodule $\mathcal{E}$ which is finitely generated and projective as a left $B$-module, there is a unique right $\overline{\partial}$-connection $\widetilde{\overline{\partial}}_{\mathcal{E}}: \prescript{}{B}{\Hom(\mathcal{E},B)} \to  \prescript{}{B}{\Hom(\mathcal{E},B)} \otimes_B \Omega^{(0,1)} $ such that 
\begin{align}\label{uniqueness of delbar}
    \overline{\partial}\circ \ev (e \otimes_B f) = (\id\otimes_B\ev)(\overline{\partial}_{\mathcal{E}}(e) \otimes_B f) + (\ev\otimes_B\id)(e\otimes_B \widetilde{\overline{\partial}}_{\mathcal{E}}(f))
\end{align}
for all $e\in \mathcal{E}$ and $f\in \prescript{}{B}{\Hom(\mathcal{E},B)}$.
If $\{ e^i, e_i: i = 1, \cdots n \}$ is a dual basis of $\mathcal{E}$, then
    \begin{align}\label{del tilde_E}
    \widetilde{\overline{\partial}}_{\mathcal{E}}(f)=\sum_i \bigl(e_i\otimes_B\overline{\partial}(\ev(e^i\otimes_Bf))-e_i\otimes_B(\id\otimes_B \ev)(\overline{\partial}_{\mathcal{E}}(e^i)\otimes_Bf)\bigr).
\end{align}
\end{prop}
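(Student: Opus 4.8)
The plan is to first establish uniqueness via the dual-basis identities, then to verify that the explicit formula \eqref{del tilde_E} does the job. For uniqueness, suppose $\widetilde{\overline{\partial}}_{\mathcal{E}}$ is any right $\overline{\partial}$-connection on $\prescript{}{B}{\Hom(\mathcal{E},B)}$ satisfying \eqref{uniqueness of delbar}. Given $f \in \prescript{}{B}{\Hom(\mathcal{E},B)}$, I would use the dual-basis reconstruction $f = \sum_i e_i \, \ev(e^i \otimes_B f)$ from \eqref{27thnov232} (recalling $\ev(e^i \otimes_B f) = f(e^i) \in B$), and then apply the right $\overline{\partial}$-Leibniz rule: since $\widetilde{\overline{\partial}}_{\mathcal{E}}(e_i \cdot b) = \widetilde{\overline{\partial}}_{\mathcal{E}}(e_i)\, b + e_i \otimes_B \overline{\partial}(b)$ for $b \in B$, we obtain
\[
\widetilde{\overline{\partial}}_{\mathcal{E}}(f) = \sum_i \widetilde{\overline{\partial}}_{\mathcal{E}}(e_i)\,\ev(e^i \otimes_B f) + \sum_i e_i \otimes_B \overline{\partial}\bigl(\ev(e^i \otimes_B f)\bigr).
\]
To pin down $\widetilde{\overline{\partial}}_{\mathcal{E}}(e_i)$, I would feed $e = e^j$ and $f = e_i$ into \eqref{uniqueness of delbar}: the left side is $\overline{\partial}\circ\ev(e^j \otimes_B e_i) = \overline{\partial}(P_{ji})$ (notation from \eqref{P Matrix}), and solving for the $\widetilde{\overline{\partial}}_{\mathcal{E}}(e_i)$-term and resumming over the dual basis recovers \eqref{del tilde_E}. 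This both shows uniqueness and produces the candidate.

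Next I would verify that the formula \eqref{del tilde_E} genuinely defines a right $\overline{\partial}$-connection, i.e.\ satisfies $\widetilde{\overline{\partial}}_{\mathcal{E}}(f b) = \widetilde{\overline{\partial}}_{\mathcal{E}}(f)\,b + f \otimes_B \overline{\partial}(b)$, and that it satisfies \eqref{uniqueness of delbar}. For the Leibniz property, one substitutes $fb$ for $f$ in \eqref{del tilde_E}, uses $\ev(e^i \otimes_B fb) = \ev(e^i \otimes_B f)\,b$, expands both occurrences of $\overline{\partial}$ by the Leibniz rule for $\overline{\partial}$ on $B$ (Lemma \ref{12thdec233}(2)) and the $\overline{\partial}$-connection property of $\overline{\partial}_{\mathcal{E}}$, and checks that the correction terms telescope, using again $\sum_i e_i\,\ev(e^i \otimes_B f') = f'$ to collapse the leftover sum into $f \otimes_B \overline{\partial}(b)$. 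One must also check independence of the choice of dual basis, which follows since $\sum_i e_i \otimes_B e^i$ is the canonical element corresponding to $\id_{\mathcal{E}}$ under $\mathcal{E}\otimes_B\prescript{}{B}{\Hom(\mathcal{E},B)} \cong \prescript{}{B}{\Hom(\mathcal{E},\mathcal{E})}$. For \eqref{uniqueness of delbar} itself, plug the formula in and simplify using $\ev(e^i \otimes_B f) \in B$ and the reconstruction identities; the two sums in \eqref{del tilde_E} were designed precisely so that the second term of \eqref{uniqueness of delbar} cancels the unwanted contribution.

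The main obstacle I anticipate is purely bookkeeping: carefully tracking which tensor-factor each $\ev$ and each $\overline{\partial}$ acts on through the isomorphisms $\prescript{}{B}{\Hom(\mathcal{E},B)} \otimes_B \Omega^{(0,1)}$ versus $\Omega^{(0,1)} \otimes_B \mathcal{E}$, and ensuring all expressions are well-defined over $\otimes_B$ rather than just $\otimes_{\mathbb{C}}$ — in particular that moving elements of $B$ past $\otimes_B$ is legitimate at each step. There is no conceptual difficulty: the statement is the standard fact that a connection on a finitely generated projective module induces a unique dual connection compatible with the evaluation pairing, here in the $\overline{\partial}$-graded setting, and it is a verbatim transcription of \cite[Proposition 3.32]{BeggsMajid:Leabh} with $d$ replaced by $\overline{\partial}$ and $\Omega^1$ by $\Omega^{(0,1)}$.
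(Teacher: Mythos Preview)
Your proposal is correct and follows the standard dual-connection argument. Note that the paper itself does not supply a proof of this proposition: it simply recalls the statement from \cite[Proposition 3.32]{BeggsMajid:Leabh}, adapted to the $\overline{\partial}$-setting, exactly as you observe in your final sentence.
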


\begin{cor} \label{rightdelbar on Ebar} 
Suppose $\overline{\partial}_{\mathcal{E}}$ is a left $\overline{\partial}$-connection on a $B$-bimodule $\mathcal{E}$ as in Proposition \ref{del tilde lemma},  $\widetilde{\overline{\partial}}_{\mathcal{E}}$ is the right $\overline{\partial}$-connection on $\prescript{}{B}{\Hom(\mathcal{E},B)}$ defined in \eqref{del tilde_E} and $\mathscr{H}$ is an Hermitian metric on $\mathcal{E}$ as in Definition \ref{4thdec231}. Then the map 
${\widetilde{\overline{\partial}}}_{\mathcal{E},\mathscr{H}}:\overline{\mathcal{E}}\to\overline{\mathcal{E}}\otimes_B \Omega^{(0,1)} $
    defined as 
\begin{align}\label{tilde tilde del E}
        {\widetilde{\overline{\partial}}}_{\mathcal{E},\mathscr{H}}(\overline{e})= (\mathscr{H}^{-1}\otimes_B\id)\widetilde{\overline{\partial}}_{\mathcal{E}}(\mathscr{H}(\overline{e})) 
\end{align}
 is a right $\overline{\partial}$-connection on $\overline{\mathcal{E}}$. 
\end{cor}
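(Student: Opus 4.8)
The plan is to recognise $\widetilde{\overline{\partial}}_{\mathcal{E},\mathscr{H}}$ as the conjugate of the right $\overline{\partial}$-connection $\widetilde{\overline{\partial}}_{\mathcal{E}}$ of Proposition~\ref{del tilde lemma} by the $B$-bimodule isomorphism $\mathscr{H}$, and to use that conjugating a right $\overline{\partial}$-connection by such an isomorphism yields a right $\overline{\partial}$-connection again. Concretely, with the notation of~\eqref{tilde tilde del E}, one has $\widetilde{\overline{\partial}}_{\mathcal{E},\mathscr{H}} = (\mathscr{H}^{-1}\otimes_B\id_{\Omega^{(0,1)}})\circ\widetilde{\overline{\partial}}_{\mathcal{E}}\circ\mathscr{H}$.

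First I would record the two ingredients that are needed. Since $\mathscr{H}$ is an isomorphism in $\qMod{}{B}{}{B}$, so is $\mathscr{H}^{-1}$; in particular $\mathscr{H}^{-1}$ is left $B$-linear, so $\mathscr{H}^{-1}\otimes_B\id$ is a well-defined morphism $\prescript{}{B}{\Hom(\mathcal{E},B)}\otimes_B\Omega^{(0,1)}\to\overline{\mathcal{E}}\otimes_B\Omega^{(0,1)}$, and hence $\widetilde{\overline{\partial}}_{\mathcal{E},\mathscr{H}}$ is a well-defined $\mathbb{C}$-linear map from $\overline{\mathcal{E}}$ to $\overline{\mathcal{E}}\otimes_B\Omega^{(0,1)}$. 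Secondly, Proposition~\ref{del tilde lemma} guarantees that $\widetilde{\overline{\partial}}_{\mathcal{E}}$ obeys the right $\overline{\partial}$-Leibniz rule $\widetilde{\overline{\partial}}_{\mathcal{E}}(f\cdot b)=\widetilde{\overline{\partial}}_{\mathcal{E}}(f)\cdot b + f\otimes_B\overline{\partial}(b)$, the right $B$-action on the target sitting on the $\Omega^{(0,1)}$-leg.

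Then I would verify the right $\overline{\partial}$-Leibniz rule for $\widetilde{\overline{\partial}}_{\mathcal{E},\mathscr{H}}$: for $\overline{e}\in\overline{\mathcal{E}}$ and $b\in B$,
\begin{align*}
\widetilde{\overline{\partial}}_{\mathcal{E},\mathscr{H}}(\overline{e}\cdot b)
&= (\mathscr{H}^{-1}\otimes_B\id)\,\widetilde{\overline{\partial}}_{\mathcal{E}}\big(\mathscr{H}(\overline{e})\cdot b\big)\\
&= (\mathscr{H}^{-1}\otimes_B\id)\big(\widetilde{\overline{\partial}}_{\mathcal{E}}(\mathscr{H}(\overline{e}))\cdot b + \mathscr{H}(\overline{e})\otimes_B\overline{\partial}(b)\big)\\
&= \widetilde{\overline{\partial}}_{\mathcal{E},\mathscr{H}}(\overline{e})\cdot b + \overline{e}\otimes_B\overline{\partial}(b),
\end{align*}
using that $\mathscr{H}$ is right $B$-linear in the first step, Proposition~\ref{del tilde lemma} in the second, and, in the third, that $\mathscr{H}^{-1}\otimes_B\id$ commutes with the right $B$-action on the $\Omega^{(0,1)}$-leg together with $\mathscr{H}^{-1}\circ\mathscr{H}=\id_{\overline{\mathcal{E}}}$. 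This is exactly the condition for $\widetilde{\overline{\partial}}_{\mathcal{E},\mathscr{H}}$ to be a right $\overline{\partial}$-connection on $\overline{\mathcal{E}}$, which completes the proof.

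I do not anticipate a real obstacle here: the argument is essentially a functorial one-liner, and the only care needed is bookkeeping — tracking that the right $B$-module structures on $\prescript{}{B}{\Hom(\mathcal{E},B)}\otimes_B\Omega^{(0,1)}$ and $\overline{\mathcal{E}}\otimes_B\Omega^{(0,1)}$ live on the $\Omega^{(0,1)}$-factor, and that $\mathscr{H}^{-1}$ being a bimodule map makes $\mathscr{H}^{-1}\otimes_B\id$ legitimate. If an explicit proof were preferred, the same identity could instead be extracted from~\eqref{del tilde_E} and~\eqref{tilde tilde del E} together with the relations $h\tilde{h}=P$, $\tilde{h}h=P^{\dagger}$ and $\tilde{h}P=\tilde{h}$ of Proposition~\ref{metric equations}, but the bimodule-map argument above is the transparent one, in line with the stated aim of using bar-category language.
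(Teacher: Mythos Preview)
Your proof is correct and follows exactly the paper's approach: the paper's proof is the one-line observation that since $\mathscr{H}$ is a $B$-bimodule isomorphism and $\widetilde{\overline{\partial}}_{\mathcal{E}}$ is a right $\overline{\partial}$-connection, the conjugate $(\mathscr{H}^{-1}\otimes_B\id)\circ\widetilde{\overline{\partial}}_{\mathcal{E}}\circ\mathscr{H}$ is again a right $\overline{\partial}$-connection. You have simply spelled out the Leibniz-rule verification explicitly, which is the content of that observation.
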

\begin{proof}
     Since $\mathscr{H}$ is a $B$-bimodule isomorphism, this follows from the fact that $\widetilde{\overline{\partial}}_{\mathcal{E}}$ is a right $\overline{\partial}$-connection on $\overline{\mathcal{E}}$.   
\end{proof}

\begin{defn} \label{right del on E}
With the notations of Corollary \ref{rightdelbar on Ebar}, assume that 
   ${\widetilde{\overline{\partial}}}_{\mathcal{E},\mathscr{H}}(\overline{e})=\sum_j\overline{f_j}\otimes_B \omega_j $. 
Then we define the map 
\begin{align} \label{(1,0)part}
    \widehat{\nabla}:\mathcal{E}\to \Omega^{(1,0)}\otimes_B \mathcal{E} ~ \text{by} ~ \widehat{\nabla}(e)= \sum_j \omega_j^{\ast}\otimes_B f_j.
\end{align} 
\end{defn}

\begin{rem} \label{29thnov231}
Proposition \ref{25thnov235} implies that $\widehat{\nabla}$ is well-defined. Moreover, it is easy to see that  $\widehat{\nabla}$ is a left $\partial$-connection on $\mathcal{E}$.
\end{rem}

Now we state and prove the main result of this subsection.

\begin{thm} \label{1,0partofchernconnection}
Let $ (\Omega^{\bullet}, \wedge, \partial, \overline{\partial}) $ be a complex structure on a $\ast$-algebra $B$ and $\mathscr{H}$ be an Hermitian metric on a holomorphic $B$-bimodule $ (\mathcal{E}, \overline{\partial}_{\mathcal{E}})$. If $\mathcal{E}$ is finitely generated and projective as a left $B$-module, then the connection $\widehat{\nabla}$  
 of \eqref{(1,0)part} coincides with  the $(1,0)$ part of the Chern connection on $\mathcal{E}$, i.e., 
 $$\widehat{\nabla}= (\pi^{(1,0)}\otimes_B \id) \nabla_{\Ch}.$$ 
\end{thm}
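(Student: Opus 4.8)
The plan is to identify $\widehat{\nabla}$ as the $(1,0)$-part of \emph{a} connection on $\mathcal{E}$ which is compatible with $\mathscr{H}$ and whose $(0,1)$-part is $\overline{\partial}_{\mathcal{E}}$; by the uniqueness clause of Theorem \ref{chern}, such a connection must be the Chern connection, and the theorem follows. Concretely, I would set $\nabla := \widehat{\nabla} + \overline{\partial}_{\mathcal{E}}$, which is visibly a left connection on $\mathcal{E}$ (Remark \ref{29thnov231} says $\widehat{\nabla}$ is a left $\partial$-connection, and $\overline{\partial}_{\mathcal{E}}$ is a left $\overline{\partial}$-connection, so their sum satisfies the Leibniz rule for $d = \partial + \overline{\partial}$). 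Its $(0,1)$-part is $\overline{\partial}_{\mathcal{E}}$ by construction, since $\widehat{\nabla}$ takes values in $\Omega^{(1,0)} \otimes_B \mathcal{E}$. The only nontrivial point is metric compatibility in the sense of Definition \ref{19thoct231}.

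The key computation is to verify
\[
  d \langle e, \overline{e'} \rangle = (\id \otimes_B \langle~,~\rangle)(\nabla e \otimes_B \overline{e'}) + (\langle~,~\rangle \otimes_B \id)(e \otimes_B \widetilde{\nabla}\,\overline{e'})
\]
for all $e, e' \in \mathcal{E}$, where $\widetilde{\nabla}$ is the right connection on $\overline{\mathcal{E}}$ built from $\nabla$. I would split this into the $(0,1)$ and $(1,0)$ graded components. The $(0,1)$-component should reduce, via $\langle e, \overline{e'}\rangle = \ev(e \otimes_B \mathscr{H}(\overline{e'}))$ and the defining property \eqref{uniqueness of delbar} of $\widetilde{\overline{\partial}}_{\mathcal{E}}$, to the compatibility of $\overline{\partial}_{\mathcal{E}}$ with $\mathscr{H}$ — and here $\widehat{\nabla}$ enters through its definition \eqref{(1,0)part} as the $\ast$-conjugate of ${\widetilde{\overline{\partial}}}_{\mathcal{E},\mathscr{H}}$, which is precisely ${\widetilde{\overline{\partial}}}_{\mathcal{E}}$ transported by $\mathscr{H}^{-1}$ (Corollary \ref{rightdelbar on Ebar}, \eqref{tilde tilde del E}). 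The $(1,0)$-component is obtained from the $(0,1)$-one by applying $\ast$ and using Lemma \ref{12thdec233}(iii), i.e.\ $\partial(\omega^\ast) = (\overline{\partial}\omega)^\ast$, together with the Hermitian symmetry $\langle f, \overline{e}\rangle^\ast = \langle e, \overline{f}\rangle$ from Definition \ref{4thdec231}. The bookkeeping of where the $\ast$'s and flips land when passing between $\mathcal{E}$, $\overline{\mathcal{E}}$, and $\prescript{}{B}{\Hom(\mathcal{E},B)}$ is the delicate part, but it is exactly the structure that the bar-category language (as the authors emphasize) is designed to keep straight.

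I would expect the main obstacle to be purely organizational: carefully unwinding \eqref{tilde tilde del E} and \eqref{(1,0)part} to check that $\widehat{\nabla}$, as defined, really produces the $(1,0)$-part compatible with $\mathscr{H}$, rather than some twisted variant. As a sanity check I would also verify consistency with the explicit Christoffel-symbol formula \eqref{4thdec232}: writing $\overline{\partial}_{\mathcal{E}}(e^i) = -\sum_j (\Gamma_-)^i_j \otimes_B e^j$ and $\mathscr{H}^{-1}(e_i) = \sum_j \overline{h_{ij}e^j}$, one can compute $\widehat{\nabla}(e^i)$ directly from \eqref{del tilde_E}, \eqref{tilde tilde del E}, \eqref{(1,0)part} and match it against $-\Gamma_+ = \partial h \cdot \tilde h + h (\Gamma_-)^\dagger \tilde h$, using the identities $h\tilde h = P$, $\tilde h h = P^\dagger$, $(h)^\dagger = h$, $(\tilde h)^\dagger = \tilde h$ from Proposition \ref{metric equations} and Proposition \ref{g tilde delbar P g=0}. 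Since Theorem \ref{chern} already guarantees uniqueness, establishing either the abstract compatibility or the Christoffel-symbol match suffices, and I would present whichever turns out cleaner — likely the abstract argument, since it avoids choosing a dual basis.
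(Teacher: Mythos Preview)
Your proposal is correct. The paper, however, takes precisely the route you describe as a ``sanity check'': it fixes a dual basis $\{e^i, e_i\}$, writes $\widehat{\nabla}(e^i) = -\sum_j s^i_j \otimes_B e^j$, and establishes the matrix identity $-s = \partial h \cdot \tilde{h} + h(\Gamma_-)^\dagger \tilde{h}$ by tracing through the definitions of $\widetilde{\overline{\partial}}_{\mathcal{E}}$ and $\widetilde{\overline{\partial}}_{\mathcal{E},\mathscr{H}}$ in a dual basis and invoking the matrix relations of Propositions~\ref{metric equations} and~\ref{g tilde delbar P g=0} together with \eqref{sp=s}. Your preferred abstract argument --- set $\nabla = \widehat{\nabla} + \overline{\partial}_{\mathcal{E}}$, verify compatibility with $\mathscr{H}$ directly, and invoke the uniqueness clause of Theorem~\ref{chern} --- is genuinely different and works cleanly: the $(0,1)$-half of the compatibility identity is exactly \eqref{uniqueness of delbar} transported through $\mathscr{H}$ via \eqref{tilde tilde del E}, once one observes that the $(0,1)$-component of $\widetilde{\nabla}\,\overline{e'}$ equals $\widetilde{\overline{\partial}}_{\mathcal{E},\mathscr{H}}(\overline{e'})$ (indeed $\widehat{\nabla}$ contributes to $\widetilde{\nabla}$ a term lying in $\overline{\mathcal{E}} \otimes_B \Omega^{(0,1)}$ while $\overline{\partial}_{\mathcal{E}}$ contributes one in $\overline{\mathcal{E}} \otimes_B \Omega^{(1,0)}$); the $(1,0)$-half then follows by applying $\ast$ with $e$ and $e'$ interchanged, exactly as you outline. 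Your approach is coordinate-free and lets the bar-category structure do the work; the paper's is more hands-on and plugs directly into the Christoffel-symbol description \eqref{4thdec232} already on record from \cite{BeggsMajidChern}.
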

\begin{proof}    
 Let $\{ e^i, e_i \}$ be a dual basis for the $B$-bimodule $\mathcal{E}$.
We assume that
\begin{equation} \label{21stnov232}
\overline{\partial}_{\mathcal{E}}(e^i)=-\sum_j(\Gamma_{-})^i_j\otimes_B e^j ~ \text{and} ~\widehat{\nabla}(e^i)= -\sum_js^i_j\otimes_B e^j.
\end{equation}

Consider the matrices $s := (s^i_j)_{ij} $ and $\Gamma_{-}: = ((\Gamma_{-})^i_j)_{ij} $. We use the notations $s^{\dagger} := ((s^j_i)^{\ast})_{ij} $ and $\Gamma_{-}^{\dagger}: = (((\Gamma_{-})^j_i)^{\ast})_{ij} $. 
Then by virtue of \eqref{4thdec232}, it suffices to show   that 
\begin{align*}
    -s=\partial h\cdot \tilde{h } + h (\Gamma_{-})^{\dagger} \tilde{h}.
\end{align*}
Now, by the definition of $\widehat{\nabla}$ and \eqref{21stnov232}, we get   $\widetilde{\overline{\partial}}_{\mathcal{E}, \mathscr{H}}(\overline{e^i})=- \sum_j \overline{e^j}\otimes_B (s^i_j)^{\ast}$.  Therefore, from \eqref{tilde tilde del E}, we have
\begin{equation} \label{21stnov231}
    \widetilde{\overline{\partial}}_{\mathcal{E}}(\mathscr{H}(\overline{e^i}))= (\mathscr{H}\otimes_B\id) \widetilde{\overline{\partial}}_{\mathcal{E}, \mathscr{H}}(\overline{e^i}) = - \sum_j \mathscr{H}(\overline{e^j})\otimes_B (s^i_j)^{\ast}.
\end{equation}
Using the formula for $\mathscr{H} (\overline{e^i}) $ and $\mathscr{H} (\overline{e^j}) $ from Proposition \ref{metric equations} in \eqref{21stnov231}, we have
\begin{align*}
   \sum_j  \widetilde{\overline{\partial}}_{\mathcal{E}}(e_jh^{ji})= - \sum_{j,k} e_k h^{kj}\otimes_B(s^i_j)^{\ast}.
\end{align*}
Now using the fact that $\widetilde{\overline{\partial}}_{\mathcal{E}}$ is a right $\overline{\partial } $-connection, we get,
\begin{align*}
    - \sum_{j,k} e_k h^{kj}\otimes_B(s^i_j)^{\ast}= \sum_j \bigl(\widetilde{\overline{\partial}}_{\mathcal{E}}(e_j)h^{ji}+e_j\otimes_B \overline{\partial}(h^{ji}) \bigr).
\end{align*}
Using \eqref{del tilde_E}, this implies
\begin{align*}
    &- \sum_{j,k} e_k h^{kj}\otimes_B(s^i_j)^{\ast}\\
    &= \sum_{j,l} \bigl(e_l \otimes_B \overline{\partial} (\ev (e^l \otimes_B e_j)) h^{ji} - e_l \otimes_B (\id \otimes_B \ev) (\overline{\partial}_{\mathcal{E}} (e^l) \otimes_B e_j) h^{ji} \bigr) + \sum_j e_j \otimes_B \overline{\partial} (h^{ji})\\
    &= \sum_{j,l} \bigl(e_l\otimes_B \overline{\partial}(\ev(e^l\otimes_Be_j))h^{ji}  + \sum_k e_l\otimes_B (\Gamma_-)^l_k\ev(e^k\otimes_Be_j) h^{ji} \bigr) + \sum_j e_j\otimes_B\overline{\partial}(h^{ji})
\end{align*}
by the first equation of \eqref{21stnov232}.

We recall that $P$ denotes the matrix defined in  \eqref{P Matrix}. Tensoring the above equation on the left by $e^t$, we get
\begin{align*}
    &-e^t\otimes_B \sum_{j,k} e_k h^{kj}\otimes_B(s^i_j)^{\ast}\\
    &= e^t\otimes_B \sum_{j,l} e_l\otimes_B \overline{\partial}(\ev(e^l\otimes_Be_j))h^{ji} +e^t\otimes_B \sum_{j,k,l} e_l\otimes_B (\Gamma_-)^l_k\ev(e^k\otimes_Be_j) h^{ji}\\
    &+e^t\otimes_B \sum_j e_j\otimes_B \overline{\partial}(h^{ji}).
\end{align*}
for all $t$. Now we apply $\ev\otimes_B \id$ to the above equation to get
\begin{align*}
    -Phs^{\dagger}= P\overline{\partial}(P)h+P (\Gamma_-)Ph+P \overline{\partial}(h)
\end{align*}
We apply  the equation $Ph = h$ of Proposition \ref{metric equations} to obtain 
\begin{align*}
    - hs^{\dagger}= P\overline{\partial}(P)h+P (\Gamma_-)h+P \overline{\partial}(h).
\end{align*}
Multiplying by $\tilde{h}$ on the left  and using the relations $\tilde{h} h = P^\dagger $ and $\tilde{h} P = \tilde{h} $ of Proposition \ref{metric equations}, we get
\begin{align*}
    -P^{\dagger}s^{\dagger}= \tilde{h}\overline{\partial}(P)h+\tilde{h} (\Gamma_-)h+\tilde{h} \overline{\partial}(h).
\end{align*}
Therefore, from Proposition \ref{g tilde delbar P g=0}, we get 
\begin{equation} \label{29thnov232}
    -(sP)^{\dagger}= \tilde{h} (\Gamma_-)h+\tilde{h} \overline{\partial}(h).
\end{equation}
But as $\widehat{\nabla}$ is a left $\partial$-connection by Remark \ref{29thnov231}, we have  $sP=s$ from \eqref{sp=s}. Using this,  and taking $\dagger$ on \eqref{29thnov232}, we have
\begin{align*}
    -s= {h} (\Gamma_-)^{\dagger}\tilde{h}+ {\partial}(h)\tilde{h},
\end{align*}
where we have used the equations $h^\dagger = h$ and $ (\tilde{h})^\dagger = \tilde{h}$ from Proposition \ref{metric equations}.
This completes our proof. 
\end{proof}

\subsection{Proof of the covariance of the Chern connection}

In this subsection, we use Theorem \ref{1,0partofchernconnection} to prove that the Chern connection on a covariant holomorphic $B$-bimodule for a covariant Hermitian metric is covariant. Throughout the section, we will use the fact that the category $\qMod{A}{B}{}{B}$ is a bar-category (see Example \ref{2nddec231}). 

We begin by proving the following lemma:

\begin{lem} \label{tilde_partial_E_H_covariant}
Let $ (\mathcal{E}, \overline{\partial}_{\mathcal{E}})$ be a left $A$-covariant holomorphic $B$-bimodule  in $\qMod{A}{B}{}{B}$ which is finitely generated and projective as a left $B $-module. If  $\mathscr{H}$ is a left $A$-covariant Hermitian metric on $\mathcal{E}$, then the right $\overline{\partial}$-connections $\widetilde{\overline{\partial}}_{\mathcal{E}}$ and $\widetilde{\overline{\partial}}_{\mathcal{E},\mathscr{H}}$ defined by \eqref{del tilde_E} and \eqref{(1,0)part} respectively are left covariant.
\end{lem}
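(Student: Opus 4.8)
\smallskip
\noindent\textbf{Proof strategy.}
The plan is to reduce everything to the covariance of $\widetilde{\overline{\partial}}_{\mathcal{E}}$ on $\prescript{}{B}{\Hom(\mathcal{E},B)}$: once that is known, the identity $\widetilde{\overline{\partial}}_{\mathcal{E},\mathscr{H}}=(\mathscr{H}^{-1}\otimes_B\id)\circ\widetilde{\overline{\partial}}_{\mathcal{E}}\circ\mathscr{H}$ from \eqref{tilde tilde del E}, together with the fact that $\mathscr{H}$, $\mathscr{H}^{-1}$, and hence $\mathscr{H}^{-1}\otimes_B\id$, are morphisms in $\qMod{A}{B}{}{B}$ (so left $A$-covariant), immediately yields covariance of $\widetilde{\overline{\partial}}_{\mathcal{E},\mathscr{H}}$ by chaining three covariance identities. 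For the first part I would use that $\ev$ and $\coev$ are morphisms in $\qMod{A}{B}{}{B}$ (Proposition~\ref{12thdec236}), that $\overline{\partial}$ is left $A$-covariant since the complex structure is (the remark after Definition~\ref{12thdec234}), and that $\overline{\partial}_{\mathcal{E}}$ is left $A$-covariant by hypothesis (Definition~\ref{11thdec231}). The one structural observation to keep in mind is that, although a dual basis $\{e^i,e_i\}$ need not consist of covariant elements, the element $\coev(1)=\sum_i e_i\otimes_B e^i\in\prescript{}{B}{\Hom(\mathcal{E},B)}\otimes_B\mathcal{E}$ \emph{is} $A$-coinvariant, being the image under the covariant map $\coev$ of the coinvariant element $1\in B$.

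To prove $\widetilde{\overline{\partial}}_{\mathcal{E}}$ is covariant (I write $\delta$ for the various relative-Hopf-module coactions), I would fix $f\in\prescript{}{B}{\Hom(\mathcal{E},B)}$ and set
\[
  \Psi_f(x):=\overline{\partial}\big(\ev(x\otimes_B f)\big)-(\id\otimes_B\ev)\big(\overline{\partial}_{\mathcal{E}}(x)\otimes_B f\big)\in\Omega^{(0,1)},\qquad x\in\mathcal{E}.
\]
A one-line Leibniz computation shows $\Psi_f$ is left $B$-linear (this is exactly what makes \eqref{del tilde_E} independent of the dual basis), so \eqref{del tilde_E} reads $\widetilde{\overline{\partial}}_{\mathcal{E}}(f)=\sum_i e_i\otimes_B\Psi_f(e^i)$. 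Combining the covariance of $\overline{\partial}$, $\ev$, and $\overline{\partial}_{\mathcal{E}}$ with the $B$-action rules for the coactions on $B$ and $\Omega^{(0,1)}$, one checks the ``partial covariance'' identity
\[
  \delta\big(\Psi_f(x)\big)=x_{(-1)}f_{(-1)}\otimes\Psi_{f_{(0)}}(x_{(0)}).
\]
Applying $\delta$ (diagonally) to $\widetilde{\overline{\partial}}_{\mathcal{E}}(f)=\sum_i e_i\otimes_B\Psi_f(e^i)$ and inserting this identity gives
\[
  \delta\big(\widetilde{\overline{\partial}}_{\mathcal{E}}(f)\big)
  =\sum_i (e_i)_{(-1)}(e^i)_{(-1)}f_{(-1)}\otimes(e_i)_{(0)}\otimes_B\Psi_{f_{(0)}}\big((e^i)_{(0)}\big),
\]
and the right-hand side is obtained from $\delta\big(\coev(1)\big)=\sum_i (e_i)_{(-1)}(e^i)_{(-1)}\otimes(e_i)_{(0)}\otimes_B(e^i)_{(0)}$ by right-multiplying the $A$-leg by $f_{(-1)}$ and applying $\Psi_{f_{(0)}}$ in the last slot — a well-defined operation on the tensor product over $B$, precisely because $\Psi_{f_{(0)}}$ is left $B$-linear. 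Since $\coev(1)$ is coinvariant, $\delta\big(\coev(1)\big)=1\otimes\coev(1)$, so this collapses to $f_{(-1)}\otimes\sum_i e_i\otimes_B\Psi_{f_{(0)}}(e^i)=f_{(-1)}\otimes\widetilde{\overline{\partial}}_{\mathcal{E}}(f_{(0)})$, which is the covariance of $\widetilde{\overline{\partial}}_{\mathcal{E}}$; then $\widetilde{\overline{\partial}}_{\mathcal{E},\mathscr{H}}$ follows as indicated above.

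The main obstacle is precisely this last collapse: one cannot push the coaction past $\sum_i e_i\otimes_B\Psi_f(e^i)$ termwise, since the dual basis is not covariant. The fix is to package all the non-covariant data into the single coinvariant element $\coev(1)$ and to recognize that the remaining operations (multiplication by the floating leg $f_{(-1)}$ and application of $\Psi_{f_{(0)}}$) descend to $\prescript{}{B}{\Hom(\mathcal{E},B)}\otimes_B\mathcal{E}$ thanks to the left $B$-linearity of $x\mapsto\Psi_f(x)$ and its bilinearity in $(f,x)$. An alternative route — deducing covariance of $\widetilde{\overline{\partial}}_{\mathcal{E}}$ directly from its uniqueness characterization \eqref{uniqueness of delbar} by applying $\delta$ to both sides and pairing the result against $\coev(1)$ — runs into the same point, so I expect the computation sketched above to be the cleanest presentation.
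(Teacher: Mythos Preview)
Your proof is correct and uses essentially the same ingredients as the paper (covariance of $\ev$, $\coev$, $\overline{\partial}$, $\overline{\partial}_{\mathcal{E}}$, and then of $\mathscr{H}$ for the second connection). The paper's version is more economical: it simply observes that \eqref{del tilde_E} can be rewritten as
\[
\widetilde{\overline{\partial}}_{\mathcal{E}}(f)=\Bigl(\id\otimes_B(\overline{\partial}\circ\ev)-\id\otimes_B(\id\otimes_B\ev)(\overline{\partial}_{\mathcal{E}}\otimes_B\id)\Bigr)(\coev\otimes_B\id)(1_B\otimes_B f),
\]
so $\widetilde{\overline{\partial}}_{\mathcal{E}}$ is literally a composite of $A$-covariant maps, which bypasses your partial-covariance identity and the separate coinvariance argument for $\coev(1)$.
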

\begin{proof} Let $f $ belongs to $\prescript{}{B}{\Hom(\mathcal{E},B)}$. Then \eqref{del tilde_E} implies that 
\begin{align*}
    \widetilde{\overline{\partial}}_{\mathcal{E}}(f)
   & = \Bigl(\id \otimes_B(\overline{\partial}\circ \ev)- (\id\otimes_B (\id\otimes_B \ev)\circ (\overline{\partial}_{\mathcal{E}} \otimes_B \id)) \Bigr) \Bigl(\coev\otimes_B \id \Bigr) (1_B \otimes_B f).
\end{align*}
Now as the maps $\overline{\partial}_\mathcal{E}, \overline{\partial}$ are $A$-covariant,
 $\widetilde{\overline{\partial}}_{\mathcal{E}}$ is also $A$-covariant. 

The metric $\mathscr{H}$ on $\mathcal{E}$ is covariant. Hence, the covariance of $\widetilde{\overline{\partial}}_{\mathcal{E},\mathscr{H}}$ follows from the definition of $\widetilde{\overline{\partial}}_{\mathcal{E},H}$ and the covariance of the map $\widetilde{\overline{\partial}}_{\mathcal{E}}$. 
\end{proof}

Finally, we prove  the covariance of the Chern connection.

\begin{thm} \label{30thjuly241} (Theorem \ref{23rdoct231})
Let $ (\Omega^{(\bullet, \bullet)}, \wedge, \partial, \overline{\partial}) $ be an $A$-covariant  complex structure on a $\ast$-algebra $B$. If $ (\mathcal{E}, \overline{\partial}_{\mathcal{E}})$ is a covariant holomorphic $B$-bimodule equipped with an $A$-covariant Hermitian metric $\mathscr{H}$ with $\mathcal{E}$ finitely generated and projective as a left $B$-module, then the Chern connection
$\nabla_{\Ch}$ is a covariant connection.
\end{thm}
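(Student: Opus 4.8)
The plan is to exploit Theorem \ref{1,0partofchernconnection}, which identifies the $(1,0)$-part of $\nabla_{\Ch}$ with the connection $\widehat{\nabla}$ of Definition \ref{right del on E}, built entirely out of $\mathscr{H}$, $\overline{\partial}_{\mathcal{E}}$, and the canonical evaluation/coevaluation maps. Since by hypothesis $\nabla_{\Ch}^{(0,1)} = \overline{\partial}_{\mathcal{E}}$ is $A$-covariant, and since $\nabla_{\Ch} = (\pi^{(1,0)}\otimes_B\id)\nabla_{\Ch} + \overline{\partial}_{\mathcal{E}}$ by \eqref{19thoct233}, it suffices to show that $\widehat{\nabla}$ is $A$-covariant. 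This reduces the whole theorem to tracking covariance through the chain of constructions in Section \ref{1staugust241}.

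\textbf{Step 1.} First I would recall that $\qMod{A}{B}{}{B}$ is a bar category (Example \ref{2nddec231}) and that for $\mathcal{E}$ finitely generated projective as a left $B$-module, the maps $\ev$ and $\coev$ of \eqref{evcoev}, as well as the dual $\prescript{}{B}{\Hom(\mathcal{E},B)}$ with its coaction \eqref{27thnov233}, all live in $\qMod{A}{B}{}{B}$ by Proposition \ref{12thdec236}. So every structural map in the construction is a morphism in a bar category over $A$-comodules, and covariance is preserved under composition, tensoring, and the bar functor.

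\textbf{Step 2.} Next I would invoke Lemma \ref{tilde_partial_E_H_covariant}: since $\overline{\partial}_{\mathcal{E}}$, $\overline{\partial}$, $\ev$, $\coev$ are all $A$-covariant, the right $\overline{\partial}$-connection $\widetilde{\overline{\partial}}_{\mathcal{E}}$ on $\prescript{}{B}{\Hom(\mathcal{E},B)}$ from \eqref{del tilde_E} is $A$-covariant, and hence so is $\widetilde{\overline{\partial}}_{\mathcal{E},\mathscr{H}} = (\mathscr{H}^{-1}\otimes_B\id)\circ\widetilde{\overline{\partial}}_{\mathcal{E}}\circ\mathscr{H}$, using that $\mathscr{H}$ (and thus $\mathscr{H}^{-1}$) is a covariant isomorphism. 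Then $\widehat{\nabla}$ is obtained from $\widetilde{\overline{\partial}}_{\mathcal{E},\mathscr{H}}$ by applying $\dagger = \mathrm{flip}(\ast\otimes\ast)$ on $\overline{\mathcal{E}}\otimes_B\Omega^{(0,1)}$; one checks that $\dagger$ is compatible with the coactions \eqref{25thnov232}, precisely because $B$ (hence $\Omega^\bullet$) is a comodule $\ast$-algebra and $\overline{\partial}(\omega^\ast)=(\partial\omega)^\ast$ (Lemma \ref{12thdec233}). Therefore $\widehat{\nabla} = (\pi^{(1,0)}\otimes_B\id)\nabla_{\Ch}$ is $A$-covariant, and combined with the covariance of $\overline{\partial}_{\mathcal{E}}$ and \eqref{19thoct233}, the connection $\nabla_{\Ch}$ is $A$-covariant.

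\textbf{The main obstacle} I anticipate is the bookkeeping in Step 2 around the bar functor and the flip-of-stars map: one must be careful that the conjugate-linear operation $\ast$ intertwines the $A$-coaction with its image under the Hopf $\ast$-structure (i.e. the $m_{(-1)}^\ast$ appearing in \eqref{25thnov232}), so that ``$A$-covariant'' genuinely passes through $\overline{(-)}$ and through the passage $\widetilde{\overline{\partial}}_{\mathcal{E},\mathscr{H}}\rightsquigarrow\widehat{\nabla}$. Once it is granted that all the building blocks ($\ev$, $\coev$, $\mathscr{H}$, $\overline{\partial}$, $\overline{\partial}_{\mathcal{E}}$, $\ast$, the bar functor) are morphisms or natural transformations in the bar category $\qMod{A}{B}{}{B}$ — which is exactly what the preparatory subsections set up — the covariance of $\nabla_{\Ch}$ is forced, and no further computation with Christoffel symbols is needed beyond what Theorem \ref{1,0partofchernconnection} already provides.
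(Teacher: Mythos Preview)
Your proposal is correct and follows essentially the same route as the paper: reduce via \eqref{19thoct233} and Theorem \ref{1,0partofchernconnection} to the covariance of $\widehat{\nabla}$, invoke Lemma \ref{tille_partial_E_H_covariant} for $\widetilde{\overline{\partial}}_{\mathcal{E},\mathscr{H}}$, and then push covariance through the bar-category passage from $\widetilde{\overline{\partial}}_{\mathcal{E},\mathscr{H}}$ to $\widehat{\nabla}$. One small imprecision: the map taking $\widetilde{\overline{\partial}}_{\mathcal{E},\mathscr{H}}$ to $\widehat{\nabla}$ is not literally $\dagger = \mathrm{flip}(\ast\otimes\ast)$ (which acts on $\Omega^1\otimes_B\Omega^1$), but rather the composite $\mathrm{bb}^{-1}\circ\overline{\Upsilon^{-1}\circ(\id\otimes_B\star_{\Omega^1})}$ of Proposition \ref{25thnov235}; the paper carries this out explicitly and uses Lemma \ref{24thnov231} for the ``$\ast$ intertwines coactions'' step you flag as the main obstacle.
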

\begin{proof} Let  $ \widehat{\nabla} $ be the $\partial$-connection introduced in equation \eqref{(1,0)part}. By a combination of Theorem \ref{chern} and Theorem \ref{1,0partofchernconnection}, we know that
 \begin{equation} \label{24thnov23n1} \nabla_{\Ch} = \overline{\partial}_{\mathcal{E}} + \widehat{\nabla}. \end{equation}
 Since the holomorphic structure on $\mathcal{E}$ is $A$-covariant, the map $\overline{\partial}_{\mathcal{E}}$  is $A$-covariant by definition. 
Therefore, by virtue of \eqref{24thnov23n1}, we are left to prove that $\widehat{\nabla}$ is covariant.

Let $e $ be an element of $\mathcal{E}$. Since  $\Omega^{(0,1)}= B \overline{\partial} B$, there exist elements  $f_i \in \mathcal{E}$ and $a_i \in B $ such that 
\begin{equation} \label{12thdec232}
\widetilde{\overline{\partial}}_{\mathcal{E},\mathscr{H}} (\overline{e})= \sum_i \overline{f}_i\otimes_B \overline{\partial}(a_i).
\end{equation}
Let $    \prescript{\mathcal{E}}{}{\delta}(e)= e_{(-1)} \otimes e_{(0)}$ denote the coaction of $A$ on $\mathcal{E}$. Then by using \eqref{25thnov232}, we have $\prescript{\overline{\mathcal{E}}}{}{\delta}(\overline{e})= e_{(-1)}^{\ast} \otimes \overline{e_{(0)}}$.

Now by Lemma \ref{tilde_partial_E_H_covariant}, $\widetilde{\overline{\partial}}_{\mathcal{E},\mathscr{H}}$ is $A$-covariant. So we have
\begin{align}\label{25thnov233}
    \prescript{\overline{\mathcal{E}}\otimes_B\Omega^{(0,1) }}{ }{\delta}(\widetilde{\overline{\partial}}_{\mathcal{E},\mathscr{H}} (\overline{e}))
     = (\id\otimes \widetilde{\overline{\partial}}_{\mathcal{E},\mathscr{H}}) \prescript{\overline{\mathcal{E}}}{}{\delta}(\overline{e})    
     = e^{\ast}_{(-1)}\otimes\widetilde{\overline{\partial}}_{\mathcal{E},\mathscr{H}} {(\overline{e_{(0)}})}.
\end{align}
Let 
 \begin{equation} \label{12thdec231}  
    \widetilde{\overline{\partial}}_{\mathcal{E},\mathscr{H}}{(\overline{e_{(0)}})}= \sum_j \overline{ x_j}\otimes_B\omega_j
    \end{equation}
  for some $x_j \in \mathcal{E} $ and $\omega_j \in \Omega^{(0, 1)}$.  Then from \eqref{25thnov233} and the left $A$-covariance of the map $\overline{\partial}$, we have 
\begin{align}{\label{25thnov231}}
    \sum_i (f_i)_{(-1)}^{\ast}(a_i)_{(1)} \otimes \overline{(f_i)_{(0)}}\otimes_B \overline{\partial}((a_i)_{(2)})
    =\sum_j e^{\ast}_{(-1)}\otimes \overline{ x_j}\otimes_B\omega_j,
\end{align}
where we have used the Sweedler notations
$    \prescript{\mathcal{E}}{}{\delta}(f_i)= (f_i)_{(-1)} \otimes (f_i)_{(0)},   $
$    \Delta (a_i) = (a_i)_{(1)}\otimes (a_i)_{(2)}$ and
using \eqref{25thnov232}, we have written $\prescript{\overline{\mathcal{E}}}{}{\delta}(\overline{f_i})= (f_i)_{(-1)}^{\ast} \otimes \overline{(f_i)_{(0)}}$.

At this stage, from Remark \ref{11thdec23n1}, we recall that the map $\star_{\Omega^1}: \Omega^{(0,1)} \rightarrow \overline{\Omega^{(1,0)}} $ is an isomorphism and $\Upsilon$ is the natural isomorphism in Definition \ref{15thjuly241}. We apply the linear map 
\begin{align*}
    \id\otimes\Upsilon_{\Omega^{(1,0)},\mathcal{E}}^{-1}(\id\otimes_B \star_{\Omega^1}) : A\otimes\overline{\mathcal{E}}\otimes_B\Omega^{(0,1)} \to  A \otimes \overline{\Omega^{(1,0)} \otimes_B {\mathcal{E}}}
\end{align*} 
to \eqref{25thnov231} and use the third assertion of Lemma \ref{12thdec233}  to get
\begin{align*}
     \sum_j (f_i)_{(-1)}^{\ast} (a_i)_{(1)} \otimes  \overline{\partial((a_i)_{(2)}^{\ast})\otimes_B (f_i)_{(0)}}= \sum_j  e_{(-1)}^{\ast}\otimes \overline{\omega_j^{\ast} \otimes_B x_j} .
\end{align*}
By Lemma \ref{24thnov231}, this implies the following equation: 
\begin{align*} 
   \sum_i (a_i)_{(1)}^{\ast}(f_i)_{(-1)}  \otimes \partial((a_i)_{(2)}^{\ast})\otimes_B (f_i)_{(0)}= \sum_j  e_{(-1)}\otimes  \omega_j^{\ast} \otimes_B x_j.
\end{align*}
 Therefore,
 \begin{align*}
     (\id\otimes \widehat{\nabla}) \prescript{{\mathcal{E}}}{}{\delta}({e})
&{}= e_{(-1)} \otimes \widehat{\nabla} (e_{(0)})\\     
&{}= \sum_j  e_{(-1)}\otimes  \omega_j^{\ast} \otimes_B x_j\quad\text{(by~\eqref{12thdec231} and Definition~\ref{right del on E})}\\
&{}= \sum_i (a_i)_{(1)}^{\ast}(f_i)_{(-1)}  \otimes \partial((a_i)_{(2)}^{\ast})\otimes_B (f_i)_{(0)}\\
&{}= \prescript{\Omega^{(1,0)}\otimes_B \mathcal{E}}{ }{\delta} (\sum_j \partial (a^*_j) \otimes_B f_j)\\
&{}= \prescript{\Omega^{(1,0)}\otimes_B \mathcal{E}}{ }{\delta}(\widehat{\nabla}({e}))
 \end{align*}
by \eqref{12thdec232} and Definition \ref{right del on E}.
 Thus, $\widehat{\nabla}$ is a left covariant.
\end{proof}

\section{Appendix} \label{10thaugust244}

In this appendix, we collect the proof of some results which have used in the article.

\subsection{Compact quantum group algebras} \label{3rdjuly242}

Throughout this article, we assume familiarity with the theory of compact quantum groups. For details on this theory, we refer to \cite{woroleshouches} and \cite{NeshveyevTuset}.  

If a Hopf $\ast$-algebra $H$ is the span of the matrix coefficients of unitary irreducible corepresentations of a compact quantum group $\widetilde{H}$, then we say that $H$ is a compact quantum group algebra.  

Let $H$ be a compact quantum group algebra and let $V$ be an object of the category $\lmod{H}{}$ consisting of finite dimensional left $H$-comodules. Then $V$ is a finite dimensional corepresentation of the compact quantum group $\widetilde{H}$. It is well-known that $V$ is unitarizable, i.e, there exists an inner product  $ \left\langle ~  , ~ \right\rangle $ (conjugate linear on the right)  on $V$ so that the following equation is satisfied
\begin{equation} \label{28thmarch24n2}
v_{(-1)} w^*_{(- 1)} \left\langle v_{(0)}, w_{(0)} \right\rangle = \left\langle v, w \right\rangle 1_H
\end{equation}
for all $v, w$ in $V$. 

If an inner product $ \left\langle ~ , ~ \right\rangle $ on a left $H$-comodule $V$ satisfies \eqref{28thmarch24n2}, then we say that  $ \left\langle ~ , ~ \right\rangle $ is $H$-invariant.

Let us record the following simple lemma, where, for an object $ (V, \prescript{V}{}{\delta}) $ in $\lmod{H}{}$, $\overline{V}$ will denote the conjugate vector space equipped with the left comodule coaction defined in \eqref{25thnov232}.

\begin{lem}\label{28feb241}
Let $H$ be a compact quantum group algebra and let $(V,  \prescript{V}{}{\delta})$ be a finite dimensional left $H$-comodule. If $\langle ~,~ \rangle$ is an $H$-invariant inner product on $V$ in the sense of \eqref{28thmarch24n2}, then 
\begin{equation*}
    \psi: \overline{V} \to \Hom(V, \mathbb{C}), \quad \text{defined by } \psi(\overline{v})(w)=\langle w,v\rangle. 
\end{equation*} 
is an isomorphism in the category $\lmod{H}{}$. Moreover, $\overline{V}$ is a right dual of $V$ in $\lmod{H}{}$ via the morphisms:
    \begin{align}{\label{30thmarch241}}
        &V\otimes \overline{V}\to \mathbb{C}; \quad v\otimes \overline{w} \mapsto \langle v, w\rangle \quad \text{and}
        \quad \mathbb{C} \to \overline{V}\otimes V;  \quad 1\mapsto \sum_{i=1}^{\dim(V)}(\overline{v_i}\otimes v_i),
    \end{align} 
    where $\{v_i\}_{i=1}^{\dim(V)}$ is an orthonormal basis of $V$.
\end{lem}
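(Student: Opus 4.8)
The plan is to verify, one by one, the three assertions: that $\psi$ is $H$-colinear, that $\psi$ is a linear isomorphism, and that the displayed maps form an evaluation--coevaluation pair exhibiting $\overline{V}$ as a right dual. Throughout I will use that $H$ is a compact quantum group algebra only through the existence of the $H$-invariant inner product $\langle~,~\rangle$ satisfying \eqref{28thmarch24n2}; everything else is formal.

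First I would check colinearity of $\psi$. Recall the coaction on $\overline V$ is $\prescript{\overline V}{}{\delta}(\overline v)=v_{(-1)}^{\ast}\otimes\overline{v_{(0)}}$ from \eqref{25thnov232}, and the coaction on $\Hom(V,\mathbb{C})$ is the standard dual-comodule coaction induced by $\prescript{V}{}{\delta}$ (using the antipode). Unwinding the definition $\psi(\overline v)(w)=\langle w,v\rangle$, colinearity of $\psi$ amounts precisely to the identity \eqref{28thmarch24n2}: applying the dual coaction to $\psi(\overline v)$ and evaluating on $w$ produces $S(w_{(-1)})\langle w_{(0)},v\rangle_{(-1)}\otimes\cdots$, which after using that $\langle w_{(0)},v\rangle\in\mathbb{C}$ and the invariance relation collapses to $v_{(-1)}^{\ast}\otimes\langle w,v_{(0)}\rangle$, matching $(\id\otimes\psi)\prescript{\overline V}{}{\delta}(\overline v)$ evaluated on $w$. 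This is a short Sweedler computation. That $\psi$ is a bijection is immediate since $V$ is finite-dimensional and $\langle~,~\rangle$ is a nondegenerate pairing, so $\psi$ is an injective linear map between spaces of equal dimension; a colinear linear isomorphism is an isomorphism in $\lmod{H}{}$.

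For the duality statement, I would write $\ev_V\colon V\otimes\overline V\to\mathbb{C}$, $v\otimes\overline w\mapsto\langle v,w\rangle$, and $\coev_V\colon\mathbb{C}\to\overline V\otimes V$, $1\mapsto\sum_i\overline{v_i}\otimes v_i$ for an orthonormal basis $\{v_i\}$, and verify: (a) these are morphisms in $\lmod{H}{}$, and (b) the snake identities \eqref{27thnov231} hold. For (a), colinearity of $\ev_V$ is again exactly \eqref{28thmarch24n2}, and colinearity of $\coev_V$ follows because $\sum_i\overline{v_i}\otimes v_i$ is independent of the choice of orthonormal basis (any two are related by a unitary) together with the invariance of the inner product; alternatively one transports the standard $\ev,\coev$ for the pair $(V,\Hom(V,\mathbb{C}))$ through the isomorphism $\psi$ of part one, which makes colinearity automatic. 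For (b), the snake identities reduce to the elementary linear-algebra facts $\sum_i\langle v,v_i\rangle v_i=v$ and $\sum_i v_i\langle v_i,w\rangle=w$ (Parseval for an orthonormal basis), which is a routine check.

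I do not expect a genuine obstacle here: the lemma is essentially a repackaging of the unitarizability of finite-dimensional comodules over a compact quantum group algebra. The one point requiring a little care is the bookkeeping with the bar-category conventions --- in particular keeping track of which coaction ($v_{(-1)}$ versus $v_{(-1)}^{\ast}$, and the antipode in the dual coaction) sits on each factor, and conjugate-linearity on the correct side of $\langle~,~\rangle$ --- so that the invariance relation \eqref{28thmarch24n2} is applied in the correct form. So the ``hard part'', such as it is, is purely notational: lining up \eqref{25thnov232}, the dual-comodule structure, and \eqref{28thmarch24n2} consistently.
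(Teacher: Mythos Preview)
The paper does not actually supply a proof of this lemma: it is recorded as a ``simple lemma'' in the appendix and left to the reader. Your plan is correct and is exactly the routine verification one would expect; in particular, your observation that colinearity of $\psi$ (and of $\ev$) unwinds precisely to the invariance identity \eqref{28thmarch24n2}, and that the snake identities reduce to Parseval for an orthonormal basis, is the right way to do it. The cleanest route for the coevaluation, as you note, is to transport the standard $(\ev,\coev)$ for $(V,\Hom(V,\mathbb{C}))$ along the isomorphism $\psi$, which makes colinearity automatic and avoids a direct coinvariance check on $\sum_i \overline{v_i}\otimes v_i$.
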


\subsection{Regarding Takeuchi's equivalence} \label{3rdjuly241}

We will continue to use the notations of Subsection \ref{19thdec235}. In particular, for a quantum homogeneous space $B = A^{\co(H)}$, $\Phi$ and $\Psi$ will denote the functors between $\qMod{A}{B}{}{B}$ and $\qMod{H}{}{}{B}$ introduced in \eqref{2ndjuly241} and \eqref{2ndjuly242}. Takeuchi's original result was in the following form:

\begin{thm} (Theorem 1, \cite{Tak}) \label{takeuchi} 
 Let $B = A^{\co(H)}$ be a quantum homogeneous space as in Definition \ref{ Quantum homogeneous space }. Then an adjoint equivalence of the categories $ \qMod{A}{B}{}{B} $ and $ \qMod{H}{}{}{B}  $ is given by $\Phi, \Psi$ and the unit natural isomorphism 
 \begin{equation} \label{6thjune242}
 U: M \rightarrow \Psi \circ \Phi (M), ~ m \rightarrow m_{(-1)} \otimes [ m_{(0)} ].
 \end{equation} 
\end{thm}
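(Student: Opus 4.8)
The plan is to exhibit $(\Phi,\Psi)$ as an adjunction in which $\Phi$ is left adjoint to $\Psi$, with the map $U$ of \eqref{6thjune242} as its unit, and then to upgrade this adjunction to an adjoint equivalence by showing that both the unit and the counit are natural isomorphisms. The faithful flatness of $A$ over $B$ built into Definition~\ref{ Quantum homogeneous space } is precisely what powers this last step.

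First I would construct the counit. Define $C_V\colon \Phi\Psi(V)\to V$ on classes of elements of $\Psi(V)=A\square_H V\subseteq A\otimes V$ by $C_V\big(\big[\sum_i a_i\otimes v_i\big]\big)=\sum_i\epsilon(a_i)v_i$. One checks this is independent of the representative, since for $b\in B=A^{\co(H)}$ one has $\pi(b)=\epsilon(b)1_H$, whence $B^+\Psi(V)$ lies in the kernel of $\sum_i a_i\otimes v_i\mapsto\sum_i\epsilon(a_i)v_i$; one then verifies that $C_V$ is a morphism in $\qMod{H}{}{}{B}$. With $U$ as in \eqref{6thjune242}, the two triangle identities $\Psi(C_V)\circ U_{\Psi(V)}=\id_{\Psi(V)}$ and $C_{\Phi(M)}\circ\Phi(U_M)=\id_{\Phi(M)}$ follow from short Sweedler-notation computations (using that the left $A$-coaction on $\Psi(V)$ sits on the $A$-leg). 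This establishes $\Phi\dashv\Psi$; it remains to see that $U$ and $C$ are isomorphisms.

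For the unit, I would first record that $U_M$ is left $B$-linear: since $B$ is a left coideal subalgebra and $\prescript{M}{}{\delta}(bm)=b_{(1)}m_{(-1)}\otimes b_{(2)}m_{(0)}$ with $b_{(2)}\in B$, the relation $[b_{(2)}m_{(0)}]=\epsilon(b_{(2)})[m_{(0)}]$ yields $U_M(bm)=b\cdot U_M(m)$. As $A$ is faithfully flat as a right $B$-module, $U_M$ is an isomorphism as soon as $A\otimes_B U_M$ is. The source is handled by the base-change map
\[
\chi_M\colon A\otimes_B M\longrightarrow A\otimes\Phi(M),\qquad a\otimes_B m\mapsto a\,m_{(-1)}\otimes[m_{(0)}],
\]
which is an isomorphism for every $M$ with inverse $a\otimes[m]\mapsto a\,S(m_{(-1)})\otimes_B m_{(0)}$ (a direct antipode computation, using $\epsilon(b^+)=0$ for well-definedness). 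For $M=A$ this $\chi_A$ is exactly the canonical Galois map $A\otimes_B A\to A\otimes H$, $a\otimes_B a'\mapsto aa'_{(1)}\otimes\pi(a'_{(2)})$, under the identification $\Phi(A)=A/B^+A\cong H$. Combining $\chi_M$ with this $H$-Galois isomorphism $A\otimes_B A\cong A\otimes H$ — and with flatness, to commute $A\otimes_B(-)$ past the kernel defining the cotensor product $A\square_H(-)$ — one identifies $A\otimes_B\Psi\Phi(M)$ with $A\otimes\Phi(M)$ in such a way that $A\otimes_B U_M$ becomes the identity. Hence $U_M$ is an isomorphism and $\Phi$ is fully faithful.

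Finally, for the counit I would argue formally: the triangle identity gives $\Psi(C_V)\circ U_{\Psi(V)}=\id$, and since $U_{\Psi(V)}$ is an isomorphism by the previous step, so is $\Psi(C_V)$. Faithful flatness of $A$ over $B$ makes $\Psi=A\square_H(-)$ exact and faithful, hence conservative, so $C_V$ itself is an isomorphism. This proves that $(\Phi,\Psi,U,C)$ is an adjoint equivalence with unit $U$, as claimed; naturality of $U$ and $C$ is immediate from their defining formulas. The main obstacle is the base-change step in the treatment of the unit: establishing that $A\otimes_B(-)$ commutes with the cotensor $A\square_H(-)$ and converting the bijectivity of the single canonical Galois map $\chi_A$ into the isomorphy of $U_M$ for all $M\in\qMod{A}{B}{}{B}$ — i.e.\ the faithfully flat $H$-Galois descent underlying the whole equivalence.
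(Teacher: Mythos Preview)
The paper does not supply a proof of this statement; it is quoted as Theorem~1 of \cite{Tak} and used as a black box, with only the formula \eqref{6thjune243} for $U^{-1}$ recorded afterwards. So there is nothing in the paper to compare your argument against.

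Your outline is a sound sketch of the standard proof. The construction of the counit $C_V$, the triangle identities, and the explicit inverse $a\otimes[m]\mapsto aS(m_{(-1)})\otimes_B m_{(0)}$ to $\chi_M$ are all correct, as is the reduction of the isomorphy of $U_M$ to that of $A\otimes_B U_M$ via faithful flatness. The one place where you lean on something not established is the identification $\Phi(A)=A/B^+A\cong H$: the map $[a]\mapsto\pi(a)$ is clearly surjective, but injectivity (equivalently $\ker\pi=B^+A$) is a genuine theorem, essentially Schneider's result that faithful flatness of $A$ over $B=A^{\co(H)}$ together with bijectivity of the antipode forces the canonical Galois map $A\otimes_B A\to A\otimes H$ to be bijective. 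You flag precisely this descent step as ``the main obstacle'' in your final paragraph, which is honest and accurate; naming Schneider's theorem explicitly, or reproducing Takeuchi's original direct argument, would close the gap. The same remark applies to commuting $A\otimes_B(-)$ past $A\square_H(-)$: this uses flatness on one side and (co)flatness of $A$ as an $H$-comodule on the other, the latter again being part of the Schneider--Takeuchi package rather than something available for free.
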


In Corollary 2.7 of \cite{MMF2}, it was observed that the inverse of $U$ is given by
\begin{equation} \label{6thjune243}
 U^{-1} \left(\sum_i a_i \otimes [ m_i ] \right) = \sum_i a_i S ((m_i)_{(- 1)}) (m_i)_{(0)}.  
\end{equation}

The aim of this subsection is to state Proposition \ref{20thdec231}. Though this result is well-known to the experts, we include its proof since it has been used in the article more than once. We will need to introduce some more notations. 

Consider the full subcategory $ \Modz{A}{B} $  of $\qMod{A}{B}{}{B} $ consisting of objects $M$ which satisfies $ M B^+ = B^+ M. $ Then, under Takeuchi's equivalence (Theorem \ref{takeuchi}), the   corresponding full subcategory $\Modz{H}{}$ of $ \qMod{H}{}{}{B}$   is given by the objects with the trivial right $B$-action. The category  $ \Modz{A}{B} $  is  equipped with a monoidal structure given by the tensor product $\otimes_B$. Moreover, with respect to the obvious monoidal structure on  $\Modz{H}{}$.  Takeuchi’s equivalence is readily endowed with the structure of a monoidal equivalence (see Section 4 of \cite{MMF2}). 

Now let~$M$ be an object of~$\Modz{A}{B}$. Then for any $m \in M$ and $b \in B$, we have
\begin{eqnarray*}
 m b &=& U^{-1} (U (m b ))\\
     &=& U^{-1} (m_{(-1)} b_{(1)} \otimes [ m_{(0)} ] \epsilon (b_{(2)}))\\
     &=& m_{(- 2)} b S (m_{(- 1)}) m_{(0)},
\end{eqnarray*}
where we have used~\eqref{6thjune242}, \eqref{6thjune243} and that the right $B$-module structure of~$\Phi(M)$ is trivial.

Thus, if $M$ is an~object of~$\Modz{A}{B}$, then
\begin{equation} \label{6thdec231}
m b := m_{(- 2)} b S (m_{(- 1)}) m_{(0)},
\end{equation}
for $m \in M$ and $b \in B$.

\begin{rem}
  The category $\modz{A}{B}$ of Definition~\ref{10thjune241} is then the full subcategory of $\Modz{A}{B}$ whose objects are finitely generated as left $B$-modules and Theorem~\ref{3rdapril241} shows that $\Phi$ is a~monoidal equivalence between $\modz{A}{B}$ and $\lmod{H}{}$, where the latter is the category of finite dimensional left $H$-comodules. 
\end{rem}

Then we have the following result:

\begin{prop} \label{20thdec231}
Suppose~$M$ and~$N$ are objects in~$\modz{A}{B}$ and $\phi: M \rightarrow N$ is a~left $B$-linear and left $A$-comodule map. Then~$\phi$ is automatically a~morphism of the category $\modz{A}{B}$, i.e, $\phi$ is right $B$-linear.
\end{prop}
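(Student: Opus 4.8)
The plan is to use the explicit formula \eqref{6thdec231} for the right $B$-action on objects of $\Modz{A}{B}$, which expresses right multiplication purely in terms of the left $B$-action and the left $A$-coaction. Since $\modz{A}{B}$ is a full subcategory of $\Modz{A}{B}$, both $M$ and $N$ satisfy this identity: for $m \in M$ and $b \in B$ we have $m\cdot b = m_{(-2)}\, b\, S(m_{(-1)})\, m_{(0)}$, and likewise for $N$. The key point is that the right side of this formula involves only operations that $\phi$ is already assumed to respect — namely left multiplication by elements of $B$ and the left $A$-coaction.

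So the proof is a short computation. First I would recall that because $\phi$ is a left $A$-comodule map, $\prescript{N}{}{\delta}(\phi(m)) = m_{(-1)} \otimes \phi(m_{(0)})$ for all $m \in M$; in particular the Sweedler components of $\phi(m)$ may be taken to be $\phi(m)_{(-1)} = m_{(-1)}$ and $\phi(m)_{(0)} = \phi(m_{(0)})$ (more precisely, $\phi(m)_{(-2)} \otimes \phi(m)_{(-1)} \otimes \phi(m)_{(0)} = m_{(-2)} \otimes m_{(-1)} \otimes \phi(m_{(0)})$ after applying coassociativity). Then, for $m \in M$ and $b \in B$, I compute
\[
  \phi(m\cdot b) = \phi\big(m_{(-2)}\, b\, S(m_{(-1)})\, m_{(0)}\big) = m_{(-2)}\, b\, S(m_{(-1)})\, \phi(m_{(0)}),
\]
using \eqref{6thdec231} for $M$ and then the left $B$-linearity of $\phi$ (the scalar $m_{(-2)} b S(m_{(-1)})$ lies in $B$). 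On the other hand, applying \eqref{6thdec231} for $N$ to the element $\phi(m)$ and using the identification of its Sweedler components above gives
\[
  \phi(m)\cdot b = \phi(m)_{(-2)}\, b\, S(\phi(m)_{(-1)})\, \phi(m)_{(0)} = m_{(-2)}\, b\, S(m_{(-1)})\, \phi(m_{(0)}).
\]
The two right-hand sides agree, so $\phi(m\cdot b) = \phi(m)\cdot b$, proving that $\phi$ is right $B$-linear and hence a morphism in $\modz{A}{B}$.

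I do not anticipate a genuine obstacle here; the content of the proposition is really packaged inside formula \eqref{6thdec231}, whose derivation (via Takeuchi's equivalence, \eqref{6thjune242} and \eqref{6thjune243}) is already carried out in the excerpt. The only mild point of care is bookkeeping with Sweedler notation — making sure that coassociativity is applied correctly so that the three-fold coaction on $\phi(m)$ is legitimately replaced by $m_{(-2)} \otimes m_{(-1)} \otimes \phi(m_{(0)})$ — but this is routine. One should also note in passing that $\modz{A}{B}$ is indeed contained in $\Modz{A}{B}$, i.e. that objects of $\modz{A}{B}$ satisfy $MB^+ = B^+M$; this is immediate from Definition \ref{10thjune241}, so \eqref{6thdec231} is available for both $M$ and $N$.
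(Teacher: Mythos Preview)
Your proof is correct and matches the paper's argument essentially line for line: both apply \eqref{6thdec231} to $M$, use left $B$-linearity of $\phi$, then left $A$-colinearity to rewrite the Sweedler components as those of $\phi(m)$, and finally apply \eqref{6thdec231} to $N$. The paper's version is simply terser, omitting the explicit discussion of coassociativity and the containment $\modz{A}{B}\subseteq\Modz{A}{B}$ that you spell out.
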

\begin{proof} Indeed, if $x \in M$ and $b \in B$, then by applying \eqref{6thdec231} twice, we obtain
\begin{eqnarray*}
  \phi(x  b) &=& \phi(x_{(-2)}bS(x_{(-1)}) x_{(0)}) =  x_{(-2)}bS(x_{(-1)}) \phi(x_{(0)})\quad
                 \text{(as $\phi$ is left B-linear)}\\
    &=& \phi (x)_{(-2)} b S(\phi (x)_{(-1)}) \phi(x)_{(0)}    = \phi(x) b
\end{eqnarray*}
 by the left $A$-colinearity of $\phi$. 
\end{proof} 

We end this subsection with the following corollary:

\begin{cor} \label{23rdjuly241}
Suppose $B = A^{\co(H)}$ is a quantum homogeneous space equipped with an $A$-covariant differential calculus such that $\Omega^1$ is an object of the category $\modz{A}{B}$. Then $ {}^{\co(A)} (\Omega^1 \otimes_B \Omega^1 \otimes_B \Omega^1) = 0 $ if and only if the only morphism from $\mathbb{C}$ to $\Phi (\Omega^1) \otimes \Phi (\Omega^1) \otimes \Phi (\Omega^1) $ in $\lmod{H}{}$ is the zero morphism. 
\end{cor}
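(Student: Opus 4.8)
The plan is to reduce the statement to a purely categorical assertion about the monoidal equivalence $\Phi$ of Theorem~\ref{3rdapril241}. The space $^{\co(A)}(\Omega^1\otimes_B\Omega^1\otimes_B\Omega^1)$ is, by definition, the set of $A$-coinvariant elements of the triple tensor product, and an element $x$ is coinvariant precisely when the map $\mathbb{C}\to \Omega^1\otimes_B\Omega^1\otimes_B\Omega^1$, $1\mapsto x$, is a morphism in $\qMod{A}{B}{}{B}$ — provided this map is also left $B$-linear. Here left $B$-linearity is automatic because $B$ acts on $\mathbb{C}=\Omega^0$ through the counit and, as recorded after Definition~\ref{4thmay242} (via \cite[Lemma~1.16]{BeggsMajid:Leabh}-type centrality arguments) and more directly since $\Omega^1\otimes_B\Omega^1\otimes_B\Omega^1$ lies in $\modz{A}{B}$, the formula \eqref{6thdec231} forces any $A$-colinear map out of $B$ to be $B$-bilinear. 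So the first step is: identify $^{\co(A)}(\Omega^1\otimes_B\Omega^1\otimes_B\Omega^1)$ with $\Hom_{\modz{A}{B}}(B,\ \Omega^1\otimes_B\Omega^1\otimes_B\Omega^1)$.

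Next I would invoke that $\Omega^1$ is an object of $\modz{A}{B}$ by hypothesis, and that $\modz{A}{B}$ is closed under the tensor product $\otimes_B$ (this is part of the monoidal structure described before Theorem~\ref{3rdapril241}); hence $\Omega^1\otimes_B\Omega^1\otimes_B\Omega^1$ is again an object of $\modz{A}{B}$. Likewise $B$ is the monoidal unit of $\modz{A}{B}$. Since $\Phi$ is a monoidal equivalence of categories, it induces a bijection
\[
  \Hom_{\modz{A}{B}}\big(B,\ \Omega^1\otimes_B\Omega^1\otimes_B\Omega^1\big)
  \;\cong\;
  \Hom_{\lmod{H}{}}\big(\Phi(B),\ \Phi(\Omega^1\otimes_B\Omega^1\otimes_B\Omega^1)\big),
\]
and monoidality gives $\Phi(B)\cong\mathbb{C}$ (the unit of $\lmod{H}{}$) together with a natural isomorphism $\Phi(\Omega^1\otimes_B\Omega^1\otimes_B\Omega^1)\cong\Phi(\Omega^1)\otimes\Phi(\Omega^1)\otimes\Phi(\Omega^1)$. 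Combining the two displays, $^{\co(A)}(\Omega^1\otimes_B\Omega^1\otimes_B\Omega^1)=0$ if and only if $\Hom_{\lmod{H}{}}(\mathbb{C},\ \Phi(\Omega^1)\otimes\Phi(\Omega^1)\otimes\Phi(\Omega^1))=0$, which is exactly the claim (the latter Hom-space being zero is the same as saying its only element is the zero morphism).

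I do not anticipate a genuine obstacle here; the result is essentially bookkeeping. The one point requiring a little care is the left $B$-linearity in the first step: one must be sure that \emph{every} $A$-colinear linear map $\mathbb{C}\to \Omega^1\otimes_B\Omega^1\otimes_B\Omega^1$ is automatically a morphism in $\modz{A}{B}$, so that ``$A$-coinvariant element'' and ``morphism from the monoidal unit'' really do coincide. This is precisely the content of Proposition~\ref{20thdec231} applied with $M=B$ and $N=\Omega^1\otimes_B\Omega^1\otimes_B\Omega^1$: a left $B$-linear, left $A$-colinear map between objects of $\modz{A}{B}$ is right $B$-linear, and the map $1\mapsto x$ is trivially left $B$-linear for a coinvariant $x$ since $B\cdot x = \epsilon(b)x$ and $x\cdot b=\epsilon(b)x$ follow from \eqref{6thdec231} once $x$ is $A$-coinvariant. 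Hence the identification is legitimate, and the corollary follows. (This is the argument already used implicitly when deriving the equivalence of \eqref{eq:3tensLocal0} and \eqref{eq:3tensLocal} in the proof of the Lemma preceding Lemma~\ref{8thmay24jb4}.)
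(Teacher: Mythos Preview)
Your approach is essentially the same as the paper's: identify $A$-coinvariants with morphisms from the monoidal unit in $\modz{A}{B}$, invoke Proposition~\ref{20thdec231} to secure right $B$-linearity, and then apply Takeuchi's monoidal equivalence. The paper's proof does exactly this, using the map $f_X:B\to\Omega^1\otimes_B\Omega^1\otimes_B\Omega^1$, $f_X(b)=bX$.

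There is one slip worth correcting. In your first and third paragraphs you work with a map out of $\mathbb{C}$ (with $B$-action via $\epsilon$) rather than out of $B$, and you claim that $b\cdot x=\epsilon(b)x$ for $x$ coinvariant. This is false: take $x=1\in B$. What \eqref{6thdec231} actually gives for a coinvariant $x$ is centrality, $xb=bx$, not $bx=\epsilon(b)x$. The correct domain for the correspondence is $B$ (the monoidal unit of $\modz{A}{B}$, as you say yourself in the second paragraph): the map $b\mapsto bx$ is trivially left $B$-linear, its $A$-colinearity is equivalent to coinvariance of $x$, and Proposition~\ref{20thdec231} then supplies right $B$-linearity. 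With this adjustment your argument is complete and coincides with the paper's.
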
 
\begin{proof}
This follows by observing that there is a one to one correspondence between $ {}^{\co(A)} (\Omega^1 \otimes_B \Omega^1 \otimes_B \Omega^1) $ and the set of all left $B$-linear left $A$-colinear maps from $B$ to  $ \Omega^1 \otimes_B \Omega^1 \otimes_B \Omega^1$, defined by $X \mapsto f_X$, where 
$f_X: B \rightarrow \Omega^1 \otimes_B \Omega^1 \otimes_B \Omega^1$
is defined as $f_X (b) = b X. $ As $f_X$ is left $B$-linear and left $A$-colinear,  Proposition \ref{20thdec231} implies that $f_X$ is a morphism in $\modz{A}{B}$. Now the result follows from Takeuchi's equivalence. 
\end{proof}

\subsection{Some results on bar-categories}

\begin{prop} \label{3rdjuly243}
If $B = A^{\co(H)}$ is a quantum homogeneous space of a Hopf $\ast$-algebra $A$, then the category $\modz{A}{B}$ is a bar category. 
\end{prop}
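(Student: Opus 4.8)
The goal is to equip $\modz{A}{B}$ with the structure of a bar category in the sense of Definition \ref{15thjuly241}. The plan is to transport the bar-category structure that already exists on the ambient category $\qMod{A}{B}{}{B}$ (Example \ref{2nddec231}(3)) and to verify that it restricts well to the full subcategory $\modz{A}{B}$. Concretely, for an object $M$ of $\modz{A}{B}$, one defines $\overline{M}$ to be the $B$-bimodule of \eqref{28thnov231} with the $A$-coaction \eqref{25thnov232}; the natural equivalence $\mathrm{bb}$, the morphism $\star\colon 1_{\mathcal C}\to\overline{1_{\mathcal C}}$ (here $1_{\mathcal C}=B$), and the natural equivalence $\Upsilon$ are all inherited verbatim from $\qMod{A}{B}{}{B}$. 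All the coherence identities listed in Definition \ref{15thjuly241} then hold automatically, since they already hold in the larger bar category and $\modz{A}{B}$ is a full subcategory. So the only genuine content is to check that $\modz{A}{B}$ is closed under the bar functor, i.e.\ that if $M\in\modz{A}{B}$ then $\overline{M}\in\modz{A}{B}$.

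The first step is therefore to show $\overline{M}$ is again an object of $\qMod{A}{B}{}{B}$ — this is already part of Example \ref{2nddec231}(3), so it can be quoted. The second step is to show $\overline{M}$ is finitely generated as a left $B$-module: if $m_1,\dots,m_k$ generate $M$ as a left $B$-module, then using \eqref{28thnov231} one checks $\overline{m_1},\dots,\overline{m_k}$ generate $\overline{M}$, because $\overline{b^*\cdot m_j} = \overline{m_j}\cdot b$ and conversely $b\cdot\overline{m_j} = \overline{m_j\cdot b^*}$, so every element $\overline{b\cdot m_j}$ is a left $B$-multiple of some $\overline{m_i}$ after expanding $m_j$ in terms of the generators — more carefully, one shows the left $B$-span of $\{\overline{m_j}\}$ equals $\overline{M}$ by a short symmetric-difference argument. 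The third and only subtle step is the condition $\overline{M}\,B^+ = B^+\,\overline{M}$. Using the bimodule structure \eqref{28thnov231}, for $b\in B^+$ and $m\in M$ we have $\overline{m}\cdot b = \overline{b^*\cdot m}$ and $b\cdot\overline{m} = \overline{m\cdot b^*}$; since $\ast$ preserves $\epsilon$ (as $B$ is a $\ast$-algebra and $\epsilon$ is a $\ast$-algebra map), $\ast$ restricts to a (conjugate-linear) bijection $B^+\to B^+$. Hence $\overline{M}\,B^+ = \overline{B^+ M}$ and $B^+\,\overline{M} = \overline{M B^+}$, and the hypothesis $M B^+ = B^+ M$ for $M\in\modz{A}{B}$ gives $\overline{M}\,B^+ = \overline{B^+ M} = \overline{M B^+} = B^+\,\overline{M}$.

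Once closure under $\overline{(\cdot)}$ is established, one notes that the bar functor sends morphisms of $\modz{A}{B}$ to morphisms of $\modz{A}{B}$ (a morphism $f$ goes to $\overline f(\overline x)=\overline{f(x)}$, which is $B$-bilinear and $A$-colinear by the same formal manipulation as in the ambient category, and stays within the subcategory since the subcategory is full). Then $\mathrm{bb}$, $\star$, $\Upsilon$ restrict to natural transformations on $\modz{A}{B}$ — here one uses that $\modz{A}{B}$ is closed under the monoidal product $\otimes_B$ (stated in the paragraph following Definition \ref{10thjune241}) so that the domain and codomain of $\Upsilon_{M,N}$, namely $\overline{M\otimes_B N}$ and $\overline N\otimes_B\overline M$, are objects of $\modz{A}{B}$. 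All the axioms of Definition \ref{15thjuly241} are equalities of morphisms that already hold in $\qMod{A}{B}{}{B}$, hence hold a fortiori in the full subcategory.

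\textbf{Main obstacle.} The only place where anything beyond bookkeeping is needed is the verification $\overline{M}B^+ = B^+\overline{M}$; everything else is inherited from the ambient bar category. The key observation making this work is that $\ast$ maps $B^+$ bijectively onto $B^+$, which in turn relies on $\epsilon$ being a $\ast$-homomorphism on $B$. I expect no serious difficulty, only the need to be careful with the conjugate-linearity of $\ast$ when manipulating the bimodule structure \eqref{28thnov231}.
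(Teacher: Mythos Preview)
Your overall strategy---restrict the bar structure from the ambient bar category $\qMod{A}{B}{}{B}$ (Example~\ref{2nddec231}(3)) and check that $\modz{A}{B}$ is closed under $\overline{(\cdot)}$---is exactly the paper's approach, and your verification of $\overline{M}\,B^+ = B^+\,\overline{M}$ via the observation that $\ast$ maps $B^+$ to itself is correct and matches the paper.

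The gap is in the finite-generation step. From \eqref{28thnov231} alone, the left $B$-span of $\{\overline{m_1},\dots,\overline{m_k}\}$ in $\overline{M}$ is precisely the bar of the \emph{right} $B$-span of $\{m_1,\dots,m_k\}$ in $M$; so your claim that left $B$-generators of $M$ yield left $B$-generators of $\overline{M}$ amounts to asserting that $\{m_j\}$ also generates $M$ as a right $B$-module. This is not automatic for an arbitrary $B$-bimodule, and your ``short symmetric-difference argument'' does not supply a reason. The missing ingredient is that for objects of $\Modz{A}{B}$ the right action is determined by the left action and the coaction, via formula~\eqref{6thdec231}. The paper first establishes $\overline{M}\in\Modz{A}{B}$ (your argument does this), then applies~\eqref{6thdec231} to $\overline{M}$ to obtain
\[
\overline{m}\cdot b \;=\; \bigl(m_{(-2)}^{*}\,b\,S(m_{(-1)}^{*})\bigr)\cdot \overline{m_{(0)}}\,,
\]
which expresses the right $B$-action on $\overline{M}$ through the left one. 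With this in hand, writing $m=\sum_j b_j m_j$ and $\overline{m}=\sum_j \overline{m_j}\cdot b_j^{*}$ shows $\overline{m}$ lies in the left $B$-span of the finitely many elements $\overline{(m_j)_{(0)}}$ (the Sweedler components of the $m_j$), which suffices. Plug this step in and the rest of your outline goes through.
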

\begin{proof} To begin with, let $\Modz{A}{B}$ denote the monoidal category introduced in Subsection \ref{3rdjuly241}. If $M$ is an object of $\Modz{A}{B}$, then $MB^+=B^+M$. Using this, it is easy to see that $ \overline{M} B^+ = B^+ \overline{M} $ and that $\Modz{A}{B}$ is a bar category. Now if  $M$ be an object of $\modz{A}{B}$, then $M$ is an object of $\Modz{A}{B}. $ Hence, $\overline{M}$, equipped  with the bimodule structure defined in \eqref{28thnov231} and comodule structure defined in \eqref{25thnov232} is an object of $\Modz{A}{B}$. Then by \eqref{6thdec231}, we get 
\begin{equation} \label{3rdapril244}
    \overline{m}\cdot_{\overline{M}}b =  \left (m_{(-2)}^* b S(m_{(-1)}^*)\right) \cdot_{\overline{M}}\overline{m_{(0)}}.
\end{equation}

\noindent As $M$ is finitely generated as a left $B$-module, it is easy to see from \eqref{3rdapril244} that $\overline{M}$ is also finitely generated as a left $B$-module. Thus, $\overline{M}$ is indeed an object of $\modz{A}{B}$. The rest of the properties of a bar-category can be easily verified.
\end{proof}

\begin{lem} \label{27thmarch243}
    Suppose $M$ is an object in the category $\modz{A}{B}$, where $B\subseteq A$ is a quantum homogeneous space. Let $\Phi$ denote the Takeuchi's equivalence, then for any object $M$ in $\modz{A}{B}$, $\Phi(\overline{M})\cong \overline{\Phi(M)}$.
\end{lem}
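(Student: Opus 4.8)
The plan is to establish the isomorphism $\Phi(\overline{M}) \cong \overline{\Phi(M)}$ in $\lmod{H}{}$ by exhibiting an explicit candidate map at the level of vector spaces and checking it is well-defined, $H$-colinear, and bijective. Recall that $\Phi(M) = M/B^+M$ and $\overline{M}$ is the conjugate bimodule with structure~\eqref{28thnov231} and coaction~\eqref{25thnov232}. So $\Phi(\overline{M}) = \overline{M}/B^+\overline{M}$ while $\overline{\Phi(M)} = \overline{M/B^+M}$. The natural candidate is the map
\[
  \kappa \colon \overline{M}/B^+\overline{M} \longrightarrow \overline{M/B^+M}, \qquad [\,\overline{m}\,] \longmapsto \overline{[\,m\,]},
\]
induced by the identity on the underlying set $\{\overline{m} : m \in M\}$.

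First I would check that $\kappa$ is well-defined, i.e.\ that $B^+\overline{M}$ is carried into the kernel of $\overline{m} \mapsto \overline{[m]}$. Using~\eqref{28thnov231}, for $b \in B^+$ we have $b \cdot \overline{m} = \overline{m \cdot b^*}$; since $\epsilon(b) = 0$ forces $\epsilon(b^*) = 0$, we get $b^* \in B^+$, so $m \cdot b^* \in MB^+ = B^+M$ (here I use that $M$ is an object of $\modz{A}{B}$, so $MB^+ = B^+M$), hence $\overline{[m\cdot b^*]} = \overline{0}$ in $\overline{M/B^+M}$. Thus $B^+\overline{M} \subseteq \ker$, and $\kappa$ descends. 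The same computation run backwards shows $\kappa$ is injective, and surjectivity is immediate since every $\overline{[m]}$ is $\kappa([\overline{m}])$. So $\kappa$ is a linear isomorphism; one should also note it is right $B$-linear with respect to the (trivial, after passing to $\Phi$) $B$-actions, but since both sides lie in $\Modz{H}{}$ with trivial right $B$-action this is automatic.

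Next I would verify $H$-colinearity. On $\Phi(\overline{M})$ the coaction sends $[\overline{m}] \mapsto \pi((\overline{m})_{(-1)}) \otimes [(\overline{m})_{(0)}]$, and by~\eqref{25thnov232} this is $\pi(m_{(-1)}^*) \otimes [\overline{m_{(0)}}]$. On $\overline{\Phi(M)}$ the coaction sends $\overline{[m]} \mapsto ([m]_{(-1)})^* \otimes \overline{[m]_{(0)}}$, and the coaction on $\Phi(M)$ is $[m] \mapsto \pi(m_{(-1)}) \otimes [m_{(0)}]$, so this equals $\pi(m_{(-1)})^* \otimes \overline{[m_{(0)}]}$. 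Since $\pi$ is a $\ast$-algebra map, $\pi(m_{(-1)})^* = \pi(m_{(-1)}^*)$, and the two expressions agree under $\kappa$. Hence $\kappa$ is an isomorphism in $\lmod{H}{}$, which is the claim.

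I do not expect a genuine obstacle here; the statement is essentially a compatibility check between the two functorial constructions (conjugation and $\Phi$), both of which are already available from the excerpt (Proposition~\ref{3rdjuly243} for the bar structure on $\modz{A}{B}$, Theorem~\ref{3rdapril241} for $\Phi$). The only point requiring a little care is the well-definedness step, where one must use the defining condition $MB^+ = B^+M$ of $\modz{A}{B}$ together with the fact that the $\ast$-operation preserves $B^+$; everything else is bookkeeping with Sweedler notation and the fact that $\pi$ is a $\ast$-map. If one prefers a more abstract argument, one could instead invoke the uniqueness of bar functors making $\Phi$ a monoidal functor of bar categories, but the explicit $\kappa$ above is cleaner and self-contained.
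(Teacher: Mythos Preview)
Your proposal is correct and follows essentially the same route as the paper: define the obvious map $[\overline{m}] \mapsto \overline{[m]}$, use $MB^+ = B^+M$ (together with $*$ preserving $B^+$) to check that it is a well-defined bijection, and then verify $H$-colinearity by a Sweedler-notation computation using that $\pi$ is a $*$-map. The only cosmetic difference is that the paper first invokes $\overline{M}B^+ = B^+\overline{M}$ (via Proposition~\ref{3rdjuly243}) for the injectivity step, whereas your ``run the computation backwards'' avoids this by writing $m \in B^+M = MB^+$ and using the left action on $\overline{M}$ directly.
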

\begin{proof}
  $M$ is an object in the category $\modz{A}{B}$ and so  $\overline{M}$ is an object in $\modz{A}{B}$ by Proposition \ref{3rdjuly243}. Therefore, $\overline{M}$ is an object of $\Modz{A}{B}$ and hence,  
  $$\overline{M}B^+ =  B^+\overline{M}.$$ 
  We are going to use this fact in the proof.
    
    Suppose $V= \frac{\overline{M}}{B^+\overline{M}} $
and $W=\overline{\frac{M}{B^+M}}$.
Define the surjective map
\begin{equation} \label{5thjuly241}
f: \overline{M} \to W \quad \text{by} \quad \overline{m}\mapsto \overline{[m]},
\end{equation}
where $[m]$ denotes the equivalence class of $m$ in $\Phi (M). $

We will prove that $f$ induces an isomorphism from $V$ to $W$ in $\lmod{H}{}$. Indeed, if $\overline{m}\in\ker(f)$, then  $[m]=0$. Thus,  we have $m \in B^+M$, so that  $\overline{m} \in \overline{M}B^+ =  B^+\overline{M}$. Hence we have that $\ker(f)\subseteq B^+\overline{M}$.

Conversely, if  $\sum_{i=1}^n b_i \overline{x_i}\in B^+\overline{M}$, then 
$$f(\sum_{i=1}^n b_i \overline{x_i})=f(\sum_{i=1}^n  \overline{x_ib_i^{\ast}})=\overline{[\sum_{i=1}^n {x_i}b_i^{\ast}]}.$$
Now the element  $\sum_{i=1}^n {x_i}b_i^{\ast}\in MB^+ =  B^+ M$ which implies that  
$$ f (\sum^n_{i = 1} b_i \overline{x_i}) = \overline{[\sum_{i=1}^n {x_i}b_i^{\ast}]}=0.$$
This proves that $\ker (f) = B^+ \overline{M} $ and 
 we have a vector space isomorphism denoted by $f$ (by an abuse of notation) from $V$ to $W$ defined as $f[\overline{m}]= \overline{[m]}$. 
 Finally, the $H$-covariance of the map $f$ can be checked from the following computation:
 \begin{align*}
     ^W\delta(\overline{[m]})&= \pi(m_{(-1)})^{\ast} \otimes \overline{[m_{(0)}]}\\
    & = (\id\otimes f) (\pi(m_{(-1)})^{\ast} \otimes [\overline{m_{(0)}}])\\
    & = (\id\otimes f) ^V\delta(\overline{[m]}).
 \end{align*}
\end{proof}

We end this subsection with a couple of simple observations which have been used in the article. 

\begin{prop}\label{25thnov235}
Suppose $(\Omega^{\bullet}, \wedge, \partial, \overline{\partial})$ is a complex structure on a $\ast$-algebra $B$ and $\mathcal{E}$ be a $B$-bimodule. Let $ \beta: \overline{\mathcal{E}}\to \overline{\mathcal{E} } \otimes_B \Omega^{(0,1)}$
 be a $\mathbb{C}$-linear map.

 If $\beta(\overline{e})= \sum_{i=1}^k\overline{f_i} \otimes_B \omega_i$, then
   $$ \widehat{\beta}: \mathcal{E } \to \Omega^{(1,0)}\otimes_B \mathcal{E} ~ \text{defined as $\widehat{\beta}(e)= \sum^k_{i = 1} \omega_i^{\ast} \otimes_B f_i$} $$
is well defined.
\end{prop}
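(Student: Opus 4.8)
The statement to prove is Proposition~\ref{25thnov235}: given a $\mathbb{C}$-linear map $\beta\colon\overline{\mathcal{E}}\to\overline{\mathcal{E}}\otimes_B\Omega^{(0,1)}$, the associated map $\widehat{\beta}\colon\mathcal{E}\to\Omega^{(1,0)}\otimes_B\mathcal{E}$, $\widehat{\beta}(e)=\sum_i\omega_i^*\otimes_B f_i$ when $\beta(\overline{e})=\sum_i\overline{f_i}\otimes_B\omega_i$, is well defined.

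The plan is to realize $\widehat{\beta}$ as a genuine composition of well-defined maps, using the bar-category structure of $\qMod{}{B}{}{B}$, so that well-definedness becomes automatic rather than requiring a direct check that the formula is independent of the chosen representation $\sum_i\overline{f_i}\otimes_B\omega_i$. First I would recall that $\Omega^{(0,1)}$ is a star object in the bar category $\qMod{}{B}{}{B}$, with the isomorphism $\star_{\Omega^1}$ of \eqref{8thjuly241} restricting to an isomorphism $\Omega^{(0,1)}\xrightarrow{\ \cong\ }\overline{\Omega^{(1,0)}}$ by Remark~\ref{11thdec23n1} (this uses $(\Omega^{(p,q)})^*=\Omega^{(q,p)}$). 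Likewise the canonical natural isomorphism $\mathrm{bb}$ identifies $\mathcal{E}$ with $\overline{\overline{\mathcal{E}}}$. The key structural ingredient is the natural isomorphism $\Upsilon$ of the bar category, which gives $\overline{\overline{\mathcal{E}}\otimes_B\Omega^{(0,1)}}\xrightarrow{\ \cong\ }\overline{\Omega^{(0,1)}}\otimes_B\overline{\overline{\mathcal{E}}}$, i.e. (after applying $\star_{\Omega^1}^{-1}$ on the first factor and $\mathrm{bb}_{\mathcal{E}}^{-1}$ on the second) an isomorphism $\overline{\overline{\mathcal{E}}\otimes_B\Omega^{(0,1)}}\xrightarrow{\cong}\Omega^{(1,0)}\otimes_B\mathcal{E}$.

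With these in hand I would define $\widehat{\beta}$ as the composite
\[
\mathcal{E}\xrightarrow{\ \mathrm{bb}_{\mathcal{E}}\ }\overline{\overline{\mathcal{E}}}\xrightarrow{\ \overline{\beta}\ }\overline{\overline{\mathcal{E}}\otimes_B\Omega^{(0,1)}}\xrightarrow{\ \overline{\Upsilon\text{-step}}\ }\overline{\overline{\Omega^{(0,1)}}\otimes_B\overline{\overline{\mathcal{E}}}}\xrightarrow{\ \cong\ }\Omega^{(1,0)}\otimes_B\mathcal{E},
\]
where $\overline{\beta}$ is the image of $\beta$ under the bar functor and the last steps apply $\overline{(\star_{\Omega^1}^{-1})^{-}}$, $\mathrm{bb}^{-1}$ and $\Upsilon$. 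Each arrow in this composite is a well-defined $\mathbb{C}$-linear map since it is built only from the structure morphisms of the bar category and the fixed isomorphism $\star_{\Omega^1}$; in particular no choice of representative for elements of a tensor product over $B$ is ever made. The remaining task is the bookkeeping: track where a simple tensor $\overline{f_i}\otimes_B\omega_i$ goes under this composite and verify that it lands on $\omega_i^*\otimes_B f_i$, which unwinds to the stated formula for $\widehat{\beta}$. Here I would use the concrete description of the bar functor on morphisms, $\overline{g}(\overline{x})=\overline{g(x)}$, together with the explicit form $\star_{\Omega^1}(\omega)=\overline{\omega^*}$ and the fact that $\mathrm{bb}_{\mathcal{E}}(e)$ is identified with $\overline{\overline{e}}$.

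I expect the main (though modest) obstacle to be purely notational: carefully chasing simple tensors through the composition of $\Upsilon$, $\mathrm{bb}$ and $\star_{\Omega^1}$ and their inverses, making sure the order of tensor factors and the conjugations match up so that $\overline{f_i}\otimes_B\omega_i\mapsto\omega_i^*\otimes_B f_i$ on the nose. An alternative, slightly more hands-on route that avoids invoking $\Upsilon$ explicitly: observe that the assignment $\sum_i\overline{f_i}\otimes_B\omega_i\mapsto\sum_i\omega_i^*\otimes_B f_i$ is precisely the effect of the well-defined $B$-bimodule isomorphism $\overline{\mathcal{E}}\otimes_B\Omega^{(0,1)}\xrightarrow{\ \mathrm{flip}\circ(\star_{\Omega^1}\otimes_B \mathrm{id}_{\overline{\mathcal{E}}})\ }\Omega^{(1,0)}\otimes_B\mathcal{E}$ read through the identifications $\overline{\overline{\mathcal{E}}}\cong\mathcal{E}$ and $\overline{\Omega^{(0,1)}}\cong\Omega^{(1,0)}$ — here one must check that $\mathrm{flip}$ is legitimate, i.e. that the relevant map factors through the balanced tensor product, which follows because $\star_{\Omega^1}$ is $B$-bilinear with respect to the twisted bimodule structure \eqref{28thnov231} on the bars. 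Either way, once $\widehat{\beta}$ is exhibited as such a composition of well-defined maps, well-definedness is immediate. I would close with the short remark (already anticipated in Remark~\ref{29thnov231}) that the same argument applied to a $\overline{\partial}$-connection $\beta$ shows $\widehat{\beta}$ is a $\partial$-connection, since the bar functor and $\Upsilon$ intertwine the Leibniz rules via part~(3) of Lemma~\ref{12thdec233}.
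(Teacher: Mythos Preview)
Your approach is correct and is essentially the same as the paper's: the paper also realizes $\widehat{\beta}$ as a composite of bar-category structure maps, writing $\widehat{\beta}(e)= \mathrm{bb}^{-1}_{\Omega^1 \otimes_B \mathcal{E}}\,\overline{\Upsilon_{\Omega^{1},\mathcal{E}}^{-1}(\id\otimes_B \star_{\Omega^1})(\beta(\overline{e}))}$, from which well-definedness is immediate. Your write-up is more detailed, but the idea is identical.
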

\begin{proof}  We work in the bar-category $\qMod{}{B}{}{B}$.
We note that 
\begin{align*}
    \widehat{\beta}(e)= \mathrm{bb}^{-1}_{\Omega^1 \otimes_B \mathcal{E}} \overline{\Upsilon_{\Omega^{1},\mathcal{E}}^{-1}(\id\otimes_B \star_{\Omega^1})(\beta(\overline{e}))}.
\end{align*}
Therefore, $\widehat{\beta}$ is well-defined. 
\end{proof}

We will also need the following lemma:

\begin{lem}\label{24thnov231}
Suppose $A$ is a Hopf $\ast$-algebra and $\mathcal{F}$ is an object in the bar category $\lMod{A}{}$. If $a_i, b_j$ are finitely many elements in $A$ and $x_i, y_j \in \mathcal{F}$ are such that  

$\sum_i a_i \otimes \overline{x_i}= \sum_j b_j \otimes \overline{y_j} $ in $A\otimes \overline{\mathcal{F}},$.  then $\sum_i a_i^{\ast} \otimes {x_i}= \sum_j b_j^{\ast} \otimes {y_j}$.
\end{lem}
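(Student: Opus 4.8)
The plan is to deduce the identity by transporting the hypothesis through a chain of genuine $\mathbb{C}$-linear isomorphisms, rather than by the tempting but invalid move of ``applying $*$ and deleting the bars'' term by term. First I would record the elementary fact that $\star_A\colon A\to\overline{A}$, $a\mapsto\overline{a^{\ast}}$, is a well-defined $\mathbb{C}$-linear bijection: it is the composite of the conjugate-linear involution $*$ with the conjugate-linear bar map, hence $\mathbb{C}$-linear, with inverse $\overline{a}\mapsto a^{\ast}$. Applying the $\mathbb{C}$-linear map $\star_A\otimes\mathrm{id}_{\overline{\mathcal{F}}}\colon A\otimes\overline{\mathcal{F}}\to\overline{A}\otimes\overline{\mathcal{F}}$ to the hypothesis $\sum_i a_i\otimes\overline{x_i}=\sum_j b_j\otimes\overline{y_j}$ turns it into $\sum_i\overline{a_i^{\ast}}\otimes\overline{x_i}=\sum_j\overline{b_j^{\ast}}\otimes\overline{y_j}$ in $\overline{A}\otimes\overline{\mathcal{F}}$.

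Next I would use the bar-category structure on $\lMod{A}{}$ recalled in Example~\ref{2nddec231}. The natural isomorphism $\Upsilon$ gives $\Upsilon_{\mathcal{F},A}\colon\overline{\mathcal{F}\otimes A}\to\overline{A}\otimes\overline{\mathcal{F}}$, $\overline{x\otimes a}\mapsto\overline{a}\otimes\overline{x}$; feeding the last equation through $\Upsilon_{\mathcal{F},A}^{-1}$ produces $\sum_i\overline{x_i\otimes a_i^{\ast}}=\sum_j\overline{y_j\otimes b_j^{\ast}}$ in $\overline{\mathcal{F}\otimes A}$. Because the bar map $\mathcal{F}\otimes A\to\overline{\mathcal{F}\otimes A}$ is additive and injective and $\sum_i\overline{x_i\otimes a_i^{\ast}}=\overline{\sum_i x_i\otimes a_i^{\ast}}$, we may cancel the bar to get $\sum_i x_i\otimes a_i^{\ast}=\sum_j y_j\otimes b_j^{\ast}$ in $\mathcal{F}\otimes A$, and a final application of the flip isomorphism $\mathcal{F}\otimes A\cong A\otimes\mathcal{F}$ yields the desired $\sum_i a_i^{\ast}\otimes x_i=\sum_j b_j^{\ast}\otimes y_j$.

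The one point that needs care — and the reason the lemma is not completely trivial — is that the naive assignment $a\otimes\overline{x}\mapsto a^{\ast}\otimes x$ is \emph{not} a well-defined $\mathbb{C}$-linear map on $A\otimes_{\mathbb{C}}\overline{\mathcal{F}}$ (it is conjugate-linear in the $A$-slot), so the manipulation really must pass through the honest isomorphisms $\star_A$, $\Upsilon$, and the bar map as above. For readers who prefer to bypass the bar-category formalism, the same conclusion follows by a basis computation: choose a finite-dimensional subspace $W\subseteq\mathcal{F}$ containing all the $x_i$ and $y_j$, fix a basis $\{f_k\}$ of $W$, write $x_i=\sum_k c_{ik}f_k$ and $y_j=\sum_k d_{jk}f_k$, note that $\{\overline{f_k}\}$ is a $\mathbb{C}$-basis of $\overline{W}$ so that the hypothesis forces $\sum_i\overline{c_{ik}}\,a_i=\sum_j\overline{d_{jk}}\,b_j$ in $A$ for every $k$, apply $*$ to obtain $\sum_i c_{ik}\,a_i^{\ast}=\sum_j d_{jk}\,b_j^{\ast}$, and reassemble into $\sum_i a_i^{\ast}\otimes x_i=\sum_j b_j^{\ast}\otimes y_j$. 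I would include whichever of these two arguments fits the exposition best; since everything around it is phrased categorically, the bar-category version is the natural choice.
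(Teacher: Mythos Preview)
Your argument is correct and follows essentially the same approach as the paper: both exhibit the assignment $a\otimes\overline{x}\mapsto a^{\ast}\otimes x$ as a composite of honest $\mathbb{C}$-linear bijections coming from the bar-category structure on $\lMod{A}{}$. The only cosmetic difference is the particular factorisation chosen---you apply $\star_A\otimes\mathrm{id}$ first and then $\Upsilon_{\mathcal{F},A}^{-1}$, while the paper first bars the whole element and uses $\Upsilon_{A,\overline{\mathcal{F}}}$ together with $\mathrm{bb}_{\mathcal{F}}^{-1}$; your alternative basis computation is a nice bonus but is not what the paper does.
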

\begin{proof} The proof follows from the following computation:
\begin{align*}
    \sum_i a_i^{\ast} \otimes {x_i} & = \mathrm{flip}\left(\sum_i x_i \otimes a_i^{\ast} \right) = \mathrm{flip} \circ (\mathrm{bb}^{-1}_{\mathcal{F}}\otimes \star_A^{-1}) \left(\sum_i \overline{\overline{ x_i}}\otimes \overline{a_i} \right)\\
    & = \mathrm{flip} \circ (\mathrm{bb}^{-1}_{\mathcal{F}}\otimes \star_A^{-1}) \circ \Upsilon_{A,\overline{\mathcal{F}}}\left(\overline{ \sum_i a_i \otimes \overline{x_i}}\right)\\
    &= \mathrm{flip} \circ (\mathrm{bb}^{-1}_{\mathcal{F}}\otimes \star_A^{-1}) \circ \Upsilon_{A,\overline{\mathcal{F}}} \left(\overline{\sum_j b_j \otimes \overline{y_j}}\right)
     = \sum_j b_j^{\ast} \otimes y_j.
\end{align*}
\end{proof}

\bibliographystyle{amsalpha-ak}
\bibliography{bigbib}

\newcommand{\etalchar}[1]{$^{#1}$}
\providecommand{\bysame}{\leavevmode\hbox to3em{\hrulefill}\thinspace}
\providecommand{\MR}{\relax\ifhmode\unskip\space\fi MR }
% \MRhref is called by the amsart/book/proc definition of \MR.
\providecommand{\MRhref}[2]{%
  \href{http://www.ams.org/mathscinet-getitem?mr=#1}{#2}
}
\providecommand{\href}[2]{#2}
\begin{thebibliography}{DGKOB{\etalchar{+}}2}

\bibitem[ALP]{atiyahseq}
P.~Aschieri, G.~Landi, and C.~Pagani, \emph{Atiyah sequences of braided {L}ie algebras and their splittings}, 2023, \href{http://arxiv.org/abs/2312.00495}{{\ttfamily arXiv:2312.00495 [math.QA]}}.

\bibitem[AW]{aschieriweber}
P.~Aschieri and T.~Weber, \emph{Metric compatibility and {L}evi-{C}ivita connections on quantum groups}, 2023, \href{http://arxiv.org/abs/2209.05453}{{\ttfamily arXiv:2209.05453 [math.QA]}}.

\bibitem[BGJ]{article7}
J.~Bhowmick, D.~Goswami, and S.~Joardar, \emph{A new look at {L}evi-{C}ivita connection in noncommutative geometry}, Int. J. Geom. Methods Mod. Phys. \textbf{18} (2021), no.~7, Paper No. 2150105, 39.

\bibitem[BGL]{KoszulJDG}
J.~Bhowmick, D.~Goswami, and G.~Landi, \emph{On the {K}oszul formula in noncommutative geometry}, Rev. Math. Phys. \textbf{32} (2020), no.~10, 2050032, 33.

\bibitem[BGM]{article1}
J.~Bhowmick, D.~Goswami, and S.~Mukhopadhyay, \emph{Levi-{C}ivita connections for a class of spectral triples}, Lett. Math. Phys. \textbf{110} (2020), no.~4, 835--884.

\bibitem[BM1]{BarCategory}
E.~J. Beggs and S.~Majid, \emph{Bar categories and star operations}, Algebr. Represent. Theory \textbf{12} (2009), no.~2-5, 103--152.

\bibitem[BM2]{beggs2011compatible}
E.~J. Beggs and S.~Majid, \emph{*- compatible connections in noncommutative {R}iemannian geometry}, Journal of Geometry and Physics \textbf{61} (2011), no.~1, 95--124.

\bibitem[BM3]{BeggsMajidChern}
E.~Beggs and S.~Majid, \emph{Spectral triples from bimodule connections and {C}hern connections}, J. Noncommut. Geom. \textbf{11} (2017), no.~2, 669--701, \href{http://arxiv.org/abs/1508.04808}{{\ttfamily arXiv:1508.04808 [math.QA]}}.

\bibitem[BM4]{BeggsMajid:Leabh}
E.~Beggs and S.~Majid, \emph{Quantum {R}iemannian geometry}, 1 ed., Grundlehren der mathematischen Wissenschaften, vol. 355, Springer International Publishing, 2019.

\bibitem[BS]{BS}
E.~Beggs and P.~S. Smith, \emph{{Noncommutative complex differential geometry}}, J. Geom. Phys. \textbf{72} (2013), 7--33, \href{http://arxiv.org/abs/1209.3595}{{\ttfamily arXiv:1209.3595 [math.AG]}}.

\bibitem[Con]{Connes}
A.~Connes, \emph{Noncommutative geometry}, Academic Press, Inc., San Diego, CA, 1994.

\bibitem[CP]{ChariPressley}
V.~Chari and A.~Pressley, \emph{A guide to quantum groups}, Cambridge University Press, Cambridge, 1994.

\bibitem[DKOSS1]{HolVBs}
F.~D\'{i}az~Garc\'{i}a, A.~Krutov, R.~\'O~Buachalla, P.~Somberg, and K.~R. Strung, \emph{Holomorphic relative {H}opf modules over the irreducible quantum flag manifolds}, Lett. Math. Phys. \textbf{111} (2021), no.~1, 24, \href{http://arxiv.org/abs/2005.09652}{{\ttfamily arXiv:2005.09652 [math.QA]}}.

\bibitem[DKOSS2]{Fano}
F.~D\'{i}az~Garc\'{i}a, A.~Krutov, R.~\'O~Buachalla, P.~Somberg, and K.~R. Strung, \emph{Positive line modules over the irreducible quantum flag manifolds}, Lett. Math. Phys. \textbf{112} (2022), no.~6, 33, \href{http://arxiv.org/abs/1912.08802}{{\ttfamily arXiv:1912.08802 [math.QA]}}, OWP-2020-01.

\bibitem[Dri]{DrinfeldICM}
V.~G. Drinfeld, \emph{Quantum groups}, Proceedings of the {I}nternational {C}ongress of {M}athematicians, {V}ol. 1, 2 ({B}erkeley, {C}alif., 1986), Amer. Math. Soc., Providence, RI, 1987, pp.~798--820.

\bibitem[EGNO]{EtingofTensorCat}
P.~Etingof, S.~Gelaki, D.~Nikshych, and V.~Ostrik, \emph{Tensor categories}, Mathematical Surveys and Monographs, vol. 205, American Mathematical Society, Providence, RI, 2015.

\bibitem[FGR]{FGR}
J.~Fr\"{o}hlich, O.~Grandjean, and A.~Recknagel, \emph{Supersymmetric quantum theory and non-commutative geometry}, Comm. Math. Phys. \textbf{203} (1999), no.~1, 119--184.

\bibitem[HK1]{HK}
I.~Heckenberger and S.~Kolb, \emph{The locally finite part of the dual coalgebra of quantized irreducible flag manifolds}, Proc. London Math. Soc. (3) \textbf{89} (2004), no.~2, 457--484, \href{http://arxiv.org/abs/math/0301244}{{\ttfamily arXiv:math/0301244 [math.QA]}}.

\bibitem[HK2]{HKdR}
I.~Heckenberger and S.~Kolb, \emph{De {R}ham complex for quantized irreducible flag manifolds}, J. Algebra \textbf{305} (2006), no.~2, 704--741, \href{http://arxiv.org/abs/math/0307402}{{\ttfamily arXiv:math/0307402 [math.QA]}}.

\bibitem[Jim]{Jimbo1986}
M.~Jimbo, \emph{A {$q$}-analogue of {$U(\mathfrak{gl}(N+1))$}, {H}ecke algebra, and the {Y}ang-{B}axter equation}, Lett. Math. Phys. \textbf{11} (1986), no.~3, 247--252.

\bibitem[KLLS]{NIS2}
A.~Krutov, A.~Lebedev, D.~Leites, and I.~Shchepochkina, \emph{Nondegenerate invariant symmetric bilinear forms on simple {L}ie superalgebras in characteristic 2}, Linear Algebra Appl. \textbf{649} (2022), 1--21, \href{http://arxiv.org/abs/2102.11653}{{\ttfamily arXiv:2102.11653 [math.RT]}}, Updated version of OWP 2020-02.

\bibitem[KLvS]{KLvSPodles}
M.~Khalkhali, G.~Landi, and W.~D. van Suijlekom, \emph{Holomorphic structures on the quantum projective line}, Int. Math. Res. Not. IMRN (2011), no.~4, 851--884, \href{http://arxiv.org/abs/0907.0154}{{\ttfamily arXiv:0907.0154 [math.QA]}}.

\bibitem[KS]{KSLeabh}
A.~Klimyk and K.~Schm\"udgen, \emph{Quantum groups and their representations}, Texts and Monographs in Physics, Springer-Verlag, 1997.

\bibitem[Maj]{Maj}
S.~Majid, \emph{Noncommutative {R}iemannian and spin geometry of the standard {$q$}-sphere}, Comm. Math. Phys. \textbf{256} (2005), 255--285, \href{http://arxiv.org/abs/math/0307351}{{\ttfamily arXiv:math/0307351 [math.QA]}}.

\bibitem[Mat1]{MarcoConj}
M.~Matassa, \emph{K\"{a}hler structures on quantum irreducible flag manifolds}, J. Geom. Phys. \textbf{145} (2019), 103477, 16, \href{http://arxiv.org/abs/1901.09544}{{\ttfamily arXiv:1901.09544 [math.QA]}}.

\bibitem[Mat2]{matassalevicivita}
M.~Matassa, \emph{Fubini-{S}tudy metrics and {L}evi-{C}ivita connections on quantum projective spaces}, Adv. Math. \textbf{393} (2021), Paper No. 108101, 56.

\bibitem[MR]{meslandrennie1}
B.~Mesland and A.~Rennie, \emph{Existence and uniqueness of the {L}evi-{C}ivita connection on noncommutative differential forms}, 2024, \href{http://arxiv.org/abs/2403.13735}{{\ttfamily arXiv:2403.13735 [math.QA]}}.

\bibitem[NT]{NeshveyevTuset}
S.~Neshveyev and L.~Tuset, \emph{Compact quantum groups and their representation categories}, Cours Sp\'{e}cialis\'{e}s [Specialized Courses], vol.~20, Soci\'{e}t\'{e} Math\'{e}matique de France, Paris, 2013.

\bibitem[OB1]{MMF2}
R.~\'{O}~Buachalla, \emph{Noncommutative complex structures on quantum homogeneous spaces}, J. Geom. Phys. \textbf{99} (2016), 154--173, \href{http://arxiv.org/abs/1108.2374}{{\ttfamily arXiv:1108.2374 [math.QA]}}.

\bibitem[OB2]{MMF3}
R.~\'{O}~Buachalla, \emph{Noncommutative {K}\"{a}hler structures on quantum homogeneous spaces}, Adv. Math. \textbf{322} (2017), 892--939, \href{http://arxiv.org/abs/1602.08484}{{\ttfamily arXiv:1602.08484 [math.QA]}}.

\bibitem[OSvR]{OSV}
R.~\'O~Buachalla, J.~\v{S}\v{t}ovi\v{c}ek, and A.-C. van Roosmalen, \emph{A {K}odaira vanishing theorem for noncommutative {K}\"ahler structures}, \href{http://arxiv.org/abs/1801.08125}{{\ttfamily arXiv:1801.08125 [math.QA]}}, arXiv preprint, 2018.

\bibitem[PS]{PolSchwar2003}
A.~Polishchuk and A.~Schwarz, \emph{Categories of holomorphic vector bundles on noncommutative two-tori}, Commun. Math. Phys. \textbf{236} (2003), no.~1, 135--159.

\bibitem[Skr]{Skryabin2007}
S.~Skryabin, \emph{Projectivity and freeness over comodule algebras}, Trans. Amer. Math. Soc. \textbf{359} (2007), no.~6, 2597--2623, \href{http://arxiv.org/abs/math/0610657}{{\ttfamily arXiv:math/0610657 [math.RA]}}.

\bibitem[Tak]{Tak}
M.~Takeuchi, \emph{Relative {H}opf modules---equivalences and freeness criteria}, J. Algebra \textbf{60} (1979), no.~2, 452--471.

\bibitem[Ulb]{Ulb90}
K.~H. Ulbrich, \emph{Smash products and comodules of linear maps}, Tsukuba Journal of Mathematics (1990).

\bibitem[Wor]{woroleshouches}
S.~L. Woronowicz, \emph{Compact quantum groups}, Sym\'etries quantiques ({L}es {H}ouches, 1995), North-Holland, Amsterdam, 1998, pp.~845--884.

\end{thebibliography}

\end{document}